\documentclass[12pt,a4paper]{amsart}
\usepackage{amssymb,mathtools}
\usepackage[margin=1in]{geometry}
\usepackage{extarrows}
\usepackage{stmaryrd}
\usepackage[all]{xy}
\newdir{ >}{{}*!/-0.7em/@{>}}
\usepackage{booktabs,tabularx,array,enumitem,microtype}
\usepackage[colorlinks=true,linkcolor=blue,citecolor=blue,urlcolor=blue]{hyperref}
\hypersetup{pdftitle={When Gorenstein injective objects form a covering class},
  pdfauthor={Yuhui Hua, Shunjie Li and Chencheng Zhang},
  pdfkeywords={Gorenstein injective object, covering class, coproduct, filtered colimit, Noetherian scheme, Zariski descent, totally acyclic complex}}
\setlist{nosep,leftmargin=2.2em}
\setlist[enumerate]{label=\textup{(\arabic*)},ref=\arabic*}
\theoremstyle{plain}
\newtheorem{lemma}{Lemma}[section]
\newtheorem{definition}[lemma]{Definition}
\newtheorem{proposition}[lemma]{Proposition}
\newtheorem{example}[lemma]{Example}

\newtheorem{corollary}[lemma]{Corollary}
\theoremstyle{definition}

\newtheorem{question}[lemma]{Question}
\theoremstyle{remark}
\newtheorem{remark}[lemma]{Remark}
\theoremstyle{plain}
\newtheorem{maintheorem}{Theorem}

\newtheorem{mainexample}[maintheorem]{Example}

\newcommand{\Ext}{\operatorname{Ext}}
\newcommand{\Spec}{\operatorname{Spec}}

\title[On the existence of Gorenstein injective covers]{On the existence of Gorenstein injective covers}
\author[Y. Hua]{Yuhui Hua\textsuperscript{*}}
\thanks{\textsuperscript{*}Corresponding author.}
\address{School of Mathematical Sciences, Shanghai Jiao Tong University,
Shanghai 200240, P.~R.~China}
\email{huayuhui@sjtu.edu.cn}
\author[S.-J. Li]{Shun-Jie Li}
\address{School of Mathematical Sciences, Shanghai Jiao Tong University,
Shanghai 200240, P.~R.~China}
\email{lishunjie@sjtu.edu.cn}
\author[C. Zhang]{Chencheng Zhang}
\address{School of Mathematical Sciences, Shanghai Jiao Tong University,
Shanghai 200240, P.~R.~China}
\email{zhangchencheng@sjtu.edu.cn}
\subjclass[2020]{Primary 18G25; Secondary 13D02, 18G80, 14F06}
\keywords{Gorenstein injective objects, covering classes, Grothendieck categories, totally acyclic complexes, Zariski locality, recollements}
\thanks{The authors were supported by the National Natural Science
Foundation of China (No.~12131015).}
\date{}
\begin{document}
\raggedbottom
\begin{abstract}

For a locally finitely generated Grothendieck category, we characterise the Gorenstein injective covering property by several equivalent conditions, including closure under coproducts. To complete this characterisation, we construct a countable commutative Noetherian Cohen--Macaulay local ring of dimension one for which this property fails, answering negatively Iacob's question \cite[Question~1]{Iacob} whether character modules of Gorenstein injective modules over left Noetherian rings are Gorenstein flat. Using this characterisation, we prove that for Noetherian schemes the covering property is Zariski-local. Moreover, assuming that $\mathcal{GI}(\mathrm{QCoh}(X))$ is a covering class, we establish compatible adjunctions and recollements for homotopy categories of injectives and totally acyclic injective complexes under open restriction. However, this property is not stalk-local: we construct a commutative Noetherian ring $S$ such that $\mathcal{GI}(S_{\mathfrak{p}})$ forms a covering class for all prime ideals $\mathfrak{p}$, but $\mathcal{GI}(S)$ do not form a covering class.

\end{abstract}
\maketitle

\section{Introduction}

Covers and envelopes (also called minimal right and left approximations), originally studied by E. E. Enochs \cite{Enochs81} and Auslander--Smal\o~\cite{AS80} in the early 1980s, are fundamental and important concepts in relative homological algebra, representation theory and so on. Gorenstein homological algebra, introduced by Enochs--Jenda \cite{EnochsJenda1995, EnochsJenda2000}, has been developed to an advanced level. We mainly focus on Gorenstein injective objects. To obtain Gorenstein injective resolutions, it is natural to ask whether the class of Gorenstein objects forms a (pre)covering class or a (pre)enveloping class.

Let $R$ be a ring. Denote by $\mathcal{GI}(R)$ the class of Gorenstein injective modules. \v Saroch-\v S\v tov\'i\v cek \cite{SS2018} have proved that for any ring $R$, $(^\perp\mathcal{GI}(R),\mathcal{GI}(R))$ is a hereditary perfect cotorsion pair, and hence $\mathcal{GI}(R)$ is an enveloping class. For a general Grothendieck category $\mathcal{G}$, Estrada-Gillespie \cite{EG26} have proved that $(^\perp\mathcal{GI},\mathcal{GI})$ is a complete cotorsion pair (which makes $\mathcal{GI}$ an enveloping class) if and only if $\mathcal{G}$ admits a set of Tate trivial generators.

A.~Iacob \cite{Iacob, Iacob24} proved that $\mathcal{GI}(R)$ is a covering class if and only if it is special precovering if and only if it is closed under direct limits if and only if $R$ is left Noetherian and the character modules of Gorenstein injective left $R$-modules are Gorenstein flat right $R$-modules. For results related to the covering property of Gorenstein injective complexes and DG Gorenstein injective complexes, see \cite{EEI, Iacob22, Iacob24}.

In this paper, we study when the class of Gorenstein injective objects $\mathcal{GI}(\mathcal{A})$ in a locally finitely generated Grothendieck category $\mathcal{A}$ becomes a covering class. Our main theorem gives a list of equivalent characterizations for $\mathcal{GI}(\mathcal{A})$ to be a covering class. 
\begin{maintheorem}\label{thm:equivalence}
Let $\mathcal A$ be a locally finitely generated Grothendieck category with a generator $U$.
The following conditions are equivalent.
\begin{enumerate}
\item $\mathcal{GI}(\mathcal A)$ forms a covering class.
\item $\mathcal{GI}(\mathcal A)$ forms a precovering class.
\item $\mathcal{GI}(\mathcal A)$ forms a special covering class.
\item $\mathcal{GI}(\mathcal A)$ forms a special precovering class.
\item $\mathcal{GI}(\mathcal A)$ is closed under all small filtered colimits.
\item $\mathcal{GI}(\mathcal A)$ is closed under arbitrary coproducts.
\item $\mathcal{GI}(\mathcal A)$ is closed under $|\mathrm{End}_{\mathcal A}(U)|$-indexed coproducts.
\item $\mathcal A$ is locally Noetherian and the following canonical inclusion has a right adjoint
\[
 K_{\mathrm{tac}}(\mathrm{Inj}\,\mathcal A)\longrightarrow K(\mathrm{Inj}\,\mathcal A).
\]
\end{enumerate}
\end{maintheorem}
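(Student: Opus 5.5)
We will prove the equivalences by arranging them in a cycle, using known results in relative homological algebra together with the special structure of locally Noetherian Grothendieck categories. The trivial implications are (3)$\Rightarrow$(1)$\Rightarrow$(2), (3)$\Rightarrow$(4)$\Rightarrow$(2), and (5)$\Rightarrow$(6)$\Rightarrow$(7). The general theory of precovering classes closed under direct limits (Enochs' theorem, valid in Grothendieck categories) gives that a precovering class closed under filtered colimits is covering. Moreover, if the class is additionally closed under extensions and contains the injectives, Wakamatsu's lemma upgrades covers to special covers. Since $\mathcal{GI}(\mathcal A)$ is closed under extensions and contains $\mathrm{Inj}\,\mathcal A$, this yields (2)+(5)$\Rightarrow$(3). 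It therefore suffices to prove (2)$\Rightarrow$(5), (7)$\Rightarrow$(8), and (8)$\Rightarrow$(2)+(5).

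For (2)$\Rightarrow$(5), a precovering class is closed under products automatically. A standard argument shows that a precovering class closed under products and under pure quotients is closed under coproducts, and hence under filtered colimits. The subtle point is to first show that $\mathcal A$ is locally Noetherian. For this, we take injectives: if $\mathcal{GI}$ is precovering, then a coproduct $\bigoplus E_i$ of injectives has a $\mathcal{GI}$-precover. Using that $\mathcal{GI}$-objects are right orthogonal to objects of finite injective dimension, we aim to show that $\bigoplus E_i$ is itself injective. By the Bass--Papp type characterisation in locally finitely generated categories (coproducts of injectives are injective iff locally Noetherian), this gives local Noetherianity. Once $\mathcal A$ is locally Noetherian, filtered colimits of injectives are injective. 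A filtered colimit of totally acyclic complexes of injectives is then an acyclic complex of injectives, and the only issue is total acyclicity, i.e.\ exactness of $\mathrm{Hom}(E,-)$ for injective $E$. Here precovers give the needed comparison maps.

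For (7)$\Rightarrow$(8), we first show that closure under $|\mathrm{End}(U)|$-indexed coproducts of injectives forces local Noetherianity. Using the generator $U$, one only needs to test a Baer-type criterion on subobjects of $U$, and ascending chains of subobjects of finitely generated objects can be encoded by coproducts of that cardinality of injective envelopes. Next, in the locally Noetherian case $K(\mathrm{Inj}\,\mathcal A)$ is compactly generated (Krause), and $K_{\mathrm{tac}}$ is a triangulated subcategory closed under products. Closure of $\mathcal{GI}$ under coproducts implies that $K_{\mathrm{tac}}$ is closed under coproducts, since a totally acyclic complex corresponds to a complete resolution with Gorenstein injective cycles. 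A localizing subcategory closed under products in a compactly generated category, and definable (cut out by the vanishing of $\mathrm{Hom}$ from compacts into an acyclicity condition), admits a right adjoint by Brown representability / Neeman's results on definable subcategories. The hardest point is passing from $|\mathrm{End}(U)|$-indexed coproducts to arbitrary ones. We plan to use that in a locally Noetherian category with generator $U$ every object is a filtered union of subobjects generated by at most $|\mathrm{End}(U)|$ finitely generated pieces, combined with a deconstructibility argument for $\mathcal{GI}$.

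For (8)$\Rightarrow$(2)+(5), the right adjoint $r$ gives, for any $M$ with injective resolution $I_M$, a map $rI_M\to I_M$. Taking the appropriate cycle object yields a Gorenstein injective $G$ with a map $G\to M$, and we verify that it is a precover using $\mathrm{Hom}_{K}(T,-)$ for totally acyclic $T$. Closure of $\mathcal{GI}$ under coproducts then follows because $K_{\mathrm{tac}}$ is coreflective, hence closed under coproducts computed in $K(\mathrm{Inj})$. In the locally Noetherian case these coproducts are degreewise coproducts, so cycles commute with them. Filtered colimits then follow from coproducts plus the pure-quotient closure above. We expect the main obstacle to be the step showing that precovering forces local Noetherianity (and the cardinality reduction in (7)), since these are exactly where the module-theoretic arguments of Iacob must be replaced by categorical ones.
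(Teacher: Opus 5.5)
Your proposal has genuine gaps. The most serious is that condition (5) is never actually reached. Both your (2)$\Rightarrow$(5) and the last step of (8)$\Rightarrow$(2)+(5) rest on two claims:
\begin{itemize}
\item that $\mathcal{GI}(\mathcal A)$ is closed under pure quotients, for which you give no argument (it is essentially the content of (5) itself);
\item that precovering classes are automatically closed under products, which is false. A precovering class closed under summands is closed under \emph{coproducts}; product closure belongs to preenveloping classes. For example, projective abelian groups form a precovering class that is not closed under products.
\end{itemize}
Saying that ``precovers give the needed comparison maps'' does not address total acyclicity of a filtered colimit. Two ideas are missing. First, complete injective resolutions must be made strictly functorial on $\mathcal{GI}(\mathcal A)$. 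The paper does this by iterating the canonical maps $\coprod_{E}\coprod_{\mathrm{Hom}(E,G)}E\to G$, over indecomposable injectives $E$, and $G\to\prod_{\mathrm{Hom}(G,J)}J$. Without this, a filtered diagram of Gorenstein injectives does not even yield a diagram of totally acyclic complexes. Second, over a locally Noetherian $\mathcal A$, the degreewise colimit of a filtered diagram in $\mathrm{Ch}(\mathrm{Inj}\,\mathcal A)$ represents its homotopy colimit in $K(\mathrm{Inj}\,\mathcal A)$. The paper proves this through Krause's equivalence of $K(\mathrm{Inj}\,\mathcal A)$ with the derived category of the dg category of injective resolutions of Noetherian objects, using that Noetherian objects are finitely presented. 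Hence the colimit stays in every localising subcategory, in particular in $K_{\mathrm{tac}}(\mathrm{Inj}\,\mathcal A)$ once that is closed under coproducts. This is the paper's (6)$\Rightarrow$(5).

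Your (8)$\Rightarrow$(2) construction degenerates. $I_M$ is a bounded below complex of injectives, hence K-injective, so $\mathrm{Hom}_{K(\mathcal A)}(S,I_M)=0$ for every acyclic $S$. This forces $rI_M=0$, and your ``precover'' is just $0\to M$. Your coreflectivity argument for closure under coproducts is correct and is the paper's (8)$\Rightarrow$(6); precovers must then come from (6)$\Rightarrow$(5)$\Rightarrow$(1).

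For the existence of the right adjoint, product closure and definability are the natural input for a \emph{left} adjoint, not a right one. Neither is established for $K_{\mathrm{tac}}(\mathrm{Inj}\,\mathcal A)$ anyway, since total acyclicity is tested against injective objects, which are not compact in $K(\mathrm{Inj}\,\mathcal A)$. The paper instead proves that totally acyclic complexes of injectives form a $\lambda$-Kaplansky class in $\mathrm{Ch}(\mathcal A)$, via a transfinite zig-zag construction controlled by cardinal bounds on maps out of indecomposable injectives. This class is therefore covering once $\mathcal{GI}(\mathcal A)$ is closed under filtered colimits, and the right adjoint follows from these right approximations via Krause's criterion. This is the ``deconstructibility'' you invoke without supplying it.

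Finally, the cardinal reduction needs a specific lemma: every indecomposable injective is a quotient of a coproduct of at most $\kappa=|\mathrm{End}_{\mathcal A}(U)|$ Noetherian subobjects of $U$. Consequently, maps from indecomposable injectives into $\coprod_iT_i$ factor through $\le\kappa$-indexed subcoproducts, where (7) applies. Also, when $\kappa$ is finite, (7) is vacuous and your chain-encoding argument for local Noetherianity cannot work. A separate argument is needed there; the paper shows that $\mathcal A$ is then a module category over a finite ring.
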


Theorem~\ref{thm:equivalence} applies, for example, to modules over a Noetherian ring, complexes in a locally Noetherian Grothendieck category, quasi-coherent sheaves on a Noetherian scheme, comodules over a coalgebra over a field.

For any ring $R$, it is known that if $\mathcal{GI}(R)$ is a covering class, then $R$ is left Noetherian \cite{CFH}. 
A. Iacob \cite[Question~1]{Iacob} asks whether, over every left Noetherian ring, the character modules of Gorenstein injective left modules are Gorenstein flat right modules. Our next result answers this question negatively, even for a countable $1$-dimensional Cohen--Macaulay local ring.

\begin{mainexample}\label{thm:main}
There exists a countable one-dimensional Cohen--Macaulay local ring $R$ such that $\mathcal{GI}(R)$ is not precovering.
More precisely, there is a sequence $(G_n)_{n\in\mathbb N_+}$ of Gorenstein injective $R$-modules such that $M=\coprod_{n\in\mathbb N_+}G_n$ has infinite Gorenstein injective dimension and admits no Gorenstein injective precover.
\end{mainexample}

For a Noetherian scheme $X$, write $P(X)$ for the property that $\mathcal{GI}(\mathrm{QCoh}(X))$ is a covering class.
For a commutative ring $R$, write $P(R)$ for the property that $\mathcal{GI}(R)$ is covering. The equivalence of abelian categories $\mathrm{QCoh}(\mathrm{Spec}\,R)\simeq \mathrm{Mod}_R$ identifies $P(\mathrm{Spec}\,R)$ with $P(R)$ when $R$ is Noetherian. Our next theorem shows that the property $P(X)$ is Zariski-local for any Noetherian scheme $X$.

\begin{maintheorem}\label{thm:locality}
Let $X$ be a Noetherian scheme. The following conditions are equivalent.
\begin{enumerate}
\item For every open subscheme $U\subseteq X$, $\mathcal{GI}(\mathcal A_U)$ forms a covering class.
\item $\mathcal{GI}(\mathcal A_X)$ forms a covering class.
\item There is an open cover $X=\bigcup_{i\in I}U_i$ such that $\mathcal{GI}(\mathcal A_{U_i})$ forms a covering class for every $i\in I$.
\item There is an affine open cover $X=\bigcup_{i\in I}\mathrm{Spec}\,R_i$ such that $\mathcal{GI}(R_i)$ forms a covering class for every $i\in I$.
\item For every open cover $X=\bigcup_{i\in I}U_i$, $\mathcal{GI}(\mathcal A_{U_i})$ forms a covering class for every $i\in I$.
\item For every affine open cover $X=\bigcup_{i\in I}\mathrm{Spec}\,R_i$, $\mathcal{GI}(R_i)$ forms a covering class for every $i\in I$.
\end{enumerate}
\end{maintheorem}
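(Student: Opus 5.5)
By Theorem~\ref{thm:equivalence}, applied to $\mathcal A_U=\mathrm{QCoh}(U)$ (locally Noetherian for $U$ Noetherian), $\mathcal{GI}(\mathcal A_U)$ is covering if and only if it is closed under arbitrary coproducts. So I will prove the locality statement for the property ``$\mathcal{GI}$ is closed under coproducts'', with a fixed Noetherian scheme $X$. The implications $(1)\Rightarrow(5)\Rightarrow(3)$, $(1)\Rightarrow(6)\Rightarrow(4)$, $(1)\Rightarrow(2)$ and $(4)\Rightarrow(3)$ are formal; the last one uses $\mathrm{QCoh}(\Spec R)\simeq\mathrm{Mod}_R$. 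It therefore suffices to prove two things:
\begin{enumerate}
\item[(a)] the property passes from $X$ to any open $U\subseteq X$, giving $(2)\Rightarrow(1)$;
\item[(b)] it ascends from a cover, giving $(3)\Rightarrow(2)$.
\end{enumerate}

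The key tool is a local characterisation of Gorenstein injective quasi-coherent sheaves on a Noetherian scheme. Let $j\colon U\to X$ be an open immersion. Since $X$ is Noetherian, $j_*$ and $j^*$ preserve injective quasi-coherent sheaves and coproducts, and $j^*$ is exact. I first show that $j^*$ preserves Gorenstein injectives. Take a totally acyclic injective complex $I$ for $G$ and apply $j^*$ to get an exact complex of injectives on $U$. For total acyclicity, let $E$ be injective on $U$. Write $E$ as a summand of $j^*j_*E$, with $j_*E$ injective on $X$. By adjunction $\mathrm{Hom}(j^*I,E)$ is a summand of $\mathrm{Hom}(j^*I,j^*j_*E)=\mathrm{Hom}(I,j_*j^*j_*E)$, and $j_*j^*j_*E\cong j_*E$ is injective, so this complex is acyclic.

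Conversely I want: $G\in\mathrm{QCoh}(X)$ is Gorenstein injective if and only if $G|_{U_i}$ is Gorenstein injective for every $i$, for a finite cover (finite by quasi-compactness). For the ``if'' direction I would use the Čech-type resolution
\[
0\to G\to \textstyle\bigoplus_i j_{i*}(G|_{U_i})\to\bigoplus_{i<k}j_{ik*}(G|_{U_{ik}})\to\cdots,
\]
which is finite and exact. Each $j_*$ preserves Gorenstein injectives: apply $j_*$ to a totally acyclic complex on $U$. It stays exact because injective quasi-coherent sheaves are $j_*$-acyclic on Noetherian schemes, so $j_*$ is exact on acyclic complexes of injectives with injective cycles, and the cycles are Gorenstein injective, hence $\Gamma$-acyclic, by the same argument. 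Total acyclicity then follows from the adjunction $\mathrm{Hom}(E,j_*-)\cong\mathrm{Hom}(j^*E,-)$ and the fact that $j^*E$ is injective. Consequently $G$ has a finite left coresolution by Gorenstein injectives. I then need $\Ext^{\ge1}(E,G)=0$ for injective $E$ together with finite Gorenstein injective dimension to conclude that $G$ is Gorenstein injective. This Ext vanishing can be computed locally via the spectral sequence for the direct-image adjunctions, because injectives on $X$ are direct sums of $j_*$ of local injectives.

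I expect this local criterion to be the main obstacle. I would try first to realise it via the characterisation ``$G$ is a cycle of an exact complex of injectives whose restrictions are totally acyclic'' and then check total acyclicity against indecomposable injectives $J(x)$, which are supported at points and hence seen in a single $U_i$.

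Granting the criterion, the rest is straightforward. For (b), let $\{G_\lambda\}$ be Gorenstein injective on $X$. Since restriction commutes with coproducts, $(\coprod G_\lambda)|_{U_i}=\coprod G_\lambda|_{U_i}$, which is Gorenstein injective by hypothesis; hence $\coprod G_\lambda$ is Gorenstein injective. For (a), let $G_\lambda$ be Gorenstein injective on $U$. Then $\coprod j_*G_\lambda=j_*\coprod G_\lambda$ is Gorenstein injective on $X$. Applying $j^*$ and using $j^*j_*\cong\mathrm{id}$ gives that $\coprod G_\lambda$ is Gorenstein injective on $U$.
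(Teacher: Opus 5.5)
There is a genuine gap, and it sits in your ``key tool'': the claim that $j^*$ preserves Gorenstein injectives. Your verification checks the wrong Hom-direction. Acyclicity of $\mathrm{Hom}(j^*I,E)$ holds for any acyclic complex and any injective $E$. Total acyclicity of $j^*I$ instead requires $\mathrm{Hom}_{\mathcal A_U}(E,j^*I)$ to be acyclic, and no adjunction transports this to $X$: $j^*$ does not preserve products in general, so it has no left adjoint on quasi-coherent sheaves.

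The claim is in fact false. Take the ring $R$ of Example~\ref{thm:main}, $X=\Spec R$ and $U=D(t)$. The module $\prod_n G_n$ is Gorenstein injective over $R$. But $\Ext^1_{R[t^{-1}]}\bigl(C[t^{-1}],(\prod_n G_n)[t^{-1}]\bigr)\cong\bigl(\prod_n\Ext^1_R(C,G_n)\bigr)[t^{-1}]\neq0$, because $t^{n-1}\Ext^1_R(C,G_n)\neq0$ for every $n$, while $C[t^{-1}]\cong E_R(R/\mathfrak n)$ is injective over $R[t^{-1}]$.

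Several consequences follow. Your local criterion is false as an equivalence (use the cover $\{X,U\}$). Its ``if'' half also uses the claim, since the \v{C}ech terms need $G|_{U_{ik}}$ to be Gorenstein injective. The proposed local computation of $\Ext(E,G)$ also fails. The only adjunction, $j^*\dashv j_*$, controls maps into $j_*(-)$, not out of $j_*E'$. Moreover $\mathrm{Hom}_X(j_*E',G)\to\mathrm{Hom}_U(E',G|_U)$ need not be injective: on $\Spec\mathbb Z$ with $U=D(p)$, the indecomposable injective $\mathbb Q$ at the generic point surjects onto $\mathbb Z(p^\infty)$, whose restriction to $U$ is zero. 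Both (a) and (b), as written, rest on the false claim.

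Step (a) is easily repaired by working with complete resolutions rather than objects, as the paper does. The functor $j_*$ sends acyclic complexes of injectives on $U$ to acyclic ones (your \v{C}ech dimension-shift argument). Hence $j_*\coprod_\lambda I_\lambda\cong\coprod_\lambda j_*I_\lambda$ is acyclic, its cycles $\coprod_\lambda j_*Z^nI_\lambda$ are Gorenstein injective by (2), and so it is totally acyclic. Finally, $j_*$ reflects total acyclicity, since $\mathrm{Hom}_{\mathcal A_U}(E,J)\cong\mathrm{Hom}_{\mathcal A_X}(j_*E,j_*J)$ with $j_*E$ injective.

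Step (b) needs an idea your proposal lacks. The paper abandons $u^*$ and uses the right adjoint $u^\times$ of $u_*\colon K(\mathrm{Inj}\,\mathcal A_U)\to K(\mathrm{Inj}\,\mathcal A_X)$. This exists by Brown representability even though $u_*$ has no right adjoint on $\mathrm{QCoh}$, so it is exactly the missing ``right adjoint of direct image''. It does preserve total acyclicity: the affine case goes through the \v{C}ech complex and $\mathrm{pd}_R R_f\le1$, and the general case follows by Mayer--Vietoris.

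For $X=U\cup V$, one shows $I\cong\mathrm{Cone}\bigl(z_*z^\times I\to u_*u^\times I\oplus v_*v^\times I\bigr)$ in $K(\mathrm{Inj}\,\mathcal A_X)$, with the map built from counits. One then takes coproducts of these triangles over a family in $K_{\mathrm{tac}}(\mathrm{Inj}\,\mathcal A_X)$. Coproduct closure on $U$, $V$ and $U\cap V$ (the last by (a)), together with $u_*$ preserving coproducts and total acyclicity, gives coproduct closure on $X$.
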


For any commutative Noetherian ring $R$ with a dualising complex, $\mathcal{GI}(R)$ is closed under filtered colimits by \cite[Theorem~6.9]{CFH}, and hence $\mathcal{GI}(R)$ is a covering class.
Our next example is a commutative Noetherian ring $S$ whose Gorenstein injective modules do not form a covering class, although its localisation $S_{\mathfrak{p}}$ at every prime ideal $\mathfrak{p}$ admits a dualising complex.

\begin{mainexample}\label{ex:stalkwise}
There exists a countable one-dimensional Cohen--Macaulay ring $S$ such that, for every $\mathfrak p\in\mathrm{Spec}\,S$, the local ring $S_{\mathfrak p}$ admits a dualising complex and hence $\mathcal{GI}(S_{\mathfrak p})$ is covering, whereas $\mathcal{GI}(S)$ is not precovering.
\end{mainexample}

For $X=\mathrm{Spec}\,S$, the identifications $\mathcal O_{X,\mathfrak p}\cong S_{\mathfrak p}$ and $\mathrm{QCoh}(X)\simeq \mathrm{Mod}_S$ give the equivalent geometric statement: every stalk ring admits a dualising complex and has a covering class of Gorenstein injective modules, while $\mathcal{GI}(\mathrm{QCoh}(X))$ does not form a precovering class. This means that, for a Noetherian scheme $X$, the property $P(X)$ is not necessarily stalk-local. 

After the preliminaries in Section~\ref{sec:preliminaries}, Section~\ref{sec:equivalence} proves Theorem~\ref{thm:equivalence}, and Section~\ref{sec:nonclosure} constructs Example~\ref{thm:main}.
Section~\ref{sec:locality} proves Theorem~\ref{thm:locality}, and Section~\ref{sec:stalkwise-descent} constructs Example~\ref{ex:stalkwise}.

\section{Preliminaries}\label{sec:preliminaries}

Throughout, all rings are associative and unital, and modules are right modules unless otherwise stated.
We write $\mathbb N=\{0,1,2,\ldots\}$ and $\mathbb N_+=\{1,2,\ldots\}$.

\subsection{Locally generated and locally Noetherian Grothendieck categories}
Let $\mathcal A$ be a \emph{Grothendieck category}, that is, an abelian category with arbitrary coproducts, exact filtered colimits and a generator.
An object $U\in\mathcal A$ is a \emph{generator} if every object of $\mathcal A$ is a quotient of a coproduct of copies of $U$.

An object $X\in\mathcal A$ is \emph{finitely generated} if $X=\bigcup_{i\in I}X_i$ for a directed family of subobjects $X_i\subseteq X$ implies $X=X_{i_o}$ for some $i_0\in I$.
Here \emph{directed} means that, for any $i,j\in I$, there is some $k\in I$ such that $X_i\subseteq X_k$ and $X_j\subseteq X_k$.
Moreover, $X$ is \emph{Noetherian} if every ascending chain of its subobjects stabilizes. An object $X \in \mathcal A$ is \emph{finitely presented} if $\mathrm{Hom}_{\mathcal A}(X,-)$ commutes with all filtered colimits.

A Grothendieck category $\mathcal A$ is \emph{locally finitely generated} if it has a generating set of finitely generated objects.
It is \emph{locally Noetherian} if it has a generating set of Noetherian objects; equivalently, every object is the directed union of its Noetherian subobjects \cite[II, \S4, p.~356]{Gabriel1962}.
In a locally Noetherian Grothendieck category, the Noetherian objects are precisely the finitely presented objects \cite[II, \S4, Corollary~1]{Gabriel1962}.

Every Grothendieck category has injective envelopes \cite[II, \S6, Theorem~2]{Gabriel1962} and an injective cogenerator \cite[\S1.9 and Theorem~1.10.1]{Grothendieck1957}.
If $\mathcal A$ is locally Noetherian, injective objects are closed under coproducts and filtered colimits \cite[II, \S4, Corollary~1]{Gabriel1962}, and both the Noetherian objects and the indecomposable injective objects have a set of representatives \cite[II, \S4, p.~356, and \S5, Proposition~11]{Gabriel1962}.

For any ring $R$, the category $\mathrm{Mod}_R$ is locally finitely generated, and is locally Noetherian if and only if $R$ is right Noetherian.
If $\mathcal A$ is a locally finitely generated Grothendieck category, then so is $\mathrm{Ch}(\mathcal A)$; moreover, $\mathrm{Ch}(\mathcal A)$ is locally Noetherian if and only if $\mathcal A$ is locally Noetherian.
The category $\mathrm{QCoh}(X)$ is locally finitely generated when, for example, $X$ is quasi-compact and quasi-separated \cite[Corollaire~(6.9.12)]{EGAI}; under these hypotheses, it is locally Noetherian if and only if $X$ is Noetherian.
Further locally Noetherian examples include torsion abelian groups and comodules over a coalgebra over a field \cite[Theorem~2.1.3 and Corollary~2.1.4]{Sweedler1969}.

\subsection{Chain Complexes}

Let $\mathcal A$ be an abelian category, and denote $\mathrm{Ch}(\mathcal A)$ by the category of complexes in $\mathcal A$.
For a complex $X$, put
\[
 Z^n(X)=\ker d_X^n,\qquad
 B^n(X)=\mathrm{im}\,d_X^{n-1},\qquad
H^n(X)=Z^n(X)/B^n(X).
\]
The \emph{shift functor} $[1]$ is given by $(X[1])^n=X^{n+1}$ and $d_{X[1]}^n=-d_X^{n+1}$.
A complex is \emph{acyclic} if $H^n(X)=0$, for all $n\in \mathbb Z$.
A chain map $f:X\longrightarrow Y$ is a \emph{quasi-isomorphism} if $H^n(f):H^n(X)\longrightarrow H^n(Y)$ are isomorphisms for all $n\in \mathbb Z$.
The \emph{derived category} $D(\mathcal A)$ is the localization of $K(\mathcal A)$ with respect to quasi-isomorphisms; $D^b(\mathcal A)$ is its full subcategory of objects with bounded cohomology.
For a chain map $f:X\to Y$, its \emph{mapping cone} is given by
\[
 \mathrm{Cone}(f)^n=X^{n+1}\oplus Y^n,\qquad
 d_{\mathrm{Cone}(f)}^n=\left(\begin{smallmatrix}
     -d_X^{n+1} & 0 \\
     f^{n+1} & d_Y^n
 \end{smallmatrix}\right).
\]
A complex $X$ is \emph{contractible} if $\mathrm{Id}_X$ is homotopic to zero.
For complexes $E$ and $X$, the \emph{Hom complex} is given by
\[
 \mathrm{Hom}_{\mathcal A}^\bullet(E,X)^n
 =\prod_{p\in\mathbb Z}\mathrm{Hom}_{\mathcal A}(E^p,X^{p+n}),\qquad
 (df)^p=d_X^{p+n}f^p-(-1)^n f^{p+1}d_E^p.
\]

A complex $I$ of injective objects in $\mathcal A$ is \emph{totally acyclic} if it is acyclic and $\mathrm{Hom}_{\mathcal A}(E,I)$ is acyclic for every injective object $E$. Write $K(\mathrm{Inj}\,\mathcal A)$ for the full subcategory of $K(\mathcal A)$ formed by complexes of injective objects, and $K_{\mathrm{ac}}(\mathrm{Inj}\,\mathcal A)$ and $K_{\mathrm{tac}}(\mathrm{Inj}\,\mathcal A)$ for its acyclic and totally acyclic subcategories.
For $\mathcal A=\mathrm{Mod}_R$, we write $\mathrm{Ch}(R)$, $K(R)$ and $D(R)$.
For locally Noetherian $\mathcal A$, coproducts in $K(\mathrm{Inj}\,\mathcal A)$ are computed degreewise.

\subsection{Gorenstein injective objects}
The notion of Gorenstein injective modules is due to Enochs and Jenda \cite{EnochsJenda1995}; see also \cite[Definition~10.1.1]{EnochsJenda2000}.
Let $\mathcal{A}$ be an abelian category with enough injective objects, e.g., a Grothendieck category.
An object $G$ is \emph{Gorenstein injective} if $G\cong Z^0(I)$ for some totally acyclic injective complex $I$, which we call a \emph{complete injective resolution} of $G$.
We denote this class by $\mathcal{GI}(\mathcal A)$, or by $\mathcal{GI}(R)$ in $\mathrm{Mod}_R$.
A complete injective resolution gives
\begin{equation}\label{eq:gi-ext-vanishing}
 \mathrm{Ext}_{\mathcal A}^j(E,G)=0
 \qquad \text{for\ any\ } j\in\mathbb N_+,\ E\text{ injective},\ G\in\mathcal{GI}(\mathcal A).
\end{equation}
Consequently, an acyclic complex of injective objects whose cycles are Gorenstein injective is totally acyclic.
The class of Gorenstein injective objects is closed under extensions and direct summands \cite[Corollary~4.5 and Proposition~4.11]{SSW2008}.
The \emph{Gorenstein injective dimension} of $M$ is the least $d\in\mathbb N$ such that there is an exact sequence $0\to M\to G^0\to\cdots\to G^d\to0$ with all $G^i$ Gorenstein injective, and is $+\infty$ if no such $d$ exists.

\subsection{(Pre)covering classes}
Let $\mathcal F$ be a class of objects in an abelian category $\mathcal A$.
An $\mathcal F$-\emph{precover} of $M$ is a morphism $p:F\to M$, with $F\in\mathcal F$ such that every morphism $F'\to M$ with $F'\in\mathcal F$ factors through $p$.
It is an $\mathcal F$-\emph{cover} if every endomorphism $a:F\to F$ satisfying $pa=p$ is an isomorphism.
We call $\mathcal F$ a \emph{precovering class} or a \emph{covering class} if every object has the corresponding approximation.
Note that a \emph{right approximation} means a precover in the same sense.

An $\mathcal F$-precover or $\mathcal F$-cover $p:F\to M$ is called \emph{special} if $\mathrm{Ext}_{\mathcal A}^1(F',\ker p)=0$ for every $F'\in\mathcal F$.
We call $\mathcal F$ a \emph{special precovering class} or a \emph{special covering class} if every object admits a special $\mathcal F$-precover or a special $\mathcal F$-cover, respectively.
Our convention does not require precovers or covers to be epimorphisms.

\subsection{Compactly generated triangulated categories}

Let $\mathcal T$ be a triangulated category with arbitrary coproducts.
A triangulated subcategory is \emph{localising} if it is closed under arbitrary coproducts.
An object $C\in\mathcal T$ is \emph{compact} if $\mathrm{Hom}_{\mathcal T}(C,-)$ preserves arbitrary coproducts. Denote the subcategory of compact objects by $\mathcal T^c$ .
We say a set $\mathcal G$ of compact objects \emph{generates} $\mathcal T$ if
\[
 \mathrm{Hom}_{\mathcal T}(G,X[n])=0,
 \quad\forall \ G\in\mathcal G,\ n\in\mathbb Z
 \quad\Longrightarrow\quad X\simeq0.
\]
We call $\mathcal T$ \emph{compactly generated} if such a set $\mathcal{G}$ exists.
The \emph{thick closure} of a set $S$ of objects is the smallest triangulated subcategory containing $S$ that is closed under direct summands, denoted by $\mathrm{thick}(S)$.

\subsection{Model category}

A \emph{model structure} on a category is a triple $(\mathrm{CoFib},\mathrm{Fib},\mathrm{Weq})$ of classes of morphisms,
in which the morphisms are called \emph{cofibrations}, \emph{fibrations}, \emph{weak equivalences}, respectively, satisfying $(\rm CM1)$-$(\rm CM4)$(\cite{Q1}).

A \emph{model category} is a category $\mathcal M$ together with a model structure $(\mathrm{CoFib},\mathrm{Fib},\mathrm{Weq})$ on $\mathcal M$ such that $\mathcal M$ is complete and cocomplete.

Let $\mathcal M$ be a model category and $X\in \mathcal M$. $X$ is called \emph{cofibrant} (resp. \emph{fibrant}) if the unique morphism $\varnothing \to X$ (resp. $X\to *$) from the initial (terminal) object is a cofibration (resp. fibration). A \emph{cofibrant replacement} of $X$ is a cofibrant object $QX$ together with a trivial fibration $q_X:QX\longrightarrow X$. Equivalently, it is
obtained by factoring the morphism $\varnothing \to X$ as
\[
\varnothing \xrightarrow{\ i\ } QX \xrightarrow{\ q_X\ } X,
\]
where $i$ is a cofibration and $q_X$ is a trivial fibration. Dually, a \emph{fibrant replacement} of $X$ is a fibrant object $RX$ together with a trivial cofibration $r_X:X\longrightarrow RX$. Equivalently,
it is obtained by factoring the morphism $X\to *$ as
\[
X \xrightarrow{\ r_X\ } RX \xrightarrow{\ p\ } *,
\]
where $r_X$ is a trivial cofibration and $p$ is a fibration.

Now, let $\mathcal{N}$ be another model category. A \emph{Quillen adjunction}
(or \emph{Quillen pair}) between $\mathcal{M}$ and $\mathcal{N}$ is an adjunction
\[
F:\mathcal M \longrightarrow \mathcal N,
\qquad
G:\mathcal N \longrightarrow \mathcal M,
\]
with $F\dashv G$, such that the following equivalent conditions hold:
\begin{enumerate}
\item $F$ preserves cofibrations and trivial cofibrations;
\item $G$ preserves fibrations and trivial fibrations.
\end{enumerate}
When this holds, $F$ is called a \emph{left Quillen functor} and $G$ is
called a \emph{right Quillen functor}.

For a Quillen adjunction $F\dashv G$, the left and right derived functors are defined on objects by
\[
\mathbb L F(X)=F(QX),
\qquad
\mathbb R G(Y)=G(RY),
\]
where $Q$ and $R$ denote cofibrant and fibrant replacements, respectively.

\section{Characterisations of the covering property of \texorpdfstring{$\mathcal{GI}$}{GI}}\label{sec:equivalence}

Let $\mathcal A$ be a locally finitely generated Grothendieck category with a generator $U$.
The following conditions are equivalent.
\begin{enumerate}
\item $\mathcal{GI}(\mathcal A)$ forms a covering class.
\item $\mathcal{GI}(\mathcal A)$ forms a precovering class.
\item $\mathcal{GI}(\mathcal A)$ forms a special covering class.
\item $\mathcal{GI}(\mathcal A)$ forms a special precovering class.
\item $\mathcal{GI}(\mathcal A)$ is closed under all small filtered colimits.
\item $\mathcal{GI}(\mathcal A)$ is closed under arbitrary coproducts.
\item $\mathcal{GI}(\mathcal A)$ is closed under $|\mathrm{End}_{\mathcal A}(U)|$-indexed coproducts.
\item $\mathcal A$ is locally Noetherian and the following canonical inclusion has a right adjoint
\[
 K_{\mathrm{tac}}(\mathrm{Inj}\,\mathcal A)\longrightarrow K(\mathrm{Inj}\,\mathcal A).
\]
\end{enumerate}

Subsection~\ref{subsec:finite-endomorphisms} treats the case in which $\mathrm{End}_{\mathcal A}(U)$ is finite, when $\mathcal{GI}(\mathcal A)$ forms a covering class.
Subsection~\ref{subsec:noetherian-reduction} shows that each of conditions~\textup{(1)--(7)} forces $\mathcal A$ to be locally Noetherian.
We then work in the locally Noetherian case and prove the implications below, with the three main steps marked by $\square$, $\star$ and $\triangle$.
\[
\xymatrix@C=2cm@R=.5cm{
(4) \ar@{->}[r] & (2) \ar@{->}[r] & (6) \ar@{->}[d]_{\square} \ar@<-0.6ex>@{->}[r] & (7) \ar@<-0.6ex>@{->}[l]_{\star} \\
(3) \ar@{->}[u] & (1) \ar@{->}[l] & (5) \ar@{->}[l]_{\triangle} \ar@{->}[r] & (8) \ar@{->}[lu]
}
\]
The implication $\square$ uses the functorial complete injective resolutions constructed in Subsection~\ref{subsec:functorial-complete-resolutions} and the closure of localising subcategories of $K(\mathrm{Inj}\,\mathcal A)$ under degreewise filtered colimits established in Subsection~\ref{subsec:colimit-comparison}.
A cardinality argument in Subsection~\ref{subsec:cardinal-criterion} gives the implication $\star$.
For $\triangle$, we first show that the class $\mathcal C$ of totally acyclic complexes of injective objects is Kaplansky.
Under condition~\textup{(5)}, we then show that $\mathcal C$ is covering in $\mathrm{Ch}(\mathcal A)$ and deduce that $\mathcal{GI}(\mathcal A)$ is covering in $\mathcal A$; see Subsection~\ref{subsec:small-tac}.
The proof of Theorem~A is completed in Subsection~\ref{sec:ProofA}.
Finally, Subsection~\ref{subsec:tac-recollement} gives criteria for the inclusion $K_{\mathrm{tac}}(\mathrm{Inj}\,\mathcal A)\to K(\mathrm{Inj}\,\mathcal A)$ to admit a left or a right adjoint.

\subsection{Generators with finite endomorphism rings}\label{subsec:finite-endomorphisms}

\begin{lemma}\label{lem:finite-endomorphism-generator}
Let $\mathcal A$ be a Grothendieck category with a generator $U$ whose endomorphism ring is finite.
Then $\mathcal A$ is equivalent to the category of modules over a finite ring.
In particular, $\mathcal A$ is locally Noetherian and $\mathcal{GI}(\mathcal A)$ forms a covering class.
\end{lemma}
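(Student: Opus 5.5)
Set $R=\mathrm{End}_{\mathcal A}(U)$, a finite ring. The plan is to show that $U$ is a finitely generated projective generator; Gabriel--Mitchell (Morita) then gives an equivalence $\mathrm{Hom}_{\mathcal A}(U,-)\colon\mathcal A\to\mathrm{Mod}_R$. The finite ring $R$ is right Noetherian (it is even right Artinian), so $\mathrm{Mod}_R$ is locally Noetherian. Local Noetherianity is invariant under equivalence, so $\mathcal A$ is locally Noetherian too. Finally, an equivalence preserves injectives, totally acyclic complexes, Gorenstein injectives and covers. So it suffices to know that $\mathcal{GI}(R)$ is covering for a finite ring $R$. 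Such an $R$ is Artinian, and by Iacob's criterion we need the character modules of Gorenstein injective left $R$-modules to be Gorenstein flat. The cleanest route I would try: any Noetherian ring for which $\mathcal{GI}$ is closed under direct limits works, and over a finite (hence Artinian) ring one can use that $\mathrm{Mod}$ has only finitely many indecomposable injectives. Alternatively, invoke that a finite ring is a finite-dimensional-like algebra over $\mathbb Z/n$, which has a dualising module (Matlis/character duality $\mathrm{Hom}_{\mathbb Z}(-,\mathbb Q/\mathbb Z)$ restricts to a duality on finite modules), and then apply the known covering results for such rings.

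The main obstacle is the first step: deducing projectivity and compactness of $U$ from mere finiteness of $\mathrm{End}(U)$. I would proceed as follows. First, every object $X$ is a quotient of $U^{(\mathrm{Hom}(U,X))}$. Consider an injective cogenerator $E$ and the finite set $\mathrm{Hom}(U,U)$. The key claim is that $\mathcal A$ has only a set of objects $X$ with $\mathrm{Hom}(U,X)$ small, and indeed that every subobject of $U$ is determined by data inside $\mathrm{End}(U)$. Concretely, for a subobject $K\subseteq U$, the quotient $U/K$ is a quotient of $U^{(S)}$ where $S=\mathrm{Hom}(U,U/K)$. I would try to show that the images of endomorphisms of $U$ form a finite lattice that exhausts the subobjects of $U$. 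Then $U$ has finitely many subobjects, so $U$ has finite length, hence is Noetherian and finitely generated, and $\mathcal A$ is locally Noetherian.

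For projectivity, I would show $\mathrm{Ext}^1(U,-)=0$. Given an epimorphism $X\to U$, pull back along it from a cover $U^{(I)}\to X$. Using finiteness, the map $U^{(I)}\to U$ is a sum of finitely many distinct endomorphisms. So it factors through a finite subcoproduct $U^n\to U$, and a splitting should be extractable by a finite-ring argument: the right ideal generated by these endomorphisms must be all of $R$, otherwise the images would miss $U$. If this step resists, I would instead argue directly via the finite-length structure: a Grothendieck category generated by an object of finite length with finite endomorphism ring has finitely many simple objects, each with a finite injective envelope, so it is equivalent to modules over the finite endomorphism ring of a projective generator $\bigoplus P_i$ built from these envelopes.
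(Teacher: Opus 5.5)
There is a genuine gap at exactly the step you flag as the main obstacle, and it cannot be overcome because the claim is false. A generator with finite endomorphism ring need not be projective, and $\mathcal A$ need not be equivalent to $\mathrm{Mod}_{\mathrm{End}_{\mathcal A}(U)}$.

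Take $\Lambda=\mathbb F_2[x]/(x^2)$, $\mathcal A=\mathrm{Mod}_\Lambda$ and $U=\Lambda\oplus\mathbb F_2$. Then $U$ is a generator and $\mathrm{End}_{\mathcal A}(U)$ has $32$ elements, but $\mathrm{Ext}^1_\Lambda(U,\mathbb F_2)\neq0$. Moreover, $\mathrm{End}_{\mathcal A}(U)$ has two isomorphism classes of simple modules while $\Lambda$ has one. So $\mathrm{Hom}_{\mathcal A}(U,-)$ is fully faithful but not an equivalence.

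Your splitting argument breaks at this point. Endomorphisms $f_1,\dots,f_n$ with $\sum_i\mathrm{im}\,f_i=U$ only give an epimorphism $U^n\to U$; they do not give $1\in\sum_i f_i\,\mathrm{End}_{\mathcal A}(U)$. In the example, the projection onto $\Lambda$ and the composite $U\to\Lambda\twoheadrightarrow\mathbb F_2\hookrightarrow U$ jointly cover $U$, yet $U^2\to U$ does not split. The fallback does not repair this either. Injective envelopes of simple objects are injective, not projective, so no projective generator is ``built from these envelopes''. The existence of projective objects in $\mathcal A$, and the finiteness of those envelopes, is precisely what would need proof.

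The missing idea is that the progenerator is in general not $U$ itself, and the finite ring is a corner $eBe$ of $B=\mathrm{End}_{\mathcal A}(U)$ rather than $B$. The paper gets this from Gabriel--Popescu. The functor $\mathrm{Hom}_{\mathcal A}(U,-)$ is fully faithful with exact left adjoint $Q$, so $\mathcal A$ is the quotient of $\mathrm{Mod}_B$ by the localising subcategory $\ker Q$. As $B$ is Artinian, $\ker Q=\mathrm{Mod}_{B/BeB}$ for an idempotent $e$, hence $\mathcal A\simeq\mathrm{Mod}_{eBe}$. In the example, $e$ is the projection onto $\Lambda$ and $eBe\cong\Lambda$.

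The covering property then follows from the cited theorem that Gorenstein injectives are covering over every Artin algebra; a finite ring is one over $\mathbb Z/n$. Your remarks about finitely many indecomposable injectives, or an unspecified dualising module, do not by themselves yield closure under direct limits, so you need a precise result of this kind.

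Your finite-length idea is salvageable but misstated. Subobjects of $U$ are not in general images of single endomorphisms: take $U$ to be the ring $\mathbb F_2[x,y]/(x,y)^2$ and its maximal ideal. However, since $U$ generates, each subobject $K\subseteq U$ is the sum of the images of those endomorphisms of $U$ with image in $K$. So $K$ is determined by a subset of the finite set $\mathrm{End}_{\mathcal A}(U)$. This gives local Noetherianity, but not the equivalence with a module category.
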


\begin{proof}
Put $B=\mathrm{End}_{\mathcal A}(U)$.
Since $\mathcal{A}$ is a Grothendieck category with a generator $U$, one has a fully faithful functor $H=\mathrm{Hom}_{\mathcal A}(U,-):\mathcal A\to\mathrm{Mod}_B$ with an exact left adjoint $Q$ \cite[Theorem~2.5.2]{Krause2022}.
Moreover, By \cite[Proposition~2.2.11 and Lemma~2.2.10]{Krause2022},
there is a localisation sequence of abelian categories
\[
\xymatrix@C=2cm{
\ker Q
  \ar@<0.7ex>@{..>>}[r]+L;[]+R
& \mathrm{Mod}_B
  \ar@<0.7ex>@{>->}[l]+R+<0.5em,0pt>;[]+L
  \ar@<0.7ex>@{>->}[r]+L+<-0.5em,0pt>;[]+R^{H}
& \mathcal A
  \ar@<0.7ex>@{->>}[l]+R;[]+L^{Q}
}
\]
Since $B$ is artinian, the Serre subcategory $\ker Q$ corresponds to an idempotent $e\in B$ such that $\ker Q$ is equivalent to $\mathrm{Mod}_{B/BeB}$ \cite[Example~2.2.19(2)]{Krause2022}.
In this case $\mathcal{A}$ is equivalent to $\mathrm{Mod}_{eBe}$, which is a module category over a finite ring.
By \cite[Corollary~4.13]{Yu2016}, Gorenstein injective modules form a covering class over every Artin algebra, so $\mathcal{GI}(\mathcal A)$ forms a covering class.
This completes the proof.
\end{proof}

\subsection{Reduction to the locally Noetherian case}\label{subsec:noetherian-reduction}
Each of conditions~\textup{(1)--(7)} forces $\mathcal A$ to be locally Noetherian.

\begin{lemma}\label{lem:cardinal-coproducts}
Let $\mathcal A$ be a locally finitely generated Grothendieck category. If every countable coproduct $E=\coprod_{n\in\mathbb N}E_n$ of injective objects $(E_n)_{n\in\mathbb N}$ is Gorenstein injective, then $\mathcal A$ is locally Noetherian.
\end{lemma}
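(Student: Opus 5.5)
We reduce to the classical characterisation that a locally finitely generated Grothendieck category is locally Noetherian if and only if countable coproducts of injective objects are injective. This is the Bass--Papp theorem for modules; for Grothendieck categories with a generating set of finitely generated objects it is due to Roos and Gabriel. So it suffices to show that, under the hypothesis, every countable coproduct $E=\coprod_{n}E_n$ of injectives is injective.

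\textbf{Step 1: $E$ is injective.} By hypothesis $E$ is Gorenstein injective. Since each $E_n$ is injective, \eqref{eq:gi-ext-vanishing} gives $\mathrm{Ext}^1_{\mathcal A}(E_n,G)=0$ for every Gorenstein injective $G$.

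Take the first step of a complete injective resolution of $E$, namely a short exact sequence
\[
0\to G\to I\to E\to 0
\]
with $I$ injective and $G=Z^{-1}$ Gorenstein injective. If this sequence splits, then $E$ is a summand of $I$ and hence injective.

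To show it splits, we need $\mathrm{Ext}^1_{\mathcal A}(E,G)=0$. The difficulty is that $\mathrm{Ext}^1(\coprod E_n,G)$ is not obviously $\prod\mathrm{Ext}^1(E_n,G)$, because $\mathrm{Ext}$ converts coproducts in the first variable into products only when the coproduct is exact in the appropriate sense. In a Grothendieck category, $\mathrm{Ext}^1(\coprod E_n,G)\cong\prod\mathrm{Ext}^1(E_n,G)$ does hold for Yoneda $\mathrm{Ext}^1$. The reason is that $\mathrm{Hom}(\coprod E_n,-)=\prod\mathrm{Hom}(E_n,-)$. Applying this to $0\to G\to J\to J/G\to 0$ with $J$ an injective envelope of $G$, one checks that $\mathrm{Ext}^1(\coprod E_n,G)$ is the cokernel of $\prod\mathrm{Hom}(E_n,J)\to\prod\mathrm{Hom}(E_n,J/G)$. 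Products of abelian groups are exact, so this cokernel is $\prod$ of the cokernels, namely $\prod\mathrm{Ext}^1(E_n,G)=0$. Hence the sequence splits and $E$ is injective.

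\textbf{Step 2: conclude local Noetherianity.} We cite the Roos/Gabriel form of Bass--Papp: in a locally finitely generated Grothendieck category, closure of injectives under countable coproducts forces every finitely generated object to be Noetherian. If a direct proof is needed, argue as follows. Suppose a finitely generated generator $X$ has a strictly ascending chain $X_0\subsetneq X_1\subsetneq\cdots$, and let $X_\infty=\bigcup X_n$. Choose injective envelopes $E_n$ of $X/X_n$ and let $f\colon X_\infty\to\coprod E_n$ be induced by the maps $X_\infty\to X/X_n\to E_n$. This lands in the coproduct because each finitely generated subobject of $X_\infty$ lies in some $X_m$ and so maps to zero for $n\ge m$.

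Injectivity of $\coprod E_n$ extends $f$ to $X$. Since $X$ is finitely generated and the coproduct is the directed union of its finite subcoproducts, the image of $X$ lies in a finite subcoproduct. Then all but finitely many components of $f$ vanish on $X_\infty$. This contradicts the choice of envelopes, since $X_{n+1}/X_n\neq0$ maps nontrivially to $E_n$. Hence every finitely generated object of the generating set is Noetherian, and $\mathcal A$ is locally Noetherian.

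The main obstacle is the Ext-product identification in Step 1. Step 2 is standard but requires care with finitely generated objects versus directed unions.
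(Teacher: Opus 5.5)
Your proof is correct and takes essentially the paper's route. The paper also splits the sequence $0\to Z^{-1}\to I\to E\to 0$ coming from a complete injective resolution: it lifts each structure map $E_n\to E$ through the injective precover $I\to E$ and assembles a section, which is a concrete form of your identification $\mathrm{Ext}^1_{\mathcal A}(\coprod_n E_n,G)\cong\prod_n\mathrm{Ext}^1_{\mathcal A}(E_n,G)=0$. It then invokes the Bass--Papp-type characterisation (Krause's Theorem~11.2.12) exactly as in your Step~2.

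Your direct argument for Step~2 is a valid self-contained replacement for that citation. It also works for every finitely generated object, not only for a generator.
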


\begin{proof}
  Take $E_n$'s and $E$ as in the statement.
By assumption, $E$ is Gorenstein injective.
Hence a complete injective resolution yields an injective precover $p : I\to E$.
Thus for each structure morphism $\iota_n : E_n \hookrightarrow E$, there is a lift $f_n : E_n \to I$ such that $p f_n = \iota_n$.
It follows that $E$ is a direct summand of $I$, and hence injective.
Since $\mathcal A$ is locally finitely generated, it follows from \cite[Theorem~11.2.12]{Krause2022} that $\mathcal A$ is locally Noetherian.
\end{proof}

\begin{lemma}\label{lem:precovering-noetherian}
Let $\mathcal A$ be a locally finitely generated Grothendieck category.
If $\mathcal{GI}(\mathcal A)$ forms a precovering class, then $\mathcal A$ is locally Noetherian.
\end{lemma}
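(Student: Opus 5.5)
We reduce to Lemma~\ref{lem:cardinal-coproducts} through the criterion quoted in its proof: a locally finitely generated Grothendieck category is locally Noetherian as soon as every countable coproduct of injective objects is injective \cite[Theorem~11.2.12]{Krause2022}. So fix injective objects $(E_n)_{n\in\mathbb N}$, put $E=\coprod_n E_n$, and aim to show that $E$ is injective, using only that $E$ has a $\mathcal{GI}(\mathcal A)$-precover $p:G\to E$.

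First, every injective object is Gorenstein injective: the complex $0\to E_n\xrightarrow{=}E_n\to 0$ is contractible, so it is a totally acyclic injective complex with $E_n$ as a cycle. Hence each inclusion $\iota_n:E_n\to E$ factors as $\iota_n=p f_n$ for some $f_n:E_n\to G$. By the universal property of the coproduct the $f_n$ assemble to $f:E\to G$ with $pf=\mathrm{Id}_E$. Therefore $E$ is a direct summand of $G$, and so $E$ is Gorenstein injective, because $\mathcal{GI}(\mathcal A)$ is closed under direct summands \cite{SSW2008}.

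At this point Lemma~\ref{lem:cardinal-coproducts} applies verbatim: every countable coproduct of injectives is Gorenstein injective, hence, as in its proof, injective, and $\mathcal A$ is locally Noetherian. For completeness, the step inside that lemma is that a complete injective resolution of $E$ gives an epimorphism from an injective object $I\to E$ through which maps from injectives factor; the same splitting argument with the $\iota_n$ then makes $E$ a summand of $I$.

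There is no serious obstacle. The only points to check are that precovers are not assumed to be epimorphisms (harmless, since we factor maps through $p$ and never use surjectivity), and that the splitting $pf=\mathrm{Id}_E$ is obtained componentwise on the coproduct and needs no additional hypotheses.
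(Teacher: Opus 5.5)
Your proof is correct and is essentially the paper's argument: lift the coproduct inclusions $\iota_n$ through a precover to split $E=\coprod_n E_n$, then conclude local Noetherianity via \cite[Theorem~11.2.12]{Krause2022}. The only difference is that the paper first composes $G\to E$ with the injective precover $I\to G$ from a complete injective resolution, so the splitting shows directly that $E$ is a summand of the injective object $I$ and never needs closure of $\mathcal{GI}(\mathcal A)$ under direct summands.
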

\begin{proof}
For a countable family of injective objects $(E_n)_{n\in\mathbb N}$, take a Gorenstein injective precover $G\to E=\coprod_{n\in\mathbb N}E_n$ and an injective precover $I\to G$ from a complete injective resolution.
Let $p:I\to E$ denote such a composition.
For each structure morphism $\iota_n:E_n\to E$, there is a lift $f_n:E_n\to I$ such that $p f_n=\iota_n$.
It follows that $E$ is a direct summand of $I$, and hence injective.
Lemma~\ref{lem:cardinal-coproducts} shows that $\mathcal{A}$ is locally Noetherian.
\end{proof}

\begin{proposition}\label{prop:noetherian-reduction}
If any of conditions~\textup{(1)--(7)} of Theorem~\ref{thm:equivalence} holds, then $\mathcal A$ is locally Noetherian.
\end{proposition}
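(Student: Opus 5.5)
We argue case by case, reducing each of conditions (1)--(7) either to Lemma~\ref{lem:precovering-noetherian} or to Lemma~\ref{lem:cardinal-coproducts}. Conditions (1)--(4) each imply (2): a cover is a precover, and a special (pre)cover is a (pre)cover. So under any of (1)--(4), $\mathcal{GI}(\mathcal A)$ is precovering, and Lemma~\ref{lem:precovering-noetherian} shows that $\mathcal A$ is locally Noetherian.

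Conditions (5) and (6) imply closure under countable coproducts. For (5), note that a countable coproduct is the filtered colimit of its finite subcoproducts, and finite coproducts of Gorenstein injectives are Gorenstein injective because the class is closed under extensions. Injective objects are Gorenstein injective, since $\cdots\to0\to E\xrightarrow{=}E\to0\to\cdots$ is a complete injective resolution. Hence every countable coproduct of injectives is Gorenstein injective, and Lemma~\ref{lem:cardinal-coproducts} applies.

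The only delicate case is (7), where closure is assumed only for coproducts indexed by a set of cardinality $\kappa=|\mathrm{End}_{\mathcal A}(U)|$, and $\kappa$ may be finite. We split into two cases.
\begin{itemize}
\item If $\kappa$ is infinite, then $\kappa\ge\aleph_0$. Padding a countable family with copies of the zero object shows that countable coproducts of Gorenstein injectives are $\kappa$-indexed coproducts. Lemma~\ref{lem:cardinal-coproducts} then gives the claim.
\item If $\kappa$ is finite, Lemma~\ref{lem:finite-endomorphism-generator} directly gives that $\mathcal A$ is locally Noetherian. Here $\kappa\ge1$ always, and $\kappa=1$ means $U=0$, so $\mathcal A=0$, which is trivially fine.
\end{itemize}

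No step is a real obstacle. The only point needing care is this cardinality split in (7), which is exactly why Lemma~\ref{lem:finite-endomorphism-generator} was included.
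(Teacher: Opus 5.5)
Your proposal is correct and follows the paper's proof: (1)--(4) are reduced to Lemma~\ref{lem:precovering-noetherian}, (5)--(7) are reduced to Lemma~\ref{lem:cardinal-coproducts}, and Lemma~\ref{lem:finite-endomorphism-generator} handles a finite $\mathrm{End}_{\mathcal A}(U)$. The only difference is organisational: the paper sets aside the finite-endomorphism case at the outset for all seven conditions, whereas you rightly note it is needed only for (7), and you make explicit the padding-by-zero and finite-subcoproduct steps that the paper leaves implicit.
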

\begin{proof}
If $\mathrm{End}_{\mathcal A}(U)$ is finite, apply Lemma~\ref{lem:finite-endomorphism-generator}.
Suppose now that this ring is infinite.
Conditions~\textup{(1)--(4)} imply that $\mathcal{GI}(\mathcal A)$ forms a precovering class, so Lemma~\ref{lem:precovering-noetherian} applies.
Conditions~\textup{(5)--(7)} imply that countable coproducts of injective objects are in $\mathcal{GI}(\mathcal A)$, so Lemma~\ref{lem:cardinal-coproducts} applies.
This completes the proof.
\end{proof}

\subsection{Functorial complete injective resolution for \texorpdfstring{$\mathcal{GI}(\mathcal{A})$}{GIA}}\label{subsec:functorial-complete-resolutions}

Let $\mathcal A$ be a locally Noetherian Grothendieck category with generator $U$.
Fix a set $\mathcal E$ of representatives of the isomorphism
classes of indecomposable injective objects in $\mathcal A$.
We construct functorial complete injective resolutions of
Gorenstein injective objects.

\begin{lemma}\label{lem:gi-precover}
  For each $X \in \mathcal{A}$, denote the evaluation
  \[
  \epsilon_X : P(X) = \coprod_{E \in \mathcal{E}} \coprod_{f \in \mathrm{Hom}_\mathcal{A}(E,X)} E \to X
  \]
  Then for each $G \in \mathcal{GI}(\mathcal{A})$, the short exact sequence $0 \to \ker \epsilon_G \to P(G) \xrightarrow{\epsilon _G} G \to 0$ remains exact after applying $\mathrm{Hom}_\mathcal{A}(E,-)$ for each $E \in \mathcal{E}$, and $\ker \epsilon_G \in \mathcal{GI}(\mathcal{A})$.
\end{lemma}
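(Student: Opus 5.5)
The plan is to use two facts about a locally Noetherian $\mathcal A$. First, $P(G)$ is injective, because coproducts of injectives are injective there. Second, every injective object is a direct sum of indecomposable injectives isomorphic to members of $\mathcal E$ (Matlis--Gabriel decomposition). Together with the fact that $\mathrm{Hom}_{\mathcal A}(E,-)$ commutes with coproducts up to the relevant lifting, these reduce everything to lifting maps from a single $E\in\mathcal E$.

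\textbf{Step 1: $\epsilon_G$ is an injective precover.} Each map $f\colon E\to G$ with $E\in\mathcal E$ factors through $\epsilon_G$ via the inclusion of the summand indexed by $(E,f)$. Hence $\mathrm{Hom}_{\mathcal A}(E,\epsilon_G)$ is surjective for every $E\in\mathcal E$, which is exactly exactness of the sequence after applying $\mathrm{Hom}_{\mathcal A}(E,-)$ (left exactness being automatic). To extend this to an arbitrary injective $I\cong\coprod_j E_j$ with $E_j\in\mathcal E$, I use the universal property of coproducts: a map $I\to G$ is a family of maps $E_j\to G$, and each lifts individually. So $\epsilon_G$ is an injective precover of $G$.

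\textbf{Step 2: $\epsilon_G$ is an epimorphism.} Take a complete injective resolution $I$ of $G$, so that $I^{-1}\to G$ is an epimorphism from an injective. By Step 1 it factors through $\epsilon_G$, and therefore $\epsilon_G$ is epi. This gives the short exact sequence $0\to K\to P(G)\to G\to 0$ with $K=\ker\epsilon_G$.

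\textbf{Step 3: $K$ is Gorenstein injective.} This is the main step. Since $\mathrm{Hom}_{\mathcal A}(E,-)$ applied to the sequence is exact for every injective $E$ (Step 1), and $\mathrm{Ext}^{j}_{\mathcal A}(E,P(G))=0$ and $\mathrm{Ext}^{j}_{\mathcal A}(E,G)=0$ for $j\ge1$ by \eqref{eq:gi-ext-vanishing}, the long exact sequence gives $\mathrm{Ext}^{j}_{\mathcal A}(E,K)=0$ for all $j\ge1$.

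Next I build the complete resolution of $K$ by splicing.
\begin{itemize}
\item[(a)] \emph{Left half.} Let $I^{\le -1}$ be the left part of a complete resolution of $G$. Compare it with $P(G)\to G$ using the precover property and take a mapping-cone/pullback argument; a cleaner route is the dual horseshoe construction. Concretely, from the exact sequence $0\to K\to P(G)\to G\to 0$ and the complete resolution of $G$, form the pullback of $P(G)\oplus I^{-1}\to G$. This gives $0\to K'\to P(G)\oplus I^{-1}\to G\to 0$ with $K'$ Gorenstein injective, since $K'$ is an extension of $Z^{-1}(I)$ and $P(G)$, or symmetrically of $K$ and $I^{-1}$.
\item[(b)] \emph{Right half.} Take an injective envelope $K\hookrightarrow J^0$, with cokernel $C$. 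By the Ext vanishing, the class $\mathcal{W}$ of objects $X$ with $\mathrm{Ext}^{\ge1}(\mathrm{Inj},X)=0$ admitting a left injective resolution is closed under the needed operations.
\end{itemize}

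The simplest route uses the result (\v{S}aroch--\v{S}\v{t}ov\'{\i}\v{c}ek style, or directly \cite[Corollary 4.5]{SSW2008}) that $\mathcal{GI}$ is closed under kernels of epimorphisms between Gorenstein injectives whose kernel lies in ${}^\perp$-vanishing for injectives. That is, $\mathcal{GI}$ is injectively resolving in the sense that if $0\to K\to G'\to G\to0$ is exact with $G',G\in\mathcal{GI}$ and $\mathrm{Ext}^1(\mathrm{Inj},K)=0$, then $K\in\mathcal{GI}$. I would invoke or reprove this via the splice: glue a left injective resolution of $K$, obtained from the left halves of $P(G)$ and $G$ via the horseshoe lemma applied to the $\mathrm{Hom}(\mathrm{Inj},-)$-exact sequence, to the right coresolution $K\to P(G)\to I^{0}\to I^1\to\cdots$ built from $P(G)$ and the right half of $I$. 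Explicitly, $0\to K\to P(G)\to I^0\to I^1\to\cdots$ is exact with injective terms, and its cycles are $G$ and the $Z^n(I)$, which are Gorenstein injective.

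For the left half, iterate Step 1 on $K$ itself: $\epsilon_K$ is epi because $\mathrm{Ext}^1(\mathrm{Inj},K)=0$ would need to be replaced by an existing injective epimorphism onto $K$. The expected obstacle is exactly producing that first injective epimorphism onto $K$ with a kernel of the same type. I would obtain it by the pullback in (a): $P(G)\oplus I^{-1}\to G$ has kernel mapping onto $K$, with $I^{-1}$-part split off, and $\mathrm{Hom}(\mathrm{Inj},-)$-exactness gives the induction hypothesis for the kernel. Iterating yields a $\mathrm{Hom}(\mathrm{Inj},-)$-exact left injective resolution of $K$. The splice is then totally acyclic, so $K\in\mathcal{GI}(\mathcal A)$.
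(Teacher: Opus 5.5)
Steps 1 and 2 are correct and match the paper. The gap is in Step 3: you never actually construct the left half of a complete injective resolution of $K=\ker\epsilon_G$.

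\begin{itemize}
\item The object $K'=\ker\bigl(P(G)\oplus I^{-1}\to G\bigr)$ from (a) is not injective. So ``$K'$ maps onto $K$'' does not give an injective epimorphism onto $K$.
\item The ``induction hypothesis'' that the next kernel is supposed to inherit is never stated. As you note yourself, vanishing of $\mathrm{Ext}^{\ge1}_{\mathcal A}(\mathrm{Inj},-)$ alone does not produce an epimorphism from an injective, so ``iterating'' is not justified.
\item Item (b) contains no argument.
\item The kernel-closure statement you want to quote is not what the paper takes from \cite[Corollary~4.5]{SSW2008}; there it is used for extension-closure. Proving that statement requires exactly the step you are missing.
\end{itemize}

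That step is already inside (a): both short exact sequences through $K'$ split.
\begin{itemize}
\item $0\to Z^{-1}I\to K'\to P(G)\to0$ splits because $\mathrm{Ext}^1_{\mathcal A}(P(G),Z^{-1}I)=0$ by \eqref{eq:gi-ext-vanishing}.
\item $0\to K\to K'\to I^{-1}\to0$ splits because $I^{-1}\to G$ lifts through the precover $\epsilon_G$ from Step 1.
\end{itemize}
Hence
\[
K\oplus I^{-1}\cong K'\cong Z^{-1}I\oplus P(G)\in\mathcal{GI}(\mathcal A).
\]
This is precisely the Schanuel lemma the paper invokes, with $E'=I^{-1}$ and $G'=Z^{-1}I$. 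So $K$ is a direct summand of a Gorenstein injective object. Closure of $\mathcal{GI}(\mathcal A)$ under direct summands, recorded in the preliminaries, then gives $K\in\mathcal{GI}(\mathcal A)$. With this, your $\mathrm{Ext}$ computation and all the splicing become unnecessary.

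If you want to keep the splicing route and avoid summand-closure, the induction must carry the invariant ``direct summand of a Gorenstein injective object'', not merely $\mathrm{Ext}$-vanishing. That is considerably more work than the one-line summand argument.
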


\begin{proof}
  Since $\mathcal{A}$ is locally Noetherian, $P(X)$ is a well-defined injective object for each $X \in \mathcal{A}$.
  By decomposition of injective objects, $\epsilon_X$ is an injective precover.
  For each $G \in \mathcal{GI}(\mathcal{A})$, choose a short exact sequence $0 \to G' \to E' \to G \to 0$ from a complete injective resolution of $G$.
  The map $E'\to G$ is an injective precover by \eqref{eq:gi-ext-vanishing}, which factors through $\epsilon_G$.
  Hence $\epsilon_G$ is epic.
  Then Schanuel lemma \cite[Lemma~2.2]{EJO2001} gives $G' \oplus P(G) \cong E' \oplus \ker \epsilon_G$.
  Thus $\ker \epsilon_G \in \mathcal{GI}(\mathcal{A})$.
  The short exact sequence $0 \to \ker \epsilon_G \to P(G) \xrightarrow{\epsilon _G} G \to 0$ remains exact after applying $\mathrm{Hom}_\mathcal{A}(E,-)$ for each $E \in \mathcal{E}$.
\end{proof}

Then fix a injective cogenerator $J$.

\begin{lemma}\label{lem:gi-preenvelope}
  For each $X \in \mathcal{A}$, denote the embedding $\iota_X : X \to \prod_{\mathrm{Hom}_\mathcal{A}(X,J)}J$.
  Then for each $G \in \mathcal{GI}(\mathcal{A})$, the short exact sequence $0 \to G \xrightarrow{\iota_G} \prod_{\mathrm{Hom}_\mathcal{A}(G,J)}J \to \mathrm{coker} \iota_G \to 0$ remains exact after applying $\mathrm{Hom}_\mathcal{A}(E,-)$ for each $E \in \mathcal{E}$, and $\mathrm{coker} \iota_G \in \mathcal{GI}(\mathcal{A})$.
\end{lemma}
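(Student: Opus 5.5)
The argument dualises the proof of Lemma~\ref{lem:gi-precover}, with the injective preenvelope $\iota_X$ taking the place of the injective precover $\epsilon_X$. Write $J_X=\prod_{\mathrm{Hom}_{\mathcal A}(X,J)}J$. Since $J$ is an injective cogenerator, $\iota_X$ is a monomorphism. Since $J_X$ is a product of injectives, it is injective.

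\textbf{Step 1: $\iota_X$ is an injective preenvelope.} Let $I$ be injective. Embed $I$ into a product of copies of $J$; as $I$ is injective, it is a direct summand of that product. So any morphism $X\to I$ is determined by its components $X\to J$. Each such component factors through the corresponding projection $J_X\to J$, and hence the morphism factors through $\iota_X$. In other words, every morphism from $X$ to an injective object factors through $\iota_X$.

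\textbf{Step 2: the sequence is $\mathrm{Hom}_{\mathcal A}(E,-)$-exact and the cokernel is Gorenstein injective.} Let $G\in\mathcal{GI}(\mathcal A)$ and choose a short exact sequence $0\to G\to E'\to G''\to0$ from a complete injective resolution, so $E'$ is injective and $G''$ is Gorenstein injective. Both $0\to G\to E'\to G''\to 0$ and $0\to G\to J_G\to \mathrm{coker}\,\iota_G\to 0$ are sequences beginning with injective preenvelopes of $G$. Indeed, $G\to E'$ is a preenvelope simply because $E'$ is injective, with no Ext vanishing needed. The dual Schanuel lemma \cite[Lemma~2.2]{EJO2001} therefore gives
\[
G''\oplus J_G\cong E'\oplus \mathrm{coker}\,\iota_G .
\]
(Alternatively, one can form the pushout of the two monomorphisms.) Gorenstein injectives contain all injectives and are closed under coproducts of two objects by extension-closure, so $G''\oplus J_G\in\mathcal{GI}(\mathcal A)$. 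Closure under direct summands \cite{SSW2008} then gives $\mathrm{coker}\,\iota_G\in\mathcal{GI}(\mathcal A)$. For exactness under $\mathrm{Hom}_{\mathcal A}(E,-)$ with $E\in\mathcal E$, the long exact Ext sequence reduces the claim to $\Ext^1_{\mathcal A}(E,G)=0$, which holds by \eqref{eq:gi-ext-vanishing}.

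\textbf{Main obstacle.} The only delicate point is the dual Schanuel lemma in a Grothendieck category. Its hypotheses are two exact sequences $0\to G\to I_i\to C_i\to0$ in which both $I_i$ are injective. The first map of each sequence then factors through the other, and the standard argument goes through without further assumptions. If one prefers to avoid citing it, the pushout of $E'\leftarrow G\to J_G$ gives the same conclusion directly. Neither route needs the locally Noetherian hypothesis, unlike Lemma~\ref{lem:gi-precover}, where that hypothesis was used to ensure $P(X)$ is injective.
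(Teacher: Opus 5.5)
Your proof is correct and follows the paper's argument. In both, $\iota_G$ is monic because $J$ is a cogenerator, and the dual Schanuel (pushout) argument makes $\mathrm{coker}\,\iota_G$ a summand of the Gorenstein injective object $G''\oplus J_G$. Exactness under $\mathrm{Hom}_{\mathcal A}(E,-)$ then comes from $\Ext^1_{\mathcal A}(E,G)=0$. Your Step~1 is correct but unnecessary: as you note yourself, the dual Schanuel lemma uses only the injectivity of the middle terms.
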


\begin{proof}
  For each $X \in \mathcal{A}$, since $J$ is a cogenerator and $\mathrm{Hom}_\mathcal{A}(\iota_X, J)$ is surjective, $\iota_X$ is monic.
  By the dual procedure in the proof of Lemma~\ref{lem:gi-precover}, $\mathrm{coker} \iota_G$ is a summand of a Gorenstein injective object, and hence Gorenstein injective.
  Since $G \in \mathcal{GI}(\mathcal{A})$, the short exact sequence $0 \to G \xrightarrow{\iota_G} \prod_{\mathrm{Hom}_\mathcal{A}(G,J)}J \to \mathrm{coker} \iota_G \to 0$ remains exact after applying $\mathrm{Hom}_\mathcal{A}(E,-)$ for each $E \in \mathcal{E}$.
\end{proof}

\begin{proposition}\label{lem:gi-basic}
There is a functor $\mathbf I:\mathcal{GI}(\mathcal A)\to\mathrm{Ch}(\mathcal A)$ whose values are totally acyclic complexes of injective objects, with a natural isomorphism $Z^0\mathbf I\cong\mathrm{Id}_{\mathcal{GI}(\mathcal A)}$.
\end{proposition}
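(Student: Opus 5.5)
The plan is to splice together the two functorial constructions of Lemmas~\ref{lem:gi-precover} and \ref{lem:gi-preenvelope}. For $G\in\mathcal{GI}(\mathcal A)$, set
\[
\Omega G=\ker\epsilon_G,\qquad \Sigma G=\mathrm{coker}\,\iota_G,\qquad P(G),\qquad I(G)=\prod_{\mathrm{Hom}_{\mathcal A}(G,J)}J .
\]
These lemmas say that $\Omega G$ and $\Sigma G$ are again Gorenstein injective. We then iterate. In degrees $n\ge 0$ put $\mathbf I(G)^n=I(\Sigma^nG)$, with differential the composite
\[
I(\Sigma^nG)\to\Sigma^{n+1}G\xrightarrow{\iota}I(\Sigma^{n+1}G).
\]
In degrees $n<0$ put $\mathbf I(G)^{n}=P(\Omega^{-n-1}G)$, with differential
\[
P(\Omega^{k}G)\xrightarrow{\epsilon}\Omega^{k}G\hookrightarrow P(\Omega^{k-1}G),
\]
where $\Omega^0G=G$. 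At degree $-1\to 0$ the differential is $P(G)\xrightarrow{\epsilon_G}G\xrightarrow{\iota_G}I(G)$. Splicing the short exact sequences shows that $\mathbf I(G)$ is an acyclic complex of injectives (using that $P(X)$ is injective by local Noetherianity) with $Z^0\mathbf I(G)=\ker(I(G)\to I(\Sigma G))=G$, canonically. All its cycles are of the form $\Omega^kG$ or $\Sigma^kG$, so they are Gorenstein injective. By the remark after \eqref{eq:gi-ext-vanishing}, the complex is totally acyclic; alternatively, this follows from the Hom-exactness clauses in the two lemmas, since every injective is a summand of a coproduct of objects of $\mathcal E$. Strictly speaking, $\mathrm{Hom}(E,-)$ is needed for arbitrary injective $E$, which the Ext vanishing \eqref{eq:gi-ext-vanishing} supplies directly.

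The key step is functoriality, and here one must avoid choices. The constructions $P$ and $I$ should be made strictly functorial. For $I$: a morphism $g:X\to Y$ induces $\mathrm{Hom}(Y,J)\to\mathrm{Hom}(X,J)$, $h\mapsto hg$, and hence a map $\prod_{\mathrm{Hom}(X,J)}J\to\prod_{\mathrm{Hom}(Y,J)}J$. Its $h$-component is the projection to the factor indexed by $hg$. This is strictly functorial and makes $\iota$ natural. For $P$: $g$ sends the summand indexed by $(E,f)$ to the summand indexed by $(E,gf)$ via the identity of $E$, so again $\epsilon$ is natural and $P$ is strictly functorial. Kernels and cokernels of natural transformations are then functorial once chosen; we fix one choice of kernel and cokernel objects for each morphism. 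This yields functors $\Omega,\Sigma:\mathcal{GI}(\mathcal A)\to\mathcal{GI}(\mathcal A)$, the iterates $\Omega^k,\Sigma^k$, and the induced chain maps $\mathbf I(g)$ degreewise. Compatibility with the differentials follows from naturality of $\epsilon$, $\iota$ and of the kernel and cokernel inclusions.

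Finally, $Z^0\mathbf I(g)$ is identified with $g$ under the canonical iso $Z^0\mathbf I(G)\cong G$. This holds because $G\to I(G)$ is the kernel of $I(G)\to I(\Sigma G)$ and $\mathbf I(g)^0=I(g)$ restricts along $\iota$ to $g$, by naturality of $\iota$. That gives the natural isomorphism $Z^0\mathbf I\cong\mathrm{Id}$.

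The main obstacle is mild: it is the bookkeeping needed to ensure that the kernel/cokernel choices give genuine functors rather than functors up to isomorphism. The rest is formal once $P$ and $I$ are defined via the indexing maps above.
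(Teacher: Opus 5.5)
Your proposal is correct and follows the paper's proof, which iterates the constructions of Lemmas~\ref{lem:gi-precover} and~\ref{lem:gi-preenvelope} on the kernel and cokernel sides and splices the resulting short exact sequences into a complete injective resolution. Your explicit strictly functorial definitions of $P$ and $\prod_{\mathrm{Hom}_{\mathcal A}(-,J)}J$ on morphisms (reindexing summands via $f\mapsto gf$ and factors via $h\mapsto hg$), together with the naturality of $\epsilon$ and $\iota$, supply the functoriality details that the paper leaves implicit.
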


\begin{proof}
  For each $G \in \mathcal{GI}(\mathcal{A})$, construct a complete injective resolution $\mathbf I(G)$ by iterating the constructions in Lemmas~\ref{lem:gi-precover} and \ref{lem:gi-preenvelope} on the kernel and cokernel sides, respectively. 
\end{proof}

\begin{remark}
  Note that this functor is not required to be additive in the further study.
\end{remark}

\subsection{A comparison of two colimits in \texorpdfstring{$K(\mathrm{Inj}\mathcal{A})$}{KInjA}}\label{subsec:colimit-comparison}

This subsection extends \cite[Theorem~13.3]{BensonKrause} from finite group algebra modules to locally Noetherian Grothendieck categories.
Benson and Krause construct homotopy colimits using a bar resolution in the diagram category \cite[\S13]{BensonKrause}.
We simplify the proof by computing homotopy colimits as derived colimits through the projective model structures on dg modules and their diagram categories.
We show that over a locally Noetherian Grothendieck category, for any filtered diagram $F:D\to \mathrm{Ch}(\mathrm{Inj}\,\mathcal A)$, the homotopy colimit of $F$ coincides with the colimit of $F$ in $\mathrm{Ch}(\mathcal{A})$.

Following \cite[\S3.1]{Keller2006}, let $\mathrm{Ch}_{dg}(\mathcal{A})$ denote the \emph{dg category} with objects $\mathrm{Ch}(\mathcal{A})$ and morphisms given by the Hom complexes.
A \emph{right dg module} over a dg category $\mathcal{C}$ is a functor $F: \mathcal{C}^{\mathrm{op}}\to \mathrm{Ch}_{dg}(\mathrm{Ab})$ such that $\mathrm{Hom}_{\mathcal{C}}^\bullet (Y,X) \to \mathrm{Hom}_{\mathrm{Ch}(\mathrm{Ab})}^\bullet(FX, FY)$ are chain maps.

Choose a set $\mathcal N$ of representatives of the Noetherian objects of $\mathcal A$, and fix for each $N \in \mathcal{N}$ an injective resolution $N\to iN$.
Let $\mathcal C \subseteq \mathrm{Ch}_{dg}(\mathcal{A})$ be the full dg subcategory of complexes with objects $iN$.
The derived category $D(\mathcal C)$ is the localisation of the category of right dg $\mathcal C$-modules, by inverting those module morphisms $f:F\to G$ for which $f(iN)$ is a quasi-isomorphism for every $N\in\mathcal N$.

Let $\mathcal M$ be the model category of right dg $\mathcal C$-modules with its projective model structure \cite[Theorem~3.2]{Keller2006}, which is determined by the following classes of morphisms:
\[
\begin{aligned}
 \mathrm{Weq}_{\mathcal M}
 &=\{f\mid f(c)\text{ is a quasi-isomorphism for every }c\in\mathrm{Ob}\,\mathcal C\},\\
 \mathrm{Fib}_{\mathcal M}
 &=\{f\mid f(c)^n\text{ is surjective for every }c\in\mathrm{Ob}\,\mathcal C,
                 \ n\in\mathbb Z\},\\
 \mathrm{Cof}_{\mathcal M}
 &={}^{\square}(\mathrm{Fib}_{\mathcal M}\cap\mathrm{Weq}_{\mathcal M}).
\end{aligned}
\]
Here ${}^{\square}\mathcal P$ denotes the maps with the left lifting property against every map in $\mathcal P$.
Since $\mathcal C$ is small, $\mathcal{M}$ is cofibrantly generated \cite[\S3, following Definition~3.1]{Toen2007}.
For a small category $D$, the diagram category $\mathcal M^D=\mathrm{Fun}(D,\mathcal M)$ again admits a cofibrantly generated projective model structure, where $\mathrm{Fib}_{\mathcal M^{D}}$ and $\mathrm{Weq}_{\mathcal M^{D}}$ are checked pointwise \cite[Theorem~11.6.1]{Hirschhorn2003}.

For any $D$-indexed diagram $F:D\to\mathcal M$, let $q:QF\twoheadrightarrow F$ denotes its cofibrant replacement in $\mathcal M^D$.
Here $q \in \mathrm{Weq}_{\mathcal M^{D}} \cap \mathrm{Fib}_{\mathcal M^{D}}$, and hence $q_d$ is a surjective quasi-isomorphism for every $d\in D$.
The homotopy colimit of $F$ in $\mathcal{M}$ is defined in the derived category $D(\mathcal C) = \mathcal M [\mathrm{Weq}_{\mathcal M}{}^{-1}] = \mathsf{Ho}(\mathcal{M})$:
\[
 \mathrm{hocolim}_D F=(\mathbf L\mathrm{colim}_D)(F)
 \cong\mathrm{colim}_D(QF)
 \qquad\text{in }D(\mathcal C).
\]
Indeed, the ordinary adjunction $\mathrm{colim}_D\dashv\Delta$ is a Quillen adjunction, since the constant diagram functor $\Delta$ preserves fibrations and weak equivalences \cite[Proposition~8.5.3]{Hirschhorn2003}.
It therefore induces the derived adjunction
\[
 \mathrm{Hom}_{D(\mathcal C)}(\mathrm{hocolim}_D F,Y)
 \cong\mathrm{Hom}_{\mathrm{Ho}(\mathcal M^D)}(F,\Delta Y).
\]

The dg Yoneda functor $\Phi(X)(iN)=\mathrm{Hom}_{\mathcal A}^{\bullet}(iN,X)$ induces an triangle equivalence
\[
 \Phi:K(\mathrm{Inj}\,\mathcal A)\xlongrightarrow{\sim}D(\mathcal C)
 \qquad\text{\cite[Proposition~A.1]{Krause2005}}.
\]
Fix a quasi-inverse $\Phi^{-1}$ of the triangle equivalence $\Phi$.
For a diagram $T:D\to\mathrm{Ch}(\mathrm{Inj}\,\mathcal A)$, define its homotopy colimit by
\[
 \mathrm{hocolim}_{d\in D}T_d
 :=\Phi^{-1}\!\left(\mathrm{hocolim}_{d\in D}\Phi(T_d)\right).
\]
Here $\Phi^{-1}$ is an arbitrary fixed quasi-inverse.

\begin{proposition}\label{lem:localizing-colimit}
Let $\mathcal A$ be a locally Noetherian Grothendieck category and let $T:D\to\mathrm{Ch}(\mathrm{Inj}\,\mathcal A)$ be a filtered diagram of complexes of injective objects and chain maps.
Then its colimit in $\mathrm{Ch}(\mathcal A)$ is a complex of injective objects representing the homotopy colimit.
\end{proposition}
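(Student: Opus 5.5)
First, the colimit $L=\mathrm{colim}_D T$ in $\mathrm{Ch}(\mathcal A)$ is computed degreewise, so $L^n=\mathrm{colim}_d T_d^n$. Since $\mathcal A$ is locally Noetherian, filtered colimits of injectives are injective (as recalled in the preliminaries). Hence $L\in\mathrm{Ch}(\mathrm{Inj}\,\mathcal A)$. The real content is to identify $\Phi(L)$ with $\mathrm{hocolim}_D\Phi(T)$ in $D(\mathcal C)$.

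Each $iN$ is a complex of injectives that may be unbounded above, so the first step is to show that the dg Yoneda functor commutes with filtered colimits on the nose:
\[
\mathrm{colim}_d\,\mathrm{Hom}^\bullet_{\mathcal A}(iN,T_d)\xrightarrow{\ \sim\ }\mathrm{Hom}^\bullet_{\mathcal A}(iN,L).
\]
The obstruction is the product over $p$ in the Hom complex. I would avoid it by replacing $iN$ by a homotopy equivalent model. In a locally Noetherian category one can take $iN$ built degreewise from finite coproducts of indecomposable injectives, but $\mathrm{Hom}(E,-)$ for $E$ injective does not commute with filtered colimits either.

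So instead I would work with the compact generators $iN$ directly. By \cite[Proposition~A.1]{Krause2005} (Krause's argument), $\mathrm{Hom}_{K(\mathrm{Inj})}(iN,X)\cong\mathrm{Hom}_{K(\mathcal A)}(N,X)$ for $X\in K(\mathrm{Inj}\,\mathcal A)$, with $N$ placed in degree $0$. More generally there is a quasi-isomorphism of Hom complexes
\[
\mathrm{Hom}^\bullet(iN,X)\to\mathrm{Hom}^\bullet(N,X)=\mathrm{Hom}_{\mathcal A}(N,X^{\bullet}),
\]
natural in $X$. Its cone is $\mathrm{Hom}^\bullet(C,X)$, where $C$ is the cone of $N\to iN$, an acyclic left-bounded complex, and this is acyclic because $X$ is a complex of injectives. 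Now $N$ is finitely presented, so $\mathrm{Hom}_{\mathcal A}(N,-)$ commutes with filtered colimits degreewise. Hence $\Phi'(X):=\mathrm{Hom}_{\mathcal A}(N,X)$ satisfies $\Phi'(L)=\mathrm{colim}_d\Phi'(T_d)$ as dg modules. This uses that $\Phi'$ is a dg module over $\mathcal C$ via restriction along $N\to iN$, or one can just compare at the level of $D(\mathcal C)$ through the natural zig-zag $\Phi\to\Phi'$, which is a pointwise weak equivalence.

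Second, filtered colimits in $\mathrm{Ch}(\mathrm{Ab})$ are exact, so $\mathrm{colim}_D$ preserves pointwise quasi-isomorphisms between filtered diagrams. Consequently $\mathbf L\mathrm{colim}_D(F)\cong\mathrm{colim}_D F$ for any $F\in\mathcal M^D$: for the cofibrant replacement $QF\to F$, the map $\mathrm{colim}\,QF\to\mathrm{colim}\,F$ is a quasi-isomorphism at each object $iN$, since colimits of dg modules are computed objectwise. Applying this to $\Phi(T)$ and using the natural weak equivalences $\Phi(T_d)\simeq\Phi'(T_d)$ of diagrams gives
\[
\mathrm{hocolim}\,\Phi(T)\cong\mathrm{colim}\,\Phi(T)\simeq\mathrm{colim}\,\Phi'(T)=\Phi'(L)\simeq\Phi(L)
\]
in $D(\mathcal C)$. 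Applying $\Phi^{-1}$ then yields the claim.

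The main obstacle is the naturality and compatibility of the comparison $\Phi\simeq\Phi'$. Concretely, one must make $\mathrm{Hom}(N,-)$ into a right dg $\mathcal C$-module compatibly, or else realise the comparison as a natural zig-zag of pointwise quasi-isomorphisms of $\mathcal C$-modules. My first attempt would be to use the bimodule $\mathrm{Hom}^\bullet(iN,iN')\to\mathrm{Hom}^\bullet(N,iN')$ and define the intermediate module $X\mapsto\mathrm{Hom}^\bullet(\mathrm{Cone},X)$ functorially. Failing that, I would check the quasi-isomorphism $\mathrm{colim}\,\Phi(T_d)(iN)\to\Phi(L)(iN)$ directly at each $iN$ via the commutative square with $\mathrm{Hom}(N,-)$.
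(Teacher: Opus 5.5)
This is essentially the paper's proof, provided you take your fallback. The canonical map $\alpha\colon\mathrm{colim}_d\Phi(T_d)\to\Phi(L)$ is already a morphism of dg $\mathcal C$-modules. The paper shows each $\alpha(iN)$ is a quasi-isomorphism via exactly your square with $\mathrm{Hom}^\bullet_{\mathcal A}(N,-)$, using finite presentability of $N$ and exactness of filtered colimits. It then identifies $\mathrm{hocolim}_D\Phi(T)$ with $\mathrm{colim}_D\Phi(T)$ just as you do.

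Drop the primary route, though. $X\mapsto\mathrm{Hom}^\bullet_{\mathcal A}(N,X)$ does not become a right dg $\mathcal C$-module by ``restriction along $N\to iN$'', since a morphism $iN''\to iN'$ in $\mathcal C$ induces no map $\mathrm{Hom}^\bullet_{\mathcal A}(N',X)\to\mathrm{Hom}^\bullet_{\mathcal A}(N'',X)$. The objectwise check makes such a structure unnecessary.
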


\begin{proof}
Put $X=\mathrm{colim}_{d\in D}T_d$ the colimit in $\mathrm{Ch}(\mathcal A)$.
Since $\mathcal{A}$ is locally Noetherian, the injective objects are closed under filtered colimits \cite[Theorem~11.2.12]{Krause2022}.
Thus $X \in \mathrm{Ch}(\mathrm{Inj}\mathcal{A})$.
The canonical maps $T_d\to X$ induce a compatible family of morphisms of dg module $\Phi (T_d)\to\Phi(X)$, and hence a comparison map $\alpha:\mathrm{colim}_{d\in D}\Phi (T_d)\to\Phi(X)$.

For $N\in\mathcal N$ and a complex $Y$ of injective objects, the injective resolution $N\to iN$ yields a quasi-isomorphism $r_{N,Y}:\mathrm{Hom}_{\mathcal A}^{\bullet}(iN,Y)\to\mathrm{Hom}_{\mathcal A}^{\bullet}(N,Y)$ is a quasi-isomorphism: its $n$th cohomology map is
$\mathrm{Hom}_{K(\mathcal A)}(iN,Y[n])\to\mathrm{Hom}_{K(\mathcal A)}(N,Y[n])$, which is an isomorphism by \cite[Lemma~2.1]{Krause2005} for every $n\in\mathbb Z$.
Naturality in $Y$ gives the commutative diagram of complexes of abelian groups
\[
\xymatrix@C=2.5em@R=2em{
 \mathrm{colim}_{d\in D}\mathrm{Hom}_{\mathcal A}^{\bullet}(iN,T_d)
 \ar[r]^-{\alpha(iN)}\ar[d]_-{\mathrm{colim}_{d\in D}(r_{N,T_d})}
 &\mathrm{Hom}_{\mathcal A}^{\bullet}(iN,X)\ar[d]^{r_{N,X}}\\
 \mathrm{colim}_{d\in D}\mathrm{Hom}_{\mathcal A}^{\bullet}(N,T_d)
 \ar[r]_-{\beta_N}^-{\sim}
 &\mathrm{Hom}_{\mathcal A}^{\bullet}(N,X).
}
\]
Since $N$ is finitely presented, $\beta_N$ is an isomorphism.
Recall that $r_{N,X}$ and each $r_{N, T_d}$ are quasi-isomorphisms.
By exactness of filtered colimits, $\mathrm{colim}_{d\in D}(r_{N,T_d})$ is a quasi-isomorphism.
It follows that $\alpha(iN)$ is a quasi-isomorphism for every $N\in\mathcal N$, hence $\alpha$ is a quasi-isomorphism of dg modules.

Now choose a projective cofibrant diagram replacement $q:Q \Phi (T)\twoheadrightarrow \Phi (T)$.
Each component $q_d$ is a quasi-isomorphism of dg modules.
Since filtered colimits of dg modules are computed objectwise and preserve quasi-isomorphisms, $\mathrm{colim}_{d\in D}q_d$ is also a quasi-isomorphism.
Since $\mathrm{colim}_{d\in D}(Q\Phi (T))_d$ represents $\mathrm{hocolim}_D \Phi (T)$, we obtain
\[
 \mathrm{hocolim}_D \Phi (T)
 \cong\mathrm{colim}_{d\in D}(Q\Phi (T))_d
 \xlongrightarrow{\sim}\mathrm{colim}_{d\in D}\Phi(T_d)
 \xlongrightarrow[\sim]{\alpha}\Phi(X)
 \qquad\text{in }D(\mathcal C).
\]
Applying the chosen quasi-inverse $\Phi^{-1}$ completes the proof.
\end{proof}

Recall that a model category is \emph{stable} if it is pointed and suspension induces an equivalence of its homotopy category.
It is \emph{combinatorial} if it is cofibrantly generated and its underlying category is locally presentable.

\begin{lemma}\label{lem:localising-hocolimits}
Let $\mathcal M$ be a stable combinatorial model category and let $\mathcal S$ be a localising subcategory of $\mathrm{Ho}(\mathcal M)$.
For any small diagram $F:D\to\mathcal M$ whose values represent objects of $\mathcal S$, one has $\mathrm{hocolim}_D F\in\mathcal S$.
\end{lemma}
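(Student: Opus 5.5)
We reduce the homotopy colimit to iterated coproducts and cones in $\mathrm{Ho}(\mathcal M)$, using a standard resolution of the diagram. Write $\mathbf L\mathrm{colim}_D$ for the left derived functor of $\mathrm{colim}_D\colon\mathcal M^D\to\mathcal M$, taken with respect to the projective model structure on $\mathcal M^D$, which exists because $\mathcal M$ is combinatorial. Then $\mathrm{hocolim}_D F=\mathbf L\mathrm{colim}_D(F)$. The aim is a model for this derived colimit that is visibly built from the values $F(d)$ using only operations under which a localising subcategory is closed: coproducts, shifts, cones, and sequential homotopy colimits.

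\textbf{Step 1.} Use the simplicial (bar) replacement. For a combinatorial model category one may assume, up to Quillen equivalence (Dugger's theorem), that $\mathcal M$ is simplicial. Alternatively one can work with the Bousfield--Kan formula via framings, as in Hirschhorn \cite{Hirschhorn2003}. Either way, $\mathrm{hocolim}_D F$ is the geometric realisation of the simplicial object
\[
B_n=\coprod_{d_0\to\cdots\to d_n}QF(d_0),
\]
where $QF(d_0)$ is a cofibrant replacement of $F(d_0)$. Each $B_n$ is a coproduct of objects isomorphic in $\mathrm{Ho}(\mathcal M)$ to the $F(d)$. Coproducts of cofibrant objects in $\mathcal M$ compute coproducts in $\mathrm{Ho}(\mathcal M)$, so every $B_n$ lies in $\mathcal S$.

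\textbf{Step 2.} Filter the realisation by its skeleta, $|B|=\mathrm{colim}_n\,\mathrm{sk}_n|B|$. Each inclusion $\mathrm{sk}_{n-1}|B|\to\mathrm{sk}_n|B|$ is a cofibration fitting in a homotopy pushout square. By stability, its homotopy cofibre is $\Sigma^n$ applied to the latching quotient of $B_n$. That quotient is a summand of $B_n$: it is the coproduct over nondegenerate chains. Hence it lies in $\mathcal S$, since $\mathcal S$ is thick, localising subcategories being closed under summands by the Eilenberg swindle. By induction on $n$, using the fact that a triangulated subcategory is closed under extensions, each $\mathrm{sk}_n|B|$ lies in $\mathcal S$.

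\textbf{Step 3.} The colimit along a sequence of cofibrations between cofibrant objects is a homotopy colimit. In a stable model category the sequential homotopy colimit is the cone of $1-\mathrm{shift}$ on $\coprod_n \mathrm{sk}_n|B|$ (the Milnor triangle). This cone lies in $\mathcal S$ because $\mathcal S$ is closed under coproducts and cones. Therefore $\mathrm{hocolim}_D F\simeq|B|\in\mathcal S$.

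The main obstacle is Step 1: justifying the bar-construction formula for $\mathbf L\mathrm{colim}_D$ in a general stable combinatorial model category, together with the skeletal cofibre computation, without a simplicial enrichment. If Dugger's reduction is undesirable, I would instead argue directly with cosimplicial frames as in Hirschhorn \cite{Hirschhorn2003}. Another option, in the application at hand, is to use that $\mathcal M$ is a dg module category, where the bar resolution is an explicit total complex of coproducts of the $QF(d_0)$. Its totalisation is then a sequential colimit of iterated cones, and Steps 2 and 3 apply verbatim.
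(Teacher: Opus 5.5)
Your proposal is correct and follows the paper's route. You reduce to the simplicial case via Dugger's theorem (the paper's \cite[Proposition~3.1]{CGR2014}) and then prove closure under homotopy colimits there; the paper simply cites \cite[Proposition~3.4]{CGR2014} for that step, while you unpack it through the Bousfield--Kan simplicial replacement, its skeletal filtration (with cofibres $\Sigma^n$ of coproducts over nondegenerate chains) and the Milnor triangle. The one step you only assert is transporting $\mathrm{hocolim}_D F$ and $\mathcal S$ along the Quillen equivalence; the paper does this explicitly via $\mathbf L L$ and the induced equivalence of homotopy categories of diagram categories \cite[Theorem~11.6.5]{Hirschhorn2003}.
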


\begin{proof}
Since $\mathcal M$ is combinatorial, it is cofibrantly generated. 
Thus the diagram category $\mathcal M^D$ admits the projective model structure, i.e., $\mathrm{Weq}$ and $\mathrm{Fib}$ are degreewise defined and $\mathrm{Cof} = {}^{\square}(\mathrm{Fib}\cap\mathrm{Weq})$ \cite[Theorem~11.6.1]{Hirschhorn2003}.

Suppose first that $\mathcal M$ is simplicial.
Take a projectively cofibrant diagram replacement $q:QF\to F$ in $\mathcal M^D$.
Each $(QF)_d$ is cofibrant, and $q_d$ is a weak equivalence, so $(QF)_d$ still represents an object of $\mathcal S$.
Since $\mathcal S$ is closed under coproducts, cones and extensions, \cite[Proposition~3.4]{CGR2014} applies and gives $\mathrm{hocolim}_D F\cong\mathrm{hocolim}_D QF\in\mathcal S$.

In general, by \cite[Proposition~3.1]{CGR2014}, $\mathcal{M}$ is Quillen equivalent to some stable simplicial combinatorial model category $\mathcal N$, i.e., $L:\mathcal M\rightleftarrows\mathcal N:R$ is a Quillen equivalence with $L$ left adjoint.
Hence one has the triangle equivalence $\Psi = \mathbf L L:\mathsf{Ho}(\mathcal M) \xlongrightarrow{\sim}\mathsf{Ho}(\mathcal N)$.
By \cite[Theorem~11.6.5]{Hirschhorn2003}, this induces equivalences $\Psi_D:\mathsf{Ho}(\mathcal M^D)\xlongrightarrow{\sim}\mathsf{Ho}(\mathcal N^D)$, where the diagram categories carry their projective model structures.
Since $\Psi$ preserves triangles and coproducts, $\mathcal S'=\Psi(\mathcal S)$ is localising.
Take a cofibrant replacement $QF\to F$ in $\mathcal M^D$.
Set $F'=L^D(QF)$ and $F'_d=L((QF)_d)$.
Then
\[
\begin{aligned}
\Psi(\mathrm{hocolim}_D F)& = (\mathbf L L)(\mathrm{hocolim}_D F)\cong(\mathbf L L)(\mathrm{colim}_D QF) \cong L(\mathrm{colim}_D QF)\\
&\cong\mathrm{colim}_D(L^D QF)=\mathrm{colim}_D F'\cong\mathrm{hocolim}_D F' \in \mathcal{S}'.
\end{aligned}
\]
Here $\mathrm{colim}_D$ and $L^D$ are left Quillen, $\mathrm{colim}_D QF$ and $F'$ are cofibrant in $\mathcal M$ and $\mathcal N^D$, respectively.
Since $\Psi$ is an equivalence, one has $\mathrm{hocolim}_D F\in\mathcal S$.
\end{proof}

\begin{corollary}\label{cor:localizing-colimit}
Let $\mathcal A$ be a locally Noetherian Grothendieck category and let $\mathcal L$ be a localising subcategory of $K(\mathrm{Inj}\,\mathcal A)$.
If $T:D\to\mathcal L$ is a filtered diagram, then its colimit in $\mathrm{Ch}(\mathcal A)$ belongs to $\mathcal L$.
\end{corollary}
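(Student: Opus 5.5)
The plan is to combine Proposition~\ref{lem:localizing-colimit} with Lemma~\ref{lem:localising-hocolimits}, transported along the dg Yoneda equivalence $\Phi:K(\mathrm{Inj}\,\mathcal A)\xrightarrow{\sim}D(\mathcal C)$.

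First, let $X=\mathrm{colim}_{d\in D}T_d$ be the colimit in $\mathrm{Ch}(\mathcal A)$. By Proposition~\ref{lem:localizing-colimit}, $X$ is a complex of injective objects, and in $K(\mathrm{Inj}\,\mathcal A)$ there is an isomorphism
\[
X\cong \mathrm{hocolim}_{d\in D}T_d=\Phi^{-1}\bigl(\mathrm{hocolim}_D\Phi(T_d)\bigr).
\]
Since $\mathcal L$ is closed under isomorphisms in $K(\mathrm{Inj}\,\mathcal A)$, it suffices to show $\mathrm{hocolim}_D\Phi(T)\in\Phi(\mathcal L)$.

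Second, set $\mathcal S=\Phi(\mathcal L)\subseteq D(\mathcal C)=\mathrm{Ho}(\mathcal M)$. Because $\Phi$ is a triangle equivalence, it preserves coproducts, which exist in $K(\mathrm{Inj}\,\mathcal A)$ and are computed degreewise in the locally Noetherian case. Hence $\mathcal S$ is a localising subcategory of $\mathrm{Ho}(\mathcal M)$. The values $\Phi(T_d)$ of the diagram $\Phi\circ T:D\to\mathcal M$ represent objects of $\mathcal S$.

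Third, to apply Lemma~\ref{lem:localising-hocolimits}, we must check that $\mathcal M$, the category of right dg $\mathcal C$-modules with the projective model structure, is stable and combinatorial.
\begin{itemize}
\item It is cofibrantly generated since $\mathcal C$ is small, as already noted.
\item Its underlying category is locally presentable, being a category of additive functors from a small (dg) category into $\mathrm{Ch}(\mathrm{Ab})$; equivalently, it is a Grothendieck abelian category of modules over a small ringoid.
\item It is pointed, and the shift functor induces the suspension on $D(\mathcal C)$, which is an autoequivalence because $D(\mathcal C)$ is triangulated with suspension $[1]$.
\end{itemize}
Lemma~\ref{lem:localising-hocolimits} then gives $\mathrm{hocolim}_D\Phi(T)\in\mathcal S$. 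Applying $\Phi^{-1}$ yields $X\in\mathcal L$. Filteredness is used only in Proposition~\ref{lem:localizing-colimit}.

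The main obstacle is bookkeeping rather than substance. The homotopy colimit is defined only up to isomorphism, and $\Phi^{-1}$ is an arbitrarily chosen quasi-inverse, so one must make sure that the identification in Proposition~\ref{lem:localizing-colimit} is an isomorphism in $K(\mathrm{Inj}\,\mathcal A)$ and not merely in $D(\mathcal C)$. It is: there $\Phi(X)\cong\mathrm{hocolim}_D\Phi(T)$ in $D(\mathcal C)$, and applying $\Phi^{-1}$ gives $X\cong\Phi^{-1}\Phi(X)\cong\mathrm{hocolim}_DT$. One must also verify stability of $\mathcal M$ in the sense required by Lemma~\ref{lem:localising-hocolimits}, namely that the model-categorical suspension agrees up to natural isomorphism with the shift on dg modules. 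I would justify this by a standard fact, for instance the citation to \cite{Keller2006}, which identifies $\mathrm{Ho}(\mathcal M)$ with the triangulated category $D(\mathcal C)$.
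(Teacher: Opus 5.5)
Your proposal is correct and follows the paper's proof essentially verbatim. Like the paper, you check that the projective model structure on dg $\mathcal C$-modules is stable and combinatorial, observe that $\Phi(\mathcal L)$ is localising in $D(\mathcal C)$, and combine Proposition~\ref{lem:localizing-colimit} with Lemma~\ref{lem:localising-hocolimits} to get $\Phi(\mathrm{colim}_D T)\cong\mathrm{hocolim}_D\Phi(T)\in\Phi(\mathcal L)$, hence $\mathrm{colim}_D T\in\mathcal L$. Your extra bookkeeping about the chosen quasi-inverse $\Phi^{-1}$ only makes explicit what the paper leaves implicit.
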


\begin{proof}
The projective model structure $\mathcal M$ for dg modules is stable, with suspension given by the shift.
$\mathcal{M}$ is also combinatorial, since it is cofibrantly generated, and the Grothendieck category of dg modules is locally presentable \cite[Proposition~3.10]{Beke2000}.
The triangulated equivalence $\Phi$ preserves coproducts, so $\Phi(\mathcal L)$ is localising in $D(\mathcal C)$.
Proposition~\ref{lem:localizing-colimit} and Lemma~\ref{lem:localising-hocolimits} give
\[
 \Phi\!\left(\mathrm{colim}_{d\in D}T_d\right)
 \cong\mathrm{hocolim}_{d\in D}\Phi(T_d)\in\Phi(\mathcal L).
\]
Since $\Phi$ is an equivalence and $\mathcal L$ is closed under isomorphisms, $\mathrm{colim}_{d\in D}T_d\in\mathcal L$.
\end{proof}

\subsection{The cardinal criterion for coproducts}\label{subsec:cardinal-criterion}

We continue to assume that $\mathcal A$ is locally Noetherian, with generator $U$.
This subsection proves $(7)\Longrightarrow(6)$ of Theorem~\ref{thm:equivalence}.
By Lemma~\ref{lem:finite-endomorphism-generator}, we may assume that $\mathrm{End}_{\mathcal A}(U)$ is infinite.
Put $\kappa = |\mathrm{End}_{\mathcal A}(U)|$.

\begin{proposition}\label{prop:card-criterion}
  Each indecomposable injective object $E$ is a quotient of a coproduct of at most $\kappa$ Noetherian subobjects of $U$.
\end{proposition}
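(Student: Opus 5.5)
Since $U$ is a generator, the natural plan is to write $E$ as a quotient of a coproduct of copies of $U$ and then shrink this presentation to at most $\kappa$ Noetherian summands. Concretely, I will first bound the size of a quotient presentation, and then pass to Noetherian subobjects using the locally Noetherian hypothesis.

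Step one is to show that the canonical epimorphism $\coprod_{f\in\mathrm{Hom}_{\mathcal A}(U,E)}U\to E$ can be replaced by one indexed by a set of cardinality at most $\kappa$. The obvious way is to bound $|\mathrm{Hom}_{\mathcal A}(U,E)|$ directly. Since $E$ is indecomposable injective, $E$ is the injective envelope of each of its nonzero subobjects. Choose a nonzero Noetherian subobject $N\subseteq E$, which exists because $\mathcal A$ is locally Noetherian. Choose also a nonzero morphism $u:U\to E$, and replace $N$ by $\mathrm{im}(u)$, a nonzero quotient of $U$; it is not necessarily Noetherian, but $U$ is a directed union of Noetherian subobjects, so a suitable restriction of $u$ is nonzero with Noetherian image.

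The idea is then to control $E$ through $\mathrm{Hom}_{\mathcal A}(U,E)$ as a right module over $B=\mathrm{End}_{\mathcal A}(U)$. Via the fully faithful functor $H=\mathrm{Hom}_{\mathcal A}(U,-)$ of Lemma~\ref{lem:finite-endomorphism-generator}'s proof, $H(E)$ is an injective $B$-module. Moreover $H(E)$ is indecomposable, since $H$ is fully faithful and so $\mathrm{End}(H(E))\cong\mathrm{End}(E)$, which is local. Finally $H(E)$ is uniform, which I would justify via the exact left adjoint $Q$ with $QH\cong\mathrm{Id}$.

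Pick a nonzero $x\in H(E)$. The cyclic submodule $xB$ has cardinality at most $\kappa$, and $H(E)$ is an injective envelope of $xB$. I would prefer to avoid bounding the envelope, which can be large, and use instead: \emph{the images of $u\in H(E)$ jointly generate $E$, and one needs only enough $u$ so that the sum of the images $u(U)$ is essential and injective-closed}. I expect this to be the main obstacle: it is not clear that a subset of size at most $\kappa$ of $\mathrm{Hom}(U,E)$ already covers $E$.

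The approach I would try first is a closure argument. The class of subobjects of $E$ that are images of $\le\kappa$ copies of $U$ is closed under unions of chains of length at most $\kappa^+$. Use the locally Noetherian property: injectives are closed under filtered colimits, so a union of a $\kappa$-sized directed system of injective subobjects is injective. Then build, by iterating at most $\omega$ times, a subobject $S\subseteq E$ generated by at most $\kappa$ morphisms $U\to E$ such that $S$ is injective. At each stage, add a set of at most $\kappa$ maps witnessing extensions of the maps from Noetherian subobjects $N'\subseteq U$ into $S$ along $N'\hookrightarrow U$. By Baer's criterion relative to the generator (injectivity tested on subobjects of $U$), $S$ is injective. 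There are at most $\kappa$ Noetherian subobjects of $U$ relevant here: each is an image of some endomorphism-determined data, or can be bounded by $\kappa$ using that $U$ is locally Noetherian with $\kappa$ infinite.

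Then $S\ne0$ is an injective subobject of indecomposable $E$, so $S=E$. Finally, each map $U\to E$ is the union of its restrictions to Noetherian subobjects of $U$. Hence $E$ is a quotient of a coproduct of at most $\kappa\cdot\kappa=\kappa$ Noetherian subobjects of $U$.
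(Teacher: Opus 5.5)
Your closure strategy is the paper's: an $\omega$-step construction inside $E$ that adds chosen extensions of maps from Noetherian subobjects of $U$ into the current stage, then Baer's criterion to show the union is injective, then indecomposability to get equality with $E$. The detour through $H(E)$ over $\mathrm{End}_{\mathcal A}(U)$ plays no role. However, the step carrying the actual content, the bound $\kappa$, is asserted rather than proved. You write that each stage adds ``a set of at most $\kappa$ maps''. This needs at most $\kappa$ pairs $(N',f)$ with $N'\subseteq U$ Noetherian and $f\colon N'\to S_n$. You only gesture at the count of the $N'$, and say nothing about $|\mathrm{Hom}_{\mathcal A}(N',S_n)|$, which is not controlled a priori because $S_n$ is not Noetherian.

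The paper supplies three estimates.
\begin{enumerate}
\item $U$ has at most $\kappa$ Noetherian subobjects, since each is the image of some $U^{\oplus r}\to U$, that is, of an $r$-tuple of endomorphisms.
\item $|\mathrm{Hom}_{\mathcal A}(M,N)|\le\kappa$ for Noetherian $M,N$. Graphs are subobjects of $M\oplus N$, and a Noetherian object has at most $\kappa$ subobjects: it is a quotient of a Noetherian subobject of some $U^{\oplus r}$, and those are images of matrices over $\mathrm{End}_{\mathcal A}(U)$.
\item An inductive invariant: $S_n$ is a sum of at most $\kappa$ \emph{Noetherian} images. Then any map from a Noetherian object into $S_n$ lands in one of at most $\kappa$ Noetherian finite subsums, and (2) applies.
\end{enumerate}
With your stages as phrased, namely sums of images of whole maps $g\colon U\to E$, you must first split each $g(U)$ into the images $g(U_i)$ of the Noetherian $U_i\subseteq U$. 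Finite subsums of the $g(U)$ need not be Noetherian.

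Two smaller points. First, Baer's criterion must be applied to the generating family of Noetherian subobjects $U_i\subseteq U$, all of whose subobjects are Noetherian, not to arbitrary ``subobjects of $U$''. A map from a non-Noetherian subobject of $U$ into $\bigcup_n S_n$ need not factor through any $S_n$. Your construction does yield the correct version, since an extension to $U$ of a map on $N'\subseteq U_i$ restricts to $U_i$. Second, subobjects generated by at most $\kappa$ maps are not closed under unions of chains of length $\kappa^+$, only under unions of chains of length at most $\kappa$. That is all you actually use.
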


\begin{proof}
Let $(U_i)_{i\in I}$ be the Noetherian subobjects of $U$; they form a generating family since their directed union is $U$.
Choose a nonzero Noetherian subobject $B_0\subseteq E$.
We construct $B_n$ inductively for each $n \in \mathbb N_+$.
When $B_n$ is constructed, put
\[
\Omega_n=\{(i,S,f)\mid i\in I,\ S\subseteq U_i,\ f:S\to B_n\}.
\]
Since $E$ is injective, for each $f : S \to B_n$ with $S \subseteq U_i$ one can fix an extension $\widetilde f : U_i \to E$. 
This defines $B_{n+1}=B_n+\sum_{(i,S,f)\in\Omega_n}\mathrm{Im}\,\widetilde f$.

We claim that $E=\bigcup_{n<\omega}B_n$.
Since $S\subseteq U_i$ is Noetherian, every map $f:S\to\bigcup_{n<\omega}B_n$ factors through some $B_n$ and hence extends over $U_i$ with image in $B_{n+1}$.
The proof of Baer's criterion, applied to the generating family $(U_i)$, makes $\bigcup_{n<\omega}B_n$ injective \cite[\S1.10, Lemme~1, p.~136]{Grothendieck1957}.
Its inclusion in $E$ therefore splits; since it contains $B_0\ne0$ and $E$ is indecomposable, $\bigcup_{n<\omega}B_n=E$.

It remains to show that $|\Omega_n| \leq \kappa$ for each $n \in \mathbb N$.
We claim that
\begin{enumerate}
  \item The collection of Noetherian subobjects of $U$ has cardinality at most $\kappa$,
  \item For Noetherian objects $M$ and $N$, one has $|\mathrm{Hom}_{\mathcal A}(M,N)|\leq\kappa$.
\end{enumerate}
Indeed, each Noetherian subobject $U_i$ of $U$ is a quotient of finite coproducts of $U$, and hence $|I|$ is bounded by $|\mathrm{End}_\mathcal{A}(U)| = \kappa$.
For Noetherian objects $M$ and $N$, there is an injection of sets
\[
\mathrm{Hom}_\mathcal{A}(M, N) \hookrightarrow \{\text{subobjects of } M \oplus N\}, \qquad f \longmapsto \big[\begin{psmallmatrix}1\\f\end{psmallmatrix} : M \to M \oplus N\big].
\]
Since the collection of subobjects of $M \oplus N$ is bounded by $\kappa$, one has $|\mathrm{Hom}_\mathcal{A}(M, N)| \leq \kappa$.

We then show for each $n \in \mathbb N$ inductively the proposition $\mathfrak P(n)$:
\begin{quote}
  $B_n$ is a quotient of a coproduct of at most $\kappa$ objects $U_i$, and $|\Omega_n|\leq\kappa$.
\end{quote}
Since $B_0$ is Noetherian, it is a quotient of a finite coproduct of the $U_i$; the preceding estimates give $|\Omega_0|\leq\kappa$, so $\mathfrak P(0)$ holds.
Suppose $\mathfrak P(n)$ holds for some $n \in \mathbb N$.
By construction $B_{n+1}$ is a quotient of $B_n\oplus\coprod_{(i,S,f)\in\Omega_n}U_i$.
Since $|\Omega_n|\leq\kappa$, it is again a quotient of a coproduct of at most $\kappa$ objects $U_i$.
The preceding estimates bound the number of pairs $(i,S)$ and the size of each $\mathrm{Hom}_{\mathcal A}(S,B_{n+1})$ by $\kappa$, so $|\Omega_{n+1}|\leq\kappa$.
This yields $\mathfrak P(n+1)$, and the claim follows by induction.

Since $E=\bigcup_{n<\omega}B_n$, it is a quotient of $\coprod_{n<\omega}B_n$.
Combining the epimorphisms onto the $B_n$ and using that $\kappa$ is infinite gives the required quotient of at most $\kappa$ objects $U_i$.
This completes the proof.
\end{proof}

\begin{corollary}\label{lem:small-injectives}
  For a set of objects $(Y_t)_{t \in T}\subseteq \mathcal A$, every map from an indecomposable injective object $E$ to $\coprod_{t\in T}Y_t$ factors through some $\coprod_{t \in T'} Y_{t}$, where $T' \subseteq T$ and $|T'| \leq \kappa$.
\end{corollary}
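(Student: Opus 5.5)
By Proposition~\ref{prop:card-criterion} we may fix an epimorphism $\pi:\coprod_{j\in J}V_j\twoheadrightarrow E$, where each $V_j$ is a Noetherian subobject of $U$ and $|J|\le\kappa$. Let $\phi:E\to\coprod_{t\in T}Y_t$ be a morphism. The plan is to show that each composite $\phi\pi\iota_j:V_j\to\coprod_t Y_t$ factors through a finite subcoproduct. Once this holds, everything assembles along $J$.

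For the single-summand step, write $\coprod_{t\in T}Y_t$ as the filtered colimit, in fact the directed union, of the subobjects $\coprod_{t\in F}Y_t$ with $F\subseteq T$ finite. These are subobjects because coproducts of monomorphisms are monic in a Grothendieck category. Since $V_j$ is Noetherian, it is finitely presented, in particular finitely generated. Its image $\phi\pi\iota_j(V_j)$ is a quotient of $V_j$, hence finitely generated. This image is the directed union of its intersections with the subcoproducts $\coprod_{t\in F}Y_t$, using exactness of filtered colimits (AB5). So the image lies in $\coprod_{t\in F_j}Y_t$ for some finite $F_j\subseteq T$.

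Put $T'=\bigcup_{j\in J}F_j$. Then $|T'|\le\kappa\cdot\aleph_0=\kappa$, because $\kappa$ is infinite by the standing assumption of this subsection. Each $\phi\pi\iota_j$ has image inside the subobject $\coprod_{t\in T'}Y_t$. Since $\coprod_j V_j$ is the sum of the images of the $\iota_j$, the image of $\phi\pi$ is the sum of the images of the $\phi\pi\iota_j$, and so it also lies in $\coprod_{t\in T'}Y_t$. Finally, $\pi$ is an epimorphism, so $\operatorname{im}\phi=\operatorname{im}\phi\pi\subseteq\coprod_{t\in T'}Y_t$. Hence $\phi$ factors through the monomorphism $\coprod_{t\in T'}Y_t\hookrightarrow\coprod_{t\in T}Y_t$.

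The only delicate point is the use of finite generation against the directed union of finite subcoproducts. This requires identifying the coproduct with the union of its finite subcoproducts, together with AB5 to intersect the image with that union. Everything else is cardinal bookkeeping.
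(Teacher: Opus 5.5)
Your proposal is correct and is essentially the paper's proof: the same epimorphism from a coproduct of at most $\kappa$ Noetherian subobjects of $U$ (Proposition~\ref{prop:card-criterion}), a finite subcoproduct for each summand, the union $T'$ of cardinality at most $\kappa$, and descent along the epimorphism. The only difference is cosmetic: you phrase the last step through images, whereas the paper notes that the projection onto $\coprod_{t\in T\setminus T'}Y_t$ vanishes after composing with the epimorphism and hence vanishes on $E$.
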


\begin{proof}
  By Proposition~\ref{prop:card-criterion}, $E$ is a quotient of $\coprod_{s \in S} U_s$ where each $U_s$ is a Noetherian subobject of $U$ and $|S| \leq \kappa$.
  Then consider the composition $g : \coprod_{s \in S} U_s \twoheadrightarrow E \to \coprod_{t\in T}Y_t$.
  For each $s \in S$, the restriction $g : U_s \to \coprod_{t\in T}Y_t$ factors through a finite subcoproduct $\coprod_{t \in T_s} Y_t$ with $T_s \subseteq T$ finite.
  Set $T'=\bigcup_{s\in S}T_s$; then $|T'|\leq\kappa$, and $g$ factors through $\coprod_{t\in T'}Y_t$.
  The projection to $\coprod_{t\in T\setminus T'}Y_t$ vanishes after the quotient epimorphism, hence vanishes on $E$.
  Thus the original map factors through the same subcoproduct.
\end{proof}

\begin{proposition}\label{prop:cardinal-criterion}
  The class $\mathcal{GI}(\mathcal A)$ is closed under arbitrary coproducts if it is closed under $\kappa$-indexed coproducts.
\end{proposition}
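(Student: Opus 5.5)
Let $(G_t)_{t\in T}$ be a family of Gorenstein injective objects and put $G=\coprod_{t\in T}G_t$. For $T'\subseteq T$ write $G_{T'}=\coprod_{t\in T'}G_t$. Each $G_{T'}$ is a direct summand of $G$, so the inclusions and projections between these subcoproducts are compatible. By hypothesis $G_{T'}\in\mathcal{GI}(\mathcal A)$ whenever $|T'|\le\kappa$; finite subsets are covered by allowing repeated zero summands, or by closure under finite sums. The plan is to build a complete injective resolution of $G$ directly, and to verify total acyclicity using Corollary~\ref{lem:small-injectives}.

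\textbf{Step 1: degreewise coproduct.} Take the functorial complete resolutions $\mathbf I(G_t)$ of Proposition~\ref{lem:gi-basic}, and form the degreewise coproduct $I=\coprod_t\mathbf I(G_t)$. Because $\mathcal A$ is locally Noetherian, $I$ is a complex of injectives. Because coproducts are exact in a Grothendieck category, $I$ is acyclic with $Z^0(I)\cong G$.

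\textbf{Step 2: reduction to indecomposable injectives.} It remains to show that $\mathrm{Hom}_{\mathcal A}(E,I)$ is acyclic for every injective $E$. Every injective is a coproduct of indecomposables, but $\mathrm{Hom}(\coprod E_i,-)=\prod\mathrm{Hom}(E_i,-)$ and products of acyclic complexes of abelian groups are acyclic. Hence it suffices to treat $E\in\mathcal E$ indecomposable.

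\textbf{Step 3: factoring a cycle through a small subcoproduct.} Let $f\in\mathrm{Hom}(E,I^n)$ be a cycle, i.e.\ $d^nf=0$. By Corollary~\ref{lem:small-injectives}, applied to $I^n=\coprod_t\mathbf I(G_t)^n$, the map $f$ factors through $\coprod_{t\in T'}\mathbf I(G_t)^n$ for some $|T'|\le\kappa$. The subcomplex $I_{T'}=\coprod_{t\in T'}\mathbf I(G_t)$ is a direct summand of $I$ as a complex, so $f$ is a cycle of $\mathrm{Hom}(E,I_{T'})$.

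\textbf{Step 4: total acyclicity of $I_{T'}$.} The complex $I_{T'}$ is an acyclic complex of injectives. Its cycle objects are $Z^m(I_{T'})=\coprod_{t\in T'}Z^m(\mathbf I(G_t))$, a $\le\kappa$-indexed coproduct of Gorenstein injectives, hence Gorenstein injective by hypothesis. By the consequence of \eqref{eq:gi-ext-vanishing} noted in the preliminaries, $I_{T'}$ is totally acyclic. Therefore $f=d^{n-1}g$ for some $g:E\to I^{n-1}_{T'}$, and composing $g$ with the split inclusion $I_{T'}\to I$ shows that $f$ is a boundary in $\mathrm{Hom}(E,I)$. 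Thus $I$ is totally acyclic and $G\in\mathcal{GI}(\mathcal A)$.

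The main obstacle is the bookkeeping in Step~3. One has to check that the factorization from Corollary~\ref{lem:small-injectives} is compatible with the differential, and this is exactly why a split subcomplex is used rather than a single degree. One should also note that a subset $T'$ of size $\le\kappa$ falls under the hypothesis; if the hypothesis means exactly $\kappa$-indexed coproducts, pad the family with copies of the zero object, which is Gorenstein injective. No functoriality of $\mathbf I$ is needed beyond choosing some complete resolution for each $G_t$.
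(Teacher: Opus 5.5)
Your proof is correct and follows essentially the same route as the paper. You take the degreewise coproduct of complete resolutions, reduce to indecomposable injectives, use Corollary~\ref{lem:small-injectives} to push a cycle into a subcoproduct indexed by at most $\kappa$ terms, and lift there using the hypothesis. The only difference is cosmetic: you lift inside the split subcomplex $I_{T'}$, shown totally acyclic via its Gorenstein injective cycles. The paper instead lifts through the single short exact sequence $0\to\coprod_{j\in J}Z^{n-1}T_j\to\coprod_{j\in J}T_j^{n-1}\to\coprod_{j\in J}Z^nT_j\to0$.
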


\begin{proof}
Let $(G_i)_{i\in I}$ be Gorenstein injective objects, and choose complete injective resolutions $T_i$ with $Z^0T_i\cong G_i$.
Then $T=\coprod_{i\in I}T_i$ is acyclic with injective terms and $Z^nT=\coprod_{i\in I}Z^nT_i$.
To see that $T$ is totally acyclic, it suffices to show that $\mathrm{Hom}_{\mathcal A}(E,T)$ is acyclic for each indecomposable injective object $E$: this follows from the canonical isomorphism $\prod_{i} \mathrm{Ext}^1_\mathcal{A}(X_i, Y) \cong \mathrm{Ext}^1_\mathcal{A}(\coprod_{i}X_i, Y)$ for Grothendieck categories.

For $E\in\mathcal E$, every map $f : E\to Z^nT$ factors, by Corollary~\ref{lem:small-injectives}, through $\coprod_{j\in J}Z^nT_j$ for some $J\subseteq I$ with $|J|\leq\kappa$.  
Put the restriction $f' : E\to \coprod_{j \in J}Z^n T_j$.
The exact sequence
\[
 0\longrightarrow\coprod_{j\in J}Z^{n-1}T_j\longrightarrow\coprod_{j\in J}T_j^{n-1}
 \longrightarrow\coprod_{j\in J}Z^nT_j\longrightarrow0
\]
has Gorenstein injective left term, so $f'$ lifts to $\coprod_{j \in J}T_j^{n-1}$, and hence $f$ lifts to $T^{n-1}$.
Thus $\mathrm{Hom}_{\mathcal A}(E,T)$ is acyclic.
It follows that $T$ is totally acyclic, and hence $Z^0T\cong\coprod_{i\in I}G_i$ is Gorenstein injective.
\end{proof}

\subsection{From filtered colimits to covers}\label{subsec:small-tac}

Let $\mathcal A$ be locally Noetherian, and let $\mathcal C \subseteq \mathrm{Ch}(\mathcal A)$ denote the class of totally acyclic complexes of injective objects.
We prove $(5)\Longrightarrow(1)$ in three steps: first $\mathcal C$ is Kaplansky; then closure of $\mathcal{GI}(\mathcal A)$ under filtered colimits makes $\mathcal C$ a covering class; finally $\mathcal{C}$-covers yields $\mathcal{GI}(\mathcal{A})$-covers under certain assumptions.

The zero category is immediate.
For this construction, choose a generator $U$ with infinite endomorphism ring, replacing a generator by its countable coproduct if necessary.
Put $\kappa=|\mathrm{End}_{\mathcal A}(U)|$ and $\lambda=(2^\kappa)^+$.
For a regular cardinal $\lambda$, a category is \emph{$\lambda$-filtered} if every diagram in it indexed by a category with fewer than $\lambda$ morphisms admits a cocone.
An object $X$ is \emph{$\lambda$-presentable} if $\mathrm{Hom}_\mathcal{A}(X, -)$ preserves $\lambda$-filtered colimits.

A class $\mathcal F$ of objects is \emph{$\delta$-Kaplansky} if $\delta$ is regular and, for every $T\in\mathcal F$ and every $\delta$-presentable subobject $Y\subseteq T$, there is a $\delta$-presentable subobject $V$ with $Y\subseteq V\subseteq T$ and $V,T/V\in\mathcal F$.
$\mathcal{F}$ is \emph{Kaplansky} if it is $\delta$-Kaplansky for some regular cardinal $\delta$.

The next four estimates replace the module-cardinality bounds in the zig-zag construction.
Together with Noetherian covers of pullbacks, they allow the construction to be carried out intrinsically in $\mathcal A$.

\begin{lemma}\label{lem:tac-smallness}
Every indecomposable injective object $E$ is $\kappa^+$-presentable.
\end{lemma}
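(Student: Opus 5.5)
The plan is to combine Proposition~\ref{prop:card-criterion} with the fact that Noetherian objects are finitely presented. By Proposition~\ref{prop:card-criterion}, there is an epimorphism $\pi:\coprod_{s\in S}U_s\twoheadrightarrow E$ with $|S|\le\kappa$, each $U_s$ a Noetherian subobject of $U$. Each $U_s$ is finitely presented (Noetherian objects are finitely presented in a locally Noetherian category). A coproduct of at most $\kappa$ finitely presented objects is $\kappa^+$-presentable, because $\mathrm{Hom}(\coprod_s U_s,-)=\prod_s\mathrm{Hom}(U_s,-)$, and a product of fewer than $\kappa^+$ factors commutes with $\kappa^+$-filtered colimits of sets. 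The remaining task is to pass from the presentable cover to $E$ itself, which requires controlling the kernel $K=\ker\pi$.

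To handle the kernel, I will show that $K$ is also a quotient of a coproduct of at most $\kappa$ Noetherian subobjects of $U$, i.e.\ $K$ is $\kappa^+$-generated. Then $E$ is the cokernel of a map between two $\kappa^+$-presentable objects $\coprod_{t\in T}U_t\to\coprod_{s\in S}U_s$. The class of $\kappa^+$-presentable objects is closed under cokernels, since finite colimits commute with $\kappa^+$-filtered colimits. So $E$ is $\kappa^+$-presentable.

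To bound the generation of $K$, I will write $\coprod_s U_s$ as the directed union of its finite subcoproducts $F$, which gives $K=\bigcup_F (K\cap F)$. Each $F$ is Noetherian, so $K\cap F$ is Noetherian. There are at most $\kappa$ finite subsets of $S$ since $\kappa$ is infinite. Each Noetherian $K\cap F$ is a quotient of a finite coproduct of the $U_i$, because the $U_i$ generate and $K\cap F$ is finitely generated. Hence $K$ is a quotient of a coproduct of at most $\kappa$ of the $U_i$, as required.

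The main obstacle is the step for general $\lambda$-presentability in a Grothendieck category: verifying that a coproduct of $\le\kappa$ finitely presented objects is $\kappa^+$-presentable, and that cokernels preserve presentability. I will check these directly via the exactness of $\mathrm{Hom}$ and the interchange of $\kappa^+$-filtered colimits of abelian groups with products of fewer than $\kappa^+$ factors and with finite limits. Both are standard facts about $\lambda$-filtered colimits in $\mathrm{Ab}$, so I expect this step to be routine once set up.
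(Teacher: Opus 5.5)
Your proof is correct, but its key step differs from the paper's. Both arguments start from Proposition~\ref{prop:card-criterion} and use the fact that Noetherian objects are finitely presented.

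The paper works with images rather than a presentation. The images in $E$ of the finite subcoproducts of $\coprod_{s\in S}U_s$ are Noetherian, there are at most $\kappa$ of them, and they form a directed family with union $E$. So $E$ is the colimit of a diagram with at most $\kappa$ morphisms of finitely presented objects. Then $\mathrm{Hom}_{\mathcal A}(E,-)$ is a $\kappa^+$-small limit of functors preserving filtered colimits, and no information about $\ker\pi$ is needed.

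You instead build a presentation $\coprod_{t\in T}U_t\to\coprod_{s\in S}U_s\to E\to 0$. This forces the extra step of bounding the generation of $\ker\pi$, via AB5 and $\ker\pi=\bigcup_F(\ker\pi\cap F)$. That step is fine. It can be shortened: each $\ker\pi\cap F$ is already Noetherian, hence finitely presented, so you may use $\coprod_F(\ker\pi\cap F)$ directly instead of re-covering it by the $U_i$. What your route buys is the standard description of $E$ as a cokernel of a map between coproducts of at most $\kappa$ finitely presented objects. The paper's route is shorter because it never touches the kernel.

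One wording point: closure of $\kappa^+$-presentable objects under cokernels does not come from finite colimits commuting with $\kappa^+$-filtered colimits. It comes from exactness of filtered colimits in $\mathrm{Ab}$, applied to the left exact sequence
\[
0\to\mathrm{Hom}_{\mathcal A}(E,-)\to\mathrm{Hom}_{\mathcal A}\Bigl(\coprod_{s\in S} U_s,-\Bigr)\to\mathrm{Hom}_{\mathcal A}\Bigl(\coprod_{t\in T} U_t,-\Bigr).
\]
This is in effect what you say in your last paragraph.
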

\begin{proof}
We first make explicit the cardinal estimates used below.
A Noetherian subobject of $U^{\oplus r}$ is the sum of the images of finitely many maps $U\to U^{\oplus r}$, so there are at most $\kappa$ such subobjects.
Every Noetherian object $M$ is a quotient of a Noetherian subobject $L\subseteq U^{\oplus r}$: choose an epimorphism $U^{\oplus r}\twoheadrightarrow M$ and use the directed union of Noetherian subobjects of its source to find $L$ mapping onto $M$.
Preimages in $L$ therefore bound the subobjects of $M$ by $\kappa$.
Graphs in $M\oplus N$ then give $|\mathrm{Hom}_{\mathcal A}(M,N)|\leq\kappa$ for Noetherian $M,N$.
By Proposition~\ref{prop:card-criterion}, $E$ is a quotient of a coproduct of at most $\kappa$ Noetherian objects.
The images of the finite subcoproducts form a directed family of at most $\kappa$ Noetherian subobjects with union $E$.
Noetherian objects in $\mathcal A$ are finitely presented, so this expresses $E$ as a $\kappa^+$-small colimit of $\kappa^+$-presentable objects.
Hence $E$ is $\kappa^+$-presentable, since $\kappa^+$-filtered colimits of sets commute with $\kappa^+$-small limits.
\end{proof}

\begin{corollary}\label{cor:small-coproduct-presentable}
A coproduct of $\leq 2^\kappa$ indecomposable injective objects is $\lambda$-presentable.
\end{corollary}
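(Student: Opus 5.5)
The plan is to write a coproduct $E=\coprod_{t\in T}E_t$ of indecomposable injectives with $|T|\le 2^\kappa$ as a $\lambda$-small colimit of $\lambda$-presentable objects, and then apply the standard closure fact: a colimit of fewer than $\lambda$ $\lambda$-presentable objects, indexed by a small diagram with fewer than $\lambda$ morphisms, is $\lambda$-presentable. This holds because $\lambda$-filtered colimits of sets commute with $\lambda$-small limits. Here $\lambda=(2^\kappa)^+$ is a successor cardinal, hence regular.

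First, by Lemma~\ref{lem:tac-smallness}, each $E_t$ is $\kappa^+$-presentable. Since $\kappa^+\le\lambda$, every $\lambda$-filtered category is $\kappa^+$-filtered, so each $E_t$ is also $\lambda$-presentable.

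Second, observe that $\mathrm{Hom}_{\mathcal A}(E,-)\cong\prod_{t\in T}\mathrm{Hom}_{\mathcal A}(E_t,-)$ by the universal property of the coproduct. Take a $\lambda$-filtered diagram $(Y_d)_{d\in D}$ in $\mathcal A$. Then
\[
\mathrm{colim}_d\mathrm{Hom}_{\mathcal A}(E,Y_d)\cong\mathrm{colim}_d\prod_{t\in T}\mathrm{Hom}_{\mathcal A}(E_t,Y_d)\cong\prod_{t\in T}\mathrm{colim}_d\mathrm{Hom}_{\mathcal A}(E_t,Y_d)\cong\prod_{t\in T}\mathrm{Hom}_{\mathcal A}(E_t,\mathrm{colim}_dY_d),
\]
and the last term is $\mathrm{Hom}_{\mathcal A}(E,\mathrm{colim}_dY_d)$. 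The middle isomorphism is the one nontrivial input: in $\mathrm{Set}$, $\lambda$-filtered colimits commute with products of fewer than $\lambda$ factors, and $|T|\le2^\kappa<\lambda$.

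To justify that step, check injectivity and surjectivity of the comparison map directly. For surjectivity, a family of elements, one in each $\mathrm{colim}_d$, is represented at stages $d_t$. These fewer than $\lambda$ indices have a common upper bound by $\lambda$-filteredness, so the whole family is represented at a single stage. For injectivity, two families that agree in the product colimit agree coordinatewise at stages $d'_t\ge d$. Again there are fewer than $\lambda$ such stages, so one common bound equalises all coordinates. Finally, compatibility of the comparison map with the canonical map $\mathrm{colim}_d\mathrm{Hom}(E,Y_d)\to\mathrm{Hom}(E,\mathrm{colim}Y_d)$ is a naturality check.

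The main obstacle is this middle commutation step, and it is elementary. The only point needing care is that $\lambda$ is regular and strictly exceeds $|T|$, which is why $\lambda$ was chosen to be $(2^\kappa)^+$.
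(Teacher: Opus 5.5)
Your proof is correct and is essentially the paper's argument: each summand is $\lambda$-presentable by Lemma~\ref{lem:tac-smallness}, and $\mathrm{Hom}_{\mathcal A}(\coprod_t E_t,-)\cong\prod_t\mathrm{Hom}_{\mathcal A}(E_t,-)$ commutes with $\lambda$-filtered colimits because such colimits of sets commute with products of fewer than $\lambda$ factors. Your elementwise check just spells out what the paper cites as standard; in the injectivity step, if the index category is not a poset, take a cocone on the fewer than $\lambda$ equalising morphisms out of $d$, which is exactly what $\lambda$-filteredness provides.
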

\begin{proof}
By Lemma~\ref{lem:tac-smallness}, each summand is $\lambda$-presentable.
Hom out of the coproduct is a product of fewer than $\lambda=(2^\kappa)^+$ Hom functors, and $\lambda$-filtered colimits of sets commute with such products.
\end{proof}

\begin{lemma}\label{lem:small-coproduct-subobjects}
A coproduct of at most $2^\kappa$ indecomposable injective objects has at most $2^\kappa$ Noetherian subobjects.
\end{lemma}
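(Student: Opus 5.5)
Let $Y=\coprod_{t\in T}E_t$ with $|T|\le 2^\kappa$ and each $E_t$ indecomposable injective. The plan is to show that every Noetherian subobject of $Y$ lies in a small subcoproduct, and then to count the Noetherian subobjects of each such subcoproduct.

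First I reduce to finite subcoproducts. A Noetherian subobject $N\subseteq Y$ is finitely generated. Moreover $Y$ is the directed union of the finite subcoproducts $Y_F=\coprod_{t\in F}E_t$, $F\subseteq T$ finite, and filtered colimits in $\mathcal A$ are exact. Hence
\[
N=\bigcup_F (N\cap Y_F),
\]
so $N\subseteq Y_F$ for some finite $F$. The number of finite subsets of $T$ is at most $2^\kappa$ (since $2^\kappa$ is infinite). It therefore suffices to show that each finite coproduct $Y_F$ has at most $2^\kappa$ Noetherian subobjects.

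Next I bound the Noetherian subobjects of $Y_F$ for fixed finite $F$. As in the proof of Lemma~\ref{lem:tac-smallness}, every Noetherian object $N$ is a quotient of a Noetherian subobject $L\subseteq U^{\oplus r}$ for some $r$, and there are at most $\kappa$ such pairs $(r,L)$. So each Noetherian $N\subseteq Y_F$ is the image of some morphism $L\to Y_F$, and the count is bounded by
\[
\sum_{(r,L)}|\mathrm{Hom}_{\mathcal A}(L,Y_F)|.
\]
Now $L$ is a subobject of $U^{\oplus r}$ and $Y_F$ is injective, so restriction gives a surjection $\mathrm{Hom}_{\mathcal A}(U^{\oplus r},Y_F)\twoheadrightarrow \mathrm{Hom}_{\mathcal A}(L,Y_F)$. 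It thus suffices to bound $|\mathrm{Hom}_{\mathcal A}(U,E)|\le 2^\kappa$ for an indecomposable injective $E$; the finite coproduct and the finite power of $U$ then preserve this bound.

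The main obstacle is this bound on $\mathrm{Hom}_{\mathcal A}(U,E)$. I plan to use Proposition~\ref{prop:card-criterion} together with the construction in its proof, which gives $E=\bigcup_{n<\omega}B_n$ with each $B_n$ a quotient of a coproduct of at most $\kappa$ objects $U_i$. Because the $U_i$ are Noetherian, $\mathrm{Hom}_{\mathcal A}(U_i,E)=\bigcup_n\mathrm{Hom}_{\mathcal A}(U_i,B_n)$. Also $\mathrm{Hom}_{\mathcal A}(U_i,B_n)$ injects into the set of subobjects of $U_i\oplus B_n$ via graphs. The subobjects of $B_n$ are controlled by subobjects of $\coprod_{\le\kappa}U_i$, and their number is at most $2^\kappa$: every subobject is the union of its Noetherian subobjects, each of which lies in a finite subcoproduct and so has at most $\kappa$ choices, and a subobject is determined by a set of these choices. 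This gives $|\mathrm{Hom}_{\mathcal A}(U_i,E)|\le\aleph_0\cdot 2^\kappa=2^\kappa$.

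Finally, $U$ is the directed union of the $U_i$, so
\[
\mathrm{Hom}_{\mathcal A}(U,E)\hookrightarrow\prod_i\mathrm{Hom}_{\mathcal A}(U_i,E).
\]
This product has cardinality at most $(2^\kappa)^\kappa=2^\kappa$, since there are at most $\kappa$ indices $i$. Summing the bounds over $\le\kappa$ pairs $(r,L)$ and $\le 2^\kappa$ finite subsets $F$ gives at most $2^\kappa$ Noetherian subobjects of $Y$.
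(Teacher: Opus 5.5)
Your proof is correct. You share the paper's outer reduction: every Noetherian subobject lies in a finite subcoproduct, and there are at most $2^\kappa$ of those. You bound the finite stage by a different route.

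The paper argues directly from Proposition~\ref{prop:card-criterion}. A finite coproduct $Y_F$ of indecomposable injectives is a quotient of a coproduct of at most $\kappa$ Noetherian subobjects of $U$. The images of the finite subcoproducts therefore exhibit $Y_F$ as a directed union of at most $\kappa$ Noetherian subobjects, each with at most $\kappa$ subobjects. Every Noetherian subobject of $Y_F$ lies in one of them, so $Y_F$ has at most $\kappa$ Noetherian subobjects.

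You instead count morphisms from the at most $\kappa$ test objects $L\subseteq U^{\oplus r}$. You use injectivity of $Y_F$ to reduce to $|\mathrm{Hom}_{\mathcal A}(U,E)|$, and bound that by $2^\kappa$ via graphs and a count of all subobjects of a $\le\kappa$-fold coproduct of Noetherian objects. This gives only $2^\kappa$ at the finite stage, which still suffices for the lemma. It is longer and loses the sharper bound $\kappa$ that the paper's argument yields; that argument also gives $|\mathrm{Hom}_{\mathcal A}(L,Y_F)|\le\kappa$ for Noetherian $L$ for free.

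Two small remarks on your last step. First, the graph of a map $U_i\to B_n$ is a subobject of $U_i\oplus B_n$, not of $B_n$. So you should count subobjects of $U_i\oplus B_n$; it is again a quotient of a coproduct of at most $\kappa$ of the $U_j$, so the same $2^\kappa$ bound applies. Second, the filtration $E=\bigcup_n B_n$ and the product over $i$ are unnecessary. By the statement of Proposition~\ref{prop:card-criterion}, together with $U$ being the union of at most $\kappa$ Noetherian subobjects, $U\oplus E$ is already a quotient of a coproduct of at most $\kappa$ Noetherian subobjects of $U$. The graph argument then gives $|\mathrm{Hom}_{\mathcal A}(U,E)|\le 2^\kappa$ in one step.
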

\begin{proof}
By Proposition~\ref{prop:card-criterion}, every finite coproduct of indecomposable injectives is a directed union of at most $\kappa$ Noetherian subobjects, obtained as images of finite subcoproducts of a Noetherian presentation.
Each has at most $\kappa$ subobjects, and every Noetherian subobject of the union lies in one of them.
Thus a finite coproduct has at most $\kappa$ Noetherian subobjects.
For the asserted bound, every Noetherian subobject lies in a finite subcoproduct, and there are at most $2^\kappa$ finite subcoproducts.
\end{proof}

\begin{corollary}\label{cor:small-coproduct-hom}
For every indecomposable injective object $E$ and every coproduct $X$ of at most $2^\kappa$ indecomposable injective objects, one has $|\mathrm{Hom}_{\mathcal A}(E,X)|\leq2^\kappa$.
\end{corollary}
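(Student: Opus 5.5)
The plan is to reduce the bound to the structure already established for $E$ and for $X$. Write $X=\coprod_{t\in T}X_t$ with each $X_t$ indecomposable injective and $|T|\leq 2^\kappa$.

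First, every morphism $f:E\to X$ is determined by its restrictions to Noetherian subobjects. By Proposition~\ref{prop:card-criterion} and the proof of Lemma~\ref{lem:tac-smallness}, $E$ is the directed union of a fixed family $(E_s)_{s\in S}$ of Noetherian subobjects with $|S|\leq\kappa$. Since $E=\bigcup_s E_s$, the map
\[
\mathrm{Hom}_{\mathcal A}(E,X)\longrightarrow\prod_{s\in S}\mathrm{Hom}_{\mathcal A}(E_s,X)
\]
is injective. Indeed, if $f|_{E_s}=0$ for all $s$, then $\ker f$ contains every $E_s$, so $\ker f=E$.

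Second, each factor has at most $2^\kappa$ elements. Because $E_s$ is Noetherian, hence finitely generated, every map $E_s\to X$ lands in a finite subcoproduct $X_F=\coprod_{t\in F}X_t$. It then factors through the image, which is a Noetherian subobject of $X_F$. So
\[
\mathrm{Hom}_{\mathcal A}(E_s,X)=\bigcup_{F,\,N}\mathrm{Hom}_{\mathcal A}(E_s,N),
\]
where $F$ runs over the finite subsets of $T$ and $N$ over the Noetherian subobjects of $X_F$. By Lemma~\ref{lem:small-coproduct-subobjects}, together with the count of finite subsets, there are at most $2^\kappa$ pairs $(F,N)$; in fact it is enough that $X$ itself has at most $2^\kappa$ Noetherian subobjects. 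For each such $N$, the graph estimate from the proof of Lemma~\ref{lem:tac-smallness} gives $|\mathrm{Hom}_{\mathcal A}(E_s,N)|\leq\kappa$ for Noetherian $E_s,N$. Hence $|\mathrm{Hom}_{\mathcal A}(E_s,X)|\leq 2^\kappa\cdot\kappa=2^\kappa$.

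Combining the two steps,
\[
|\mathrm{Hom}_{\mathcal A}(E,X)|\leq(2^\kappa)^{\kappa}=2^{\kappa\cdot\kappa}=2^\kappa.
\]
The only subtle point is the first step. A cruder approach would bound the product over \emph{all} Noetherian subobjects of $E$, or would try to factor $f$ through a subcoproduct of size $\kappa$ as in Corollary~\ref{lem:small-injectives}. The latter gives at most $(2^\kappa)^\kappa=2^\kappa$ choices of subcoproduct, but still leaves the Hom sets into them to be bounded. Restricting to a generating family of only $\kappa$ Noetherian subobjects keeps the exponent at $\kappa$, and this is what makes the final estimate $2^\kappa$ rather than something larger.
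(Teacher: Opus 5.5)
Your proposal is correct and essentially the paper's proof. The paper embeds $\mathrm{Hom}_{\mathcal A}(E,X)$ into $\prod_{i\in I}\mathrm{Hom}_{\mathcal A}(U_i,X)$ via an epimorphism $\coprod_{i\in I}U_i\twoheadrightarrow E$ with $|I|\leq\kappa$; this is the same as your restriction to the $\leq\kappa$ Noetherian images $E_s$. It then bounds each factor by $2^\kappa\cdot\kappa$ using Lemma~\ref{lem:small-coproduct-subobjects} and the Noetherian Hom estimate, exactly as you do.
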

\begin{proof}
For a Noetherian object $N$, every map $N\to X$ has Noetherian image.
There are at most $2^\kappa$ possible images by Lemma~\ref{lem:small-coproduct-subobjects}, and at most $\kappa$ maps to each by the Noetherian Hom estimate recalled above.
Choose an epimorphism $\coprod_{i\in I}U_i\twoheadrightarrow E$ as in Proposition~\ref{prop:card-criterion}, with $|I|\leq\kappa$ and each $U_i$ Noetherian.
Then
\[
 |\mathrm{Hom}_{\mathcal A}(E,X)|
 \leq\prod_{i\in I}|\mathrm{Hom}_{\mathcal A}(U_i,X)|
 \leq(2^\kappa)^\kappa=2^\kappa.
\]
\end{proof}

\begin{proposition}\label{prop:small-tac-subcomplex}
The class $\mathcal C$ is $\lambda$-Kaplansky in $\mathrm{Ch}(\mathcal A)$.
\end{proposition}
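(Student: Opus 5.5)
We prove that $\mathcal C$ is $\lambda$-Kaplansky by a zig-zag (back-and-forth) construction of length $\omega$, in the style of the module-theoretic proofs for totally acyclic complexes. Throughout, we use the four estimates just established in place of cardinality counts of elements.

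Let $T\in\mathcal C$ and let $Y\subseteq T$ be a $\lambda$-presentable subcomplex. Each $T^n$ is injective, so by locally Noetherian decomposition $T^n=\coprod_{a\in A_n}E_a$ with $E_a\in\mathcal E$. The subcomplexes we build will be of a specific form: in each degree $n$, a subcoproduct $V^n=\coprod_{a\in A'_n}E_a$ with $|A'_n|\le 2^\kappa$, closed under the differential. Such a $V$ is a degreewise $\lambda$-presentable complex of injectives by Corollary~\ref{cor:small-coproduct-presentable}, hence $\lambda$-presentable in $\mathrm{Ch}(\mathcal A)$. Moreover, $V^n$ is a direct summand of $T^n$, so $T/V$ is again a complex of injectives.

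To start, $Y^n$ is $\lambda$-presentable, so it lies in a subcoproduct indexed by at most $2^\kappa$ summands; this uses Corollary~\ref{lem:small-injectives} applied to a Noetherian cover of $Y^n$ and bounds on Noetherian subobjects. We then iterate countably many closure steps, each enlarging every $A'_n$ by at most $2^\kappa$ indices:
\begin{enumerate}
\item[(a)] \emph{Closure under $d$.} Each $E_a\subseteq T^n$ with $a\in A'_n$ maps under $d$ into a subcoproduct of $T^{n+1}$ with at most $\kappa$ summands, by Corollary~\ref{lem:small-injectives}.
\item[(b)] \emph{Acyclicity.} For each Noetherian subobject of $Z^n(V)$ (there are at most $2^\kappa$ by Lemma~\ref{lem:small-coproduct-subobjects}), choose a preimage in $T^{n-1}$, which is acyclic, via a Noetherian cover of the pullback. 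Then add the summands it meets.
\item[(c)] \emph{Total acyclicity.} For each $E\in\mathcal E$ and each map $E\to Z^n(V)$, choose a lift $E\to T^{n-1}$, which exists since $T\in\mathcal C$, and add the at most $\kappa$ summands it meets. The number of such maps is at most $2^\kappa$ by Corollary~\ref{cor:small-coproduct-hom}, and $|\mathcal E|\le 2^\kappa$ since each $E$ is a quotient of a $\kappa$-coproduct of Noetherian subobjects of $U$.
\end{enumerate}

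Let $V$ be the union over the $\omega$ stages. Stage (a) at later steps makes $V$ a subcomplex. Noetherian subobjects and maps out of indecomposable injectives into $Z^n(V)$ factor through a finite stage, by Lemma~\ref{lem:tac-smallness}, since $\lambda>\omega$ and the factorisation is stable. Hence $V$ is acyclic and $\mathrm{Hom}(E,V)$ is acyclic for all $E\in\mathcal E$, and therefore for all injectives, arguing as in Proposition~\ref{prop:cardinal-criterion}. So $V\in\mathcal C$.

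For $T/V$: the sequence $0\to V\to T\to T/V\to0$ is degreewise split, and $V,T$ are acyclic, so $T/V$ is acyclic. For any injective $E$, applying $\mathrm{Hom}(E,-)$ gives a short exact sequence of complexes, because of the degreewise splitting. Its first two terms are acyclic, so $\mathrm{Hom}(E,T/V)$ is acyclic. Hence $T/V\in\mathcal C$.

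The main obstacle is step (b)/(c): ensuring that witnesses chosen in the ambient $T$ are controlled by at most $2^\kappa$ summands, and that the limiting $V$ inherits exactness. Noetherian objects are finitely presented and indecomposable injectives are $\kappa^+$-presentable, so every relevant test map factors through some finite stage of the $\omega$-chain. Provided the counting via Corollary~\ref{cor:small-coproduct-hom} and Lemma~\ref{lem:small-coproduct-subobjects} is applied to sub-coproducts of size at most $2^\kappa$ at each stage, the bookkeeping closes up.
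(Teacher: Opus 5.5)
There is a genuine gap in step (c), and it comes from the length of your chain. You assert that every map $f\colon E\to Z^n(V)$ from an indecomposable injective factors through a finite stage ``by Lemma~\ref{lem:tac-smallness}, since $\lambda>\omega$''. That lemma only says $E$ is $\kappa^+$-presentable, i.e.\ $\mathrm{Hom}_{\mathcal A}(E,-)$ commutes with $\kappa^+$-filtered colimits. An $\omega$-indexed chain is not even $\aleph_1$-filtered, so nothing follows from it.

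Indecomposable injectives are typically not finitely presented; think of the Pr\"ufer group $\mathbb Z(p^\infty)$, or $E_R(k)$ over a local ring of positive dimension. By Corollary~\ref{lem:small-injectives}, $f$ may meet up to $\kappa$ summands of $V^n$, and these can be spread over infinitely many of your $\omega$ stages. Then $f$ factors through no $Z^n(V_m)$, and step (c) never supplies a lift.

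This is not just a missing justification: the paper's own Example~\ref{thm:main} refutes the principle.
\begin{itemize}
\item The complexes $W_m=\coprod_{n\le m}\mathbf I\langle n\rangle$ form an $\omega$-chain of degreewise subcoproducts of $W=\coprod_{n\ge1}\mathbf I\langle n\rangle$.
\item Each $W_m$ is totally acyclic, so every map from an injective into $Z^k(W_m)$ lifts to $W_m^{k-1}\subseteq W_{m+1}^{k-1}$.
\item $W$ is acyclic with injective terms, yet it is not totally acyclic, because $Z^0W=\coprod_n G_n$ is not Gorenstein injective.
\end{itemize}
So some map from an injective into $Z^k(W)$ factors through no finite stage. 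Total acyclicity is genuinely not preserved by $\omega$-unions, and the construction must close off at a stage of cofinality greater than $\kappa$.

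The fix is what the paper does. Run a continuous chain $(V_\beta)_{\beta\le\kappa^+}$: take unions at limit ordinals, perform your steps (a)--(c) at successor ordinals (with countably many rounds of (a) to restore closure under $d$), and set $V=V_{\kappa^+}$.
\begin{itemize}
\item Since $\kappa^+\le 2^\kappa$, each $V_\beta^n$ still has at most $2^\kappa$ summands.
\item Noetherian subobjects of $Z^n(V)$ still land in some $Z^n(V_\beta)$, because they are finitely generated and the union is directed.
\item A map $f\colon E\to Z^n(V)$ meets at most $\kappa$ summands, and by regularity of $\kappa^+$ all of them lie in some $V_\beta^n$ with $\beta<\kappa^+$. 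Since $V_\beta$ is a subcomplex, $f$ factors through $Z^n(V_\beta)$ and acquires a lift in $V_{\beta+1}^{n-1}$.
\end{itemize}

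The rest of your argument is fine as written: the $\lambda$-presentability of $V$, the degreewise splitting, and $T/V\in\mathcal C$. One small point: for the initial step, Corollary~\ref{lem:small-injectives} concerns maps out of indecomposable injectives, not Noetherian objects. It is cleaner to note, as the paper does, that the subcoproducts of $T^n$ with at most $2^\kappa$ summands form a $\lambda$-filtered union through which the $\lambda$-presentable $Y^n$ must factor.
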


\begin{proof}
The result is known when $\mathcal A$ is a module category; \cite[Lemma~4.1.9 and Remark~4.1.10]{EnochsJenda2011VolII}.
This lemma generalises the result to locally finitely generated Grothendieck category, where Noetherian subobjects replace finite sets of elements.

Fix $T \in \mathcal C$ and a $\lambda$-presentable subcomplex $Y\subseteq T$.
It is clear that each $Y^n$ is $\lambda$-presentable.
Then decompose $T^n=\coprod_{a\in A^n}E_{n,a}$ into indecomposable injectives.
It remains to construct a $\lambda$-presentable complex $V$ with $Y \subseteq V \subseteq T$ and $V, T/V \in \mathcal C$.
The construction is given by transfinite induction on a continuous $\kappa^+$-chain, which yields $Y \subseteq V_0 \subseteq V_1 \subseteq \cdots \subseteq V_\beta \subseteq V_{\beta + 1} \subseteq \cdots$ with $\bigcup_{\beta < \kappa^+} V_\beta = V$, and is supposed to satisfy
\[
 V_\beta^n=\coprod_{a\in A_\beta^n}E_{n,a},
 \qquad A_\beta^n\subseteq A^n,
 \qquad |A_\beta^n|\leq2^\kappa,
\]

We first construct $V_0$ by induction.
Since the subcoproducts of $T^n$ with $\leq 2^\kappa$ summands form a $\lambda$-filtered union and $Y^n$ is $\lambda$-presentable, the inclusion $Y^n \subseteq T^n$ passes through some subcoproduct $\coprod_{a \in {}^{(0)}A^{n}_0}E_{n,a}$, denoted by ${}^{(0)}V_0^n$.
Note that $d({}^{(0)}V_0^{n})$ is not necessarily contained in ${}^{(0)}V_0^{n+1}$.
Hence we take the refinement ${}^{(1)}V_0^{n+1} = \coprod_{a \in {}^{(1)}A^{n+1}_0}E_{n+1,a}$ so that ${}^{(0)}V_0^{n+1} + d({}^{(0)}V_0^{n}) \subseteq {}^{(1)}V_0^{n+1}$.
By Corollary~\ref{lem:small-injectives}, the differential of each indecomposable summand factors through $\leq \kappa$ summands in the next degree.
Hence ${}^{(0)}V_0^{n+1} + d({}^{(0)}V_0^{n})$ factors through $\leq 2^\kappa + 2^\kappa \cdot \kappa = 2^\kappa$ summands and thus we may assume $|{}^{(1)}A_0^{n+1}| \leq 2^\kappa$.
By induction, one obtains
\[
\xymatrix@C=1cm@R=0.5cm{
\vdots \ar[dr]
& \vdots \ar[dr]
& \vdots
&
& \vdots \ar[d]
\\
{}^{(0)}V_0^{n-1} \ar@{}[r]|{\subset} \ar[dr]
& {}^{(1)}V_0^{n-1} \ar@{}[r]|{\subset} \ar[dr]
& {}^{(2)}V_0^{n-1} \ar@{}[r]|{\subset}
& \cdots
& V_0^{n-1} \ar[d]
\\
{}^{(0)}V_0^n \ar@{}[r]|{\subset} \ar[dr]
& {}^{(1)}V_0^n \ar@{}[r]|{\subset} \ar[dr]
& {}^{(2)}V_0^n \ar@{}[r]|{\subset}
& \cdots
& V_0^n \ar[d]
\\
{}^{(0)}V_0^{n+1} \ar@{}[r]|{\subset} \ar[dr]
& {}^{(1)}V_0^{n+1} \ar@{}[r]|{\subset} \ar[dr]
& {}^{(2)}V_0^{n+1} \ar@{}[r]|{\subset}
& \cdots
& V_0^{n+1} \ar[d]
\\
\vdots & \vdots & \vdots && \vdots
}
\]
Here ${}^{(m)}V_0^n = \coprod_{a \in {}^{(m)}A_0^n}E_{n,a}$, $|{}^{(m)}A_0^n| \leq 2^\kappa$ and ${}^{(m)}V_0^n+d_T^{n-1}({}^{(m)}V_0^{n-1})\subseteq{}^{(m+1)}V_0^n$.
Each solid arrow denote the differential restricted to subobjects.
Taking $V_0^n = \bigcup_{m \in \mathbb N}{}^{(m)}V_0^n$ completes the construction of $V_0$.

Suppose now that $V_\beta$ is constructed for some $\beta < \kappa^+$, then we construct $V_{\beta + 1}$.
Consider $Z^n V_\beta \subseteq V_\beta^n$.
For each Noetherian subobject $N \subseteq Z^n V_\beta$, one can take another noetherian object $N' \subseteq T^{n-1}$ such that the restricted $d : N' \to N$ is epic: indeed $N \subseteq Z^n T = B^n T$, and the preimage of $N$ along $d$ is a filtered union of its Noetherian subobjects.
Each such $N' \subseteq T^{n-1}$ lies in a finite subcoproduct indexed by $S_\beta^{n-1}(N')$, and one fixes the correspondence $N \mapsto S_\beta^{n-1}(N')$.
For each morphism $f$ from indecomposable injective $E$ to $Z^n T$, fix a lifting $\widetilde f : E \to T^{n-1}$.
By Corollary~\ref{lem:small-injectives}, $\widetilde f$ factors through a subcoproduct indexed by a set $L_\beta^{n-1}(\widetilde f)\subseteq A^{n-1}$ with $|L_\beta^{n-1}(\widetilde f)|\leq\kappa$.
Fix such $L_\beta^{n-1}(\widetilde f)$ for each $f$.
Take
\[
{}^{(0)}A_{\beta+1}^{n-1} = A_{\beta}^{n-1} \cup \bigcup_{N \subseteq Z^n V_\beta \ \text{Noetherian}} S_\beta^{n-1}(N') \cup \bigcup_{\substack{f : E \to Z^n V_\beta\\ E \ \text{indecomposable injective}}}L_\beta^{n-1}(\widetilde f).
\]
By construction $|A_{\beta}^{n-1}| \leq 2^\kappa$.
Since $V^n_\beta$ is a coproduct of $\leq 2^\kappa$ indecomposable injective objects, $Z^n V_\beta$ has $\leq 2^\kappa$ Noetherian subobjects.
Hence $|\bigcup_{N \subseteq Z^n V_\beta \ \text{Noetherian}} S_\beta^{n-1}(N')| \leq 2^\kappa$.
Each indecomposable injective object is an injective envelope of some Noetherian object, thus they has cardinal $\leq \kappa$.
By Corollary~\ref{cor:small-coproduct-hom}, $|\mathrm{Hom}_{\mathcal A}(E, Z^n V_\beta)|\leq 2^\kappa$.
It follows that $|{}^{(0)}A_{\beta+1}^{n-1}| \leq 2^\kappa$.
By the same construction in the initial setup, one obtains ${}^{(m)}A_{\beta+1}^{n}$ for each $n \in \mathbb Z$ and $m \in \mathbb N$.
Taking $A_{\beta + 1}^n = \bigcup_{m \in \mathbb N} {}^{(m)} A_{\beta+1}^n$ completes the construction of $V_{\beta+1}$.

Suppose that for any limit ordinal $\delta \leq \kappa^+$, each $(V_\beta)_{\beta < \delta}$ is constructed.
Take $V_\delta = \bigcup_{\beta < \delta} V_\beta$.
By construction $V_\delta ^n = \coprod_{a \in A_{\beta+1} ^n} E_{n,a}$, where $A_{\beta+1}^n =\bigcup _{\beta < \delta} A_\beta ^n$ has cardinal $\leq 2^\kappa$.
We claim that $V_\delta$ is an exact complex.
On one hand, $V_\beta$ is a complex since each Noetherian subobject $N \subseteq V_\delta^n$ lies in some $V_\alpha^n$ for $\alpha < \delta$, thus $N$ is annihilated by $d^2$.
On the other hand, each Noetherian subobject $N \subseteq Z^n V_\delta$ lies in $Z^n V_\beta$ for some $\beta < \delta$, and thus lies in $B^n V_{\beta+1}$ by construction.

Take $V=V_{\kappa^+}$.
By construction, $V$ is an acyclic complex with injective terms, and each $V^n$ is $\lambda$-presentable by Corollary~\ref{cor:small-coproduct-presentable}.
The equaliser description of chain maps uses only countable products of degreewise Hom sets, which commute with $\lambda$-filtered colimits; hence $V$ is $\lambda$-presentable in $\mathrm{Ch}(\mathcal A)$.
To see that $V\in\mathcal C$, it remains to show that every map $f:E\to Z^{n+1}V$, with $E$ indecomposable injective, lifts to $V^n$.
By Corollary~\ref{lem:small-injectives}, the composite $E\to Z^{n+1}V\hookrightarrow V^{n+1}$ factors through a subcoproduct with at most $\kappa$ summands.
Since $\kappa^+$ is regular, all these summands lie in some $V_\beta^{n+1}$ with $\beta<\kappa^+$.
Thus $f$ factors through $Z^{n+1}V_\beta$ and, by the successor construction, lifts to $V_{\beta+1}^n\subseteq V^n$.
Hence $\mathrm{Hom}_{\mathcal A}(E,V)$ is acyclic for every indecomposable injective $E$.
Every injective object is a coproduct of indecomposable injectives, and products of exact complexes of abelian groups are exact; therefore $V\in\mathcal C$.

Each $V^n$ is a subcoproduct of $T^n$, so $0\to V\to T\to T/V\to 0$ is degreewise split and $T/V$ has injective terms.
Since $V$ and $T$ are acyclic, so is $T/V$.
For every injective object $E$, applying $\mathrm{Hom}_{\mathcal A}(E,-)$ gives a short exact sequence of complexes of abelian groups.
Hence $\mathrm{Hom}_{\mathcal A}(E,T/V)$ is acyclic.
Thus $T/V\in\mathcal C$.

We have constructed a $\lambda$-presentable subcomplex $V$ with $Y\subseteq V\subseteq T$ and $V,T/V\in\mathcal C$.
Hence $\mathcal C$ is $\lambda$-Kaplansky.
\end{proof}

\begin{proposition}\label{lem:tac-approximations}
Let $\mathcal A$ be a locally Noetherian Grothendieck category, and let $\mathcal C$ be the class of totally acyclic complexes of injective objects in $\mathrm{Ch}(\mathcal A)$.
If $\mathcal{GI}(\mathcal A)$ is closed under filtered colimits, then $\mathcal C$ is a covering class in $\mathrm{Ch}(\mathcal A)$.
\end{proposition}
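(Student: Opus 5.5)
We will use the standard criterion of Enochs and collaborators: a class closed under direct limits (filtered colimits) and which is Kaplansky in a suitable sense (or more generally, a class $\mathcal F$ which is precovering and closed under direct limits) is covering. Concretely, we aim to invoke the theorem that in a Grothendieck category, a Kaplansky class that is closed under filtered colimits and extensions and contains a set of generators for its "deconstruction" is covering. This is the zig-zag/El Bashir type result: if $\mathcal F$ is closed under direct limits and every object of $\mathcal F$ is a filtered union (or transfinite extension) of $\lambda$-presentable objects from $\mathcal F$, then $\mathcal F$ is covering. So we plan three steps: closure of $\mathcal C$ under filtered colimits, deconstructibility of $\mathcal C$ via Proposition~\ref{prop:small-tac-subcomplex}, and an appeal to the general covering theorem.

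\emph{Step 1: closure under filtered colimits.} Let $T:D\to\mathcal C$ be a filtered diagram and $X=\mathrm{colim}_d T_d$ in $\mathrm{Ch}(\mathcal A)$. By exactness of filtered colimits, $X$ is acyclic and $Z^nX=\mathrm{colim}_d Z^nT_d$. Since $\mathcal A$ is locally Noetherian, $X^n$ is injective. By hypothesis~(5), each $Z^nX$ is Gorenstein injective, being a filtered colimit of Gorenstein injectives. By the remark following \eqref{eq:gi-ext-vanishing}, an acyclic complex of injectives with Gorenstein injective cycles is totally acyclic, so $X\in\mathcal C$. Alternatively, one can use Corollary~\ref{cor:localizing-colimit} with $\mathcal L=K_{\mathrm{tac}}$ once it is known to be localising; but the direct argument suffices and avoids that.

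\emph{Step 2: deconstruction.} Using Proposition~\ref{prop:small-tac-subcomplex}, every $T\in\mathcal C$ is the union of a continuous chain $0=V_0\subseteq V_1\subseteq\cdots$ with $V_{\alpha+1}/V_\alpha\in\mathcal C$ $\lambda$-presentable. We start with $V_0=0$; at successors we apply the Kaplansky property to $T/V_\alpha\in\mathcal C$ and a chosen $\lambda$-presentable subobject not yet covered, then pull back. At limits we take unions, which lie in $\mathcal C$ by Step~1, and similarly $T/V_\delta=\mathrm{colim}\,T/V_\alpha\in\mathcal C$. Since $\mathrm{Ch}(\mathcal A)$ is locally presentable, $T$ is exhausted. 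Also, $\mathcal C$ is closed under extensions, because degreewise split sequences of injective complexes remain exact under $\mathrm{Hom}(E,-)$. Hence $\mathcal C=\mathrm{Filt}(\mathcal S)$ for a set $\mathcal S$ of representatives of $\lambda$-presentable members of $\mathcal C$.

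\emph{Step 3: conclusion.} A deconstructible class is precovering in a Grothendieck category (Šťovíček / Enochs' theorem for $\mathrm{Filt}(\mathcal S)$), and a precovering class closed under filtered colimits is covering (Enochs' theorem, valid in Grothendieck categories; cf.\ El Bashir, Xu). Together these give that $\mathcal C$ is covering. The main obstacle is Step 3's citation in the generality of $\mathrm{Ch}(\mathcal A)$: the precovering part from deconstructibility requires $\mathrm{Filt}(\mathcal S)$ to be closed under coproducts and the generating-set hypothesis, which we would verify directly, and Enochs' direct-limit argument must be checked to only use filtered colimits and exactness in a Grothendieck category. If that citation is problematic, we would instead use El Bashir's theorem that a class closed under coproducts and direct limits in which every object is a $\lambda$-filtered colimit of objects of bounded size is covering, feeding in Step 1 and Proposition~\ref{prop:small-tac-subcomplex}.
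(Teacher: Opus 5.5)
Your proposal is correct and is essentially the paper's argument. The paper likewise combines three facts: the $\lambda$-Kaplansky property from Proposition~\ref{prop:small-tac-subcomplex}, extension-closedness of $\mathcal C$, and closure of $\mathcal C$ under filtered colimits (your Step~1), and then cites \cite[Proposition~1.6]{CEI2017} for the fact that such a class in a Grothendieck category is covering. Your Steps~2--3 unpack that citation into its standard proof: Kaplansky plus closure under unions gives deconstructibility, deconstructible classes are precovering by \v{S}\v{t}ov\'{\i}\v{c}ek, and Enochs' direct-limit argument upgrades precovers to covers. Your Step~1 also makes explicit that injectivity of the colimit terms uses local Noetherianity, which the paper leaves implicit.
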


\begin{proof}
By Proposition~\ref{prop:small-tac-subcomplex}, the class $\mathcal C$ is Kaplansky, and it is clearly extension-closed.
By the assumption that $\mathcal{GI}(\mathcal A)$ is closed under filtered colimits and the exactness of filtered colimits, $\mathcal C$ is closed under filtered colimits.
By \cite[Proposition~1.6]{CEI2017}, an extension-closed Kaplansky class closed under filtered colimits in a Grothendieck category is covering.
Applying this result in $\mathrm{Ch}(\mathcal A)$ proves the assertion.
\end{proof}

\begin{corollary}\label{cor:tac-covers-to-gi-covers}
Let $\mathcal A$ be an abelian category with enough injective objects.
If the totally acyclic complexes of injective objects form a covering class in $\mathrm{Ch}(\mathcal A)$, then $\mathcal{GI}(\mathcal A)$ forms a covering class in $\mathcal A$.
\end{corollary}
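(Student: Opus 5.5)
Given $M\in\mathcal A$, I will build a $\mathcal{GI}(\mathcal A)$-cover of $M$ out of a $\mathcal C$-cover of a suitable complex, using the adjunction between $Z^0$ and a ``disk'' functor. For $M\in\mathcal A$ let $D(M)$ be the complex with $M$ in degrees $-1$ and $0$ and differential $\mathrm{Id}_M$. A chain map $T\to D(M)$ is determined by a map $T^0\to M$ vanishing on the image of $d_T^{-1}$. Equivalently, it is determined by a map $T^0/B^0T\to M$, that is, by a map $Z^1T\cong T^0/B^0T\to M$ when $T$ is acyclic. So for acyclic $T$ there is a natural bijection
\[
\mathrm{Hom}_{\mathrm{Ch}(\mathcal A)}(T,D(M))\cong \mathrm{Hom}_{\mathcal A}(Z^1T,M).
\]
To keep indices clean I will use the shifted disk $D(M)[-1]$ instead, so that the bijection becomes $\mathrm{Hom}(T,D(M)[-1])\cong\mathrm{Hom}_{\mathcal A}(Z^0T,M)$. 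Fix a $\mathcal C$-cover $\phi:T\to D(M)[-1]$, and let $p:Z^0T\to M$ be the corresponding map; note $Z^0T\in\mathcal{GI}(\mathcal A)$.

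\textbf{Precover.} Let $G\in\mathcal{GI}(\mathcal A)$ and $g:G\to M$. Choose a complete injective resolution $I$ with $Z^0I\cong G$; then $I\in\mathcal C$, and $g$ corresponds to a chain map $I\to D(M)[-1]$. This factors through $\phi$ via some $h:I\to T$. Applying the bijection, which is natural in $T$, shows that $g$ factors as $p\circ Z^0(h)$, so $p$ is a $\mathcal{GI}$-precover.

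\textbf{Cover.} This is the main obstacle. Suppose $a:Z^0T\to Z^0T$ satisfies $pa=p$; I must show $a$ is an isomorphism. The plan has three steps.
\begin{enumerate}
\item Lift $a$ to a chain endomorphism $\tilde a:T\to T$. On the right side of degree $0$ this is done by choosing maps $T^{n}\to T^n$ for $n>0$ extending $a$ along the monomorphisms $Z^0T\to T^0\to\cdots$, using injectivity of the terms. On the left side it is done by lifting along the epimorphisms $T^{n-1}\to Z^nT$. Here $\mathrm{Hom}(E,-)$-exactness of $T$ for injective $E$ lets injective terms map into $T^{n-1}$. Equivalently, this follows from $\mathrm{Ext}^{\geq1}(\text{injective},\mathcal{GI})=0$ and dual Ext-vanishing.
\item Check that $\phi\tilde a=\phi$. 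This holds by the bijection, since $\phi$ depends only on $Z^0$ and $Z^0\tilde a=a$ with $pa=p$.
\item Conclude. Since $\phi$ is a cover, $\tilde a$ is an automorphism of $T$, so $a=Z^0(\tilde a)$ is an automorphism of $Z^0T$.
\end{enumerate}

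The delicate point is the lifting in step 1, in particular the negative-degree part. Lifting $Z^nT\to Z^nT$ to $T^{n-1}\to T^{n-1}$ requires a map out of the injective $T^{n-1}$ into $Z^nT$ to lift along $T^{n-1}\to Z^nT$. This is exactly the total acyclicity condition, because $\mathrm{Hom}(T^{n-1},T)$ is exact. The positive-degree part only needs injectivity of $T^n$. I would write both inductions explicitly and verify that they commute with the differentials.
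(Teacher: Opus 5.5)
\textbf{There is a genuine error in your central identification, but it is easily repaired.}

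Take the disk $D(M)$ as you defined it, namely $M\xrightarrow{\mathrm{Id}}M$ in degrees $-1,0$. A chain map $f:T\to D(M)$ must satisfy $f^{-1}=f^0d_T^{-1}$. That is the only condition: nothing forces $f^0$ to vanish on $B^0T$. So $\mathrm{Hom}_{\mathrm{Ch}(\mathcal A)}(T,D(M))\cong\mathrm{Hom}_{\mathcal A}(T^0,M)$, not $\mathrm{Hom}_{\mathcal A}(Z^1T,M)$. Disks are right adjoint to evaluation at a single degree.

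With the disk, a $\mathcal C$-cover of $D(M)$ (or any shift of it) only records a map from an injective term of $T$ to $M$. Such a map cannot be a $\mathcal{GI}(\mathcal A)$-cover in general. For example, if $M$ is Gorenstein injective but not injective, its cover is $\mathrm{Id}_M$ up to isomorphism. Your Step~2 also breaks: $\phi$ would depend on a whole term of $T$, and $\tilde a$ is not determined there by $a$.

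The shift also goes the wrong way. With $(X[1])^n=X^{n+1}$ one has $\mathrm{Hom}(T,X[-1])\cong\mathrm{Hom}(T[1],X)$ and $Z^1(T[1])=Z^2T$. So even under your formula, $D(M)[-1]$ would land on $Z^2T$, not $Z^0T$.

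\textbf{The fix.} The computation you actually wrote down ($f^0$ killing $B^0T$) is the one for the \emph{stalk} complex. Replace $D(M)$ by $M$ placed in degree $-1$ with zero differential, which is $M[1]$ in the paper's convention. A chain map $T\to M[1]$ is a map $T^{-1}\to M$ killing $B^{-1}T$. For acyclic $T$ this is $Z^{-1}T$, and $T^{-1}/Z^{-1}T\cong Z^0T$. Hence
\[
\mathrm{Hom}_{\mathrm{Ch}(\mathcal A)}(T,M[1])\cong\mathrm{Hom}_{\mathcal A}(Z^0T,M),
\]
naturally in chain maps between acyclic complexes.

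\textbf{After the fix, your argument is correct and matches the paper's proof.} The paper uses exactly this natural isomorphism, together with the fact that $Z^0:\mathcal C\to\mathcal{GI}(\mathcal A)$ is full and essentially surjective, and calls the rest straightforward. Your Step~1 is precisely a proof of that fullness: you extend $a$ to the right using injectivity of the terms, and lift to the left along $T^{n-1}\twoheadrightarrow Z^nT$ using exactness of $\mathrm{Hom}_{\mathcal A}(T^{n-1},T)$. Your precover and cover steps then spell out what the paper leaves implicit.
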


\begin{proof}
Let $\mathcal C$ denote the class of totally acyclic complexes of injective objects.
The functor $Z^0:\mathcal C\to\mathcal{GI}(\mathcal A)$ is essentially surjective and full and there are natural isomorphisms
\[
\Phi : \mathrm{Hom}_{\mathrm{Ch}(\mathcal A)}(T,M[1])
\cong\mathrm{Hom}_{\mathcal A}(Z^0T,M)
\qquad(T\in\mathcal C).
\]
Here $M$ is the stalk complex concentrated in degree $0$.
It is straightforward to show that a $\mathcal C$-(pre)cover $\tau:T\to M[1]$, yields a $\mathcal{GI}(\mathcal A)$-(pre)cover $\Phi(\tau):Z^0T\to M$.
Hence $\mathcal{GI}(\mathcal A)$ is covering whenever $\mathcal C$ is covering.
\end{proof}

\begin{corollary}\label{cor:gi-covering-from-colimits}
Let $\mathcal A$ be a locally Noetherian Grothendieck category.
If $\mathcal{GI}(\mathcal A)$ is closed under filtered colimits, then it forms a covering class.
\end{corollary}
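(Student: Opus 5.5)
The plan is to chain the two preceding results. First, Proposition~\ref{lem:tac-approximations} shows that $\mathcal C$ is a covering class in $\mathrm{Ch}(\mathcal A)$. Then Corollary~\ref{cor:tac-covers-to-gi-covers} transports $\mathcal C$-covers to $\mathcal{GI}(\mathcal A)$-covers. The hypothesis that $\mathcal{GI}(\mathcal A)$ is closed under filtered colimits is exactly the input of Proposition~\ref{lem:tac-approximations}, and a locally Noetherian Grothendieck category has enough injectives, as Corollary~\ref{cor:tac-covers-to-gi-covers} requires.

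Concretely, the first step is to confirm that $\mathcal C$ is closed under filtered colimits in $\mathrm{Ch}(\mathcal A)$, since this is what feeds \cite[Proposition~1.6]{CEI2017}. Let $(T_d)$ be a filtered diagram in $\mathcal C$ with colimit $T$.
\begin{itemize}
\item The terms of $T$ are injective, because injectives in a locally Noetherian category are closed under filtered colimits.
\item $T$ is acyclic, by exactness of filtered colimits.
\item $Z^nT=\mathrm{colim}_d Z^nT_d$, again by exactness, so each cycle object is Gorenstein injective by hypothesis.
\item By the remark after \eqref{eq:gi-ext-vanishing}, an acyclic complex of injectives with Gorenstein injective cycles is totally acyclic, so $T\in\mathcal C$.
\end{itemize}
Together with the $\lambda$-Kaplansky property of Proposition~\ref{prop:small-tac-subcomplex} and extension-closure, this gives the covering property of $\mathcal C$.

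For the transfer step, fix $M\in\mathcal A$ and a $\mathcal C$-cover $\tau:T\to M[1]$. Under $\Phi$ this corresponds to $\Phi(\tau):Z^0T\to M$.
\begin{itemize}
\item \emph{Precover property.} Given a map $G\to M$ with $G$ Gorenstein injective, choose a complete resolution $T'$ of $G$. The map corresponds to some $T'\to M[1]$, which factors through $\tau$, and applying $Z^0$ gives the factorisation.
\item \emph{Cover property.} An endomorphism $a$ of $Z^0T$ with $\Phi(\tau)a=\Phi(\tau)$ lifts, by fullness of $Z^0$ on $\mathcal C$, to some $\tilde a:T\to T$. By naturality of $\Phi$, $\tau\tilde a=\tau$, so $\tilde a$ is an isomorphism and hence so is $a=Z^0\tilde a$.
\end{itemize}
The point to check is that fullness is compatible with $\Phi$. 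This holds because a chain map $T\to M[1]$ is determined by its degree-$(-1)$ component, which factors through $Z^0T$ as $T^{-1}\twoheadrightarrow Z^0T\to M$. So $\Phi$ is natural with respect to chain maps between objects of $\mathcal C$.

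I expect the only real content to be already packaged in the cited results; the main thing to watch is the closure of $\mathcal C$ under filtered colimits, in particular that cycles commute with filtered colimits and that injectivity of the terms survives.
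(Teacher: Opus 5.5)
Your proposal is correct and follows the paper's own proof, which simply combines Proposition~\ref{lem:tac-approximations} (closure of $\mathcal C$ under filtered colimits, the Kaplansky property and \cite[Proposition~1.6]{CEI2017}) with Corollary~\ref{cor:tac-covers-to-gi-covers}. Your explicit check of the transfer step, namely naturality of $\mathrm{Hom}_{\mathrm{Ch}(\mathcal A)}(T,M[1])\cong\mathrm{Hom}_{\mathcal A}(Z^0T,M)$ in $T$ and fullness of $Z^0$ on $\mathcal C$ to lift endomorphisms, correctly fills in what the paper calls straightforward.
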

\begin{proof}
Combine Proposition~\ref{lem:tac-approximations} with Corollary~\ref{cor:tac-covers-to-gi-covers}.
\end{proof}

\subsection{Proof of \texorpdfstring{Theorem~\ref{thm:equivalence}}{Theorem A}}\label{sec:ProofA}

Let $\mathcal A$ be a locally finitely generated Grothendieck category with a generator $U$.
Theorem~\ref{thm:equivalence} states that the following conditions are equivalent.
\begin{enumerate}
\item $\mathcal{GI}(\mathcal A)$ forms a covering class.
\item $\mathcal{GI}(\mathcal A)$ forms a precovering class.
\item $\mathcal{GI}(\mathcal A)$ forms a special covering class.
\item $\mathcal{GI}(\mathcal A)$ forms a special precovering class.
\item $\mathcal{GI}(\mathcal A)$ is closed under all small filtered colimits.
\item $\mathcal{GI}(\mathcal A)$ is closed under arbitrary coproducts.
\item $\mathcal{GI}(\mathcal A)$ is closed under $|\mathrm{End}_{\mathcal A}(U)|$-indexed coproducts.
\item $\mathcal A$ is locally Noetherian and the following canonical inclusion has a right adjoint
\[
 K_{\mathrm{tac}}(\mathrm{Inj}\,\mathcal A)\longrightarrow K(\mathrm{Inj}\,\mathcal A).
\]
\end{enumerate}

\begin{proof}[Proof of Theorem~\ref{thm:equivalence}]
By Proposition~\ref{prop:noetherian-reduction}, each of the conditions implies that $\mathcal A$ is locally Noetherian.
We therefore assume throughout that $\mathcal A$ is locally Noetherian.
The proof is organised as follows:
\[
\xymatrix@C=2cm@R=.5cm{
(4) \ar@{->}[r] & (2) \ar@{->}[r] & (6) \ar@{->}[d]_{\square} \ar@<-0.6ex>@{->}[r] & (7) \ar@<-0.6ex>@{->}[l]_{\star} \\
(3) \ar@{->}[u] & (1) \ar@{->}[l] & (5) \ar@{->}[l]_{\triangle} \ar@{->}[r] & (8) \ar@{->}[lu]
}.
\]

\noindent\textit{$(1)\implies (3) \implies (4) \implies (2)$.}
By Wakamatsu lemma \cite[Lemma~7.1.13\textup{(1)}]{Krause2022}, An extension-closed covering class is special covering.
Hence $(1)$ implies $(3)$.
The implications $(3) \implies (4) \implies (2)$ are clear.

\noindent\textit{$(2)\implies (6)$.}
Suppose that $\mathcal{GI}(\mathcal A)$ is a precovering class.
Let $(G_i)_{i \in I}$ be a collection of Gorenstein injective objects.
By assumption, there is a $\mathcal{GI}(\mathcal A)$-precover $G \to \coprod_{i \in I} G_i$.
Since each structure map $G_i \to \coprod_{i \in I} G_i$ lifts to $G_i \to G$, $\coprod_{i \in I}G$ is a summand of $G$.
Hence $\mathcal{GI}(\mathcal A)$ is closed under coproducts, and $(2) \implies (6)$ follows.

\noindent\textit{$(5)\Longrightarrow(1)$.}
Apply Corollary~\ref{cor:gi-covering-from-colimits}.

\noindent\textit{$(6)\Longrightarrow(5)$.}
Assume that $\mathcal{GI}(\mathcal A)$ is closed under coproducts.
Let $\mathcal C$ be the class of totally acyclic complexes of injective terms.
Then $\mathcal C$ is closed under coproducts, and thus $K_{\mathrm{tac}}(\mathrm{Inj}\,\mathcal A) \subseteq K(\mathrm{Inj}\,\mathcal A)$ is localising.
For a filtered diagram $(G_d)_{d \in D}$, the functorial construction of complete injective resolution Proposition~\ref{lem:gi-basic} yields a diagram $\mathbf I(G_d)$.
Then by Corollary~\ref{cor:localizing-colimit}, the degreewise colimit $\varinjlim_{d \in D} \mathbf I (G_d)$ is totally acyclic.
Exactness of filtered colimits gives $\varinjlim_dG_d\cong Z^0\!\left(\varinjlim_d\mathbf I(G_d)\right)$, proving \textup{(5)}.

\noindent\textit{$(6)\Longleftrightarrow(7)$.}
We only prove $(7) \implies (6)$.
If $\mathrm{End}_{\mathcal A}(U)$ is finite, Lemma~\ref{lem:finite-endomorphism-generator} gives \textup{(1)}, hence \textup{(6)} and \textup{(7)};
otherwise apply Proposition~\ref{prop:cardinal-criterion}.

\noindent\textit{$(5)\Longrightarrow(8)$.}
Put $\mathcal T=K(\mathrm{Inj}\,\mathcal A)$ and $\mathcal S=K_{\mathrm{tac}}(\mathrm{Inj}\,\mathcal A)$.
Since precover of chain complexes passes to its homotopy category, Proposition~\ref{lem:tac-approximations} gives right $\mathcal S$-approximations in $\mathcal T$.
Since $\mathcal T$ has countable coproducts and $\mathcal S$ is thick, both have split idempotents.
Thus the inclusion $i_\ast : \mathcal S \subseteq \mathcal T$ of localising subcategory has a right adjoint by \cite[Proposition~3.2]{KrauseApproximations}.

\noindent\textit{$(8)\Longrightarrow(6)$.}
Suppose that $i_\ast:\mathcal S\to\mathcal T$ has right adjoint $i^!$.
For a family $(S_i)_{i \in I}$ in $\mathcal S$, put $S=\coprod_{i \in I} S_i$. Since $\mathcal{S}$ is a thick subcategory of $\mathcal{T}$, it follows from \cite[Lemma~3.2]{Krause2005} that the Verdier quotient functor $j^*:\mathcal{T}\longrightarrow \mathcal{T}/\mathcal{S}$ admits a right adjoint. Thus $j^*$ preserves coproducts, and hence $S\in \mathrm{Ker}(j^*)$. Since $\mathcal{S}$ is a thick subcategory of $\mathcal{T}$, $\mathrm{Ker}(j^*)=\mathcal{S}$ and hence $S\in \mathcal{S}$. Therefore $\mathcal{S}$ is closed under coproducts.

\end{proof}

\begin{question}
    Let $\mathcal A = \mathrm{Mod}_R$ for some ring $R$.
    If $\mathcal{GI}(\mathcal A)$ is closed under countable coproducts, then is it closed under arbitrary coproducts? 
    
    The author has obtained an affirmative answer under $V=L$. Thus, assuming the consistency of ZFC, no counterexample can be constructed in ZFC alone.
\end{question}

\subsection{Adjoints and recollements}\label{subsec:tac-recollement}
Let $\mathcal A$ be a locally Noetherian Grothendieck category.
Put $\mathcal T=K(\mathrm{Inj}\,\mathcal A)$ and $\mathcal S=K_{\mathrm{tac}}(\mathrm{Inj}\,\mathcal A)$.
This subsection aims to study the existence of the adjoint triple
\[
\xymatrix@C=3cm{
\mathcal S
\ar@{ >->}[r]|{i_\ast}
& \mathcal T
\ar@{..>}@<1.2ex>[l]^{i^!}
\ar@{..>}@<-1.2ex>[l]_{i^\ast}
}.
\]

\begin{proposition}\label{prop:leftadj}
    The following statements are equivalent:
    \begin{enumerate}
        \item $i_\ast$ admits a left adjoint $i^\ast$,
        \item $({}^\perp \mathcal{GI}(\mathcal A), \mathcal{GI}(\mathcal A))$ is a complete cotorsion pair,
        \item there exists a generator $U \in \mathcal A$ such that $U \in {}^\perp \mathcal{GI}(\mathcal A)$.
    \end{enumerate}
\end{proposition}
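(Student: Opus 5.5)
The plan is to prove the implications through the triangulated side, with (1) as the hub. The key observation is that $\mathcal S$ is the right orthogonal of a \emph{set} of objects of $\mathcal T$.

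\textbf{Step 1: $\mathcal S$ is an orthogonal class.} Let $\mathcal N$ be a set of representatives of the Noetherian objects and $\mathcal E$ the set of indecomposable injectives. For $X\in\mathcal T$, the isomorphism $\mathrm{Hom}_{K(\mathcal A)}(iN,X[n])\cong \mathrm{Hom}_{K(\mathcal A)}(N,X[n])=H^n\mathrm{Hom}_{\mathcal A}(N,X)$ from \cite[Lemma~2.1]{Krause2005} applies. Since every object is a directed union of Noetherian subobjects, these groups vanish for all $N\in\mathcal N$ and $n\in\mathbb Z$ exactly when $X$ is acyclic. Likewise, $\mathrm{Hom}_{K(\mathcal A)}(E,X[n])=H^n\mathrm{Hom}_{\mathcal A}(E,X)$. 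Every injective is a coproduct of members of $\mathcal E$, and $\mathrm{Hom}_{\mathcal A}(\coprod E_j,X)=\prod\mathrm{Hom}_{\mathcal A}(E_j,X)$. Hence
\[
\mathcal S=\{\,iN,\ E \mid N\in\mathcal N,\ E\in\mathcal E\,\}^{\perp}\qquad\text{in }\mathcal T .
\]
Since $\mathcal T\simeq D(\mathcal C)$ is compactly generated, the localising subcategory $\mathcal L$ generated by this set is well generated. Neeman's theory then gives a Bousfield localisation: $\mathcal L\hookrightarrow\mathcal T$ has a right adjoint, and $\mathcal L^{\perp}=\mathcal S\hookrightarrow\mathcal T$ has a left adjoint. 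If this holds, (1) is automatic in the locally Noetherian setting. I would double-check this carefully against \cite{EG26}, since it makes (2) and (3) unconditional as well. If it fails, I would fall back on the direct arguments in Steps 2 and 3.

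\textbf{Step 2: (1)$\Rightarrow$(2).} Given $M\in\mathcal A$, I would take the unit $\eta\colon iM\to i_\ast i^\ast(iM)$ and complete it to a triangle $L\to iM\to i_\ast i^\ast iM\to L[1]$ with $L\in{}^{\perp}\mathcal S$. Then I would replace $i^\ast iM$ by an honest totally acyclic complex $T$ and take the map in degree $0$ to produce $M\to Z^0T$. The hom-adjunction $\mathrm{Hom}_{\mathcal T}(L,S[1])=0$ for $S\in\mathcal S$ should translate, as in Corollary~\ref{cor:tac-covers-to-gi-covers}, into $\mathrm{Ext}^1_{\mathcal A}(C,G)=0$ for all $G\in\mathcal{GI}(\mathcal A)$, where $C$ is the cokernel. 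This would give a special $\mathcal{GI}$-preenvelope. Special precovers then follow by Salce's trick, using that $\mathcal{GI}(\mathcal A)$ is closed under extensions and contains the injectives. This translation between triangles in $\mathcal T$ and short exact sequences with Ext-vanishing is the main obstacle. It requires care with degreewise-split modifications, for instance adding contractible complexes of injectives so that the degree-$0$ map is monic.

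\textbf{Step 3: (2)$\Rightarrow$(3) and (3)$\Rightarrow$(1).} For (2)$\Rightarrow$(3), take any generator $U_0$ and a special ${}^\perp\mathcal{GI}$-precover $0\to G\to A\to U_0\to0$. Then $A\in{}^\perp\mathcal{GI}(\mathcal A)$ surjects onto a generator, so $A$ is itself a generator. For (3)$\Rightarrow$(1), if Step 1 holds there is nothing to prove. Otherwise I would argue directly via Estrada--Gillespie \cite{EG26}. A generator $U\in{}^\perp\mathcal{GI}$ makes its Noetherian subobjects, shifted through injective resolutions, Tate trivial. This yields completeness of the cotorsion pair $({}^\perp\mathcal{GI},\mathcal{GI})$. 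The left adjoint $i^\ast$ is then built degreewise from special $\mathcal{GI}$-preenvelopes of the cycles, via the dual of the construction in Proposition~\ref{lem:gi-basic}.
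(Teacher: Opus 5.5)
Your Step~1, the hub of the proposal, is false. The objects $iN$ ($N\in\mathcal N$) compactly generate $\mathcal T$ \cite[Proposition~2.3]{Krause2005}. Their right orthogonal, with shifts, is therefore zero, and your displayed equality would force $\mathcal S=0$.

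The error is the claim that $H^n\mathrm{Hom}_{\mathcal A}(N,X)=0$ for all Noetherian $N$ and all $n$ exactly when $X$ is acyclic. Only the implication towards acyclicity holds. For acyclic $X$ this group is $\Ext^1_{\mathcal A}(N,Z^{n-1}X)$, which is usually nonzero. For instance, in Lemma~\ref{lem:Tn} one has $H^1\mathrm{Hom}_R(C,\mathbf I\langle n\rangle)\cong\Ext^1_R(C,G_n)\neq0$, with $C$ finitely generated and $\mathbf I\langle n\rangle$ totally acyclic. So Step~1 proves nothing, and (3)$\Rightarrow$(1) needs a real argument.

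Your fallback, building $i^\ast$ degreewise from $\mathcal{GI}$-preenvelopes of cycles, does not supply one. Objects of $\mathcal T$ are arbitrary, possibly non-acyclic, complexes of injectives. Nothing in that recipe yields a map $T\to C_T$ into $\mathcal S$ inducing $\mathrm{Hom}_{\mathcal T}(C_T,S)\cong\mathrm{Hom}_{\mathcal T}(T,S)$. The paper gets such a map from a special preenvelope of the complex itself, via \cite[Corollary~4.3]{EG26}. This gives a degreewise split sequence $0\to T\to C_T\to V_T\to0$ with $C_T$ totally acyclic and $V_T\in{}^\perp\mathcal C$, so $\mathrm{Hom}_{\mathcal T}(V_T,S[n])=0$.

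Your orthogonality idea can nonetheless be rescued under (3). Let $U\in{}^\perp\mathcal{GI}(\mathcal A)$ be a generator; then $\mathcal S=(\{iU\}\cup\mathcal E)^{\perp}$. For totally acyclic $X$ one has $\mathrm{Hom}_{\mathcal T}(iU,X[n])\cong H^n\mathrm{Hom}_{\mathcal A}(U,X)\cong\Ext^1_{\mathcal A}(U,Z^{n-1}X)=0$. Conversely, if $H^n\mathrm{Hom}_{\mathcal A}(U,X)=0$ for all $n$, every Noetherian subobject of $Z^nX$ is a quotient of some $U^{\oplus r}$ and so lies in $B^nX$; hence $X$ is acyclic, and orthogonality to $\mathcal E$ gives total acyclicity. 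With this set, your Bousfield-localisation argument is a valid alternative to citing \cite[Corollary~4.3]{EG26}.

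In Step~2, the ``translation'' you leave open is the whole content, and Corollary~\ref{cor:tac-covers-to-gi-covers} is not the tool for it. What is needed is \cite[Lemma~7.1]{Krause2005}: if $Y\in\mathcal T$ has $H^{>0}(Y)=0$, then $\mathrm{Hom}_{\mathcal T}(Y,S)\cong\overline{\mathrm{Hom}}_{\mathcal A}(Z^0Y,Z^0S)$ for $S\in\mathcal S$. Total acyclicity of $S$ then identifies the right-hand side with $\Ext^1_{\mathcal A}(Z^0Y,Z^{-1}S)$.

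Your appeal to Salce's trick also runs the wrong way. Without enough projectives, special $\mathcal{GI}$-preenvelopes give special ${}^\perp\mathcal{GI}$-precovers only when every object is a quotient of an object of ${}^\perp\mathcal{GI}(\mathcal A)$, which is condition (3) itself. The fact that $\mathcal{GI}(\mathcal A)$ contains the injectives only lets you pass from precovers to preenvelopes.

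The fix is to use the cone instead of degree-$0$ maps into $Z^0T$. Let $\eta\colon iM\to T$ represent the unit, with $T$ totally acyclic, and put $C=\mathrm{Cone}(\eta)$, which lies in ${}^\perp\mathcal S$. The degreewise split sequence $0\to T\to C\to iM[1]\to0$, together with acyclicity of $T$, gives an exact sequence $0\to Z^{-1}T\to Z^{-1}C\to M\to0$. Since $H^k(C[-1])\cong H^k(iM)=0$ for $k>0$, the lemma applied to $C[-1]$ gives $Z^{-1}C\in{}^\perp\mathcal{GI}(\mathcal A)$. This is a special ${}^\perp\mathcal{GI}$-precover of $M$.

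Taking $M=U$ a generator exhibits the generator $Z^{-1}C\in{}^\perp\mathcal{GI}(\mathcal A)$. This is exactly the paper's proof of (1)$\Rightarrow$(3), and (2) then follows from \cite[Corollary~5.8]{EG26}, or from Salce's trick in the correct direction.

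Your (2)$\Rightarrow$(3) is correct. For (3)$\Rightarrow$(2), apply \cite[Corollary~5.8]{EG26} to the generating set $\{U\}$ directly; the detour through Noetherian subobjects of $U$ is unnecessary and unjustified.
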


\begin{proof}
The equivalence $(2)\Longleftrightarrow(3)$ follows from \cite[Corollary~5.8]{EG26}, since a generating set in ${}^\perp\mathcal{GI}(\mathcal A)$ can be replaced by its coproduct.

\noindent\textit{$(3)\Longrightarrow(1)$.}
Let $\mathcal C \subseteq \mathrm{Ch}(\mathrm{Inj}\mathcal A)$ be the collection of totally acyclic complexes.
By \cite[Corollary~4.3]{EG26}, every $T\in\mathcal T$ admits a special $\mathcal C$-preenvelope $e_T:T \to C_T$, which yields degreewise split short exact sequence in $\mathrm{Ch}(\mathcal A)$
\[
0\longrightarrow T\longrightarrow C_T\longrightarrow V_T\longrightarrow0, \qquad C_T \in \mathcal C, \quad V_T \in {}^\perp \mathcal C.
\]
This yields a distinguished triangle $T \xrightarrow{e_T}C_T \to V_T \to T[1]$ in $\mathcal T$.
For each $S \in \mathcal S$ and $n \in \mathbb Z$, the orthogonal relation yields that $\mathrm{Hom}_{\mathcal T}(V_T[n], S) = 0$.
Hence
\[
\mathrm{Hom}_{\mathcal S}(C_T, S) \cong \mathrm{Hom}_{\mathcal T}(C_T, S) \xrightarrow[\sim]{- \circ e_T} \mathrm{Hom}_{\mathcal T}(T, S).
\]
It follows that $T \mapsto C_T$ defines a left adjoint of $i_\ast$.

\noindent\textit{$(1)\Longrightarrow(3)$.}
Let $E$ be an injective resolution of a generator $U$.
Then complete the unit $\eta_E : E \to i_\ast i^\ast E$ to the distinguished triangle $E\xlongrightarrow{\eta_E} i_\ast i^\ast E\longrightarrow C\longrightarrow E[1]$.
For each $S \in \mathcal S$, adjunction $i^\ast \dashv i_\ast$ gives $\mathrm{Hom}_{\mathcal T}(\eta_E, S) : \mathrm{Hom}_{\mathcal T}(i_\ast i^\ast E, S) \cong \mathrm{Hom}_{\mathcal T}(E, S)$.
Hence $\mathrm{Cone}(\eta_E)\in{}^\perp\mathcal S$.
Since $i_\ast i^\ast E$ is acyclic, $H^{-1}C\longrightarrow H^0E\cong U$.
Since $U$ is a quotient of $Z^{-1} C$, one sees that $Z^{-1}C$ is a generator.
Since $H^{>0}(C[-1])=0$, \cite[Lemma~7.1]{Krause2005} gives
\[
\overline{\mathrm{Hom}}_{\mathcal A}(Z^{-1}C,Z^0S)
\cong\mathrm{Hom}_{\mathcal T}(C[-1],S)=0
\qquad(S\in\mathcal S),
\]
where $\overline{\mathrm{Hom}}_{\mathcal A}(-,?) = \frac{{\mathrm{Hom}}_{\mathcal A}(-,?)}{\{\text{morphisms factors through injectives}\}}$.
Apply $\mathrm{Hom}_{\mathcal A}(Z^{-1}C,-)$ to
\[
0\longrightarrow Z^{-1}S\longrightarrow S^{-1}
\xlongrightarrow \pi Z^0S\longrightarrow0.
\]
Since $S^{-1}$ is injective, this gives
\[
\mathrm{Ext}^1_{\mathcal A}(Z^{-1}C,Z^{-1}S)
\cong
\frac{\mathrm{Hom}_{\mathcal A}(Z^{-1}C,Z^0S)}
{\mathrm{Im}\ \mathrm{Hom}_{\mathcal A}(Z^{-1}C,\pi)}.
\]
By total acyclicity, every map from an injective object to $Z^0S$ passes through $S^{-1}\to Z^0S$.
Thus $\mathrm{Im}\ \mathrm{Hom}_{\mathcal A}(Z^{-1}C,\pi)$ consists precisely of the maps in $\mathrm{Hom}_{\mathcal A}(Z^{-1}C,Z^0S)$ factoring through injective objects.
Hence
\[
\mathrm{Ext}^1_{\mathcal A}(Z^{-1}C,Z^{-1}S)
\cong\overline{\mathrm{Hom}}_{\mathcal A}(Z^{-1}C,Z^0S)=0.
\]
Every object in $\mathcal{GI}(\mathcal A)$ occurs as $Z^{-1}S$ for some $S\in\mathcal S$.
Thus $Z^{-1}C\in{}^\perp\mathcal{GI}(\mathcal A)$.
\end{proof}

\begin{remark}
    It is not asserted that $(^\perp \mathcal{GI}(\mathcal A), \mathcal{GI}(\mathcal A))$ is a cotorsion pair.
\end{remark}

\begin{remark}
    If $i_\ast$ has a left adjoint, then $\mathcal S$ has arbitrary coproducts
    \[
    \coprod_{i \in I} ^{\mathcal S} S_i \cong i^\ast \left(\coprod_{i \in I}^{\mathcal T} i_\ast (S_i)\right).
    \]
    Note that this coproduct is not computed in $\mathcal T$ is general.
    In particular $\mathcal S$ is not necessarily a localising subcategory of $\mathcal T$.
    By Theorem~\ref{thm:equivalence}, if $\mathcal S$ admits a right adjoint, then $\mathcal S$ has coproducts which is computed in $\mathcal T$.
    In this case $\mathcal S$ a localising subcategory of $\mathcal T$.
\end{remark}

\begin{proposition}\label{prop:tac-recollement}
The inclusion $i_\ast$ has both adjoints if and only if
\[
\mathcal S\xrightarrow{i_\ast}\mathcal T
\xrightarrow{j^\ast}\mathcal Q:=\mathcal T/\mathcal S
\]
extends to a recollement.
In this case, $\mathcal Q$ is locally small, and
\[
i^\ast\dashv i_\ast\dashv i^!,\qquad
j_!\dashv j^\ast\dashv j_\ast,\qquad
{}^\perp\mathcal S\simeq\mathcal Q\simeq\mathcal S^\perp,
\]
where $j_!$ and $j_\ast$ identify $\mathcal Q$ with ${}^\perp\mathcal S$ and $\mathcal S^\perp$, respectively.
\end{proposition}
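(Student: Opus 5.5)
The plan is to apply the standard theory of recollements of triangulated categories, in the form of Krause's results on localisation sequences. No Gorenstein-specific input is needed beyond facts already established: $\mathcal S$ is a thick subcategory of $\mathcal T$, and $\mathcal T$ is compactly generated, since $\mathcal T\simeq D(\mathcal C)$ via $\Phi$. In particular $\mathcal T$ is locally small and has coproducts.

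\emph{One direction.} If the sequence $\mathcal S\to\mathcal T\to\mathcal Q$ extends to a recollement, then by definition $i_\ast$ has the adjoints $i^\ast$ and $i^!$, so this direction is immediate.

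\emph{The converse.} Assume $i^\ast\dashv i_\ast\dashv i^!$ exist.
\begin{enumerate}
\item From $i^!$, argue as in the step $(8)\Rightarrow(6)$ of the proof of Theorem~\ref{thm:equivalence}. By \cite[Lemma~3.2]{Krause2005}, the quotient $j^\ast:\mathcal T\to\mathcal Q$ has a right adjoint $j_\ast$, which is fully faithful with essential image $\mathcal S^\perp$. Each $X\in\mathcal T$ sits in a triangle $i_\ast i^!X\to X\to j_\ast j^\ast X\to$.
\item Dually, from $i^\ast$, each $X$ sits in a triangle $j_!j^\ast X\to X\to i_\ast i^\ast X\to$. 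Here the first term is the cocone of the unit $X\to i_\ast i^\ast X$, and it lies in ${}^\perp\mathcal S$.
\item Define $j_!$ on $\mathcal Q$ by $j^\ast X\mapsto$ that cocone. Check it is a well-defined triangle functor, left adjoint to $j^\ast$ and fully faithful, with image ${}^\perp\mathcal S$. This is the dual of Verdier's localisation argument: the composite ${}^\perp\mathcal S\hookrightarrow\mathcal T\to\mathcal Q$ is an equivalence, because $\mathrm{Hom}_{\mathcal Q}(j^\ast A,j^\ast Y)\cong\mathrm{Hom}_{\mathcal T}(A,Y)$ for $A\in{}^\perp\mathcal S$.
\item Similarly, $\mathcal S^\perp\to\mathcal Q$ is an equivalence. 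This gives ${}^\perp\mathcal S\simeq\mathcal Q\simeq\mathcal S^\perp$, and the recollement axioms (the adjunctions, $j^\ast i_\ast=0$, fully faithfulness of $i_\ast$, $j_!$, $j_\ast$, and the two triangles) all hold.
\end{enumerate}

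\emph{Local smallness.} Since $\mathrm{Hom}_{\mathcal Q}(j^\ast X,j^\ast Y)\cong\mathrm{Hom}_{\mathcal T}(X,j_\ast j^\ast Y)$ and $\mathcal T$ is locally small, $\mathcal Q$ is locally small. Alternatively, this follows directly from $\mathcal Q\simeq\mathcal S^\perp\subseteq\mathcal T$.

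The main obstacle is the bookkeeping in step 3: showing that the cocone construction defines a triangle functor on the Verdier quotient, which a priori might have large Hom-classes. I would avoid this by defining $j_!$ as the composite of the inverse of the equivalence ${}^\perp\mathcal S\simeq\mathcal Q$ with the inclusion into $\mathcal T$. Proving this equivalence only needs the triangles above and the orthogonality ${}^\perp\mathcal S$ against $\mathcal S$. Alternatively, one can cite the standard statement in Krause's localisation theory that a thick subcategory whose inclusion has both adjoints yields a recollement.
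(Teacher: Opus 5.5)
Your proposal is correct and follows the paper's route: the paper's proof simply applies \cite[Lemma~3.2]{Krause2005} and its dual to the thick subcategory $\mathcal S\subseteq\mathcal T$. Your steps 1--4 unpack that citation: $i^!$ yields $j_\ast$ and $\mathcal Q\simeq\mathcal S^\perp$, and dually $i^\ast$ yields $j_!$ and ${}^\perp\mathcal S\simeq\mathcal Q$, with local smallness of $\mathcal Q$ coming from $j_\ast$ as you say. The appeal to compact generation of $\mathcal T$ is not needed, since $K(\mathrm{Inj}\,\mathcal A)$ is locally small anyway.
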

\begin{proof}
Apply \cite[Lemma~3.2]{Krause2005} and its dual to the thick subcategory $\mathcal S\subseteq\mathcal T$.
\end{proof}

In particular, if $D(\mathcal A)$ is compactly generated, then $i_\ast$ has a left adjoint \cite[Lemma~7.3]{Krause2005}.

\begin{example}
Let $R$ be a ring such that $\mathcal{GI}(R)$ forms a covering class.
Put $\mathcal S = K_{\mathrm{tac}}(\mathrm{Inj}\,R)$ and $\mathcal T = K(\mathrm{Inj}\,R)$, respectively.
Then one has the adjoint triple
\[
\xymatrix@C=3cm{
\mathcal S
\ar@{ >->}[r]|{i_\ast}
& \mathcal T
\ar@<1.2ex>[l]^{i^!}
\ar@<-1.2ex>[l]_{i^\ast}
}
\]
where $i_\ast:\mathcal S\to\mathcal T$ is the canonical inclusion.

Let $\mathcal M$ be a set of representatives of the finitely generated objects of $\mathrm{Mod}_R$, and choose an injective resolution $\iota M$ for each $M\in\mathcal M$.
Then the objects $i^\ast(\iota M)$, $M\in\mathcal M$, form a set of compact generators of $\mathcal S$, and the compact objects of $\mathcal S$ are
\[
 \mathcal S^c=\operatorname{thick}_{\mathcal S}\{i^\ast(\iota M) \mid M\in\mathcal M\}.
\]
\end{example}

\begin{proof}
By Theorem~\ref{thm:equivalence}, $R$ is Noetherian and $\mathcal S$ is localising in $\mathcal T$.
The category $\mathcal T$ is compactly generated by the $\iota M$ \cite[Proposition~2.3]{Krause2005}.
The left adjoint $i^\ast$ exists taking $U = R$ in Lemma~\ref{prop:leftadj} (3).
The inclusion $i_\ast:\mathcal S\to\mathcal T$ preserves coproducts.
By adjunction
\[
 \mathrm{Hom}_{\mathcal S}(i^\ast(\iota M),-)\cong\mathrm{Hom}_{\mathcal T}(\iota M,i_\ast(-)),
\]
every $i^\ast(\iota M)$ is compact.
If $\mathrm{Hom}_{\mathcal S}(i^\ast(\iota M),X[n])=0$ for every $M\in\mathcal M$ and $n\in\mathbb Z$, adjunction gives $i_\ast X=0$, hence $X=0$.
Thus these objects generate $\mathcal S$, and its compact objects are their thick closure \cite[Proposition~3.4.15]{Krause2022}.
\end{proof}

\section{A Noetherian ring without Gorenstein injective covers}\label{sec:nonclosure}

This section shows that Gorenstein injective modules over a commutative Noetherian ring need not form a covering class.

More precisely, we construct a countable one-dimensional Cohen--Macaulay local ring $R$ and a sequence $(G_n)_{n\in\mathbb N_+}$ of Gorenstein injective $R$-modules whose coproduct has infinite Gorenstein injective dimension and admits no Gorenstein injective precover.

We first outline the proof in Subsection~\ref{subsec:exampleB-outline}, then establish the basic properties of $R$ and determine its indecomposable injective modules in the subsequent subsections. 
In particular, the construction for $(G_n)_{n \in \mathbb N_+}$ are explained in Subsection~\ref{The Gorenstein injective modules}.

\subsection{Outline of the example}\label{subsec:exampleB-outline}

Let $\mathbb Q[\![t]\!]$ be the formal power series ring, and let $\mathbb Q[\![t]\!][t^{-1}]$ be the field of formal Laurent series.
Fix $s_1=e^t-1$ and $s_2=e^{t^2}-1$ that are algebraically independent over $\mathbb{Q}(t)$.
Then $F=\mathbb Q(t)(s_1,s_2)$ is the field of rational functions with $3$ indeterminates.
For each $i = 1,2$, define the formal derivations
\[
\delta_i  : F \longrightarrow F, \quad f(t,s_1,s_2) \longmapsto \tfrac{\partial }{\partial s_i} f(t,s_1,s_2).
\]
Define $A=\left\{ a\in F\cap \mathbb Q[\![t]\!] \mid \delta_1(a),\delta_2(a)\in \mathbb Q[\![t]\!] \right\}$.
Then $A$ is Noetherian by Corollary~\ref{cor:A-noetherian} and $\operatorname{spec} A = \{0, \mathfrak m_A\}$ by Lemma~\ref{lem:dimA}.
Then the final ring for the example is
\[
R=
\left\{
\begin{psmallmatrix}
a&b&c\\
0&a&0\\
0&0&a
\end{psmallmatrix}
 \mid a,b,c\in A
\right\}.
\]
Since the surjection of the diagonal $\pi : R \twoheadrightarrow A$ has a nilpotent kernel, one has
\[
\operatorname{spec}(R) = \{\pi^{-1}(\mathfrak m_A), \pi^{-1}(0)\} = \{\mathfrak m_R , \mathfrak n\}.
\]
It is shown in Lemma~\ref{lem:R} that $R$ is a Cohen–Macaulay local ring of Krull dimension one.

Take the finitely generated $R$-module $C = \mathrm{Hom}_A(R, A)$.
Multiplying by $t$ gives an $\mathbb N$-indexed chain $C \xrightarrow{t} C \xrightarrow{t} C \xrightarrow{t} \cdots$, which yields a short exact sequence
\begin{equation}
0 \longrightarrow C^{(\mathbb N_+)} \xlongrightarrow{1 - \nabla}  C^{(\mathbb N_+)} \longrightarrow \varinjlim C \longrightarrow 0.\label{eq:B-telescope}
\end{equation}
Then Proposition~\ref{prop:compGI} and Lemma~\ref{lem:Tn} yield a sequence $(G_n)_{n \in \mathbb N_+}$ of Gorenstein injective $R$-modules such that
\begin{equation}
t^n \cdot \Ext^1_R(C, G_n) = 0, \qquad t^{n-1} \cdot \Ext^1_R(C, G_n) \neq 0, \quad (n \in \mathbb N_+). \label{eq:B-Ext-torsion}
\end{equation}
Applying $\mathrm{Hom}_R(-, \coprod _{n \in \mathbb N_+}G_n)$ to \eqref{eq:B-telescope} gives a commutative diagram
\[
\xymatrix@C=1cm@R=.5cm{
\left(\coprod_{n\in\mathbb N_+}
\mathrm{Ext}^1_R\left(C,G_n\right)\right)^{\mathbb N_+}
\ar[r]^-{\simeq} \ar[d]_{1-\Delta}
&
\mathrm{Ext}^1_R\left(C^{(\mathbb N_+)},\coprod_{n\in\mathbb N_+}G_n\right)
\ar[d]^{\mathrm{Ext}^1(1-\nabla,\  \coprod_{n\in\mathbb N_+}G_n)}
\\
\left(\coprod_{n\in\mathbb N_+}
\mathrm{Ext}^1_R\left(C,G_n\right)\right)^{\mathbb N_+}
\ar[r]^-{\simeq} \ar@{->>}[d]
&
\mathrm{Ext}^1_R\left(C^{(\mathbb N_+)},\coprod_{n\in\mathbb N_+}G_n\right)
\ar[d]
\\
\lim^1\left(\coprod_{n\in\mathbb N_+}\mathrm{Ext}^1_R\left(C,G_n\right)\right)
\ar@{ >.>}[r]
&
\mathrm{Ext}^2_R\left(\varinjlim C,\coprod_{n\in\mathbb N_+}G_n\right)
}
\]
Here the columns are exact at middle, and $\Delta$ is the shift of the tower
\[
\cdots \xlongrightarrow {t} \coprod_{n\in\mathbb N_+}
\mathrm{Ext}^1_R\left(C,G_n\right) \xlongrightarrow {t} \coprod_{n\in\mathbb N_+}
\mathrm{Ext}^1_R\left(C,G_n\right)\xlongrightarrow {t} \coprod_{n\in\mathbb N_+}
\mathrm{Ext}^1_R\left(C,G_n\right).
\]
By \eqref{eq:B-Ext-torsion} there is $\eta_n \in \Ext^1_R(C, G_n)$ with $t^n\cdot \eta_n = 0$ while $t^{n-1}\cdot \eta_n \neq 0$ for each $n \in \mathbb N_+$.
Then $(\eta_n)_{n \in \mathbb N_+} \in \left(\coprod_{n\in\mathbb N_+}
\mathrm{Ext}^1_R\left(C,G_n\right)\right)^{\mathbb N_+} \setminus \mathrm{im}(1- \Delta)$, and thus $\mathrm{Ext}^2_R\left(\varinjlim C,\coprod_{n\in\mathbb N_+}G_n\right) \neq 0$.
Here $\varinjlim C\cong C[t^{-1}]\cong\mathrm{Hom}_A(R,A[t^{-1}])$.
By Lemma~\ref{lem:A-localisation} and Corollary~\ref{cor:J-Hom}, one has $\mathrm{Hom}_A(R,A[t^{-1}]) = \mathrm{Hom}_A(R,F) \cong E_R(R/\mathfrak n)$.
It follows that $\coprod_{n\in\mathbb N_+} G_n$ is not Gorenstein injective.
By Theorem~\ref{thm:equivalence}, $\mathcal{GI}(R)$ is not a covering class.

We then show that $\coprod_{n\in\mathbb N_+}G_n$ has infinite $\mathcal{GI}(R)$-injective dimension.
By construction in Subsection~\ref{The Gorenstein injective modules}, we may assume the complete injective resolution $G_n$ to be a one-periodic complex.
Hence there are short exact sequences $0 \to G_n \to I_n \to G_n \to 0$ which remain exact by applying $\mathrm{Hom}_R(E_R(R/\mathfrak n), -)$.
Taking coproducts yields an exact sequence $0 \to \coprod_{n\in\mathbb N_+}G_n \to \coprod_{n\in\mathbb N_+}I_n \to \coprod_{n\in\mathbb N_+}G_n \to 0$.
Since $R$ is Noetherian, $\coprod_{n\in\mathbb N_+}I_n$ is injective.
Hence dimension shift gives $\mathrm{Ext}_R^d(E_R(R/\mathfrak n),\coprod_{n\in\mathbb N_+}G_n)\ne0$ for each $d \geq 1$.
It follows that $\coprod_{n\in\mathbb N_+}G_n$ has infinite injective dimension.
Note that $\coprod_{n\in\mathbb N_+}G_n$ has infinite Gorenstein injective dimension; otherwise a finite Gorenstein injective coresolution of $M$ would imply
$\mathrm{Ext}_R^d(E,M)=0$ for every module $E$
and all sufficiently large $d$, a contradiction.

We finally show that $\coprod_{n\in\mathbb N_+}G_n$ admits no $\mathcal{GI}(R)$-precover.
Suppose that $f:G\to \coprod_{n\in\mathbb N_+}G_n$ were a $\mathcal{GI}(R)$-precover.
Each inclusion $G_n\to \coprod_{n\in\mathbb N_+}G_n$ factors through $f$, and the resulting maps assemble into $h:M\to G$ with $fh=1_M$.
It follows that $\mathrm{Ext}_R^2(E_R(R/\mathfrak n),\coprod_{n\in\mathbb N_+}G_n) \neq 0$ is a direct summand of
$\mathrm{Ext}_R^2(E_R(R/\mathfrak n),G)=0$, contradiction.
Hence $M$ admits no $\mathcal{GI}(R)$-precover.

\subsection{The one dimensional CM local ring \texorpdfstring{$R$}{R}}

We show that $A$ is a one dimensional CM local domain, then pass these properties to $R$.

\begin{lemma}\label{lem:A-local}
$A$ is a local domain with maximal ideal $\mathfrak m_A=A\cap t\mathbb Q[\![t]\!]=(t,s_1,s_2)$.
\end{lemma}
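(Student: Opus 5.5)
We must show three things: $A$ is a domain, $\mathfrak m_A=A\cap t\mathbb Q[\![t]\!]$ is the unique maximal ideal, and this ideal is generated by $t,s_1,s_2$.

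\emph{Domain.} $A$ is a subring of the field $F$. To see it is a subring, note that $\delta_1,\delta_2$ are derivations, so the Leibniz rule gives $\delta_i(ab)=a\delta_i(b)+b\delta_i(a)\in\mathbb Q[\![t]\!]$ whenever $a,b\in A$. Closure under sums is clear, and $1\in A$.

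\emph{Locality.} Consider the ideal $\mathfrak m_A=A\cap t\mathbb Q[\![t]\!]$, the kernel of the map $A\to\mathbb Q$ sending $a\mapsto a(0)$. It suffices to show that every $a\in A$ with $a(0)\neq0$ is a unit of $A$. Such an $a$ is invertible in $\mathbb Q[\![t]\!]$, and $a^{-1}\in F$. Moreover $\delta_i(a^{-1})=-a^{-2}\delta_i(a)$, which lies in $\mathbb Q[\![t]\!]$ because $a^{-2}$ does. Hence $a^{-1}\in A$, and $A$ is local with maximal ideal $\mathfrak m_A$.

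\emph{Generators.} First, $t,s_1,s_2\in A$: each lies in $t\mathbb Q[\![t]\!]$, and their $\delta_i$-derivatives are constants ($0$ or $1$). So $(t,s_1,s_2)\subseteq\mathfrak m_A$. The reverse inclusion is the main obstacle. Given $a\in\mathfrak m_A$, I want to write $a=ta'+s_1a_1+s_2a_2$ with $a',a_1,a_2\in A$. My first attempt is the Taylor-type decomposition
\[a-a(t,0,0)=s_1a_1+s_2a_2,\]
with $a_1,a_2$ obtained as difference quotients in $s_1,s_2$ inside $F$. One then checks whether these $a_j$ are power series whose $\delta_i$-derivatives are power series.

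The cleaner route I would actually try uses the power-series side. Write $a=tb$ with $b\in\mathbb Q[\![t]\!]\cap F$. Then $\delta_i(b)=\delta_i(a)/t$, since $\delta_i(t)=0$. This may fail to be a power series, and the failure is measured by the constant terms $c_i=\delta_i(a)(0)$. To correct for it, replace $a$ by
\[a-c_1s_1-c_2s_2.\]
This element is still in $\mathfrak m_A$, and its derivatives $\delta_i(a)-c_i$ lie in $t\mathbb Q[\![t]\!]$. Dividing by $t$ now gives $b\in F\cap\mathbb Q[\![t]\!]$ with $\delta_i(b)\in\mathbb Q[\![t]\!]$, so $b\in A$. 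Therefore
\[a=c_1s_1+c_2s_2+tb\in(t,s_1,s_2).\]
The one point to verify carefully is that $b$ lies in $F$, which holds because $t\in F$. The derivation identities are routine.
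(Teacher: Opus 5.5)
Your proposal is correct and follows the paper's proof essentially verbatim: locality via $\delta_i(a^{-1})=-a^{-2}\delta_i(a)\in\mathbb Q[\![t]\!]$, and the reverse inclusion $\mathfrak m_A\subseteq(t,s_1,s_2)$ by subtracting $c_1s_1+c_2s_2$, where $c_i=\delta_i(a)(0)$, and then dividing by $t$. The Taylor-type ``first attempt'' is never used in your final argument and can simply be dropped.
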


\begin{proof}
It is straightforward to check that $A$ is a ring. 
Since $A\subseteq F$, $A$ is a domain. 

We claim that $\mathfrak m_A=A\cap t\mathbb Q[\![t]\!]$ is the unique maximal ideal.
It suffices to show that each $a \in A \setminus \mathfrak m_A$ is a unit.
Note that $a$ is a unit in $F \cap \mathbb Q[\![t]\!]$.
It remains to show that $a^{-1} \in A$.
By construction, $\delta_i(a^{-1}) = -a^{-2}\delta _i (a) \in \mathbb Q[\![t]\!]$ for $i = 1,2$.
Thus $a^{-1} \in A$, proving the claim.

We next prove that $\mathfrak m_A=(t,s_1,s_2)$. 
Clearly $(t,s_1,s_2)\subseteq\mathfrak m_A$. 
Conversely, fix an arbitrary $b\in\mathfrak m_A$.
For each $i =1 ,2$, $b$ takes the form $\delta_i(b)=c_i+td_i$ with $c_i \in \mathbb Q$ and $d_i \in \mathbb Q[\![t]\!]$.
Set $c=b-c_1s_1-c_2s_2$.
Then $c\in t\mathbb Q[\![t]\!]$ and hence $c/t\in F\cap \mathbb Q[\![t]\!]$. 
Since for each $i = 1,2$, \(
\delta_i(c/t)=\frac{\delta_i(c)}{t}=\frac{\delta_i(b)-c_i}{t}=d_i\in \mathbb Q[\![t]\!].\) By definition $c / t \in A$, and hence $c\in tA$. 
It follows that $b\in At+As_1+As_2$, proving $\mathfrak m_A \subseteq (t,s_1,s_2)$. 
This completes the proof.
\end{proof}

\begin{lemma}
    For the local domain $(A, \mathfrak m_A)$, one has $\mathfrak m_A^2=t\mathfrak m_A$.
\end{lemma}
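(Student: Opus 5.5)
The plan is to prove the two inclusions separately. The inclusion $t\mathfrak m_A\subseteq\mathfrak m_A^2$ is immediate, since $t\in\mathfrak m_A$ by Lemma~\ref{lem:A-local}. For the reverse inclusion, Lemma~\ref{lem:A-local} gives $\mathfrak m_A=(t,s_1,s_2)$, so $\mathfrak m_A^2$ is generated by the products $t^2,ts_1,ts_2,s_1^2,s_1s_2,s_2^2$. The first three already lie in $t\mathfrak m_A$, so it suffices to show
\[
s_is_j\in t\mathfrak m_A\qquad(1\le i\le j\le 2).
\]

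To do this I will show that $s_is_j/t$ lies in $A$ and in fact in $\mathfrak m_A$. Membership in $\mathbb Q[\![t]\!]$ is clear: $s_1=t+t^2/2+\cdots$ and $s_2=t^2+\cdots$ both lie in $t\mathbb Q[\![t]\!]$, so $s_is_j\in t^2\mathbb Q[\![t]\!]$. Hence $s_is_j/t\in t\mathbb Q[\![t]\!]\cap F$.

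Next I check the derivation condition in the definition of $A$. Since $\delta_k$ treats $t$ as a constant, $\delta_k(s_is_j/t)=\delta_k(s_is_j)/t$. The numerator $\delta_k(s_is_j)$ is one of $0$, $s_i$, $s_j$, or $2s_i$, and each of these lies in $t\mathbb Q[\![t]\!]$. So dividing by $t$ keeps it in $\mathbb Q[\![t]\!]$. Therefore $s_is_j/t\in A$, and since it lies in $t\mathbb Q[\![t]\!]$, Lemma~\ref{lem:A-local} gives $s_is_j/t\in\mathfrak m_A$. It follows that $s_is_j=t\cdot(s_is_j/t)\in t\mathfrak m_A$, which proves $\mathfrak m_A^2\subseteq t\mathfrak m_A$.

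The argument is short. The only point needing care is the derivation computation, which goes through precisely because each $\delta_k(s_is_j)$ is divisible by $t$ in $\mathbb Q[\![t]\!]$; this is why the products $s_is_j$ are the only terms that need checking.
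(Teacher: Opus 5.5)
Your proof is correct and follows essentially the paper's argument. The paper shows $bc/t\in\mathfrak m_A$ for arbitrary $b,c\in\mathfrak m_A$ via $\delta_i(bc/t)=\bigl(b\,\delta_i(c)+c\,\delta_i(b)\bigr)/t\in\mathbb Q[\![t]\!]$, while you run the same Leibniz-plus-divisibility check only on the products $s_is_j$ of the generators from Lemma~\ref{lem:A-local}; the paper's version is slightly more economical because it does not need the generating set $(t,s_1,s_2)$.
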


\begin{proof}
On one hand, any $b,c\in\mathfrak m_A$ yield that $bc/t\in t\mathbb Q[\![t]\!]$. 
Since \(\delta_i(bc/t)=\frac{b\delta_i(c)+c\delta_i(b)}{t}\in \mathbb Q[\![t]\!]\), $bc/t\in\mathfrak m_A$. 
Thus \(\mathfrak m_A^2\subseteq t\mathfrak m_A\). 
On the other hand, $\mathfrak m_A^2 \supseteq t\mathfrak m_A$ follows from $t\in\mathfrak m_A$.
\end{proof}

\begin{proposition}\label{lem:A-noeth}
For every $a\in A \setminus \{0\}$, $A/(a)$ is a finite-dimensional $\mathbb Q$-vector space. 
\end{proposition}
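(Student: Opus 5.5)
The plan is to reduce to the powers of $t$: show that each $A/t^nA$ is finite-dimensional over $\mathbb Q$, and that every nonzero $a\in A$ divides some power of $t$ in $A$. Then $A/(a)$ is a quotient of some $A/t^NA$, and the statement follows.

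\emph{Step 1: $A/tA$ is finite-dimensional.} By Lemma~\ref{lem:A-local}, $\mathfrak m_A=A\cap t\mathbb Q[\![t]\!]$. Evaluating the constant term gives $A/\mathfrak m_A\cong\mathbb Q$, since $\mathbb Q\subseteq A$. Lemma~\ref{lem:A-local} also gives $\mathfrak m_A=(t,s_1,s_2)$, and the preceding lemma gives $\mathfrak m_A^2=t\mathfrak m_A\subseteq tA$. So in $A/tA$ the ideal $\mathfrak m_A/tA$ is generated by the images of $s_1,s_2$. Moreover $\mathfrak m_A\cdot(\mathfrak m_A/tA)=0$, because $\mathfrak m_A^2\subseteq tA$. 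Hence $\mathfrak m_A/tA$ is a $\mathbb Q$-vector space spanned by $\bar s_1,\bar s_2$, and $\dim_{\mathbb Q}A/tA\le 3$, spanned by $1,s_1,s_2$.

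\emph{Step 2: $A/t^nA$ is finite-dimensional.} Since $A$ is a domain, multiplication by $t^i$ induces an isomorphism $A/tA\cong t^iA/t^{i+1}A$. The filtration $t^nA\subseteq t^{n-1}A\subseteq\cdots\subseteq A$ then gives $\dim_{\mathbb Q}A/t^nA\le 3n$.

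\emph{Step 3: each nonzero $a$ divides a power of $t$.} Let $a\in A\setminus\{0\}$ have $t$-adic order $k$, so $a=t^ku$ with $u$ a unit of $\mathbb Q[\![t]\!]$. I claim $t^{2k}/a\in A$. Certainly $t^{2k}/a\in F$, and $t^{2k}/a=t^ku^{-1}\in\mathbb Q[\![t]\!]$. Since $\delta_i(t)=0$, we have
\[
\delta_i\!\left(t^{2k}/a\right)=-\,t^{2k}\delta_i(a)/a^2=-\,u^{-2}\delta_i(a),
\]
which lies in $\mathbb Q[\![t]\!]$ because $\delta_i(a)\in\mathbb Q[\![t]\!]$. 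So $t^{2k}\in aA$, and $A/(a)$ is a quotient of $A/t^{2k}A$. It is therefore finite-dimensional, of dimension at most $6k$.

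I expect Step 1 to be the only point needing care. It uses both the explicit generators of $\mathfrak m_A$ and the identity $\mathfrak m_A^2=t\mathfrak m_A$. Step 3 is a direct computation with the derivations, which goes through because the denominator $a^2$ has order exactly $2k$.
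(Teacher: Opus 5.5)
Your proof is correct and is essentially the paper's argument. The key step, $t^{2k}/a\in A$ via $\delta_i(t^{2k}/a)=-u^{-2}\delta_i(a)$ and hence $t^{2k}\in aA$, is exactly the paper's. The only difference is the finishing filtration. You use the $t$-adic one, bounding $\dim_{\mathbb Q}A/t^nA\le 3n$ from $A/tA$ being spanned by $\bar 1,\bar s_1,\bar s_2$. The paper instead uses $\mathfrak m_A^{2r+1}=t^{2r}\mathfrak m_A\subseteq aA$ together with finite-dimensionality of the pieces $\mathfrak m_A^k/\mathfrak m_A^{k+1}$. Both rest on the same two lemmas, and yours gives the slightly more explicit bound $\dim_{\mathbb Q}A/(a)\le 6k$.
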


\begin{proof}
If $a$ is a unit, there is nothing to prove. Otherwise, let $r\in\mathbb N$ be the largest integer such that $a\in t^r\mathbb Q[\![t]\!]$. Then $r\geq1$.
Then $t^{2r}/a\in F\cap \mathbb Q[\![t]\!]$, and
\[
\delta_i(t^{2r}/a)=-\frac{t^{2r}\delta_i(a)}{a^2}\in \mathbb Q[\![t]\!]
\qquad(i=1,2).
\]
Thus $t^{2r}/a\in A$.
It follows that $t^{2r}\in aA$, and $\mathfrak m_A^{2r+1}=t^{2r}\mathfrak m_A\subseteq aA$.
Since $\mathfrak m_A=(t,s_1,s_2)$ is finitely generated with residue field $A/\mathfrak m_A\cong\mathbb Q$,
each $\mathfrak m_A^k/\mathfrak m_A^{k+1}$ is finite-dimensional over $\mathbb Q$.
Hence $A/\mathfrak m_A^{2r+1}$, and therefore its quotient $A/aA$, is finite-dimensional over $\mathbb Q$.
\end{proof}

\begin{corollary}\label{cor:A-noetherian}
    $A$ is Noetherian.
\end{corollary}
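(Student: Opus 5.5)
We show that every nonzero ideal of $A$ is finitely generated; the zero ideal is trivially so, and this gives the Noetherian property of Corollary~\ref{cor:A-noetherian}. The ingredients are Proposition~\ref{lem:A-noeth} and the fact that $A/(a)$ is a module over $A$ whose $A$-submodules are in particular $\mathbb Q$-subspaces. The point is that $A$ contains $\mathbb Q$ as a subring, since $\mathbb Q\subseteq F\cap\mathbb Q[\![t]\!]$ and $\delta_i$ kills constants. Hence every $A$-module is a $\mathbb Q$-vector space, and every $A$-submodule of an $A$-module is a $\mathbb Q$-subspace.

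Let $I\subseteq A$ be a nonzero ideal, and choose $0\neq a\in I$. By Proposition~\ref{lem:A-noeth}, $A/(a)$ is finite-dimensional over $\mathbb Q$. The image $I/(a)\subseteq A/(a)$ is a $\mathbb Q$-subspace, so it is finite-dimensional. Choose $x_1,\dots,x_m\in I$ whose images span $I/(a)$ over $\mathbb Q$. Then every $y\in I$ can be written as $y=\sum q_jx_j+ra$ with $q_j\in\mathbb Q\subseteq A$ and $r\in A$. Hence $I=(a,x_1,\dots,x_m)$ is finitely generated.

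Alternatively, one could argue via ascending chains. Any ascending chain $I_1\subseteq I_2\subseteq\cdots$ of ideals, after discarding zero terms, contains some $a\neq 0$ in $I_1$. The images in $A/(a)$ then form an ascending chain of subspaces of a finite-dimensional space, which stabilises; therefore the chain $I_k$ stabilises.

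There is no real obstacle here. The only point to check is that $\mathbb Q\subseteq A$, so that $A$-submodules of $A/(a)$ are $\mathbb Q$-subspaces and dimension bounds apply. This holds because constants lie in $F\cap\mathbb Q[\![t]\!]$ and have zero partial derivatives $\delta_1,\delta_2$.
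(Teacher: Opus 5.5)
Your proof is correct and is essentially the paper's argument: pick $0\neq a\in I$, use Proposition~\ref{lem:A-noeth} to see that $I/aA$ is finite-dimensional over $\mathbb Q$, and lift a basis to get $I=(a,x_1,\dots,x_m)$. Your explicit check that $\mathbb Q\subseteq A$, and the ascending-chain variant, are fine additions but not needed beyond what the paper uses implicitly.
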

\begin{proof}
Let $0\ne I\subseteq A$ be an ideal and choose $0\ne a\in I$.
By Proposition~\ref{lem:A-noeth}, $I/aA$ is finite-dimensional over $\mathbb Q$.
Lifting a basis gives $b_1,\ldots,b_r\in I$ with $I=(a,b_1,\ldots,b_r)$.
Thus every ideal of $A$ is finitely generated.
\end{proof}

\begin{lemma}\label{lem:dimA}
One has
\(\Spec A=\{(0),\mathfrak m_A\}\) and \(\dim A=1\).
\end{lemma}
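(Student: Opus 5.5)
We need to show that $\operatorname{Spec} A=\{(0),\mathfrak m_A\}$ and $\dim A=1$. The plan is to use Proposition~\ref{lem:A-noeth} to show that every nonzero prime equals $\mathfrak m_A$, and then to exhibit a strict chain of length one.

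\emph{Nonzero primes are maximal.} Let $\mathfrak p$ be a nonzero prime of $A$ and pick $0\neq a\in\mathfrak p$. By Proposition~\ref{lem:A-noeth}, $A/(a)$ is a finite-dimensional $\mathbb Q$-vector space, hence an Artinian ring. Its quotient $A/\mathfrak p$ is therefore an Artinian domain, hence a field. So $\mathfrak p$ is maximal, and by Lemma~\ref{lem:A-local} it equals $\mathfrak m_A$.

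\emph{The zero ideal is prime.} Since $A$ is a domain (Lemma~\ref{lem:A-local}), $(0)$ is prime.

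\emph{The chain is strict.} We have $0\neq t\in\mathfrak m_A$, so $(0)\subsetneq\mathfrak m_A$. Together with the first step this gives $\operatorname{Spec}A=\{(0),\mathfrak m_A\}$ and $\dim A=1$.

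There is essentially no obstacle. The only points to check are that $A/(a)$ being finite-dimensional over $\mathbb Q$ makes $A/(a)$ Artinian (descending chains of ideals are chains of $\mathbb Q$-subspaces, so they stabilize), and that $t$ lies in $A$. For the latter, $\delta_i(t)=0$ because $t$ is one of the three independent generators of $F$ over $\mathbb Q$, so $t\in A$. Then $t\in A\cap t\mathbb Q[\![t]\!]=\mathfrak m_A$.
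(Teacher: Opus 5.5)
Your proof is correct and follows the paper's argument essentially step for step. You take a nonzero element of a nonzero prime and use the finite-dimensionality of $A/(a)$ over $\mathbb Q$ to make $A/\mathfrak p$ a field, then invoke locality and the nonzero nonunit $t$. Your extra checks that $t\in A$ and that $A/(a)$ is Artinian are harmless additions the paper leaves implicit.
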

\begin{proof}
Let $0\ne\mathfrak q\in\Spec A$ and choose $0\ne a\in\mathfrak q$.
Then $A/\mathfrak q$ is a domain and a quotient of the finite-dimensional
$\mathbb Q$-algebra $A/aA$. Hence $A/\mathfrak q$ is a field, so
$\mathfrak q$ is maximal. Since $A$ is local, $\mathfrak q=\mathfrak m_A$.
As $t$ is a nonzero nonunit, \((0)\subsetneq\mathfrak m_A\),
and therefore $\dim A=1$.
\end{proof}

\begin{lemma}\label{lem:R}
$R$ is a commutative Cohen--Macaulay local ring of Krull dimension one.
\end{lemma}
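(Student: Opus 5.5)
The plan is to identify $R$ with the trivial extension (idealization) $A\ltimes A^{2}$. The map
\[
\begin{psmallmatrix} a&b&c\\ 0&a&0\\ 0&0&a\end{psmallmatrix}\longmapsto (a,(b,c))
\]
is additive. Multiplying two such matrices gives diagonal $aa'$ and off-diagonal entries $ab'+ba'$, $ac'+ca'$, since the product of the two nilpotent parts vanishes. This is exactly the idealization multiplication $(a,m)(a',m')=(aa',am'+a'm)$, where $m m'=0$. In particular $R$ is a commutative ring, being a subring of upper triangular matrices with commuting entries. As an $A$-module it is $A^{3}$, hence a finitely generated $A$-module, and Noetherian by Corollary~\ref{cor:A-noetherian}.

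For locality and dimension, the kernel $N=\ker\pi=0\ltimes A^2$ satisfies $N^2=0$. So primes of $R$ correspond bijectively to primes of $A$ via $\pi$. By Lemma~\ref{lem:A-local} and Lemma~\ref{lem:dimA}, $R$ is local with maximal ideal $\mathfrak m_R=\pi^{-1}(\mathfrak m_A)$ and has exactly two primes $\mathfrak n\subsetneq \mathfrak m_R$. Hence $\dim R=1$.

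For the Cohen--Macaulay property, it suffices to exhibit a nonzerodivisor in $\mathfrak m_R$, since $\operatorname{depth} R\le \dim R=1$. Take $t\cdot 1_R=(t,0)$. If $(t,0)(a,m)=(ta,tm)=0$, then $ta=0$ and $tm=0$ in $A$ and $A^2$, so $a=0$ and $m=0$ because $A$ is a domain and $t\neq0$. Thus $t$ is $R$-regular, $\operatorname{depth}R\ge1$, and $R$ is Cohen--Macaulay of dimension one. Equivalently, $R$ is a finite $A$-module and the depth computation can be done over $A$, where $A$ is a one-dimensional Noetherian local domain and hence CM.

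There is no real obstacle here. The only point needing care is the commutativity and idealization check, namely that the two nilpotent off-diagonal parts multiply to zero, which is immediate from the matrix shape.
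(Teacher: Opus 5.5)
Your proposal is correct and follows essentially the same route as the paper. You get Noetherianity from finiteness over $A$, locality and $\dim R=1$ from the square-zero kernel of $\pi\colon R\to A$, and the Cohen--Macaulay property from $t$ being $R$-regular together with $\operatorname{depth}R\le\dim R$. Your explicit identification $R\cong A\ltimes A^{2}$ simply spells out the commutativity that the paper takes for granted.
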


\begin{proof}
Since $R$ is a finitely generated $A$-algebra, $R$ is Noetherian.
Consider the ring surjection with the nilpotent kernel
\[
R \longrightarrow A, \qquad \begin{psmallmatrix}
a&b&c\\
0&a&0\\
0&0&a
\end{psmallmatrix} \longmapsto a.
\]
Hence $\operatorname{Spec}A = \mathrm{Spec} R$.
Thus $R$ is local with Krull dimension $1$.

Since multiplying by $t$ is injective on $R$, one has $\operatorname{depth}R\ge1$.
On the other hand \(\operatorname{depth}R \le \dim R=1\).
Hence $R$ is Cohen--Macaulay.
\end{proof}

\subsection{Indecomposable injective modules}\label{subsec:R-injectives}

The indecomposable injective $R$-modules are $E_R(R/\mathfrak n)$ and $E_R(\mathbb Q)$.
We first determine $E_R(R/\mathfrak n)$.

\begin{lemma}\label{lem:A-localisation}
One has $A[t^{-1}]=F$.
\end{lemma}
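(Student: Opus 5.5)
The plan is to prove the two inclusions $A[t^{-1}]\subseteq F$ and $F\subseteq A[t^{-1}]$. The first is immediate, since $A\subseteq F$ and $t$ is invertible in the field $F$. For the reverse inclusion it suffices to show that every $f\in F$ satisfies $t^N f\in A$ for some $N\in\mathbb N$.

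Fix $f\in F=\mathbb Q(t)(s_1,s_2)$ and write $f=p/q$ with $p,q\in\mathbb Q[t,s_1,s_2]$ and $q\neq0$. View $q$ inside $\mathbb Q[\![t]\!]$ via $s_1=e^t-1$ and $s_2=e^{t^2}-1$. By the algebraic independence of $s_1,s_2$ over $\mathbb Q(t)$, $q$ is a nonzero power series, so $q=t^r u$ with $u$ a unit of $\mathbb Q[\![t]\!]$. Hence $f\in t^{-r}\mathbb Q[\![t]\!]$, and $t^{r}f\in F\cap\mathbb Q[\![t]\!]$.

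Next I control the derivatives. By the quotient rule, $\delta_i(f)=(q\,\partial_{s_i}p-p\,\partial_{s_i}q)/q^2$, and the numerator is a polynomial in $\mathbb Q[t,s_1,s_2]\subseteq\mathbb Q[\![t]\!]$. Since $q^2=t^{2r}u^2$, we get $\delta_i(f)\in t^{-2r}\mathbb Q[\![t]\!]$ for $i=1,2$. The key point is that $t$ is a constant for $\delta_i$, because the $\delta_i$ are partial derivatives in $s_i$ with $t$ treated as an independent variable. Therefore $\delta_i(t^{N}f)=t^{N}\delta_i(f)$.

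Now take $N=2r$. Then $t^{N}f\in\mathbb Q[\![t]\!]$ and $\delta_i(t^Nf)\in\mathbb Q[\![t]\!]$ for $i=1,2$, so $t^Nf\in A$ and $f\in A[t^{-1}]$.

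The only delicate point is keeping two structures apart. The derivations $\delta_i$ act on the abstract field $\mathbb Q(t)(s_1,s_2)$, where $t,s_1,s_2$ are independent variables, whereas membership in $\mathbb Q[\![t]\!]$ refers to the embedding $s_i\mapsto e^{t^i}-1$. In particular, $\delta_i$ is not $d/dt$ under this embedding. The argument only uses that polynomials map into $\mathbb Q[\![t]\!]$ and that nonzero elements have a finite $t$-adic order, so no compatibility between the two structures is needed.
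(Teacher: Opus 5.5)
Your proof is correct and follows essentially the same argument as the paper: choose $N$ so that $t^N f$ and $t^N\delta_i(f)$ lie in $\mathbb Q[\![t]\!]$, then use $\delta_i(t)=0$ to conclude $t^N f\in A$. Your explicit choice $N=2r$, taken from the $t$-adic order of the denominator, fills in the step the paper leaves implicit when it says ``choose $N$ sufficiently large''.
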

\begin{proof}
The inclusion $A[t^{-1}]\subseteq F$ is clear. Conversely, let $f\in F$.
Choose $N$ sufficiently large that \(t^Nf, t^N\delta_1(f), t^N\delta_2(f)\)
all lie in $\mathbb Q[\![t]\!]$. Since $\delta_i(t)=0$,
\(\delta_i(t^Nf)=t^N\delta_i(f)\in \mathbb Q[\![t]\!]\).
Thus $t^Nf\in A$, and hence $f\in A[t^{-1}]$.
\end{proof}

\begin{corollary}
    $R[t^{-1}] \cong \left(A[u,v]/(u,v)^2\right)[t^{-1}]= (A[t^{-1}])[u,v]/(u,v)^2 = F[u,v]/(u,v)^2$.
\end{corollary}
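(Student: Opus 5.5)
The corollary asserts $R[t^{-1}] \cong (A[u,v]/(u,v)^2)[t^{-1}] = (A[t^{-1}])[u,v]/(u,v)^2 = F[u,v]/(u,v)^2$. I will verify the three identifications in turn.

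The first step is to identify $R$ with the trivial extension $A[u,v]/(u,v)^2$. Let $e_{12}$ and $e_{13}$ denote the matrix units. Every element of $R$ can be written as $aI+be_{12}+ce_{13}$ with $a,b,c\in A$ uniquely determined. Since $e_{12}^2=e_{13}^2=e_{12}e_{13}=e_{13}e_{12}=0$, the elements $e_{12}$ and $e_{13}$ commute and span a square-zero ideal.

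Hence the $A$-algebra map $A[u,v]\to R$ sending $u\mapsto e_{12}$ and $v\mapsto e_{13}$ is surjective. Its kernel contains $(u,v)^2$. Since $A[u,v]/(u,v)^2$ is free over $A$ on the basis $1,u,v$, and this basis maps to an $A$-basis of $R$, the induced map $A[u,v]/(u,v)^2\to R$ is an isomorphism. Here $A$ sits in $R$ as the scalar matrices, so $t$ corresponds to $t\cdot 1$.

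The second step is to localise. Inverting $t$ is the base change $-\otimes_A A[t^{-1}]$. Base change commutes with forming polynomial rings and with quotients by the ideal $(u,v)^2$, since localisation is exact and $(u,v)^2$ is generated by the monomials $u^2,uv,v^2$. This gives $(A[u,v]/(u,v)^2)[t^{-1}]\cong A[t^{-1}][u,v]/(u,v)^2$.

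Finally, Lemma~\ref{lem:A-localisation} gives $A[t^{-1}]=F$, which yields the last identification.

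None of these steps is a real obstacle. The only point needing care is the uniqueness of the coordinates $a,b,c$, which is what makes $1,u,v$ map to an $A$-basis of $R$; this is immediate from the matrix entries.
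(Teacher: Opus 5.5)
Your proposal is correct and follows the route the paper takes implicitly. The corollary is stated without separate proof right after Lemma~\ref{lem:A-localisation}, and it rests on three facts: the identification $R\cong A[u,v]/(u,v)^2$, the compatibility of inverting $t$ with this presentation, and $A[t^{-1}]=F$. Your matrix-unit check with $e_{12},e_{13}$ spells out the first identification, which the paper leaves to the reader.
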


\begin{lemma}\label{lem:J-injective}
    $E_{R[t^{-1}]}(F) \cong E_R(R/\mathfrak n)$ as $R$-modules.
\end{lemma}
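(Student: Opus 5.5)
The plan is to reduce everything to the flat ring map $R\to R[t^{-1}]$. The key tool is the standard fact that, for a flat epimorphism of commutative rings $R\to S$ (a localisation here), injective $S$-modules are injective as $R$-modules. Indeed $\mathrm{Hom}_R(-,I)\cong\mathrm{Hom}_S(S\otimes_R-,I)$ for an $S$-module $I$, and $S\otimes_R-$ is exact.

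\textbf{Step 1 (injectivity).} Put $J=E_{R[t^{-1}]}(F)$, where $F=R[t^{-1}]/\mathfrak n R[t^{-1}]$ is the residue field of the local Artinian ring $R[t^{-1}]\cong F[u,v]/(u,v)^2$ from the preceding corollary. By the fact above, $J$ is an injective $R$-module.

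\textbf{Step 2 (essential extension of an $R$-module).} The $R$-socle of $J$ contains $F$, and $F$ is an $R$-module annihilated by $\mathfrak n=\pi^{-1}(0)$. As an $R/\mathfrak n\cong A$-module it is $F=A[t^{-1}]$ by Lemma~\ref{lem:A-localisation}, which is $E_A(A/0)=\mathrm{Frac}(A)$. The point is that $R/\mathfrak n\hookrightarrow F$ is essential as $R$-modules. Any nonzero $R$-submodule of $F$ is a nonzero $A$-submodule of the fraction field, so it meets $A$ nontrivially.

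Next, $F\subseteq J$ is essential as $R$-modules. Take $0\neq x\in J$. There is $r\in R[t^{-1}]$ with $0\neq rx\in F$, since $J$ is an essential extension of $F$ over $R[t^{-1}]$. Write $r=r'/t^k$ with $r'\in R$. Then $r'x=t^k(rx)$, which is nonzero in $F$ because $t$ acts invertibly on $J$. Hence $Rx\cap F\neq0$.

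Composing, $R/\mathfrak n\subseteq F\subseteq J$ is an essential extension with $J$ injective over $R$. So $J\cong E_R(R/\mathfrak n)$.

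\textbf{Main obstacle.} The delicate point is justifying that essentiality over $R[t^{-1}]$ descends to essentiality over $R$. This works exactly because $t$ acts bijectively on $J$, which is why the argument clears denominators by multiplying by $t^k$. The identification $R/\mathfrak n\cong A$ also needs checking: the kernel of $\pi$ is $\mathfrak n$ only after noting $\pi^{-1}(0)=\ker\pi$. Once those two points are in place, the remaining steps are formal.
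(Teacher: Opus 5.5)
Your proof is correct, but it runs in the opposite direction from the paper's.

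\textbf{The paper's route.} The paper starts from $E_R(R/\mathfrak n)$. Since $t$ is injective on the essential submodule $R/\mathfrak n\cong A$, it is injective on $E_R(R/\mathfrak n)$. An injective endomorphism of an indecomposable injective module is surjective, because its image is a nonzero injective, hence split, summand. So $t$ is an automorphism and $E_R(R/\mathfrak n)=E_R(R/\mathfrak n)[t^{-1}]$. The paper then cites the compatibility of injective hulls with localisation \cite[Lemma~3.2.5]{BrunsHerzog} to identify this with $E_{R[t^{-1}]}\bigl((R/\mathfrak n)[t^{-1}]\bigr)=E_{R[t^{-1}]}(F)$.

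\textbf{Your route.} You start from $J=E_{R[t^{-1}]}(F)$ and check the two defining properties of $E_R(R/\mathfrak n)$ directly. Injectivity over $R$ comes from flatness of $R\to R[t^{-1}]$. Essentiality of $R/\mathfrak n\subseteq F\subseteq J$ comes from clearing denominators, using that $t$ is a unit on every $R[t^{-1}]$-module.

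\textbf{Comparison.} In effect you reprove the special case of the Bruns--Herzog lemma that is needed, without invoking Noetherianity or indecomposability. You also get for free the invertibility of $t$ on $E_R(R/\mathfrak n)$, which the paper reuses in Proposition~\ref{prop:compGI}, since $J$ is an $R[t^{-1}]$-module. The paper's version is shorter given the citation.

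\textbf{A terminological slip.} $F$ is not contained in the $R$-socle of $J$. That socle is zero, because $t\in\mathfrak m_R$ acts invertibly on $J$, and in any case $F\supsetneq A\neq 0$ is not a simple $R$-module. What you actually use is that $F$, the $R[t^{-1}]$-socle of $J$, is an $R$-submodule of $J$ annihilated by $\mathfrak n$, and that is all the argument needs.
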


\begin{proof}
Since $R/\mathfrak n\subseteq E_R(R/\mathfrak n)$ is an essential extension of $R$-modules, and multiplying by $t$ is injective on $R / \mathfrak n \cong A$, one sees that multiplying by $t$ is injective on $E_R(R/\mathfrak n)$.
Since $E_R(R/\mathfrak n)$ is and indecomposable injective module, $t : E_R(R/\mathfrak n) \to E_R(R/\mathfrak n)$ is an automorphism.
By \cite[Lemma~3.2.5]{BrunsHerzog}, injective envelope is compatible with localisations, which gives
\[
E_R(R/\mathfrak n)\cong E_R(R/\mathfrak n)[t^{-1}]\cong E_{R[t^{-1}]}((R/\mathfrak n)[t^{-1}])=E_{R[t^{-1}]}(F).
\]
This completes the proof.
\end{proof}

The following proposition computes injective envelopes via ring changes.

\begin{proposition}\label{prop:finite-local-injective}
Let $(B,\mathfrak p)\to(T,\mathfrak m)$ be a homomorphism of commutative Noetherian local rings.
Suppose that $T$ is a finitely generated $B$-module, and the morphism induces an isomorphism of residue fields $k = B/ \mathfrak p \cong T/\mathfrak m$.
Then $E_T(k)\cong\mathrm{Hom}_B(T,E_B(k))$.
\end{proposition}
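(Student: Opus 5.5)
We prove that $M:=\mathrm{Hom}_B(T,E_B(k))$ is an injective $T$-module, that it is an essential extension of a copy of $k$, and then conclude $M\cong E_T(k)$.

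\emph{Injectivity.} The functor $\mathrm{Hom}_B(T,-)\colon\mathrm{Mod}_B\to\mathrm{Mod}_T$ is right adjoint to the exact restriction of scalars. Indeed, $\mathrm{Hom}_T(N,\mathrm{Hom}_B(T,E))\cong\mathrm{Hom}_B(N,E)$ by tensor–hom adjunction. Hence it sends injective $B$-modules to injective $T$-modules, and $M$ is $T$-injective.

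\emph{Socle.} Next we compute the socle of $M$:
\[
\mathrm{Hom}_T(k,M)\cong\mathrm{Hom}_B(k,E_B(k))\cong k,
\]
where $k$ is viewed as a $B$-module via $B\to T\to T/\mathfrak m$. This uses the hypothesis that the induced map $B/\mathfrak p\to T/\mathfrak m$ is an isomorphism: restricted to $B$, the $T$-module $k$ is exactly the residue field of $B$. The socle of $E_B(k)$ is $k$, which is one-dimensional.

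\emph{Essentiality.} This step uses finiteness. Every element of $M$ is annihilated by a power of $\mathfrak m$, so $M$ is $\mathfrak m$-torsion. To see this, take $\varphi\colon T\to E_B(k)$. Since $T$ is finitely generated over $B$, the image $\varphi(T)$ is a finitely generated submodule of the $\mathfrak p$-torsion module $E_B(k)$. Hence $\mathfrak p^n\varphi=0$ for some $n$, and so $\varphi$ is killed by $\mathfrak p^nT$. Because $T/\mathfrak pT$ is a finite-dimensional $k$-algebra that is local with maximal ideal $\mathfrak m/\mathfrak pT$, the latter ideal is nilpotent. Therefore $\mathfrak m^r\subseteq\mathfrak pT$ for some $r$, and $\mathfrak m^{rn}\varphi=0$.

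Now let $x\in M$ be nonzero. Choose the largest $j$ with $\mathfrak m^jx\neq 0$; then $\mathfrak m^jx$ is a nonzero submodule of $Tx$ killed by $\mathfrak m$. So every nonzero cyclic submodule of $M$ meets the socle $\mathrm{Hom}_T(k,M)\cong k$. Thus $k\subseteq M$ is essential, and $M$ is an injective essential extension of $k$, i.e. $M\cong E_T(k)$.

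The main obstacle is the torsion argument, specifically the passage from $\mathfrak p$-torsion to $\mathfrak m$-torsion. It relies on $T$ being module-finite over $B$, which gives both that $\varphi(T)$ is finitely generated and that $\mathfrak m$ is nilpotent modulo $\mathfrak pT$. Only the residue field isomorphism is needed in the socle computation, and without module-finiteness the essentiality step would fail.
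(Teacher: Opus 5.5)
Your proof is correct and follows essentially the same route as the paper: injectivity of $\mathrm{Hom}_B(T,E_B(k))$ via the adjunction, the one-dimensional socle via $\mathrm{Hom}_T(k,-)\cong\mathrm{Hom}_B(k,-)$, and $\mathfrak m$-torsion obtained from finite generation of $T$ together with $\mathfrak m^h\subseteq\mathfrak pT$, which gives essentiality. You merely spell out the essentiality step (largest $j$ with $\mathfrak m^jx\neq0$) that the paper leaves implicit.
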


\begin{proof}
By \cite[Lemma~3.1.6]{BrunsHerzog}, $\mathrm{Hom}_B(T,E_B(k))$ is injective over $T$, and adjunction gives
\[
\mathrm{Hom}_T(k,\mathrm{Hom}_B(T,E_B(k)))\cong\mathrm{Hom}_B(k,E_B(k))\cong k.
\]
The module $E_B(k)$ is $\mathfrak p$-torsion, i.e., every element is
annihilated by some power of $\mathfrak p$ \cite[Theorem~18.4(v)]{Matsumura1989}.
Since $T$ is a finitely generated $B$-module, $\mathrm{Hom}_B(T,E_B(k))$ is also $\mathfrak p$-torsion.
Moreover, $T/\mathfrak pT$ is an Artinian local ring, so $\mathfrak m^h\subseteq\mathfrak pT$ for some $h$.
Hence $\mathrm{Hom}_B(T,E_B(k))$ is $\mathfrak m$-torsion with one-dimensional socle $k$.
In this case $k$ is essential, and thus $E_T(k) \cong \mathrm{Hom}_B(T,E_B(k))$.
This completes the proof.
\end{proof}

\begin{corollary}\label{cor:J-Hom}
    $E_R(R/\mathfrak n) \cong \mathrm{Hom}_A(R,F)$ as $R$-modules.
\end{corollary}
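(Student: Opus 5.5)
We want $E_R(R/\mathfrak n)\cong\mathrm{Hom}_A(R,F)$ as $R$-modules. The plan is to combine Lemma~\ref{lem:J-injective} with Proposition~\ref{prop:finite-local-injective}, applied after inverting $t$.

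By Lemma~\ref{lem:J-injective}, it suffices to identify $E_{R[t^{-1}]}(F)$. By the preceding corollary and Lemma~\ref{lem:A-localisation}, $R[t^{-1}]\cong F[u,v]/(u,v)^2$. Here $F$ sits inside $R[t^{-1}]$ as the scalar diagonal, and $R[t^{-1}]/\mathfrak n[t^{-1}]\cong F$. We apply Proposition~\ref{prop:finite-local-injective} to the inclusion $(B,\mathfrak p)=(F,0)\to(T,\mathfrak m)=(R[t^{-1}],\mathfrak n R[t^{-1}])$. Its hypotheses hold:
\begin{itemize}
\item both rings are Noetherian local, $T$ being Artinian local with maximal ideal $(u,v)$;
\item $T$ is a free $F$-module of rank $3$, hence finitely generated;
\item the composite $F\to T\to T/(u,v)$ is the identity, so the residue fields agree.
\end{itemize}
Since $F$ is a field, $E_F(F)=F$, and the proposition gives
\[
E_{R[t^{-1}]}(F)\cong\mathrm{Hom}_F(R[t^{-1}],F).
\]

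Next we pass back to $A$. Because $R$ is a finitely generated free $A$-module (with basis the three matrix units) and $F=A[t^{-1}]$ by Lemma~\ref{lem:A-localisation}, we have natural isomorphisms of $R$-modules
\[
\mathrm{Hom}_F(R[t^{-1}],F)\cong\mathrm{Hom}_F(R\otimes_A F,F)\cong\mathrm{Hom}_A(R,F).
\]
The first is the identification $R[t^{-1}]=R\otimes_A A[t^{-1}]$; the second is the tensor--hom adjunction. The $R$-module structures match because in each case $R$ acts through its action on the first argument. Composing these isomorphisms with Lemma~\ref{lem:J-injective} gives the claim.

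The main point needing care is bookkeeping rather than an obstacle. Lemma~\ref{lem:J-injective} is an isomorphism of $R$-modules obtained by restricting scalars along $R\to R[t^{-1}]$, so every identification above must be $R$-linear and not merely $F$-linear. This holds because each isomorphism is natural in the first variable, which carries the $R$-action.
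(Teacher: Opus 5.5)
Your proposal is correct and follows essentially the same route as the paper. Lemma~\ref{lem:J-injective} reduces the problem to $E_{R[t^{-1}]}(F)$, Proposition~\ref{prop:finite-local-injective} applied to $F\to R[t^{-1}]$ identifies this with $\mathrm{Hom}_F(R[t^{-1}],F)$, and tensor--hom adjunction turns that into $\mathrm{Hom}_A(R,F)$; you simply make explicit the hypotheses and the $R$-linearity that the paper leaves implicit.
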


\begin{proof}
    By Proposition~\ref{prop:finite-local-injective}, $F\to R[t^{-1}]$ yields
    \[
    E_R(R/\mathfrak n)\cong\mathrm{Hom}_F(R[t^{-1}],F)\cong\mathrm{Hom}_A(R,F).
    \]
\end{proof}

Then we determine $E_R(\mathbb Q)$ via the ring completion $\widehat R = \varprojlim_{q \geq 1} R / (\mathfrak m_R)^q$.

\begin{lemma}
    $\widehat A \cong \mathbb Q[\![t]\!] \otimes _{\mathbb Q} \mathbb Q[x_1, x_2]/(x_1,x_2)^2$.
\end{lemma}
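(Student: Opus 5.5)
The plan is to compute $\widehat A$ as the $\mathfrak m_A$-adic completion directly from the description $A\subseteq\mathbb Q[\![t]\!]$ together with the two derivations. Consider the ring map
\[
\phi:A\longrightarrow \mathbb Q[\![t]\!]\otimes_{\mathbb Q}\mathbb Q[x_1,x_2]/(x_1,x_2)^2,\qquad a\longmapsto a+\delta_1(a)\,x_1+\delta_2(a)\,x_2 .
\]
Since $\delta_1,\delta_2$ are derivations and $x_ix_j=0$, the map $\phi$ is a ring homomorphism. It is injective because $a\mapsto a$ already embeds $A$ into $\mathbb Q[\![t]\!]$. The target $\widehat{B}:=\mathbb Q[\![t]\!][x_1,x_2]/(x_1,x_2)^2$ is a complete Noetherian local ring with maximal ideal $(t,x_1,x_2)$, and it is a finite free $\mathbb Q[\![t]\!]$-module. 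We have $\phi(t)=t$ and $\phi(s_i)=s_i+x_i$, where $s_i\in t\mathbb Q[\![t]\!]$. Hence $\phi(\mathfrak m_A)\widehat B=(t,s_1+x_1,s_2+x_2)=(t,x_1,x_2)$.

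The key step is to show that the induced maps $A/\mathfrak m_A^q\to \widehat B/\mathfrak n^q$ are isomorphisms for all $q$, where $\mathfrak n=(t,x_1,x_2)$. Passing to the limit then gives $\widehat A\cong\widehat B$.

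For this, use $\mathfrak m_A^{k+1}=t^k\mathfrak m_A$ (the earlier lemma) and likewise $\mathfrak n^{k+1}=t^k\mathfrak n$, which holds because $(x_1,x_2)^2=0$. It then suffices to compare the filtrations by powers of $t$.

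Surjectivity modulo $\mathfrak n^q$ comes from approximation. Given $f+g_1x_1+g_2x_2$, truncate the power series modulo a high power of $t$. One then needs $a\in A$ with $a\equiv f$ and $\delta_i(a)\equiv g_i$ modulo $t^N$. Take $a=p(t)+q_1(t)s_1+q_2(t)s_2$ with polynomials $p,q_i$. Then $\delta_i(a)=q_i$, so choose $q_i$ to be the truncation of $g_i$. Next choose $p$ to correct $f-q_1s_1-q_2s_2$ modulo $t^N$. Such an $a$ lies in $A$.

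Injectivity modulo $\mathfrak n^q$ amounts to $\phi^{-1}(\mathfrak n^{q})=\mathfrak m_A^{q}$. Suppose $a\in A$ satisfies $a\in t^{q}\mathbb Q[\![t]\!]$ and $\delta_i(a)\in t^{q-1}\mathbb Q[\![t]\!]$, which describes $\mathfrak n^q=t^{q-1}\mathfrak n$. I will iterate the argument of Lemma~\ref{lem:A-local}: subtract $c_1s_1+c_2s_2$ and divide by $t$, as done there. This shows $a\in t^{q-1}\mathfrak m_A=\mathfrak m_A^q$, by induction on $q$.

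The main obstacle is precisely this bookkeeping. One has to check that after each division by $t$ the derivative conditions are preserved with the exponent shifted by one. This works because $\delta_i(t)=0$ and $\delta_i(s_j)=\delta_{ij}$.

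Once $A/\mathfrak m_A^q\cong\widehat B/\mathfrak n^q$ for all $q$ compatibly, take inverse limits. Since $\widehat B$ is $\mathfrak n$-adically complete, we obtain $\widehat A\cong\varprojlim \widehat B/\mathfrak n^q=\widehat B$, which is the claimed isomorphism.
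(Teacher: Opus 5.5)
Your proposal is correct and follows essentially the same route as the paper: the same map $a\mapsto a+\delta_1(a)x_1+\delta_2(a)x_2$, isomorphisms at each finite level, and then an inverse limit. The only difference is the filtration: you compare $A/\mathfrak m_A^q$ with $\widehat B/\mathfrak n^q$ directly, whereas the paper compares $A/t^qA$ with the target modulo $t^q$ and then uses $\mathfrak m_A^{q+1}=t^q\mathfrak m_A\subseteq t^qA\subseteq\mathfrak m_A^q$ to identify the two adic topologies (your induction in the injectivity step could also be replaced by the single observation $a/t^{q-1}\in\mathfrak m_A$).
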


\begin{proof}
We first claim that the homomorphism
\[
A\longrightarrow \mathbb Q[\![t]\!][x_1, x_2]/(x_1, x_2)^2, \qquad 
a\longmapsto a+\delta_1(a)x_1+\delta_2(a)x_2
\]
induces, for each $q\ge1$, an isomorphism $A/t^qA\cong (\mathbb Q[\![t]\!]/t^q\mathbb Q[\![t]\!])[x_1,x_2]/(x_1,x_2)^2$.
\begin{enumerate}
    \item It is injective. 
    If $a,\delta_1(a),\delta_2(a)\in t^q\mathbb Q[\![t]\!]$, then $a \in t^qA$.
    \item It is surjective. 
    For $i=1,2$, choose $p_{i,q}\in\mathbb Q[t]$ with $s_i-p_{i,q}\in t^q\mathbb Q[\![t]\!]$.
    Since $\delta_j(t)=0$ and $\delta_j(s_i-p_{i,q})=\delta_{ij}$, the map sends $t$ to $\bar t$ and $s_i-p_{i,q}$ to $x_i$.
    Note that $\bar t,x_1,x_2$ generate $(\mathbb Q[\![t]\!]/t^q\mathbb Q[\![t]\!])[x_1,x_2]/(x_1,x_2)^2$ as a $\mathbb Q$-algebra,
    thus the induced map is surjective.
\end{enumerate}

Finally, $(t)$-adic and $\mathfrak m_A$-adic topologies are equivalent, since
\[
\mathfrak m_A^{q+1}=t^q\mathfrak m_A
\subseteq t^qA\subseteq\mathfrak m_A^q.
\]
Taking inverse limits gives 
\[
\widehat A
\cong \varprojlim_{q \geq 1} A/t^qA
\cong \varprojlim_{q \geq 1} (\mathbb Q[\![t]\!]/t^q \mathbb Q[\![t]\!])[x_1, x_2]/(x_1,x_2)^2 \cong \mathbb Q[\![t]\!] \otimes \mathbb Q[x_1,x_2] / (x_1,x_2)^2.
\]
This completes the proof.
\end{proof}

Since $R\cong A[u,v]/(u,v)^2$ is finite over $A$, we obtain
$\widehat R\cong\widehat A[u,v]/(u,v)^2$.

\begin{lemma}\label{lem:E-injective}
There is an isomorphism of $R$-modules
\[
E_R(\mathbb Q)\cong \bigl(\mathbb Q[\![t]\!][t^{-1}]/\mathbb Q[\![t]\!]\bigr)\otimes_{\mathbb Q}
\bigl(\mathbb Q[x_1,x_2]/(x_1,x_2)^2\bigr)^*
\otimes_{\mathbb Q}\bigl(\mathbb Q[u,v]/(u,v)^2\bigr)^*.
\]
Here $(-)^*=\mathrm{Hom}_{\mathbb Q}(-,\mathbb Q)$, with the action on each dual factor given by $(af)(b)=f(ab)$.
\end{lemma}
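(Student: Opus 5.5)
The plan is to compute $E_R(\mathbb Q)$ by passing to the completion and then applying Proposition~\ref{prop:finite-local-injective} twice. First, since $E_R(\mathbb Q)$ is $\mathfrak m_R$-torsion, it carries a natural $\widehat R$-module structure. By \cite[Theorem~18.6]{Matsumura1989} (or \cite[Theorem~3.2.5]{BrunsHerzog}) it is isomorphic to $E_{\widehat R}(\mathbb Q)$, and the $R$-module structure is the one obtained by restriction along $R\to\widehat R$. Next I use the description $\widehat R\cong\widehat A[u,v]/(u,v)^2$ together with the preceding lemma $\widehat A\cong\mathbb Q[\![t]\!]\otimes_{\mathbb Q}\mathbb Q[x_1,x_2]/(x_1,x_2)^2$. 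Put $B=\mathbb Q[\![t]\!]$ and $\Lambda=\mathbb Q[x_1,x_2]/(x_1,x_2)^2\otimes_{\mathbb Q}\mathbb Q[u,v]/(u,v)^2$. This gives
\[
\widehat R\cong B\otimes_{\mathbb Q}\Lambda\cong B[x_1,x_2,u,v]/\bigl((x_1,x_2)^2+(u,v)^2\bigr),
\]
which is a finite free $B$-module of rank $9$. It is also a local ring whose residue field is $\mathbb Q$, matching that of $B$.

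Apply Proposition~\ref{prop:finite-local-injective} to the map $B\to\widehat R$. This yields $E_{\widehat R}(\mathbb Q)\cong\mathrm{Hom}_B(\widehat R,E_B(\mathbb Q))$. The standard fact $E_B(\mathbb Q)\cong\mathbb Q[\![t]\!][t^{-1}]/\mathbb Q[\![t]\!]$ holds for the DVR $B$: the module is divisible, hence injective, and it is an essential extension of its socle $\mathbb Q\cdot t^{-1}$. Since $\widehat R\cong B\otimes_{\mathbb Q}\Lambda$ with $\Lambda$ finite-dimensional, tensor–hom adjunction gives
\[
\mathrm{Hom}_B(B\otimes_{\mathbb Q}\Lambda,E_B(\mathbb Q))\cong\mathrm{Hom}_{\mathbb Q}(\Lambda,E_B(\mathbb Q))\cong E_B(\mathbb Q)\otimes_{\mathbb Q}\Lambda^*.
\]
Finally, $\Lambda^*$ splits as the tensor product of the two duals $(\mathbb Q[x_1,x_2]/(x_1,x_2)^2)^*\otimes_{\mathbb Q}(\mathbb Q[u,v]/(u,v)^2)^*$, because both factors are finite-dimensional.

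What remains is to check that these isomorphisms respect the module structures. The $\widehat R$-action on $\mathrm{Hom}_B(\widehat R,E)$ is $(rf)(r')=f(r'r)$. Under the adjunction and duality it becomes the stated action: $B$ acts on the first factor, and each of the two finite algebras acts on its dual by $(af)(b)=f(ab)$. I expect this verification to be routine but fiddly. The main point needing care is the first step, namely that the $R$-structure on $E_R(\mathbb Q)$ coincides with the restriction of the $\widehat R$-structure. A second point is that the elements $t,s_1,s_2$ and $u,v$ act through their images $t$, $s_i\mapsto s_i+x_i$ and $u,v$ in $\widehat R$. So the isomorphism is one of $R$-modules via this embedding, which is how the displayed formula should be read.
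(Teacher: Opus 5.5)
Your proposal is correct and follows the paper's proof essentially step for step. Both identify $E_R(\mathbb Q)$ with $E_{\widehat R}(\mathbb Q)$ via Matsumura's Theorem~18.6, apply Proposition~\ref{prop:finite-local-injective} to $\mathbb Q[\![t]\!]\to\widehat R$ (free of rank $9$) with $E_{\mathbb Q[\![t]\!]}(\mathbb Q)\cong\mathbb Q[\![t]\!][t^{-1}]/\mathbb Q[\![t]\!]$, and finish with tensor--hom adjunction. Your explicit bookkeeping of the $R$-action (e.g.\ $s_i$ acting through $s_i+x_i$ in $\widehat R$) is a useful addition the paper leaves implicit; note that you apply the proposition only once, not twice as your opening sentence says.
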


\begin{proof}
Since $\mathbb Q[\![t]\!]$ is a discrete valuation ring, $\mathbb Q[\![t]\!][t^{-1}]/\mathbb Q[\![t]\!]$ is injective over $\mathbb Q[\![t]\!]$, and its socle $t^{-1}\mathbb Q[\![t]\!]/\mathbb Q[\![t]\!]\cong\mathbb Q$ is essential.
Thus $E_{\mathbb Q[\![t]\!]}(\mathbb Q)\cong \mathbb Q[\![t]\!][t^{-1}]/\mathbb Q[\![t]\!]$.
The preceding completion formula shows that $\widehat R$ is a free $\mathbb Q[\![t]\!]$-module of rank $9$.
Apply Proposition~\ref{prop:finite-local-injective} to $\mathbb Q[\![t]\!] \to \widehat {R}$
therefore gives
\[
E_{\widehat R}(\mathbb Q)\cong\mathrm{Hom}_{\mathbb Q[\![t]\!]}\bigl(\widehat R,\mathbb Q[\![t]\!][t^{-1}]/\mathbb Q[\![t]\!]\bigr).
\]
Finally, $E_R(\mathbb Q)\cong E_{\widehat R}(\mathbb Q)$ as $R$-modules by \cite[Theorem~18.6(iii)]{Matsumura1989}.
Since both polynomial quotient factors are finite-dimensional over $\mathbb Q$, tensor--Hom adjunction gives
\begin{align*}
\mathrm{Hom}_{\mathbb Q[\![t]\!]}\!\left(
\widehat R,\frac{\mathbb Q[\![t]\!][t^{-1}]}{\mathbb Q[\![t]\!]}\right)
&\cong\mathrm{Hom}_{\mathbb Q}\!\left(
\frac{\mathbb Q[x_1,x_2]}{(x_1,x_2)^2}\otimes_{\mathbb Q}
\frac{\mathbb Q[u,v]}{(u,v)^2},
\frac{\mathbb Q[\![t]\!][t^{-1}]}{\mathbb Q[\![t]\!]}\right)\\
&\cong\mathrm{Hom}_{\mathbb Q}\!\left(
\frac{\mathbb Q[x_1,x_2]}{(x_1,x_2)^2},
\mathrm{Hom}_{\mathbb Q}\!\left(
\frac{\mathbb Q[u,v]}{(u,v)^2},
\frac{\mathbb Q[\![t]\!][t^{-1}]}{\mathbb Q[\![t]\!]}\right)\right)\\
&\cong
\frac{\mathbb Q[\![t]\!][t^{-1}]}{\mathbb Q[\![t]\!]}
\otimes_{\mathbb Q}\left(\frac{\mathbb Q[x_1,x_2]}{(x_1,x_2)^2}\right)^*
\otimes_{\mathbb Q}\left(\frac{\mathbb Q[u,v]}{(u,v)^2}\right)^*.
\end{align*}
\end{proof}

\subsection{Construction of \texorpdfstring{$G_n \in \mathcal{GI}(R)$}{GnGIR}}
\label{The Gorenstein injective modules}

We construct $G_n\in\mathcal{GI}(R)$ such that
\[
t^n \cdot \mathrm{Ext}_R^1(C,G_n) = 0 , \qquad t^{n-1} \cdot \mathrm{Ext}_R^1(C,G_n) \neq 0. 
\]
The construction uses the dual of the cone of $t^n$ on a one-periodic complex $P$ adapted from the construction in the proof of \cite[Theorem~5.7]{KMOT}.
For simplicity, denote
\[
X=\mathbb Q[x_1,x_2]/(x_1,x_2)^2,\qquad
Y=\mathbb Q[u,v]/(u,v)^2,\qquad
V=\mathbb Q[\![t]\!][t^{-1}]/\mathbb Q[\![t]\!].
\]
Thus $R=A\otimes_{\mathbb Q}Y$ and
$\widehat R=\mathbb Q[\![t]\!]\otimes_{\mathbb Q}X\otimes_{\mathbb Q}Y$.
By Lemma~\ref{lem:E-injective} and its proof, we identify
\[
E_R(\mathbb Q)\cong E_{\widehat R}(\mathbb Q)
\cong V\otimes_{\mathbb Q}X^*\otimes_{\mathbb Q}Y^*.
\]

\begin{lemma}\label{lem:C-base-change}
For $C=\mathrm{Hom}_A(R,A)$, there are natural isomorphisms
\[
C\cong A\otimes_{\mathbb Q}Y^*,\qquad
\widehat R\otimes_R C
\cong\mathbb Q[\![t]\!]\otimes_{\mathbb Q}X\otimes_{\mathbb Q}Y^*.
\]
In particular, $C$ is finitely generated over $R$.
\end{lemma}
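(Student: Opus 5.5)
The plan is to use the description $R\cong A\otimes_{\mathbb Q}Y$, where the matrix ring $R$ is identified with $A[u,v]/(u,v)^2$. The identification sends the diagonal entry $a$ to $a$, and the off-diagonal entries $b,c$ to the coefficients of $u,v$. Under it, $R$ is a free $A$-module with basis $1,u,v$, so $R\cong A\otimes_{\mathbb Q}Y$ as $A$-modules, compatibly with the $Y$-action.

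For the first isomorphism, I would use that for a finite-dimensional $\mathbb Q$-vector space $Y$ there is a natural map
\[
A\otimes_{\mathbb Q}Y^*\longrightarrow \mathrm{Hom}_A(A\otimes_{\mathbb Q}Y,A),\qquad a\otimes\varphi\longmapsto\bigl(a'\otimes y\mapsto aa'\varphi(y)\bigr).
\]
Choosing a basis of $Y$ shows it is bijective, because both sides are $A^{3}$. One then checks $R$-linearity. The $A$-action is clear. For the $Y$-action, an element $y_0\in Y$ acts on $\mathrm{Hom}_A(R,A)$ by precomposition with multiplication by $y_0$. On $Y^*$ it acts by $(y_0\varphi)(y)=\varphi(y_0y)$, which is the convention of Lemma~\ref{lem:E-injective}. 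The two actions agree termwise. Finite generation of $C$ over $R$ follows immediately, since $C$ is already finitely generated over $A\subseteq R$, being $A^3$.

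For the second isomorphism, I would write $\widehat R\otimes_R C\cong \widehat R\otimes_R(R\otimes_A C')$ only where that is legitimate. A cleaner route uses the $A$-module structure directly:
\[
\widehat R\otimes_R C\cong \widehat R\otimes_R (R\otimes_A A\otimes_{\mathbb Q}Y^*)\quad\text{is not available}.
\]
So instead I would use the fact that $C$ is finitely presented over $R$. Completion is then exact on finite modules, and $\widehat R\otimes_R C\cong \widehat C$, the $\mathfrak m_R$-adic completion. Since $\mathfrak m_R$-adic and $\mathfrak m_A$-adic topologies agree on finite $A$-modules, this equals $\widehat A\otimes_A C\cong \widehat A\otimes_{\mathbb Q}Y^*$. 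Here $R$ is finite over $A$ and $\mathfrak m_R^k\subseteq\mathfrak m_AR\subseteq \mathfrak m_R$ for some $k$, because $\mathfrak m_R=\mathfrak m_A\oplus Au\oplus Av$ and $(u,v)^2=0$. Substituting the computation $\widehat A\cong\mathbb Q[\![t]\!]\otimes_{\mathbb Q}X$ from the preceding lemma gives $\mathbb Q[\![t]\!]\otimes_{\mathbb Q}X\otimes_{\mathbb Q}Y^*$.

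The main obstacle is bookkeeping of module structures. The identification must be $\widehat R$-linear, with $X$ acting by multiplication and $Y$ acting dually. It must also be natural, meaning compatible with the map $t$ used later. I would handle this by noting that every isomorphism above is induced by canonical maps. These are the evaluation map, the completion comparison $\widehat R\otimes_R C\to\widehat C$, and base change along $A\to\widehat A$. Each commutes with the $Y$-action and with multiplication by elements of $A$, so everything is checked on generators.
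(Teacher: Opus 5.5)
Your proof is correct, and the first isomorphism and the finite generation are argued exactly as in the paper: the evaluation pairing is bijective because $Y$ is finite-dimensional, and $C\cong A^{3}$ is already finite over $A\subseteq R$.

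For the second isomorphism you take a different route. The paper's one-line argument is extension of scalars. Since $R$ is finite free over $A$, one has $\widehat R\cong\widehat A\otimes_A R$, which is recorded just before the lemma. Associativity of tensor products then gives $\widehat R\otimes_R C\cong\widehat A\otimes_A R\otimes_R C\cong\widehat A\otimes_A C\cong\widehat A\otimes_{\mathbb Q}Y^*$, and the computation $\widehat A\cong\mathbb Q[\![t]\!]\otimes_{\mathbb Q}X$ finishes it.

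You instead pass through the module completion $\widehat C$. You use $\widehat R\otimes_R C\cong\widehat C$ for finite modules over the Noetherian ring $R$. You then identify the $\mathfrak m_R$-adic and $\mathfrak m_A$-adic completions of $C$ via $\mathfrak m_R^2\subseteq\mathfrak m_AR\subseteq\mathfrak m_R$. This is valid, and it in effect reproves the identity $\widehat R\cong\widehat A\otimes_A R$ that the paper takes for granted. So your version is more self-contained, but longer than necessary.

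Your remark that a direct route ``is not available'' is misleading and that displayed line should be dropped. What fails is writing $C$ itself as induced from an $A$-module. You do not need that; it suffices that $\widehat R$ is induced from $\widehat A$.

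Both routes give the same $\widehat R$-module structure: $\widehat A\cong\mathbb Q[\![t]\!]\otimes_{\mathbb Q}X$ acts by multiplication and $Y$ acts dually on $Y^*$. This is exactly what Lemma~\ref{lem:Q-free} uses later.
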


\begin{proof}
The first isomorphism follows since $Y$ is finite-dimensional;
the second follows by extension of scalars.
Thus $C$ is finitely generated over $A$ and so is over $R$.
\end{proof}

We now construct an exact complex $P$ which remains exact after tensoring with $E_R(\mathbb Q)$, but $C\otimes_RP$ has nonzero homology.

Let $u,v$ denote the residue classes of the variables in $R\cong A[u,v]/(u,v)^2$, and also their images in $\widehat R$.
Similarly, regard the residue classes $x_1,x_2\in X$ as elements of $\widehat R$ via the above presentation of $\widehat R$.
Thus $u^2=uv=v^2=0$ and $x_1^2=x_1x_2=x_2^2=0$.
Let $N=\coprod_{r,s\geq0}\widehat R e_{r,s}$ be the free $\widehat R$-module on the indicated basis.
For all $r,s\geq0$, define
\begin{align*}
    a : N \longrightarrow N , &\qquad e_{2r,s} \longmapsto u e_{r,s}, \quad e_{2r+1,s} \longmapsto v e_{r,s};\\
    b : N \longrightarrow N , &\qquad e_{r,s} \longmapsto x_1 e_{r,2s} + x_2 e_{r,2s+1}.
\end{align*}
Then set $d=\begin{psmallmatrix}a&b\\b&-a\end{psmallmatrix}$.
Note that $d^2 = 0$.
Then one has a $1$-periodic complex
\[
P=(\cdots\xlongrightarrow dN\oplus N\xlongrightarrow dN\oplus N\xlongrightarrow d\cdots).
\]

\begin{lemma}\label{lem:E-tensor}
Both $P$ and $E_R(\mathbb Q)\otimes_{\widehat R} P$ are acyclic complexes.
\end{lemma}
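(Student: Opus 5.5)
The plan is to split $P$ as a tensor product of two ``one-variable'' differential modules and then apply a K\"unneth argument over the field $\mathbb Q$. Write $W=\mathbb Q^{(\mathbb N)}$ with basis $(e_r)_{r\ge0}$. Then
\[
N\cong \mathbb Q[\![t]\!]\otimes_{\mathbb Q}(X\otimes_{\mathbb Q}W)\otimes_{\mathbb Q}(Y\otimes_{\mathbb Q}W),
\qquad e_{r,s}\longmapsto 1\otimes e_s\otimes e_r .
\]
Under this identification, $a=1\otimes1\otimes\alpha$ and $b=1\otimes\beta\otimes1$, where
\begin{align*}
\alpha(e_{2r})&=ue_r, & \alpha(e_{2r+1})&=ve_r \quad\text{on } Y\otimes W,\\
\beta(e_s)&=x_1e_{2s}+x_2e_{2s+1} & &\text{on } X\otimes W.
\end{align*}
In particular $a^2=b^2=0$ and $ab=ba$. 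The matrix $d=\begin{psmallmatrix}a&b\\b&-a\end{psmallmatrix}$ is then exactly the differential of the ($\mathbb Z/2$-graded, $1$-periodic) tensor product of the differential modules $(X\otimes W,\beta)$ and $(Y\otimes W,\alpha)$, with the usual Koszul sign, base-changed along the flat extension $\mathbb Q\to\mathbb Q[\![t]\!]$. Over a field, the K\"unneth formula gives $H(K\otimes L)\cong H(K)\otimes H(L)$ for differential modules. So acyclicity of $P$ reduces to acyclicity of a single factor, and exactness is then preserved by $\mathbb Q[\![t]\!]\otimes_{\mathbb Q}-$.

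For $P$ itself I will check directly that $(Y\otimes W,\alpha)$ is exact. The map $\alpha$ sends the basis vectors $e_k$ injectively onto the distinct basis vectors $ue_r,ve_r$, and it kills $ue_k,ve_k$. Hence
\[
\ker\alpha=\mathrm{span}\{ue_r,ve_r\}=\mathrm{im}\,\alpha .
\]
The same holds for $\beta$. Hence $P$ is acyclic.

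For $E_R(\mathbb Q)\otimes_{\widehat R}P$, I will use the identification $E_R(\mathbb Q)\cong V\otimes X^*\otimes Y^*$ from Lemma~\ref{lem:E-injective}. This gives
\[
E_R(\mathbb Q)\otimes_{\widehat R}P\;\cong\; V\otimes_{\mathbb Q}\bigl((X^*\otimes W,\beta^*)\otimes(Y^*\otimes W,\alpha^*)\bigr),
\]
where $\alpha^*(f\otimes e_{2r})=uf\otimes e_r$ and $\alpha^*(f\otimes e_{2r+1})=vf\otimes e_r$, using the dual actions on $Y^*$, and similarly for $\beta^*$. By K\"unneth again, it suffices to show that one of these two factors has zero homology. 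The step I expect to be the main obstacle is exactly this homology computation on the dual side, because the module structure on $Y^*$ is ``upside down''. Only $u^*$ and $v^*$ map nontrivially (both to $1^*$), so a naive count suggests that $\ker\alpha^*$ could be larger than $\mathrm{im}\,\alpha^*$.

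My first attempt will be to compute $\ker\alpha^*$ and $\mathrm{im}\,\alpha^*$ carefully on finite sums. I will track the binary-tree indexing $r\mapsto 2r,2r+1$ and check whether the apparent extra cycles (such as $1^*\otimes e_k$, $u^*e_{2r}-v^*e_{2r+1}$, $u^*e_{2r+1}$, $v^*e_{2r}$) are in fact hit by $\alpha^*$. If a single factor is not exact, I will instead use the total differential on $N\oplus N$ directly. A cycle $(y,z)$ satisfies $ay=-bz$ and $by=az$, and I will try to construct a preimage by induction on the tree depth in the indices $(r,s)$. This exploits the fact that $a$ and $b$ act on independent coordinates, so the obstruction in one coordinate can be absorbed using the other.
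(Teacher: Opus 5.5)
Your decomposition $N\cong\mathbb Q[\![t]\!]\otimes_{\mathbb Q}(X\otimes W)\otimes_{\mathbb Q}(Y\otimes W)$ and the K\"unneth reduction are sound, once each differential module is doubled to a $\mathbb Z/2$-graded complex (this is what the sign $-a$ in $d$ encodes). Your proof that $P$ is acyclic, via exactness of $(Y\otimes W,\alpha)$, is correct. It is the homology-level form of the paper's contracting homotopy $\mathrm{diag}(g,-g)$, where $g$ splits $\alpha$ and commutes with $b$.

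However, the remark ``the same holds for $\beta$'' is false. $\ker\beta$ is spanned by all $x_i\otimes e_s$, whereas $\mathrm{im}\,\beta$ is spanned by the vectors $x_1\otimes e_{2s}+x_2\otimes e_{2s+1}$. For instance, $x_1\otimes e_0$ is a cycle that is not a boundary. This does not affect $P$, but it is not a harmless slip. The factors of $C\otimes_R P$ are $(X\otimes W,\beta)$ and $(Y^*\otimes W,\alpha^*)$, so exactness of $\beta$ would make $Q$ acyclic and contradict Lemma~\ref{lem:Q}.

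The genuine gap is the second assertion: you leave it unproved, and the factor you plan to examine is the wrong one. Your suspicion about $\alpha^*$ is correct. We have $\alpha^*(u^*\otimes e_{2r})=1^*\otimes e_r=\alpha^*(v^*\otimes e_{2r+1})$, and $\alpha^*$ kills all other basis vectors. Hence $\mathrm{im}\,\alpha^*=\mathrm{span}\{1^*\otimes e_r\}$, and $u^*\otimes e_{2r+1}$, $v^*\otimes e_{2r}$, $u^*\otimes e_{2r}-v^*\otimes e_{2r+1}$ are cycles that are not boundaries.

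Your fallback does not repair this. Over $\mathbb Q$, K\"unneth says the total complex is acyclic exactly when one factor is. So an obstruction in one tensor factor cannot be ``absorbed'' by the other, and a direct induction on $N\oplus N$ can only succeed by rediscovering exactness of the other factor.

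The missing step is to check $\beta^*$. Let $\eta_0,\eta_1,\eta_2$ be dual to $1,x_1,x_2$. Then $x_1\eta_1=x_2\eta_2=\eta_0$ and all other products $x_i\eta_j$ are zero. Hence $\beta^*(\eta_0\otimes e_s)=0$, $\beta^*(\eta_1\otimes e_s)=\eta_0\otimes e_{2s}$ and $\beta^*(\eta_2\otimes e_s)=\eta_0\otimes e_{2s+1}$. Thus $\beta^*$ maps $\{\eta_1\otimes e_s,\eta_2\otimes e_s\}$ bijectively onto $\{\eta_0\otimes e_k\}$, so $\ker\beta^*=\mathrm{im}\,\beta^*$, and your K\"unneth argument concludes.

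Dualising swaps which factor is exact. The map $\alpha$ merges indices $2r,2r+1\mapsto r$, $\beta$ splits $s\mapsto 2s,2s+1$, and transposing reverses these roles. This is precisely what the paper's homotopy $\begin{psmallmatrix}0&k\\k&0\end{psmallmatrix}$ encodes: $k$ is the splitting of $\beta^*$, and it commutes with $a$.
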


\begin{proof}
For $P$, define a $\mathbb Q[\![t]\!] \otimes X$-homomorphism $g : N \longrightarrow N$ (not an $\widehat R$-homomorphism)  via
\[ 
    e_{r,s} \longmapsto 0, \qquad ue_{r,s} \longmapsto e_{2r, s}, \qquad
    ve_{r,s} \longmapsto e_{2r+1,s}.
\]
Then $$\begin{psmallmatrix}g&0\\0&-g\end{psmallmatrix} \cdot \begin{psmallmatrix}a&b\\b&-a\end{psmallmatrix} + 
\begin{psmallmatrix}a&b\\b&-a\end{psmallmatrix} \cdot \begin{psmallmatrix}g&0\\0&-g\end{psmallmatrix} = 1_P.$$
It follows that $1_P$ is setwise homotopic to zero, and thus $P$ is acyclic by \cite{BianZhangZhang2009}.

Write $E_R(\mathbb Q)=X^*\otimes_{\mathbb Q}V\otimes_{\mathbb Q}Y^*$, and let $\eta_0,\eta_1,\eta_2$ be dual to $1,x_1,x_2$.
We identify
\[
E_R(\mathbb Q)\otimes_{\widehat R}N
\cong\coprod_{r,s\geq0}(X^*\otimes_{\mathbb Q}V\otimes_{\mathbb Q}Y^*)e_{r,s},
\]
and use $a,b,d$ also for the induced maps.
Define the $\mathbb Q$-linear map $k: E_R(\mathbb Q)\otimes_{\widehat R}N
\longrightarrow E_R(\mathbb Q)\otimes_{\widehat R}N$
via
\[
\begin{aligned}
(\eta_0\otimes w)e_{r,2s}
&\longmapsto (\eta_1\otimes w)e_{r,s},\\
(\eta_0\otimes w)e_{r,2s+1}
&\longmapsto (\eta_2\otimes w)e_{r,s},\\
(\eta_i\otimes w)e_{r,s}
&\longmapsto 0 \qquad (i=1,2),
\end{aligned}
\]
for $r,s\ge0$ and $w\in V\otimes_{\mathbb Q}Y^*$.
Then
\[
\begin{psmallmatrix}0&k\\k&0\end{psmallmatrix}
\begin{psmallmatrix}a&b\\b&-a\end{psmallmatrix}
+
\begin{psmallmatrix}a&b\\b&-a\end{psmallmatrix}
\begin{psmallmatrix}0&k\\k&0\end{psmallmatrix}
=
1_{\left(E_R(\mathbb Q)\otimes_{\widehat R}N\right)^{\oplus2}}.
\]
Thus $E_R(\mathbb Q)\otimes_{\widehat R}P$ is acyclic
\cite{BianZhangZhang2009}.
\end{proof}

Put $\mathbf C\langle n\rangle = \mathrm{Cone}(t^n : P \to P)$, and $\mathbf I \langle n\rangle = \mathrm{Hom}_{\widehat R}( \mathbf C \langle n \rangle , E_R(\mathbb Q))$.

\begin{proposition}\label{prop:compGI}
    For each $n \in \mathbb N$, $\mathbf I \langle n\rangle$ is a totally acyclic complex with injective terms.
\end{proposition}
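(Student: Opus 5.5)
The plan is to check the three defining properties of a totally acyclic complex of injectives directly: injective terms, acyclicity, and acyclicity of $\mathrm{Hom}_R(J,-)$ for every injective $J$. Each will be reduced to a statement about $\mathbf C\langle n\rangle$ via Hom--tensor adjunction. Throughout I write $E=E_R(\mathbb Q)$ and use that $E$ is an injective $\widehat R$-module, by the identification $E\cong E_{\widehat R}(\mathbb Q)$ in Lemma~\ref{lem:E-injective}.

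\emph{Terms.} Each term of $\mathbf C\langle n\rangle$ is $(N\oplus N)^{\oplus 2}$, a free $\widehat R$-module. Hence each term of $\mathbf I\langle n\rangle$ is a product of copies of $E$. Since $R$ is Noetherian, products of injective $R$-modules are injective, so every term is an injective $R$-module.

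\emph{Acyclicity.} By Lemma~\ref{lem:E-tensor}, $P$ is acyclic. The cone of a chain map between acyclic complexes is acyclic, by the long exact homology sequence, so $\mathbf C\langle n\rangle$ is acyclic. Since $E$ is injective over $\widehat R$, the functor $\mathrm{Hom}_{\widehat R}(-,E)$ is exact. It therefore carries $\mathbf C\langle n\rangle$ to an acyclic complex, namely $\mathbf I\langle n\rangle$.

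\emph{Total acyclicity.} Every injective $R$-module $J$ is a coproduct of indecomposables, and $\mathrm{Hom}_R(\coprod J_i,-)=\prod\mathrm{Hom}_R(J_i,-)$. Since products of acyclic complexes of abelian groups are acyclic, it suffices to treat $J=E_R(\mathbb Q)$ and $J=E_R(R/\mathfrak n)$, the only indecomposable injectives by Subsection~\ref{subsec:R-injectives}. Adjunction gives
\[
\mathrm{Hom}_R\bigl(J,\mathrm{Hom}_{\widehat R}(\mathbf C\langle n\rangle,E)\bigr)\cong\mathrm{Hom}_{\widehat R}\bigl(\mathbf C\langle n\rangle\otimes_{\widehat R}(\widehat R\otimes_R J),E\bigr).
\]
By exactness of $\mathrm{Hom}_{\widehat R}(-,E)$, it suffices to show that $\mathbf C\langle n\rangle\otimes_{\widehat R}M$ is acyclic for $M=\widehat R\otimes_R J$. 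Note that
\[
\mathbf C\langle n\rangle\otimes_{\widehat R}M\cong\mathrm{Cone}\bigl(t^n:P\otimes_{\widehat R}M\to P\otimes_{\widehat R}M\bigr).
\]

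For $J=E_R(\mathbb Q)$: this module is $\mathfrak m_R$-torsion, so $\widehat R\otimes_R J\cong J=E$. By Lemma~\ref{lem:E-tensor}, $E\otimes_{\widehat R}P$ is acyclic, hence so is the cone of $t^n$ on it.

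For $J=E_R(R/\mathfrak n)$: the proof of Lemma~\ref{lem:J-injective} shows that $t$ acts as an automorphism of $J$. Hence $t$ acts invertibly on $M$ and on $P\otimes_{\widehat R}M$. The cone of the isomorphism $t^n$ is then contractible, in particular acyclic.

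The main point requiring care is the identification $\widehat R\otimes_R E_R(\mathbb Q)\cong E_R(\mathbb Q)$ compatibly with the $\widehat R$-structure used in Lemma~\ref{lem:E-tensor}. It holds because $E$ is $\mathfrak m_R$-torsion, so its $R$-action extends uniquely to $\widehat R$ and $\widehat R\otimes_R E\to E$ is an isomorphism (via \cite[Theorem~18.6]{Matsumura1989}). Everything else is formal.
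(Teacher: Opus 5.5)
Your proof is correct and follows the paper's argument essentially step for step: the terms are products of copies of $E_R(\mathbb Q)$; acyclicity comes from exactness of $\mathrm{Hom}_{\widehat R}(-,E_R(\mathbb Q))$ on the acyclic cone; and total acyclicity is obtained by adjunction, reduced to the two indecomposable injectives, using Lemma~\ref{lem:E-tensor} for $E_R(\mathbb Q)$ and the invertibility of $t$ on $E_R(R/\mathfrak n)$. You are slightly more explicit than the paper in justifying the reduction to indecomposables via products of Hom complexes; the only nitpick is that products of injectives are injective over any ring, so the Noetherian hypothesis is not needed at that step.
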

\begin{proof}
Each term of $\mathbf I\langle n\rangle$ is a product of copies of $E_R(\mathbb Q)$, hence an injective $R$-module.
By Lemma~\ref{lem:E-tensor}, $P$ and thus $\mathbf C\langle n\rangle$ are acyclic.
Since $E_R(\mathbb Q) \cong E_{\widehat R}(\mathbb Q)$ is an injective $\widehat R$-module, it follows that $\mathbf I\langle n\rangle$ is acyclic.

Then we show that $\mathbf I\langle n\rangle$ is totally acyclic.
For an injective $R$-module $L$, one has
\[
\mathrm{Hom}_R(L,\mathbf I\langle n\rangle)
\cong\mathrm{Hom}_{\widehat R}
\bigl(L\otimes_R\mathbf C\langle n\rangle,E_R(\mathbb Q)\bigr).
\]
It suffices to show that $L\otimes_R\mathbf C\langle n\rangle$ is acyclic.
Here we need only consider $L=E_R(\mathbb Q)$ and $L=E_R(R/\mathfrak n)$.
For the former, $\widehat R\otimes_R E_R(\mathbb Q)\cong E_R(\mathbb Q)$ by \cite[proof of Theorem~18.6(iii)]{Matsumura1989}.
\begin{align*}
L\otimes_R\mathbf C\langle n\rangle
\cong E_R(\mathbb Q)\otimes_{\widehat R}\mathbf C\langle n\rangle
\cong\mathrm{Cone}\bigl(t^n:E_R(\mathbb Q)\otimes_{\widehat R}P
\longrightarrow E_R(\mathbb Q)\otimes_{\widehat R}P\bigr),
\end{align*}
which is acyclic by Lemma~\ref{lem:E-tensor}.
For the latter, $L\otimes_R\mathbf C\langle n\rangle$ is the cone of $t^n : E_R(R/\mathfrak n)\otimes_RP \to E_R(R/\mathfrak n)\otimes_RP$, which is contractible since $t$ is an automorphism of $E_R(R/\mathfrak n)$; see the proof of Lemma~\ref{lem:J-injective}.
\end{proof}

For $n\geq1$, put $G_n=Z^0(\mathbf I\langle n\rangle)$.
By Proposition~\ref{prop:compGI} each $G_n$ is Gorenstein injective.
Put $Q=C\otimes_R P$, viewed as a one-periodic complex of $\widehat R$-modules via the action on $P$.
One-periodicity identifies $H_i(Q)$ with $H_0(Q)$ for every $i\in\mathbb Z$.

\begin{lemma}\label{lem:Q-free}\label{lem:Q}
The homology $H_0(Q)$ is a nonzero free $\mathbb Q[\![t]\!]$-module.
\end{lemma}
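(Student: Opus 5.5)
The plan is to reduce the computation of $H_0(Q)$ to a computation over $\mathbb Q$, and then to exhibit one explicit non-bounding cycle.

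\textbf{Reduction to $\mathbb Q$.} Since $P$ is a complex of free $\widehat R$-modules, we have
\[
Q=C\otimes_R P\cong(\widehat R\otimes_R C)\otimes_{\widehat R}P .
\]
By Lemma~\ref{lem:C-base-change}, $\widehat R\otimes_R C\cong\mathbb Q[\![t]\!]\otimes_{\mathbb Q}X\otimes_{\mathbb Q}Y^*$. Hence each term of $Q$ is
\[
\mathbb Q[\![t]\!]\otimes_{\mathbb Q}W,\qquad W=\Bigl(\coprod_{r,s\ge0}(X\otimes_{\mathbb Q}Y^*)e_{r,s}\Bigr)^{\oplus2}.
\]
The key observation is that $t$ does not enter the definitions of $a$ and $b$. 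The map $a$ only involves the indices $r$ and the action of $u,v$ on $Y^*$, and $b$ only involves the indices $s$ and multiplication by $x_1,x_2$ on $X$. Consequently the differential of $Q$ is $1\otimes d_0$ for a $\mathbb Q$-linear differential $d_0$ on $W$. Because $\mathbb Q[\![t]\!]$ is flat over the field $\mathbb Q$, and tensoring a split (vector space) situation is harmless, we get
\[
H_0(Q)\cong\mathbb Q[\![t]\!]\otimes_{\mathbb Q}H_0(W,d_0).
\]
This is a free $\mathbb Q[\![t]\!]$-module on any $\mathbb Q$-basis of $H_0(W,d_0)$. So freeness is automatic, and it remains to show $H_0(W,d_0)\neq0$.

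\textbf{A candidate cycle.} Recall the $Y$-module structure on $Y^*$: with $1^*,u^*,v^*$ the dual basis, one has $u\cdot1^*=v\cdot1^*=0$, $u\cdot u^*=1^*$ and $v\cdot v^*=1^*$. So $1^*$ spans the socle of $Y^*$, and it is killed by $a$. On the $X$ side, $x_1$ spans part of the socle of $X$, so it is killed by $b$, since $x_ix_j=0$. I would take
\[
z=\bigl((x_1\otimes1^*)e_{0,0},\,0\bigr)\in W .
\]
Applying $a$ to $(x_1\otimes1^*)e_{0,0}$ gives $x_1\otimes(u\cdot1^*)e_{0,0}=0$, and applying $b$ gives $x_1\cdot(x_1e_{0,0}+x_2e_{0,1})\otimes1^*=0$. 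Hence $d_0z=0$.

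\textbf{Showing $z$ is not a boundary (main obstacle).} I would look at which basis vectors can contribute to the $e_{0,0}$-coefficient of an image $d_0w$.
\begin{enumerate}
\item Under $a$, the coefficient of $e_{0,s}$ receives contributions only from $e_{0,s}$ (multiplied by $u$) and $e_{1,s}$ (multiplied by $v$).
\item Under $b$, the coefficient of $e_{r,0}$ receives contributions only from $e_{r,0}$ (multiplied by $x_1$).
\end{enumerate}
Thus the first component of the $e_{0,0}$-coefficient of $d_0(w_1,w_2)$ has the form
\[
u\alpha+v\beta+x_1\gamma,
\]
where $\alpha,\beta$ are the $e_{0,0}$- and $e_{1,0}$-coefficients of $w_1$, and $\gamma$ is the $e_{0,0}$-coefficient of $w_2$, all lying in $X\otimes Y^*$.

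The aim is then to check that $x_1\otimes1^*$ is not of this form. Here $u$ and $v$ act on the $Y^*$ factor, so $u\alpha+v\beta$ lies in $X\otimes Y^*$ with its $Y^*$-part in $\mathbb Q1^*$. Meanwhile $x_1\gamma$ lies in $x_1X\otimes Y^*$. At first sight, $x_1\otimes1^*=x_1\cdot(1\otimes1^*)$ looks like a boundary, so the naive choice of $z$ is probably wrong.

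I would therefore refine the choice using the asymmetry $e_{r,s}\mapsto x_1e_{r,2s}+x_2e_{r,2s+1}$. A natural next candidate is a cycle built from $x_2\otimes1^*$ placed at $e_{0,1}$, or a combination whose $b$-preimage would be forced to have incompatible $e_{0,0}$- and $e_{0,1}$-coefficients.

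Failing a direct hand check, the fallback is a weight argument. Grade by the $X$-degree plus the $Y^*$-codegree, and compare the induced complexes on the finitely many basis vectors $e_{r,s}$ with small $r,s$. Since the index maps $r\mapsto 2r,2r+1$ and $s\mapsto 2s,2s+1$ only increase indices, the coefficients at $e_{0,0}$ and $e_{0,1}$ depend on finitely many coordinates. This reduces the non-boundary claim to a finite-dimensional linear algebra check, which is the step I expect to need the most care.
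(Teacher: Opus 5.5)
Your reduction to $\mathbb Q$ is correct and is the same as the paper's. The differential of $Q$ does not involve $t$, so $Q\cong\mathbb Q[\![t]\!]\otimes_{\mathbb Q}Q'$ and $H_0(Q)\cong\mathbb Q[\![t]\!]\otimes_{\mathbb Q}H_0(Q')$ is free.

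\textbf{The gap.} The real content of the lemma is that $H_0(Q')\neq0$, and your proposal does not prove it.
\begin{enumerate}
\item Your candidate $z=((x_1\otimes 1^*)e_{0,0},0)$ is in fact a boundary. Apply $d$ to $((x_1\otimes u^*)e_{0,0},0)$. The first component is $(x_1\otimes u\cdot u^*)e_{0,0}=(x_1\otimes1^*)e_{0,0}$. The second component is $b((x_1\otimes u^*)e_{0,0})=0$, because $x_1x_1=x_1x_2=0$.
\item Your suggested replacement, $x_2\otimes1^*$ at $e_{0,1}$, is a boundary for the same reason: it is $d$ applied to $(x_2\otimes u^*)e_{0,1}$.
\item The ``weight argument'' is only announced, not carried out.
\end{enumerate}
So as written, nothing shows that any cycle survives in homology.

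\textbf{The missing idea.} You looked for a cycle in the joint socle, i.e.\ killed by both $u,v$ and both $x_1,x_2$. But $a$ and $b$ act asymmetrically on the index $e_{0,0}$, and you should exploit that.
\begin{enumerate}
\item On the $e_{0,0}$-coefficient, $a$ acts only through $u$, since $e_{0,0}=e_{2\cdot0,0}\mapsto ue_{0,0}$. So the cycle condition only requires the $Y^*$-part to be killed by $u$. This allows $v^*$, because $(u\cdot v^*)(y)=v^*(uy)=0$ for all $y$.
\item On the boundary side, your own computation shows that the first-component $e_{0,0}$-coefficient of any boundary is $u\alpha+v\beta+x_1\gamma$. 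This lies in $X\otimes\mathbb Q1^*+\mathbb Qx_1\otimes Y^*$, since $b$ reaches $e_{0,0}$ only through $x_1$.
\end{enumerate}
Now take $z=((x_2\otimes v^*)e_{0,0},0)$.
\begin{enumerate}
\item It is a cycle, because $u\cdot v^*=0$ and $x_2x_1=x_2^2=0$.
\item It is not a boundary, because $x_2\otimes v^*\notin X\otimes\mathbb Q1^*+\mathbb Qx_1\otimes Y^*$. Concretely, the functional reading off the coefficient of $(x_2\otimes v^*)e_{0,0}$ in the first component vanishes on all boundaries and equals $1$ on $z$.
\end{enumerate}
This is exactly the paper's argument, with its $\epsilon_v$ being your $v^*$. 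With this choice your framework closes, and no further finite-dimensional check is needed.
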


\begin{proof}
By Lemma~\ref{lem:C-base-change}, for each $i\in\mathbb Z$,
\[
Q_i=C\otimes_RP_i
\cong\left(\coprod_{r,s\geq0}(C\otimes_R\widehat R)e_{r,s}\right)^{2}\cong\left(\coprod_{r,s\geq0}
(\mathbb Q[\![t]\!]\otimes_{\mathbb Q}X\otimes_{\mathbb Q}Y^*)e_{r,s}\right)^{2}.
\]
The differential matrices $\begin{psmallmatrix}a&b\\b&-a\end{psmallmatrix}$ of $Q$
have entries in $\{0,1,-1\}\subseteq\mathbb Q$.
Let $Q'$ be the complex of $\mathbb Q$-vector spaces with terms $Q'_i=\left(\coprod_{r,s\geq0}(X\otimes_{\mathbb Q}Y^*)e_{r,s}\right)^{2}$ and the same differential matrices.
Then $Q\cong\mathbb Q[\![t]\!]\otimes_{\mathbb Q}Q'$ as complexes of $\mathbb Q[\![t]\!]$-modules.
Since $\mathbb Q$ is a field, the functor $\mathbb Q[\![t]\!]\otimes_{\mathbb Q}-$ is exact and therefore commutes with homology.
Consequently,
\[
H_0(Q)\cong\mathbb Q[\![t]\!]\otimes_{\mathbb Q}H_0(Q').
\]
A $\mathbb Q$-basis of $H_0(Q')$ indexed by $\Lambda$ gives $H_0(Q)\cong\coprod_{\lambda\in\Lambda}\mathbb Q[\![t]\!]$.
Hence $H_0(Q)$ is free.

To see that $H_0(Q) \neq 0$, consider $z=\bigl((1\otimes x_2\otimes\epsilon_v)e_{0,0},0\bigr)\in Q_0$.
By construction $z$ is a cycle.
Define a $\mathbb Q[\![t]\!]$-linear functional
$\lambda:Q_0\to\mathbb Q[\![t]\!]$ on the displayed basis by
\[
( (1\otimes\xi\otimes\epsilon)e_{r,s},0)
\longmapsto
\begin{cases}
1,&\xi=x_2,\ \epsilon=\epsilon_v,\ r=s=0,\\
0,&\text{otherwise},
\end{cases}
\qquad 
(0,(1\otimes\xi\otimes\epsilon)e_{r,s})\longmapsto 0.
\]
Thus $\lambda$ reads exactly the coefficient of
$(1\otimes x_2\otimes\epsilon_v)e_{0,0}$ in the first component, and $\lambda(z)=1$.

Then we show that $\lambda$ vanishes on boundaries, write an element of $Q_1$ as
$(p,q)$, where $p=\sum_{r,s}p_{r,s}e_{r,s}$ and
$q=\sum_{r,s}q_{r,s}e_{r,s}$, with finitely many nonzero coefficients in
$\mathbb Q[\![t]\!]\otimes_{\mathbb Q}X\otimes_{\mathbb Q}Y^*$.
Then $d(p,q)=(ap+bq,bp-aq)$.
Only the coefficient at $e_{0,0}$ in $ap+bq$ contributes to $\lambda$.
By construction, this coefficient is $u p_{0,0}+v p_{1,0}+x_1q_{0,0}$.
Now $u \cdot Y^*+v \cdot Y^*=\mathbb Q\epsilon_0$, so the first two terms have zero
$\epsilon_v$-coefficient.
Also $x_1 \cdot X=\mathbb Qx_1$, so the last term has zero $x_2$-coefficient.
Consequently,
\[
\lambda d(p,q)=0\quad\text{for every }(p,q)\in Q_1,
\qquad\lambda(z)=1.
\]
Hence $z\notin\mathrm{im}\,d_1$.
Since $z$ is a cycle, its class in $H_0(Q)$ is nonzero.
\end{proof}

\begin{lemma}\label{lem:Tn}
For every $n\geq1$, one has $t^n\mathrm{Ext}_R^1(C,G_n)=0$ and $t^{n-1}\mathrm{Ext}_R^1(C,G_n)\ne0$.
\end{lemma}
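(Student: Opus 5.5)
We compute $\mathrm{Ext}^1_R(C,G_n)$ through the complete injective resolution $\mathbf I\langle n\rangle$ of $G_n$. Since $\mathbf I\langle n\rangle$ is an acyclic complex of injectives with $Z^0=G_n$, dimension shifting identifies $\mathrm{Ext}^1_R(C,G_n)$ with $H^1\mathrm{Hom}_R(C,\mathbf I\langle n\rangle)$. (Here we use that the truncation $0\to G_n\to \mathbf I\langle n\rangle^0\to\mathbf I\langle n\rangle^1\to\cdots$ is an injective resolution.) Adjunction then gives
\[
\mathrm{Hom}_R(C,\mathbf I\langle n\rangle)\cong\mathrm{Hom}_{\widehat R}\bigl(C\otimes_R\mathbf C\langle n\rangle,E_R(\mathbb Q)\bigr)\cong \mathrm{Hom}_{\widehat R}\bigl(\mathrm{Cone}(t^n:Q\to Q),E_R(\mathbb Q)\bigr).
\]
Because $E_R(\mathbb Q)\cong E_{\widehat R}(\mathbb Q)$ is injective over $\widehat R$, this functor is exact. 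Hence the cohomology in the relevant degree is the Matlis dual $\mathrm{Hom}_{\widehat R}(H_{j}(\mathrm{Cone}(t^n|_Q)),E_R(\mathbb Q))$ for the appropriate index $j$. By the one-periodicity of $Q$, every index gives the same answer up to the identification $H_i(Q)\cong H_0(Q)$.

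Next we compute the homology of the cone. The long exact sequence for $\mathrm{Cone}(t^n)$ gives
\[
0\to \mathrm{coker}\bigl(t^n:H_0(Q)\to H_0(Q)\bigr)\to H(\mathrm{Cone})\to \ker\bigl(t^n:H_0(Q)\to H_0(Q)\bigr)\to 0 .
\]
By Lemma~\ref{lem:Q}, $H_0(Q)\cong\coprod_{\Lambda}\mathbb Q[\![t]\!]$ is nonzero and free. So the kernel vanishes and the homology is $H\cong\coprod_\Lambda \mathbb Q[\![t]\!]/(t^n)$ with $\Lambda\neq\varnothing$. This is an $\widehat R$-module on which $t$ acts through the $\mathbb Q[\![t]\!]$-structure; this is consistent because the differentials of $Q$ are $\mathbb Q[\![t]\!]$-linear.

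Therefore $\mathrm{Ext}^1_R(C,G_n)\cong\mathrm{Hom}_{\widehat R}(H,E_R(\mathbb Q))$, and its $t$-action is induced from the action on $H$. Since $t^nH=0$, we get $t^n\mathrm{Ext}^1_R(C,G_n)=0$ immediately.

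For the non-vanishing, take $h\in H$ with $t^{n-1}h\neq0$, for instance the class of $1$ in one summand. Since $E_R(\mathbb Q)$ is an injective cogenerator for $\widehat R$-modules supported at the maximal ideal (indeed $H$ is $\mathfrak m$-torsion), there is $\varphi\in\mathrm{Hom}_{\widehat R}(H,E)$ with $\varphi(t^{n-1}h)\neq0$. Then $(t^{n-1}\varphi)(h)\neq0$, so $t^{n-1}\varphi\ne0$.

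The main obstacle is bookkeeping rather than substance. We must check that the index shift between $\mathrm{Ext}^1$ and the homology of the cone lands on a degree where the computation above applies; periodicity makes every degree isomorphic, which removes the issue. We must also check that the $R$-action of $t$ on $\mathrm{Ext}$ agrees with the one transported through adjunction and duality, which holds by naturality of the isomorphisms in $t$.
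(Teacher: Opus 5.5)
Your proposal is correct and follows the paper's proof: Hom--tensor adjunction to $\mathrm{Hom}_{\widehat R}(\mathrm{Cone}(t^n\colon Q\to Q),E_R(\mathbb Q))$, exactness of this dual, and the long exact sequence combined with freeness of $H_0(Q)$ to identify the homology with $H_0(Q)/t^nH_0(Q)$. The only difference is at the end. The paper computes $\mathrm{Ext}^1_R(C,G_n)\cong\prod_{\Lambda}\mathbb Q[\![t]\!]/t^n\mathbb Q[\![t]\!]$ explicitly via $E_{\widehat R}(\mathbb Q)\cong\mathrm{Hom}_{\mathbb Q[\![t]\!]}(\widehat R,V)$. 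You instead read off $t^n$-annihilation from $t^nH=0$, and get non-vanishing under $t^{n-1}$ from the fact that $E_{\widehat R}(\mathbb Q)$ is a cogenerator over the local ring $\widehat R$. No torsion hypothesis is needed for that cogenerator property, and your route is equally valid and slightly shorter.
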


\begin{proof}
The adjunction gives
\[
\mathrm{Hom}_R(C,\mathbf I\langle n\rangle)
=\mathrm{Hom}_R(C,\mathrm{Hom}_{\widehat R}( \mathbf C \langle n \rangle , E_R(\mathbb Q)))
\cong\mathrm{Hom}_{\widehat R}
\bigl(C\otimes_R\mathbf C\langle n\rangle,E_R(\mathbb Q)\bigr).
\]
Since $C\otimes_R\mathbf C\langle n\rangle
\cong\mathrm{Cone}(t^n:Q\to Q)$ taking homology groups of the distinguished triangle $Q \xrightarrow{t^n}Q \to C \otimes _R \mathbf C\langle n \rangle \to Q[1]$ gives a long exact sequence
\[
\cdots \longrightarrow H_{0}(Q) \xlongrightarrow{t^n} H_{0}(Q) \longrightarrow H_0(C \otimes _R \mathbf C\langle n \rangle) \longrightarrow H_{-1}(Q) \longrightarrow \cdots .
\]
By Lemma~\ref{lem:Q-free}, $t^n$ is injective on $H_i(Q)$.
Hence one has isomorphisms of $\widehat R$-modules
\[
H_i(C\otimes_R\mathbf C\langle n\rangle)
\cong H_0(Q)/t^nH_0(Q)\qquad(i\in\mathbb Z).
\]
Since $E_R(\mathbb Q)\cong E_{\widehat R}(\mathbb Q)$, one has the isomorphisms
\[
\mathrm{Ext}_R^1(C,G_n)\cong H^1\mathrm{Hom}_R(C,\mathbf I\langle n\rangle) \cong\mathrm{Hom}_{\widehat R}(H_0(Q)/t^nH_0(Q),E_R(\mathbb Q)).
\]
By Lemma~\ref{lem:Q}, choose a nonempty set $\Lambda$ such that $H_0(Q)\cong\coprod_{\lambda\in\Lambda}\mathbb Q[\![t]\!]$ is an isomorphism of $\mathbb Q[\![t]\!]$-modules.
Hence one has
\begin{align*}
    \mathrm{Ext}_R^1(C,G_n)
&\cong\mathrm{Hom}_{\mathbb Q[\![t]\!]}
\bigl(H_0(Q)/t^nH_0(Q),V\bigr)
\cong\prod_{\lambda\in\Lambda}
\mathrm{Hom}_{\mathbb Q[\![t]\!]}
\bigl(\mathbb Q[\![t]\!]/t^n\mathbb Q[\![t]\!],V\bigr)\\
&\cong \prod_{\lambda\in\Lambda}
t^{-n}\mathbb Q[\![t]\!]/\mathbb Q[\![t]\!]
\cong \prod_{\lambda\in\Lambda}
\mathbb Q[\![t]\!]/t^n\mathbb Q[\![t]\!].
\end{align*}
This product is annihilated by $t^n$ but not by $t^{n-1}$, proving the assertions.
\end{proof}

\section{Zariski locality and recollements}\label{sec:locality}
We prove that the covering property for Gorenstein injective quasi-coherent sheaves is local with respect to open covers of Noetherian schemes.

By Theorem~\ref{thm:equivalence}, it suffices to study coproduct closure of totally acyclic complexes of injectives.
Direct image along open immersions allows us to pass from a scheme to its open subschemes, while a Mayer--Vietoris argument gives the converse.

Subsection~\ref{c:sec:scheme} establishes the open-immersion adjunctions on homotopy categories of injectives, and Subsection~\ref{c:sec:acyclicity} studies their behaviour on totally acyclic complexes.
These results yield the proof of Theorem~C in Subsection~\ref{c:sec:proof}.
Finally, under the covering hypothesis, Subsection~\ref{c:sec:restriction-recollements} shows that restriction preserves total acyclicity and relates the resulting adjunctions to recollements.

\subsection{Open immersions and adjoints}\label{c:sec:scheme}

For a scheme $X$, write $\mathrm{QCoh}(X)$ for the category of quasi-coherent $\mathcal O_X$-modules, and $\Gamma(V,\mathcal F)$ for sections over $V$.
For $X=\mathrm{Spec}\,R$, the associated sheaf functor $M\mapsto\widetilde M$ and global sections give mutually inverse equivalences \cite[Corollaire~(1.3.8) and Theorem~(1.4.1)]{EGAI}:
\[
 \mathrm{Mod}_R\simeq\mathrm{QCoh}(\mathrm{Spec}\,R).
\]
On an affine chart, $\mathcal O_{X,\mathfrak p}\cong R_{\mathfrak p}$ and $\widetilde M_{\mathfrak p}\cong M_{\mathfrak p}$. 

For every Noetherian scheme $U$, put
$\mathcal A_U = \mathrm{QCoh}(U)$, which is a locally Noetherian Grothendieck category.
Put $\mathcal T_U = K(\mathrm{Inj}(\mathcal A_U))$ and the canonical functor $q_U : \mathcal T_U \to \mathcal D_U$.
Here $\mathcal D_U = D(\mathcal A_U)$ is the unbounded derived category.
The category $\mathcal T_U$ is compactly generated by \cite[Proposition~2.3]{Krause2005}.

For an open immersion $j:U\hookrightarrow X$, write $j^*$ for restriction and $j_*$ for direct image of sheaves.
The symbols $R^qj_*$ and $\mathbf Rj_*$ denote its right derived functors, and $\mathbf R\Gamma$ denotes derived global sections.

Let $u : U \hookrightarrow X$ be an open immersion of Noetherian schemes; in particular each open subscheme of $X$ is again Noetherian.
Since $u$ is quasi-compact, $u_\ast$ preserves quasi-coherence by \cite[Proposition~(6.7.1)]{EGAI}.
Then it induces the direct image functor:
\[
u_\ast : \mathcal A_U = \mathrm{QCoh}(U)\to \mathrm{QCoh}(X) = \mathcal A_X.
\]
Since restriction preserves quasi-coherence, the inverse image $u^\ast : \mathcal A_X \to \mathcal A_U$ is its well-defined left adjoint.
Since $U$ is Noetherian, taking sections on its open subsets
commutes with filtered colimits \cite[Chapter~III, Proposition~2.9]{Hartshorne1977}.
Thus $u_\ast$ preserves coproducts.

\begin{lemma}\label{c:open-adjoints}
Then adjunction $u^\ast \dashv u_\ast$ of quasicoherent sheaves degreewise induces the adjoint pair  between $\mathcal T_U$ and $\mathcal T_X$, which is again denoted by $u^\ast \dashv u_\ast$.
Moreover, $u_\ast : \mathcal T_U \to \mathcal T_X$ admits a right adjoint, which is denoted by $u^\times$.
The functors are
\[
\xymatrix@C=1.4cm{
U \ \ar@{^{(}->}[r]^u & X
}
\qquad
\xymatrix@C=2cm{
\mathcal A_U \ar[r]|{u_\ast}
& \mathcal A_X \ar@<-1.2ex>[l]_{u^\ast}
}
\qquad
\xymatrix@C=2cm{
\mathcal T_U \ar[r]|{u_\ast}
& \mathcal T_X
  \ar@<-1.2ex>[l]_{u^\ast}
  \ar@<1.2ex>[l]^{u^\times}
}
\]
\end{lemma}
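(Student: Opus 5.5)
The plan has three steps: check that both functors $u^\ast$ and $u_\ast$ preserve injective objects, pass the adjunction to homotopy categories degreewise, and obtain $u^\times$ from Brown representability.

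\emph{Step 1: $u^\ast$ and $u_\ast$ preserve injectives.} For $u_\ast$ this is formal. Its left adjoint $u^\ast$ is exact, since restriction to an open subset is exact on sheaves and preserves quasi-coherence, so $u_\ast$ sends injectives of $\mathcal A_U$ to injectives of $\mathcal A_X$.

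For $u^\ast$ I would use the structure theory of injectives on a Noetherian scheme. Since $\mathcal A_X$ is locally Noetherian, every injective is a coproduct of indecomposable injectives. Each indecomposable injective has the form $i_{x\ast}\widetilde{E(k(x))}$, the direct image of the injective hull of the residue field of $\mathcal O_{X,x}$ along $\mathrm{Spec}\,\mathcal O_{X,x}\to X$. Restricting this to $U$ gives the analogous sheaf if $x\in U$; otherwise the restriction vanishes, because the sheaf is supported on $\overline{\{x\}}$ and its sections over $U$ are obtained by localising $E(k(x))$ at elements outside $\mathfrak p_x$.

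A cleaner route is to check injectivity on an affine cover of $U$. On affines, $u^\ast$ becomes localisation $M\mapsto M_f$ (or a finite Čech-type combination of such). Localisation of injectives over a Noetherian ring is injective by \cite[Lemma~3.2.5]{BrunsHerzog}, and injectivity in $\mathrm{QCoh}$ of a Noetherian scheme is local on an affine cover (Hartshorne, Chapter~II, Exercise~3.6 / Chapter~III, \S3). Also, $u^\ast$ preserves coproducts, so the coproduct decomposition is respected either way.

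\emph{Step 2: the adjunction on homotopy categories.} Applied degreewise, the functors $u^\ast$ and $u_\ast$ are additive functors between categories of complexes of injectives. They preserve homotopies, so they descend to triangulated functors $\mathcal T_X\rightleftarrows\mathcal T_U$. The unit and counit of $u^\ast\dashv u_\ast$ on $\mathcal A$ extend degreewise to natural chain maps satisfying the triangle identities. Hence $\mathrm{Hom}_{\mathrm{Ch}}(u^\ast X,Y)\cong\mathrm{Hom}_{\mathrm{Ch}}(X,u_\ast Y)$, and this bijection is compatible with null-homotopies; equivalently, it is an isomorphism of Hom complexes $\mathrm{Hom}^\bullet(u^\ast X,Y)\cong\mathrm{Hom}^\bullet(X,u_\ast Y)$. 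Taking $H^0$ gives the adjunction on $\mathcal T$.

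\emph{Step 3: the right adjoint $u^\times$.} Coproducts in $\mathcal T_U$ and $\mathcal T_X$ are computed degreewise, since both categories are locally Noetherian. We have already seen that $u_\ast$ preserves coproducts in $\mathcal A_U$, hence $u_\ast:\mathcal T_U\to\mathcal T_X$ preserves coproducts. The category $\mathcal T_U$ is compactly generated by \cite[Proposition~2.3]{Krause2005}, so Brown representability (Neeman) gives a right adjoint $u^\times$ to the coproduct-preserving triangulated functor $u_\ast$.

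The main obstacle is Step 1 for $u^\ast$: verifying carefully that restriction preserves injectivity in $\mathrm{QCoh}$ rather than just in $\mathcal O_X$-modules. I would do this by the locality argument on an affine cover, invoking the affine statement for localisations.
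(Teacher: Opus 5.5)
Your proposal is correct and follows the paper's proof step for step. Both arguments use the same ingredients: $u_\ast$ preserves injectives because its left adjoint is exact; $u^\ast$ preserves injectives; the degreewise unit and counit (equivalently, the isomorphism of Hom complexes) give $u^\ast\dashv u_\ast$ on $\mathcal T$; and $u^\times$ comes from Brown representability, since $\mathcal T_U$ is compactly generated and $u_\ast$ preserves the degreewise coproducts.

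The only difference is that the paper simply cites \cite[Theorem~A.3]{EfimovPositselski} for $u^\ast$ preserving injectives, whereas you sketch a proof. Your first route uses the decomposition into indecomposables $J(x)$: the restriction of $J(x)$ to $U$ is the corresponding indecomposable if $x\in U$ and zero otherwise, because then $\overline{\{x\}}\cap U=\varnothing$. Your second route uses the affine-local criterion for injectivity, which is found in Hartshorne's \emph{Residues and Duality}, II.7, rather than in the textbook exercise you cite.
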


\begin{proof}
Since $u_\ast : \mathcal A_U \to \mathcal A_X$ admits an exact left adjoint, it preserves injective objects.
Hence $u_\ast : \mathcal T_U \to \mathcal T_X$ is well-defined.
By construction, $u_\ast$ preserves suspensions and conesm and thus is a triangle functor.
By \cite[Theorem~A.3]{EfimovPositselski}, $u^\ast$ also preserves injective objects.
Again the termwise application induces a triangle functor
$u^\ast:\mathcal T_X\to\mathcal T_U$.

The unit and counit of the adjunction $u^\ast\dashv u_\ast$ descend to the homotopy categories, where they still satisfy the triangle identities.
Hence $u^\ast\dashv u_\ast$ between $\mathcal T_X$ and $\mathcal T_U$.

Since $\mathcal A_U$ is a locally Noetherian Grothendieck category,
$\mathcal T_U$ is compactly generated by
\cite[Proposition~2.3]{Krause2005}.
Coproducts in $\mathcal T_U$ and $\mathcal T_X$ are computed degreewise.
Since $u_\ast:\mathcal A_U\to\mathcal A_X$ preserves coproducts,
so does the induced triangle functor
$u_\ast:\mathcal T_U\to\mathcal T_X$.
Thus $u_\ast : \mathcal T_U\to\mathcal T_X$ admits a right adjoint by Brown representability \cite[Proposition~3.3\textup{(1)}]{Krause2005}.
\end{proof}

\begin{remark}
It is not necessarily that $u_\ast : \mathcal A_U \to \mathcal A_X$ admits a right adjoint.
For instance, consider $X = \operatorname{spec} k[x,y]$ and $U = X \setminus \{(x,y)\}$.
    Hence we denote the right adjoint of $u_\ast : \mathcal T_U \to \mathcal T_X$ by $u^\times$ rather then $u^!$.
\end{remark}

\subsection{The restricted adjunction on totally acyclic complexes}\label{c:sec:acyclicity}

Denote the triangulated subcategory $\mathcal S_U=K_{\mathrm{tac}}(\mathrm{Inj}\,\mathcal A_U)$.
By Theorem~\ref{thm:equivalence}, $\mathcal{GI}(\mathcal A_U)$ is covering if and only if $\mathcal S_U$ is localising in $\mathcal T_U$.
We study whether the adjoint triple $u^\ast \dashv u_\ast \dashv u^\times$ decents to $\mathcal S$.

\begin{lemma}\label{c:pushforward-acyclicity}
The functor $u_\ast:\mathcal T_U\to\mathcal T_X$ preserves and reflects total acyclicity, i.e., for every $I\in\mathcal T_U$, $u_\ast I$ is totally acyclic if and only if $I$ is totally acyclic.
\end{lemma}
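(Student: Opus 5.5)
The plan is to get both directions from three facts: the adjunction $u^\ast\dashv u_\ast$ of Lemma~\ref{c:open-adjoints}, the counit isomorphism $u^\ast u_\ast\cong\mathrm{Id}_{\mathcal A_U}$ for an open immersion, and the fact that $u^\ast$ and $u_\ast$ both preserve injective objects. The only step that is not formal is showing that $u_\ast$ of an acyclic complex is again acyclic, since $u_\ast$ is merely left exact.

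\emph{Reflection.} Suppose $u_\ast I$ is totally acyclic. Since $u^\ast$ is exact, $u^\ast u_\ast I\cong I$ is acyclic. Let $E'$ be an injective object of $\mathcal A_U$. Then $u_\ast E'$ is injective in $\mathcal A_X$, and the counit isomorphism together with adjunction gives
\[
\mathrm{Hom}_{\mathcal A_U}(E',I)\cong\mathrm{Hom}_{\mathcal A_U}(u^\ast u_\ast E',I)\cong\mathrm{Hom}_{\mathcal A_X}(u_\ast E',u_\ast I)
\]
as complexes. The right-hand side is acyclic, so $I$ is totally acyclic.

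\emph{Preservation, first step: acyclicity of $u_\ast I$.} Let $I$ be totally acyclic, so each $Z^nI$ is Gorenstein injective. I will show that every Gorenstein injective $G$ on $U$ is $u_\ast$-acyclic, that is, $R^qu_\ast G=0$ for $q\ge1$. Then the short exact sequences $0\to Z^{n}I\to I^n\to Z^{n+1}I\to0$ stay exact after applying $u_\ast$, so $u_\ast I$ is acyclic with cycles $u_\ast Z^nI$.

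For the vanishing, I first note that $u_\ast$ has finite cohomological dimension $d$. The statement is local on $X$. For an affine open $V\subseteq X$, one has $R^qu_\ast\mathcal F|_V=\widetilde{H^q(U\cap V,\mathcal F)}$. Since $U\cap V$ is a Noetherian scheme covered by finitely many affines, Čech cohomology bounds these groups uniformly in $\mathcal F$, and a finite affine cover of $X$ then gives a global bound $d$.

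Next, $G=Z^0J$ for a complete injective resolution $J$, so there are exact sequences $0\to Z^{-k}J\to J^{-k}\to\cdots\to J^{-1}\to G\to0$ with injective middle terms. Injectives are $u_\ast$-acyclic, so dimension shifting gives $R^qu_\ast G\cong R^{q+k}u_\ast Z^{-k}J$. Choosing $k>d$ makes this zero. This is the step I expect to be the main obstacle: one must check that the Čech bound really is uniform, and that the dimension shift is legitimate for sheaf-valued derived functors.

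\emph{Preservation, second step: total acyclicity.} For every injective $E$ in $\mathcal A_X$, the object $u^\ast E$ is injective in $\mathcal A_U$ by \cite[Theorem~A.3]{EfimovPositselski}. Adjunction gives
\[
\mathrm{Hom}_{\mathcal A_X}(E,u_\ast I)\cong\mathrm{Hom}_{\mathcal A_U}(u^\ast E,I),
\]
which is acyclic. Combined with the first step, $u_\ast I$ is totally acyclic.
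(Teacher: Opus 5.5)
Your proof is correct and follows essentially the same route as the paper: reflection through $u^\ast u_\ast\cong\mathrm{Id}$ and injectivity of $u_\ast E'$, acyclicity of $u_\ast I$ through the uniform \v{C}ech bound on $U\cap V$ combined with dimension shifting through injective terms, and total acyclicity through injectivity of $u^\ast E$ and adjunction. The paper packages the middle step via sections, $H^n\Gamma(U\cap V,I)\cong H^q(U\cap V,Z^{n-q}I)$, and shifts along $I$ itself rather than along a separate complete resolution of each cycle, so it obtains acyclicity of $u_\ast I$ for every acyclic $I\in\mathcal T_U$ (reused in the proof of Lemma~\ref{c:right-adjoint-acyclicity}); your argument gives the same if you take $J$ to be $I$ and drop the appeal to Gorenstein injectivity.
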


\begin{proof}
Let $I$ be an acyclic complex of injective objects on $U$.
To see that $u_\ast I$ is acyclic, it suffices to show for each affine open subscheme $V \subseteq X$ that $(u_\ast I)|_V$ is acyclic.
Write $V\cong\mathrm{Spec}(R)$. Then $\Gamma(V,-):\mathrm{QCoh}(V)\xrightarrow{\sim}\mathrm{Mod}_R$ is an equivalence by \cite[Chapter~II, Corollary~5.5]{Hartshorne1977}.
Thus
\begin{align}
\mathcal H^n\bigl((u_\ast I)|_V\bigr)=0 & \iff \Gamma\!\left(V,
\mathcal H^n\bigl((u_\ast I)|_V\bigr)\right)=0 
\iff H^n\!\left(\Gamma\bigl(V,(u_\ast I)|_V\bigr)\right)=0\notag\\
&\iff H^n\bigl(\Gamma(U\cap V,I)\bigr)=0
\iff H^1(U\cap V,Z^{n-1}I)=0 \notag\\
&\iff \cdots 
\iff H^q(U\cap V,Z^{n-q}I)=0 \quad (\exists q \geq 1).\label{eq:C-local-acyclicity}
\end{align}
Now fix an affine open $V=\mathrm{Spec}\,R\subseteq X$.
If $U \cap V=\varnothing$, there is nothing to prove on $V$.
Otherwise, $U \cap V$ is quasi-compact, so choose $f_1,\ldots,f_m\in R$ such that
\[
D_V(f_i)\subseteq U \cap V,\qquad W=\bigcup_{i=1}^m D_V(f_i).
\]
For $1\leq i_0<\cdots<i_p\leq m$, write
\[
(U \cap V)_{i_0\cdots i_p}
=D_V(f_{i_0})\cap\cdots\cap D_V(f_{i_p})
=D_V(f_{i_0}\cdots f_{i_p}).
\]
For $\mathcal F\in\mathrm{QCoh}(U \cap V)$, the \v Cech complex has terms
\[
C^p(\mathcal F)=
\prod_{1\leq i_0<\cdots<i_p\leq m}
\Gamma((U \cap V)_{i_0\cdots i_p},\mathcal F)
\qquad(0\leq p\leq m-1),
\]
and zero terms otherwise. Its differential is
\[
(\delta^p s)_{i_0\cdots i_{p+1}}
=\sum_{a=0}^{p+1}(-1)^a
s_{i_0\cdots\widehat{i_a}\cdots i_{p+1}}
\big|_{(U \cap V)_{i_0\cdots i_{p+1}}}.
\]
Thus it is the finite complex
\begin{multline*}
0\longrightarrow\prod_{i=1}^m\Gamma(D_V(f_i),\mathcal F)
\longrightarrow\prod_{1\leq i<j\leq m}\Gamma(D_V(f_if_j),\mathcal F) \longrightarrow \cdots \\
\cdots \longrightarrow \prod_{i=1}^{m}
\Gamma\!\bigg(D_V\!\big(\prod_{j\ne i}f_j\big),\mathcal F\bigg)\longrightarrow
\Gamma(D_V(f_1\cdots f_m),\mathcal F)\longrightarrow0,
\end{multline*}
with the first nonzero term in degree zero.
All intersections $W_{i_0\cdots i_p}$ are affine, and $W$ is separated as an open subscheme of the affine scheme $V$.
By \cite[Chapter~III, Theorem~4.5]{Hartshorne1977}, this complex computes sheaf cohomology.
Hence $H^q(U \cap V, \mathcal F)=0$ for each $q \geq m$.
Put $\mathcal F = (Z^{n-m} I)|_{U \cap V}$.
Then $H^m(U \cap V, Z^{n-m}I)=0$, satisfying \eqref{eq:C-local-acyclicity}.

To see that $u_\ast I$ is totally acyclic, for any injective object $E\in\mathcal A_X$, the object $u^\ast E$ is injective in $\mathcal A_U$, and adjunction gives an isomorphism of complexes
\[
\mathrm{Hom}_{\mathcal A_X}(E,u_\ast I)
\cong \mathrm{Hom}_{\mathcal A_U}(u^\ast E,I).
\]
The latter is acyclic since $I$ is totally acyclic.
Thus $u_\ast I$ is totally acyclic.

Conversely, suppose that $u_\ast I$ is totally acyclic.
Since $u^\ast$ is exact and $u^\ast u_\ast I\cong I$, the complex $I$ is acyclic.
For every injective object $E\in\mathcal A_U$, $u_\ast E$ is injective in $\mathcal A_X$, since $u_\ast$ admits an exact left adjoint.
Full faithfulness of $u_\ast$ gives an isomorphism of complexes
\[
\mathrm{Hom}_{\mathcal A_U}(E,I)
\cong\mathrm{Hom}_{\mathcal A_X}(u_\ast E,u_\ast I).
\]
Since $\mathrm{Hom}_{\mathcal A_X}(u_\ast E,u_\ast I)$ is acyclic, $I$ is also totally acyclic.
\end{proof}

\begin{lemma}\label{c:right-adjoint-acyclicity}
The functor $u^\times : \mathcal T_X \to \mathcal T_U$ preserves total acyclicity.
\end{lemma}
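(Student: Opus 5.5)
We have to show that for $J\in\mathcal T_X$ totally acyclic, the complex $u^\times J\in\mathcal T_U$ is acyclic, and that $\mathrm{Hom}_{\mathcal A_U}(E,u^\times J)$ is acyclic for every injective $E\in\mathcal A_U$. The plan is to translate both conditions into vanishing of morphism spaces in $\mathcal T_U$ from specific objects into shifts of $u^\times J$. Adjunction then moves these to morphism spaces in $\mathcal T_X$ into shifts of $J$. Finally, Lemma~\ref{c:pushforward-acyclicity} and the exactness properties of $u_\ast$ should show that the source objects have the right orthogonality against $J$.

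\textbf{Step 1 (acyclicity).} By \cite[Lemma~2.1]{Krause2005}, for a Noetherian object $N\in\mathcal A_U$ with injective resolution $iN$ we have
\[
\mathrm{Hom}_{\mathcal T_U}(iN,Y[n])\cong \mathrm{Hom}_{K(\mathcal A_U)}(N,Y[n]).
\]
Since the Noetherian objects generate $\mathcal A_U$, a complex $Y$ of injectives is acyclic if and only if $\mathrm{Hom}_{\mathcal T_U}(iN,Y[n])=0$ for all such $N$ and all $n$. By adjunction,
\[
\mathrm{Hom}_{\mathcal T_U}(iN,u^\times J[n])\cong\mathrm{Hom}_{\mathcal T_X}(u_\ast iN,J[n]).
\]
Thus it suffices to show that $\mathrm{Hom}_{\mathcal T_X}(u_\ast iN,J[n])=0$. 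Here $u_\ast iN$ is a bounded-below complex of injectives on $X$ computing $\mathbf Ru_\ast N$, with cohomology $R^qu_\ast N$. The key claim is that any left-bounded complex $L$ of injectives satisfies $\mathrm{Hom}_{\mathcal T_X}(L,J)=0$ whenever $J$ is totally acyclic and $L$ has bounded cohomology.

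The cohomology of $u_\ast iN$ is bounded, as the Čech argument in Lemma~\ref{c:pushforward-acyclicity} shows, since $H^q$ vanishes for $q\ge m$ locally on a finite affine cover of $X$. We can then truncate: in $\mathcal T_X$, the complex $L$ is an extension of finitely many shifted injective resolutions $iM$ of objects $M$. So it suffices to show $\mathrm{Hom}_{\mathcal T_X}(iM,J[n])=0$ for every object $M$, and not merely for Noetherian $M$. For this I would use the Gorenstein injectivity of the cycles $Z^kJ$: by \eqref{eq:gi-ext-vanishing} and \cite[Lemma~7.1]{Krause2005}-type arguments, this Hom is a stable Hom group into $Z^kJ$. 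That is not obviously zero, since $M$ is arbitrary, and this is where the argument must be redirected.

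\textbf{Step 1, revised.} A better route is to compute directly. Since $u^\times$ is right adjoint to $u_\ast$, and $u_\ast$ preserves coproducts and compacts only under extra hypotheses, I would instead identify $u^\times$ concretely. I expect $u^\times J\cong u^\ast J$ up to homotopy, or at least that $u^\ast J\to u^\times J$ has a cone that is left-orthogonal-free. The test is the counit $u_\ast u^\times J\to J$ compared with the unit $J\to u_\ast u^\ast J$.

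\textbf{Step 2 (Hom from injectives).} For an injective $E\in\mathcal A_U$, acyclicity of $\mathrm{Hom}_{\mathcal A_U}(E,u^\times J)$ means that $\mathrm{Hom}_{\mathcal T_U}(E,u^\times J[n])=0$ for all $n$, viewing $E$ as a stalk complex of injectives. By adjunction this equals $\mathrm{Hom}_{\mathcal T_X}(u_\ast E,J[n])$. Since $u_\ast E$ is an injective object of $\mathcal A_X$ (as noted in the proof of Lemma~\ref{c:open-adjoints}), this vanishes by the total acyclicity of $J$. This step is therefore straightforward.

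\textbf{Main obstacle.} The main obstacle is Step 1, namely proving that $u^\times J$ is acyclic, and equivalently that $\mathrm{Hom}_{\mathcal T_X}(u_\ast iN,J[n])=0$. My first attempt would be to note that $u_\ast iN$ is a bounded-below complex of injectives with bounded cohomology. Then, by splicing its truncations, I would try to write it as a finite extension of injective resolutions of objects of finite injective dimension relative to $J$. The key tool for this is $\mathrm{Ext}^{\ge1}(E,Z^kJ)=0$ together with dimension shifting along the finitely many nonzero terms of the Čech complex in Lemma~\ref{c:pushforward-acyclicity}.
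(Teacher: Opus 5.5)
Your Step~2 is correct and matches the paper's argument for the Hom-from-injectives half. Step~1, however, has a genuine gap: your reduction there aims at a false statement.

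\textbf{Where Step~1 fails.} By \cite[Proposition~2.3]{Krause2005}, the objects $iN$ with $N$ Noetherian form a set of compact generators of $\mathcal T_U$. So the vanishing of $\mathrm{Hom}_{\mathcal T_U}(iN,Y[n])$ for all $N$ and $n$ characterises $Y\cong 0$ in $\mathcal T_U$, that is, contractibility, not acyclicity. For an acyclic complex of injectives one only has
\[
\mathrm{Hom}_{\mathcal T_U}(iN,Y[n])\cong H^n\mathrm{Hom}_{\mathcal A_U}(N,Y)\cong \mathrm{Ext}^1_{\mathcal A_U}(N,Z^{n-1}Y),
\]
which need not vanish. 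Consequently:
\begin{enumerate}
\item The target $\mathrm{Hom}_{\mathcal T_X}(u_\ast iN,J[n])=0$ already fails for $U=X$, where it would force every $J$ to be contractible.
\item Your ``key claim'' is false. For $L=i(Z^1J)$, the class of $0\to Z^0J\to J^0\to Z^1J\to 0$ is a nonzero element of $\mathrm{Hom}_{\mathcal T_X}(L,J[1])\cong \mathrm{Ext}^1_{\mathcal A_X}(Z^1J,Z^0J)$ as soon as $Z^1J$ is not injective.
\item Your last paragraph still pursues this target. The tool you name there, $\mathrm{Ext}^{\geq 1}(E,Z^kJ)=0$ for injective $E$, is beside the point: the paper proves that $u^\times$ preserves acyclicity of \emph{every} acyclic complex of injectives, so Gorenstein injectivity of the cycles plays no role in this half.
\item The revised guess $u^\times J\simeq u^\ast J$ is unsupported and cannot hold in general. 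The example following Remark~\ref{rmk:uleft} gives a totally acyclic $J$ with $u^\ast J$ not totally acyclic.
\end{enumerate}

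\textbf{The missing idea.} Test acyclicity against a flat object of finite projective dimension, where the Hom group genuinely computes cohomology and vanishes for dimension reasons. The paper first treats $X=\mathrm{Spec}\,R$, writing $U=\bigcup_{i=1}^m D(f_i)$. It computes
\[
H^n\Gamma(U,u^\times J)\cong \mathrm{Hom}_{\mathcal T_U}(\iota\mathcal O_U,u^\times J[n])\cong \mathrm{Hom}_{\mathcal T_X}(u_\ast\iota\mathcal O_U,J[n])\cong H^n\mathrm{Hom}_R(C^\bullet,J),
\]
since $u_\ast\iota\mathcal O_U$ is an injective resolution of the finite \v{C}ech complex $C^\bullet$ of localisations $R_{f_{i_0}\cdots f_{i_p}}$. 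The steps are then:
\begin{enumerate}
\item Each $R_f$ has $\mathrm{pd}_R R_f\le 1$, so $H^n\mathrm{Hom}_R(R_f,J)\cong \mathrm{Ext}^2_R(R_f,Z^{n-2}J)=0$ for any acyclic $J$ with injective terms.
\item Induction on the length of $C^\bullet$ shows that $\Gamma(U,u^\times J)=\Gamma(X,u_\ast u^\times J)$ is acyclic.
\item As $X$ is affine, $u_\ast u^\times J$ is acyclic, hence so is $u^\times J\cong u^\ast u_\ast u^\times J$.
\end{enumerate}
For general $X$, adjoin affine opens $V$ one at a time. Apply $u^\times$ to the degreewise split Mayer--Vietoris triangle $J\to u_\ast u^\ast J\oplus v_\ast v^\ast J\to z_\ast z^\ast J\to J[1]$. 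Then use $u^\times u_\ast\cong 1$ and the base change $u^\times v_\ast\cong u'_\ast v'^\times$ to reduce to the affine case and to the fact that $u'_\ast$ preserves acyclicity.

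Note that $\mathcal O_U$ is itself one of your Noetherian test objects. The error was to demand vanishing against all of them, instead of using only this one (on affine pieces) together with the finite-projective-dimension vanishing.
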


\begin{proof}
We show that $u^\times I$ is acyclic whenever $I\in\mathcal T_X$ is acyclic.
We first treat affine $X$, then the case $X=U\cup V$ with $V$ affine.
The general case follows by adjoining the members of a finite affine cover one at a time.

Suppose first that $X=\mathrm{Spec}(R)$.
Choose a finite principal cover $U=\bigcup_{i=1}^mD(f_i)$ as in the proof of Lemma~\ref{c:pushforward-acyclicity}.
Let $\mathcal O _U$ be the structral sheaf.
Fix a bounded-below injective resolution $\mathcal O_U\to\iota\mathcal O_U$.
For every $J\in\mathcal T_U$, one has
\begin{align*}
    H^n\Gamma(U,u^\times I) & \cong H^n \mathrm{Hom}_{\mathcal A_U}(\mathcal O_U, u^\times I) \cong \mathrm{Hom}_{K(\mathcal A_U)}(\mathcal O_U, u^\times I[n])\\
    & \cong \mathrm{Hom}_{\mathcal T_U} (\iota \mathcal O_U, u^\times I[n]) \cong \mathrm{Hom}_{\mathcal T_X} (u_\ast \iota \mathcal O_U, I[n]) .
\end{align*}
Under the equivalence $\Gamma(X,-) : \mathrm{QCoh}(X)\simeq\mathrm{Mod}_R$, the bounded-below complex $u_\ast\iota\mathcal O_U$ is an injective resolution of the finite \v Cech complex (with first term in degree zero)
\[
 C^\bullet=
 \left[
 \prod_iR_{f_i}\longrightarrow
 \prod_{i<j}R_{f_if_j}\longrightarrow\cdots\longrightarrow
 R_{f_1\cdots f_m}
 \right].
\]
Hence $\mathrm{Hom}_{\mathcal T_X}(u_\ast\iota\mathcal O_U,I[n])
 \cong H^n\mathrm{Hom}_R(C^\bullet,I)$.
For each $f\in R$, $R_f$ has free resolution
\[
 0\longrightarrow\coprod_{k\geq0}R
 \xrightarrow{\ e_k\mapsto e_k-fe_{k+1}\ }
 \coprod_{k\geq0}R\longrightarrow R_f\longrightarrow0.
\]
Hence $\mathrm{pd}_R R_f\leq1$.
It follows from dimension shift that
\[
 H^n\mathrm{Hom}_R(R_f,I)
 \cong\mathrm{Ext}_R^1(R_f,Z^{n-1}I)
 \cong\mathrm{Ext}_R^2(R_f,Z^{n-2}I)=0.
\]
Since $C^\bullet$ is bounded, $\mathrm{Hom}_R(C^\bullet,I)$ is acyclic by induction on length.
The previous isomorphisms shows that $\Gamma(U,u^\times I)$ is acyclic.
Since $U \subseteq X$ is open, $\Gamma(U,u^\times I)=\Gamma(X,u_\ast u^\times I)$.
Note that and $\Gamma(X,-)$ is an equivalence, $u_\ast u^\times I$ is acyclic.
Since $u^\ast$ is exact and $u_\ast$ is fully faithful, $u^\times I \cong u^\ast u_\ast u^\times I$ is acyclic.

Suppose next that $X=U\cup V$ with $V$ affine, and denote the inclusions by
\[
\xymatrix@C=2cm@R=.0cm{
& \ U\  \ar[dr]^-u & \\
U\cap V \
\ar[ur]^-{u'}
\ar[rr]^-z
\ar[dr]_-{v'}
&& \ U\cup V\\
& \ V \  \ar[ur]_-v &
}
\]
For an acyclic $I\in\mathcal T_X$, the Mayer--Vietoris sequence $ 0\to I\to u_\ast u^\ast I\oplus v_\ast v^\ast I \to z_\ast z^\ast I\to 0$ splits in each degree, since its terms are injective.
It gives a distinguished triangle
\[
 I\longrightarrow u_\ast u^\ast I\oplus v_\ast v^\ast I
 \longrightarrow z_\ast z^\ast I\longrightarrow I[1]
\]
in $\mathcal T_X$.
Since $u_\ast$ is fully faithful, $u^\times u_\ast\cong1$.
Open base change gives $v^\ast u_\ast\cong v'_\ast u'^\ast$.
Taking the right adjoints gives $u^\times v_\ast\cong u'_\ast v'^\times$.
Together with $z_\ast=u_\ast u'_\ast$, these yield
\[
 u^\times u_\ast u^\ast I\cong u^\ast I,\qquad
 u^\times v_\ast v^\ast I\cong u'_\ast v'^\times v^\ast I,\qquad
 u^\times z_\ast z^\ast I\cong u'_\ast z^\ast I.
\]
All three complexes are acyclic: 
restrictions $u^\ast$, $v^\ast$ and $z^\ast$ are exact; 
$v'^\times$ preserves acyclicity by the affine case;
$u'_\ast$ preserves acyclicity by the proof of Lemma~\ref{c:pushforward-acyclicity}.
Applying $u^\times$ to the displayed triangle therefore shows that $u^\times I$ is acyclic.

To see that $u^\times I$ is totally acyclic when $I$ is totally acyclic, regard an injective object $E\in\mathcal A_U$ as a complex concentrated in degree zero.
For every $n\in\mathbb Z$,
\[
 \mathrm{Hom}_{\mathcal T_U}(E,u^\times I[n])
 \cong\mathrm{Hom}_{\mathcal T_X}(u_\ast E,I[n])=0,
\]
since $u_\ast E$ is injective.
These are the cohomology groups of $\mathrm{Hom}_{\mathcal A_U}(E,u^\times I)$.
Together with the acyclicity already proved, this gives total acyclicity.
\end{proof}

\begin{proposition}\label{c:tac-adjunction}
The adjoint triple $u^\ast \dashv u_\ast \dashv u^\times$ induces the adjoint pair $\widetilde{u_\ast}\dashv \widetilde{u^\times}$
\[
\xymatrix@C=3.5cm@R=.7cm{
\mathcal T_U
\ar[r]|{u_\ast}
& \mathcal T_X
\ar@<-1.2ex>[l]_{u^\ast}
\ar@<1.2ex>[l]^{u^\times}
\\
\mathcal S_U
\ar@{ >->}[u]^{i_U{}_{\ast}}
\ar@{.>}[r]|{\widetilde{u_\ast}}
& \mathcal S_X
\ar@{ >->}[u]_{i_X{}_{\ast}}
\ar@{.>}@<1.2ex>[l]^{\widetilde{u^\times}}
}
\]
with $u_\ast i_U{}_\ast = i_X{}_\ast \widetilde{u_\ast}$ and $u^\times i_X{}_\ast = i_U{}_\ast \widetilde{u^\times}$.
\end{proposition}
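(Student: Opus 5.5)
We will obtain the statement as a formal consequence of Lemmas~\ref{c:pushforward-acyclicity} and~\ref{c:right-adjoint-acyclicity}, using only that $\mathcal S_U\subseteq\mathcal T_U$ and $\mathcal S_X\subseteq\mathcal T_X$ are full triangulated subcategories. The plan is to first define the two restricted functors and then show that the adjunction restricts.

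First, by Lemma~\ref{c:pushforward-acyclicity}, for every $I\in\mathcal S_U$ the complex $u_\ast I$ is totally acyclic. Hence $u_\ast$ maps $\mathcal S_U$ into $\mathcal S_X$, and we define $\widetilde{u_\ast}$ as this restriction. The identity $u_\ast i_U{}_\ast=i_X{}_\ast\widetilde{u_\ast}$ then holds on the nose. Likewise, Lemma~\ref{c:right-adjoint-acyclicity} shows that $u^\times$ maps $\mathcal S_X$ into $\mathcal S_U$. We define $\widetilde{u^\times}$ as its restriction, so that $u^\times i_X{}_\ast=i_U{}_\ast\widetilde{u^\times}$.

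A word of care is needed here, because $u^\times$ is only defined up to natural isomorphism via Brown representability. Its values are therefore only isomorphic to totally acyclic complexes. Since $\mathcal S_U$ is closed under isomorphisms in $\mathcal T_U$, this causes no problem. Alternatively, we can choose the representative of $u^\times$ so that it lands strictly in $\mathcal S_U$, and then the displayed equality holds strictly. Both restricted functors are triangle functors, since the inclusions are triangulated and full.

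Second, for $I\in\mathcal S_U$ and $J\in\mathcal S_X$, full faithfulness of $i_X{}_\ast$ and $i_U{}_\ast$ gives a chain of isomorphisms
\[
\mathrm{Hom}_{\mathcal S_X}(\widetilde{u_\ast}I,J)\cong\mathrm{Hom}_{\mathcal T_X}(u_\ast I,J)\cong\mathrm{Hom}_{\mathcal T_U}(I,u^\times J)\cong\mathrm{Hom}_{\mathcal S_U}(I,\widetilde{u^\times}J).
\]
Each isomorphism is natural in $I$ and $J$. Equivalently, the unit $I\to u^\times u_\ast I$ and the counit $u_\ast u^\times J\to J$ of $u_\ast\dashv u^\times$ are morphisms between objects of the subcategories, so they restrict. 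The triangle identities are inherited from $\mathcal T$. This gives $\widetilde{u_\ast}\dashv\widetilde{u^\times}$.

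The real content, and hence the main obstacle, is already contained in Lemma~\ref{c:right-adjoint-acyclicity}, namely that $u^\times$ preserves total acyclicity. Given that lemma, the only point left to check is the well-definedness issue for $u^\times$ noted above, and the argument given there settles it.
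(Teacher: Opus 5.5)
Your proposal is correct and follows the paper's proof: you restrict $u_\ast$ and $u^\times$ to the totally acyclic subcategories via Lemmas~\ref{c:pushforward-acyclicity} and~\ref{c:right-adjoint-acyclicity}, then transport the adjunction isomorphism through the fully faithful inclusions. Your extra remark about $u^\times$ being defined only up to isomorphism is harmless, since total acyclicity is invariant under homotopy equivalence and so $\mathcal S_U$ is closed under isomorphisms in $\mathcal T_U$.
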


\begin{proof}
By Lemmas~\ref{c:pushforward-acyclicity} and~\ref{c:right-adjoint-acyclicity}, $u_\ast$ and $u^\times$ restrict to acyclic complexes.
For arbitrary $A\in\mathcal S_U$ and $B\in\mathcal S_X$ the adjunction gives natural isomorphisms
\[
\mathrm{Hom}_{\mathcal S_X}(\widetilde{u_\ast}A,B)
\cong\mathrm{Hom}_{\mathcal T_X}(u_\ast A,B)
\cong\mathrm{Hom}_{\mathcal T_U}(A,u^\times B)
\cong\mathrm{Hom}_{\mathcal S_U}(A,\widetilde{u^\times}B).
\]
Thus $\widetilde{u_\ast}\dashv\widetilde{u^\times}$, and compatibility with the inclusions holds by definition.
\end{proof}

\begin{remark}\label{rmk:uleft}
By \cite[Lemma~7.3]{Krause2005}, the inclusion
$i_U{}_{\ast}:\mathcal S_U\to\mathcal T_U$
admits a left adjoint $i_U{}^\ast$.
Hence $\widetilde{u_\ast}$ has a left adjoint $i_U{}^\ast u^\ast i_X{}_\ast$.
This adjoint coincides with the restriction of $u^\ast$ if and only if $u^\ast$ preserves totally acyclic complexes; this does not hold in general.
\end{remark}

\begin{example}
This example shows that $u^\ast$ need not preserve totally acyclic complexes.
Retain Example~\ref{thm:main}, take $X = \mathrm{Spec}(R) = \{\mathfrak m_R, \mathfrak n\}$ and $U = D(t) = \{\mathfrak n\} \cong \mathrm{Spec}(R[t^{-1}])$.
Then $u_\ast : \mathrm{Mod}_{R[t^{-1}]} \to \mathrm{Mod}_R$ is the forgetful functor with left adjoint
\[
u^\ast : \mathrm{Mod}_R \longrightarrow \mathrm{Mod}_{R[t^{-1}]}, \qquad M \longmapsto M[t^{-1}] = R[t^{-1}] \otimes _R M.
\]
Then take $\prod_{n \geq 1} \mathbf I\langle n \rangle \in \mathcal S_X$, where $\prod_{n \geq 1} G_n = \prod_{n \geq 1} Z^0 (\mathbf I\langle n \rangle) \cong  Z^0 (\prod_{n \geq 1} \mathbf I\langle n \rangle) \in \mathcal {GI}(R)$.

Note that $u^\ast$ is the localisation with respect to the multiplication system $\{1,t,t^2, \ldots\}$.
Since $R$ is Noetherian and $C$ is finitely generated, one has caninonical isomorphisms
\[
\mathrm{Ext}^1_{u^\ast R} \left(u^\ast C, u^\ast\prod_{n \geq 1} G_n \right) \cong 
u^\ast\mathrm{Ext}^1_{R} \left( C, \prod_{n \geq 1} G_n \right) \cong u^\ast \left(\prod_{n\geq1}\mathrm{Ext}_R^1(C,G_n)\right)
\]
By Lemma~\ref{lem:Tn}, $t^{n} \mathrm{Ext}_R^1(C,G_n) =0$ whereas $t^{n-1} \mathrm{Ext}_R^1(C,G_n) \neq 0$. 
Hence the above is non-zero.
In particular, since $u^\ast C$ is injective, $u^\ast\prod_{n \geq 1} G_n$ is not Gorenstein injective.
It follows that $u^\ast\left(\prod_{n\geq1}\mathbf I\langle n\rangle\right)$ is not totally acyclic.

Therefore $u^\ast$ need not preserve totoally acyclic complexes.
\end{example}

\subsection{Proof of \texorpdfstring{Theorem~\ref{thm:locality}}{Theorem C}}\label{c:sec:proof}
Let $\mathcal A_U$ denote $\mathrm{QCoh}(U)$ for each Noetherian schemes. 
Let $X$ be a Noetherian scheme.
Theorem~\ref{thm:locality} states that the following conditions are equivalent.
\begin{enumerate}
\item For every open subscheme $U\subseteq X$, $\mathcal{GI}(\mathcal A_U)$ forms a covering class.
\item $\mathcal{GI}(\mathcal A_X)$ forms a covering class.
\item There is an open cover $X=\bigcup_{i\in I}U_i$ such that $\mathcal{GI}(\mathcal A_{U_i})$ forms a covering class for every $i\in I$.
\item There is an affine open cover $X=\bigcup_{i\in I}\mathrm{Spec}\,R_i$ such that $\mathcal{GI}(R_i)$ forms a covering class for every $i\in I$.
\item For every open cover $X=\bigcup_{i\in I}U_i$, $\mathcal{GI}(\mathcal A_{U_i})$ forms a covering class for every $i\in I$.
\item For every affine open cover $X=\bigcup_{i\in I}\mathrm{Spec}\,R_i$, $\mathcal{GI}(R_i)$ forms a covering class for every $i\in I$.
\end{enumerate}

\begin{proof}[Proof of Theorem~\ref{thm:locality}]
The implications $\textup{(1)}\Longrightarrow\textup{(5)}\Longrightarrow\textup{(6)}\Longrightarrow\textup{(4)}\Longrightarrow\textup{(3)}$ are immediate.

\noindent\textit{$(2)\Longrightarrow(1)$.}
Suppose $\mathcal{GI}(\mathcal A_X)$ is covering and let $u:U\hookrightarrow X$ be open.
By Theorem~\ref{thm:equivalence}, $\mathcal S_X$ is closed under coproducts. 
Let $(I_\alpha)_{\alpha\in\Lambda}$ be a family in $\mathcal S_U$.
By Lemma~\ref{c:pushforward-acyclicity}, each $u_\ast I_\alpha$ is totally acyclic.
By Lemma~\ref{c:open-adjoints}, $u_\ast$ preserves coproducts, 
thus $u_\ast\!\left(\coprod_{\alpha\in\Lambda} I_\alpha\right) \cong\coprod_{\alpha\in\Lambda} u_\ast I_\alpha$.
Again by Lemma~\ref{c:pushforward-acyclicity}, $\coprod_{\alpha\in\Lambda} I_\alpha\in \mathcal S_U$.
Hence $\mathcal{GI}(\mathcal A_U)$ is covering by Theorem~\ref{thm:equivalence}.

\noindent\textit{$(3)\Longrightarrow(2)$.}
Quasi-compactness reduces the given cover to a finite subcover, so it suffices to treat $X=U\cup V$.
Write $u:U\hookrightarrow X$, $v:V\hookrightarrow X$, $u':U\cap V\hookrightarrow U$, $v':U\cap V\hookrightarrow V$ and $z:U\cap V\hookrightarrow X$ for the inclusions.
By $\textup{(2)}\Longrightarrow\textup{(1)}$, the covering property also holds on $U\cap V$.
Denote the counit of $w_\ast\dashv w^\times$ by $\varepsilon^w$.

For every $I\in\mathcal T_X$, one has a distinguished triangle
\begin{equation}
z_\ast z^\times I
\xlongrightarrow{\begin{psmallmatrix}
(u_\ast\varepsilon^{u'}u^\times)_I\\
-(v_\ast\varepsilon^{v'}v^\times)_I
\end{psmallmatrix}} \label{eq:C-cone-triangle}
u_\ast u^\times I\oplus v_\ast v^\times I
\xlongrightarrow{\iota}
\mathrm{Cone}\begin{psmallmatrix}
(u_\ast\varepsilon^{u'}u^\times)_I\\
-(v_\ast\varepsilon^{v'}v^\times)_I
\end{psmallmatrix}
\longrightarrow (z_\ast z^\times I)[1].
\end{equation}
Then we claim that $\mathrm{Cone}\begin{psmallmatrix}(u_\ast\varepsilon^{u'}u^\times)_I\\-(v_\ast\varepsilon^{v'}v^\times)_I\end{psmallmatrix}
 \cong I$.

Apply $u^\times$ to $\begin{psmallmatrix}
(u_\ast\varepsilon^{u'}u^\times)_I\\
-(v_\ast\varepsilon^{v'}v^\times)_I
\end{psmallmatrix}: z_\ast z^\times I
\to
u_\ast u^\times I\oplus v_\ast v^\times I$, one has the natural isomorphisms $u^\times z_\ast z^\times \cong u^\times u_\ast u'{}_\ast z^\times \cong u'{}_\ast z^\times $, $u^\times u_\ast u^\times = u^\times$, $u^\times v_\ast v^\times \cong u'_\ast v'^\times v^\times \cong u'_\ast z^\times$, $u^\times (u_\ast\varepsilon^{u'}u^\times) \cong \varepsilon ^{u'} u^\times$, and $u^\times(v_\ast\varepsilon^{v'}v^\times)\cong u'{}_\ast(v'^\times\varepsilon^{v'})v^\times$.
Since $v'{}_\ast$ is fully faithful, the unit of $v'{}_\ast \dashv v'^\times$ is invertible and thus $v'^\times \varepsilon ^{v'}$ is a natural isomorphism by triangle identity.
Applying $u^\times$ to \eqref{eq:C-cone-triangle} above gives the distinguished triangle
\[
u'_\ast z^\times I
\xlongrightarrow{\begin{psmallmatrix}(\varepsilon^{u'}u^\times)_I\\1\end{psmallmatrix}}
u^\times I\oplus u'_\ast z^\times I
\xlongrightarrow{\begin{psmallmatrix}1&-(\varepsilon^{u'}u^\times)_I\end{psmallmatrix}}
u^\times I\xlongrightarrow{0}(u'_\ast z^\times I)[1]
\]
with an isomorphism $\varphi : u^\times I \cong u^\times \mathrm{Cone}\begin{psmallmatrix}
(u_\ast\varepsilon^{u'}u^\times)_I\\
-(v_\ast\varepsilon^{v'}v^\times)_I
\end{psmallmatrix}$ 

Since $(\varepsilon^u)_I\circ(u_\ast\varepsilon^{u'}u^\times)_I =(\varepsilon^z)_I = (\varepsilon^v)_I\circ(v_\ast\varepsilon^{v'}v^\times)_I$,
one has
\[
\begin{psmallmatrix}(\varepsilon^u)_I&(\varepsilon^v)_I\end{psmallmatrix}
\circ
\begin{psmallmatrix}
(u_\ast\varepsilon^{u'}u^\times)_I\\
-(v_\ast\varepsilon^{v'}v^\times)_I
\end{psmallmatrix}=0, \qquad \begin{psmallmatrix}(\varepsilon^u)_I&(\varepsilon^v)_I\end{psmallmatrix} : u_\ast u^\times I\oplus v_\ast v^\times I \longrightarrow I.
\]
Hence $\begin{psmallmatrix}(\varepsilon^u)_I&(\varepsilon^v)_I\end{psmallmatrix}$ factors through $\iota$ via
\[
h:\mathrm{Cone}\begin{psmallmatrix}
(u_\ast\varepsilon^{u'}u^\times)_I\\
-(v_\ast\varepsilon^{v'}v^\times)_I
\end{psmallmatrix}\longrightarrow I,
\qquad
h\circ\iota=
\begin{psmallmatrix}(\varepsilon^u)_I&(\varepsilon^v)_I\end{psmallmatrix},
\]
After applying $u^\times$, the following diagram commutes
\[
\xymatrix@C=2cm@R=.7cm{
u'_\ast z^\times I
\ar[r]^-{\begin{psmallmatrix}(\varepsilon^{u'}u^\times)_I\\1\end{psmallmatrix}}
& u^\times I\oplus u'_\ast z^\times I
\ar[r]^-{\begin{psmallmatrix}1&-(\varepsilon^{u'}u^\times)_I\end{psmallmatrix}}
\ar[d]_-{u^\times\!\left(\begin{psmallmatrix}(\varepsilon^u)_I&(\varepsilon^v)_I\end{psmallmatrix}\right)}
& u^\times I
\ar[r]^-{0}
\ar[dl]^-{(u^\times h)\circ\varphi}
& (u'_\ast z^\times I)[1]
\\
& u^\times I
}
\]
Thus $u^\times h$ is an isomorphism.

Similarly $v^\times h$ is an isomorphism.
Put $K = \mathrm{Cone}(h)$.
It follows that $u^\times K = 0$ and $v^\times K = 0$.
Hence $z^\times K = 0$.
For each $A \in \mathcal T_X$, apply $\mathrm{Hom}_{\mathcal T_X}(-,K)$ to Mayer--Vietoris triangle $A\longrightarrow u_\ast u^\ast A\oplus v_\ast v^\ast A
\longrightarrow z_\ast z^\ast A\longrightarrow A[1]$.
Then one has a long exact sequence
\[
\longrightarrow
\mathrm{Hom}_{\mathcal T_X}(A, K)[-1]
\longrightarrow 
\mathrm{Hom}_{\mathcal T_X}(z_\ast z^\ast A, K)
\longrightarrow
\mathrm{Hom}_{\mathcal T_X}(u_\ast u^\ast A\oplus v_\ast v^\ast A, K)
\longrightarrow
\]
By adjunctions one has $\mathrm{Hom}_{\mathcal T_X}(A, K) = 0$.
In particular, taking $A=K$ gives that
$\mathrm{Hom}_{\mathcal T_X}(K,K)=0$, hence $1_K=0$.
Thus $K$ is contractible.
Consequently, $h$ is an isomorphism, proving the claim.

Now let $(I_\alpha)_{\alpha\in\Lambda}$ be a family in $\mathcal S_X$.
For $w\in\{u,v,z\}$, Proposition~\ref{c:tac-adjunction} shows that $w^\times I_\alpha$ is totally acyclic.
By assumption and the implication $(2) \implies (1)$, $\mathcal{GI}(\mathcal A_U)$, $\mathcal{GI}(\mathcal A_U)$ and $\mathcal{GI}(\mathcal A_{U \cap V})$ are covering classes, and thus Theorem~\ref{thm:equivalence} shows that $\coprod_\alpha w^\times I_\alpha$ is totally acyclic.
Lemma~\ref{c:open-adjoints} allows this coproduct to pass through $w_\ast$, and Lemma~\ref{c:pushforward-acyclicity} shows that
\[
 \coprod_\alpha w_\ast w^\times I_\alpha
 \cong w_\ast\!\left(\coprod_\alpha w^\times I_\alpha\right)
 \in\mathcal S_X.
\]
Taking coproducts of the triangles \eqref{eq:C-cone-triangle} for each $\alpha$, and using the isomorphisms of cones and $I_\alpha$ proved above, we obtain a distinguished triangle
\[
\coprod_\alpha z_\ast z^\times I_\alpha
\longrightarrow
\left(\coprod_\alpha u_\ast u^\times I_\alpha\right)
\oplus
\left(\coprod_\alpha v_\ast v^\times I_\alpha\right)
\longrightarrow
\coprod_\alpha I_\alpha
\longrightarrow
\left(\coprod_\alpha z_\ast z^\times I_\alpha\right)[1].
\]
The first two terms belong to $\mathcal S_X$ by the preceding argument.
Thus $\coprod_\alpha I_\alpha \in \mathcal S_X$.
Hence $\mathcal S_X$ is closed under coproducts, and Theorem~\ref{thm:equivalence} implies that $\mathcal{GI}(\mathcal A_X)$ is covering.

This completes the proof.
\end{proof}

\subsection{Restriction and recollements}\label{c:sec:restriction-recollements}

By Theorem~\ref{thm:locality}, the covering property passes to open subschemes.
Under this hypothesis, the open-immersion adjunctions restrict to totally acyclic complexes and descend to the quotient categories, as follows.

\begin{proposition}\label{c:restriction-recollements}
Suppose that $\mathcal{GI}(\mathcal A_X)$ is covering, and put
$\mathcal Q_Y=\mathcal T_Y/\mathcal S_Y$ for $Y=X,U$.
Then $u^\ast$ preserves total acyclicity.
The horizontal rows in the following diagram are recollements, and each vertical column is an adjoint triple:
\[
\xymatrix@C=4cm@R=1.5cm{
\mathcal S_U
\ar@{ >->}[r]|{i_U{}_\ast}
\ar[d]|{\widetilde{u_\ast}}
&
\mathcal T_U
\ar@<1.1ex>[l]^{i_U{}^!}
\ar@<-1.1ex>[l]_{i_U{}^\ast}
\ar[r]|{j_U{}^\ast}
\ar[d]|{u_\ast}
&
\mathcal Q_U
\ar@<1.1ex>[l]^{j_{U\ast}}
\ar@<-1.1ex>[l]_{j_{U!}}
\ar[d]|{\widehat{u_\ast}}
\\
\mathcal S_X
\ar@{ >->}[r]|{i_X{}_\ast}
\ar@<1.1ex>[u]^{\widetilde{u^\ast}}
\ar@<-1.1ex>[u]_{\widetilde{u^\times}}
&
\mathcal T_X
\ar@<1.1ex>[l]^{i_X{}^!}
\ar@<-1.1ex>[l]_{i_X{}^\ast}
\ar[r]|{j_X{}^\ast}
\ar@<1.1ex>[u]^{u^\ast}
\ar@<-1.1ex>[u]_{u^\times}
&
\mathcal Q_X
\ar@<1.1ex>[l]^{j_{X\ast}}
\ar@<-1.1ex>[l]_{j_{X!}}
\ar@<1.1ex>[u]^{\widehat{u^\ast}}
\ar@<-1.1ex>[u]_{\widehat{u^\times}}
}
\]
Here $j_Y{}^\ast$ is the quotient functor, with
$j_{Y!}\dashv j_Y{}^\ast\dashv j_{Y\ast}$.
The central squares commute:
\[
\begin{aligned}
u_\ast i_U{}_\ast
&\cong i_X{}_\ast\widetilde{u_\ast},
&\qquad u^\ast i_X{}_\ast
&\cong i_U{}_\ast\widetilde{u^\ast},
&\qquad u^\times i_X{}_\ast
&\cong i_U{}_\ast\widetilde{u^\times},\\
j_X{}^\ast u_\ast
&\cong\widehat{u_\ast}j_U{}^\ast,
&\qquad j_U{}^\ast u^\ast
&\cong\widehat{u^\ast}j_X{}^\ast,
&\qquad j_U{}^\ast u^\times
&\cong\widehat{u^\times}j_X{}^\ast.
\end{aligned}
\]
\end{proposition}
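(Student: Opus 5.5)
The plan has three stages: show that $u^\ast$ preserves total acyclicity, assemble the two horizontal recollements, and then descend the vertical adjoint triples to the quotients.

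\emph{Step 1: $u^\ast$ preserves total acyclicity.} Let $I\in\mathcal S_X$. Since $u^\ast$ is exact and sends injectives to injectives (Lemma~\ref{c:open-adjoints}), $u^\ast I$ is an acyclic complex of injectives. By Lemma~\ref{c:pushforward-acyclicity}, it suffices to show that $u_\ast u^\ast I$ is totally acyclic. The Mayer--Vietoris argument of the proof of Theorem~\ref{thm:locality} would need a cover, so instead I would realise $u_\ast u^\ast$ as a filtered colimit. Locally on an affine $V=\mathrm{Spec}\,R$, with $U\cap V=\bigcup D(f_i)$, the object $u_\ast u^\ast I$ is built via a finite \v Cech complex out of the localisations $I_{f}=\varinjlim(I\xrightarrow{f}I\xrightarrow{f}\cdots)$, which are degreewise filtered colimits of copies of $I$.

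Globally, I would proceed by induction on a finite affine cover as in Lemma~\ref{c:right-adjoint-acyclicity}, using the Mayer--Vietoris triangles $I\to u_\ast u^\ast I\oplus v_\ast v^\ast I\to z_\ast z^\ast I\to$ to reduce to the affine principal-open case. There, $u_\ast u^\ast I\cong\varinjlim I$ along multiplication by $f$ (for a single basic open), and Corollary~\ref{cor:localizing-colimit} applies: by Theorem~\ref{thm:equivalence} the covering hypothesis makes $\mathcal S_X$ localising, so this degreewise colimit lies in $\mathcal S_X$. For a general $U$, the finite \v Cech resolution $u_\ast u^\ast I\simeq[\prod (u_{i})_\ast u_i^\ast I\to\cdots]$ is a bounded complex in $\mathcal T_X$ of objects of $\mathcal S_X$. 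Since $\mathcal S_X$ is triangulated, this gives $u_\ast u^\ast I\in\mathcal S_X$. I expect this step to be the main obstacle, namely justifying that each intersection restriction is a filtered colimit of copies of $I$ and that the \v Cech totalisation is an iterated cone in $\mathcal T_X$.

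\emph{Step 2: the recollements.} By Theorem~\ref{thm:locality}, the covering property holds on $U$ as well. Remark~\ref{rmk:uleft} (via \cite[Lemma~7.3]{Krause2005}) gives the left adjoints $i_Y^\ast$, and Theorem~\ref{thm:equivalence}(8) gives the right adjoints $i_Y^!$. Proposition~\ref{prop:tac-recollement} then yields the recollements for $Y=X,U$.

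\emph{Step 3: vertical triples and commuting squares.} Proposition~\ref{c:tac-adjunction} together with Step~1 gives restrictions $\widetilde{u^\ast}\dashv\widetilde{u_\ast}\dashv\widetilde{u^\times}$ compatible with the inclusions; adjointness on $\mathcal S$ follows by full faithfulness of the $i_\ast$. Because $u^\ast,u_\ast,u^\times$ all preserve $\mathcal S$, each one induces a functor on the Verdier quotients $\widehat{(-)}$ satisfying $j^\ast F\cong\widehat F j^\ast$, by the universal property of the quotient. Adjunctions descend: for $F\dashv G$ both preserving the thick subcategories, the unit and counit pass to the quotients and the triangle identities persist. This gives the bottom-row commutation isomorphisms and the vertical adjoint triples $\widehat{u^\ast}\dashv\widehat{u_\ast}\dashv\widehat{u^\times}$.
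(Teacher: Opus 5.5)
\textbf{There is a genuine gap: the Mayer--Vietoris reduction in Step~1 from a general Noetherian $X$ to the affine case does not work as written.}

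\emph{What is correct.} Stages~2 and~3 agree with the paper's proof. Your affine argument in Step~1 is also sound. For $X=\Spec R$ and $U=\bigcup_i D(f_i)$, the augmented \v Cech rows consist of injectives and are split exact, so $u_\ast u^\ast I$ is homotopy equivalent to $\mathrm{Tot}(C^\bullet\otimes_R I)$; the paper proves exactly this inside Lemma~\ref{c:tensor-open-localisation}. That totalisation is a finite iterated cone of finite sums of $I_{f}=\varinjlim(I\xrightarrow{f}I\xrightarrow{f}\cdots)$. These lie in $\mathcal S_X$ by Corollary~\ref{cor:localizing-colimit}, or directly because $\mathcal{GI}$ is closed under filtered colimits.

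\emph{Why the reduction fails.} The triangle $I\to u_\ast u^\ast I\oplus v_\ast v^\ast I\to z_\ast z^\ast I\to I[1]$ for $X=U\cup V$ yields $u_\ast u^\ast I\in\mathcal S_X$ only if you already know $z_\ast z^\ast I\in\mathcal S_X$. That requires $v^\ast I\in\mathcal S_V$, i.e.\ that restriction from the non-affine scheme $X$ to its affine open $V$ preserves total acyclicity. This is an instance of the very statement with a non-affine ambient scheme, not the affine principal-open case. Applying $u^\ast$ to the triangle instead only returns the split tautology $u^\ast I\to u^\ast I\oplus u'_\ast z^\ast I\to u'_\ast z^\ast I$. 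The analogy with Lemma~\ref{c:right-adjoint-acyclicity} breaks here: there the propagated property (plain acyclicity) is trivially preserved by the restrictions $u^\ast,v^\ast,z^\ast$, whereas here preservation under restriction is exactly what is in question. Induction on the number of affines does not help either, because the hypothesis only covers restrictions inside $X'$, never from $X$ to its pieces.

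\emph{Two ways to close it.} The paper avoids induction altogether. It identifies $u_\ast u^\ast I\cong(\mathbf Ru_\ast\mathcal O_U)\star I$ (Lemma~\ref{c:tensor-open-localisation}) and shows $A\star I\in\mathcal S_X$ for every $A\in\mathcal D_X$. This holds because $P\star I\in\mathcal S_X$ for perfect $P$ by duality, and perfect complexes generate $\mathcal D_X$ (Lemma~\ref{lem:PIS}, Corollary~\ref{cor:ais}).

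Alternatively, your approach can be repaired by replacing the Mayer--Vietoris triangle with the dual triangle from the proof of Theorem~\ref{thm:locality}, valid for every $I\in\mathcal T_X$:
$z_\ast z^\times I\to u_\ast u^\times I\oplus v_\ast v^\times I\to I\to$.
Apply $u^\ast$ and use open base change $u^\ast v_\ast\cong u'_\ast v'^\ast$. This gives a triangle
$u'_\ast z^\times I\to u^\times I\oplus u'_\ast v'^\ast v^\times I\to u^\ast I\to$.
Its first two vertices lie in $\mathcal S_U$ for $I\in\mathcal S_X$:
\begin{itemize}
\item $u^\times$, $v^\times$, $z^\times$ preserve total acyclicity unconditionally (Lemma~\ref{c:right-adjoint-acyclicity});
\item $u'_\ast$ preserves it (Lemma~\ref{c:pushforward-acyclicity});
\item $v'^\ast\colon\mathcal S_V\to\mathcal S_{U\cap V}$ is your affine case, with $\mathcal S_V$ localising by Theorem~\ref{thm:locality}.
\end{itemize}
Hence $u^\ast I\in\mathcal S_U$. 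Enlarging $U$ by one affine open of $X$ at a time then gives the general case.

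This repaired route is more elementary than the paper's, since it needs no $K$-flat resolutions, tensor action or perfect generation for Step~1, at the cost of an induction over an affine cover.
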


As mentioned in Remark~\ref{rmk:uleft}, the left adjoint $\widetilde {u_\ast}$ need not coincide with the restriction of $u^\ast$.
This proposition shows that $\widetilde{u^\ast} \dashv \widetilde{u_\ast}$ when $\mathcal{GI}(\mathcal A_X)$ is covering.

To prove the proposition, we use the tensor action introduced in \cite[\S2.8]{BILMP2026}.

\begin{definition}
For $A\in\mathcal D_X$, choose a \emph{$K$-flat resolution} $F\to A$ by flat quasi-coherent sheaves, meaning that $F\otimes_{\mathcal O_X}-$ preserves quasi-isomorphisms.
For $I\in\mathcal T_X$, define
\[
 A\star I=\mathrm{Tot}^{\coprod}(F\otimes_{\mathcal O_X}I),
 \qquad
 (A\star I)^n=\coprod_{p+q=n}F^p\otimes_{\mathcal O_X}I^q,
\]
with differential $d_F\otimes1+(-1)^p1\otimes d_I$ on $F^p\otimes_{\mathcal O_X}I^q$.

Since $X$ is Noetherian, tensoring an injective quasi-coherent sheaf with a flat one gives an injective sheaf, and coproducts of injectives are injective.
Thus $A\star I$ has injective terms.
The construction is independent of the chosen $K$-flat resolution \cite[\S2.8]{BILMP2026}.
\end{definition}

Recall that a complex is \emph{perfect} if each point has an affine open neighbourhood on which it is quasi-isomorphic to a bounded complex of sheaves associated to finitely generated projective modules \cite[\S3.1]{BondalVanDenBergh}.

\begin{lemma}\label{lem:PIS}
    For a perfect complex $P$ and $I \in \mathcal S_X$, one has $P \star I \in \mathcal S_X$.
\end{lemma}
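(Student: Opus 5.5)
The plan is to show that the full subcategory $\mathcal P=\{P\in\mathcal D_X : P\star I\in\mathcal S_X \text{ for all } I\in\mathcal S_X\}$ contains all perfect complexes. I will use three structural properties of $\star$, all from \cite[\S2.8]{BILMP2026}. First, $A\mapsto A\star I$ is a triangle functor $\mathcal D_X\to\mathcal T_X$, so triangles in $A$ give triangles in $\mathcal T_X$. Second, $\mathcal O_X\star I\cong I$. Third, $\star$ commutes with restriction to opens, $w^\ast(A\star I)\cong w^\ast A\star w^\ast I$. Since $\mathcal S_X$ is a thick subcategory of $\mathcal T_X$, the class $\mathcal P$ is thick and contains $\mathcal O_X$.

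On an affine scheme $X=\mathrm{Spec}\,R$, every perfect complex lies in $\mathrm{thick}(\mathcal O_X)$. Hence the affine case follows directly: $P\star I\in\mathcal S_X$. Equivalently, one can argue by hand. If $F$ is a bounded complex of finitely generated projectives, then $P\star I$ is a finite iterated extension of shifts of the complexes $F^p\otimes I$. Each of these is a summand of $I^{\oplus r}$, so it is totally acyclic, and $\mathcal S_X$ is closed under extensions and summands.

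For a general Noetherian $X$, a perfect complex need not lie in $\mathrm{thick}(\mathcal O_X)$, so I will argue locally. First, acyclicity of $P\star I$ is local on $X$. Take a finite affine cover $X=\bigcup_k V_k$ on each piece of which $P$ is strictly perfect. Then $v_k^\ast(P\star I)\cong v_k^\ast P\star v_k^\ast I$ is acyclic, since $v_k^\ast I$ is acyclic and $v_k^\ast P$ is $K$-flat (tensoring a bounded complex of flat sheaves with an acyclic complex stays acyclic).

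The remaining step is the $\mathrm{Hom}(E,-)$ condition for injective $E$, and I expect it to be the main obstacle. The reason is that $v_k^\ast I$ need not be totally acyclic; the Example after Remark~\ref{rmk:uleft} shows that restriction can destroy total acyclicity. So I will avoid restricting $I$ and work with tensor–hom adjunction instead. Since $P$ is perfect, it is strongly dualizable with dual $P^\vee$, also perfect. For $E\in\mathrm{Inj}\,\mathcal A_X$ and $n\in\mathbb Z$ this gives
\[
\mathrm{Hom}_{\mathcal T_X}(E,(P\star I)[n])\cong\mathrm{Hom}_{\mathcal T_X}(P^\vee\star E,I[n]).
\]
The object $P^\vee\star E$ is a bounded complex of injectives; each term is a finite coproduct of flat$\otimes$injective sheaves, which is injective because $X$ is Noetherian. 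By induction on length via brutal truncation triangles, the right-hand side vanishes because $\mathrm{Hom}_{\mathcal T_X}(E',I[n])=0$ for every injective $E'$, which is total acyclicity of $I$. Combined with the acyclicity already proved, this gives $P\star I\in\mathcal S_X$.

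The delicate point is justifying the adjunction isomorphism at the level of $\mathcal T_X$. If it proves difficult, I would first verify it for strictly perfect $P$ on an affine, and then glue using the Mayer--Vietoris triangles from the proof of Theorem~\ref{thm:locality}, together with $w_\ast$ preserving and reflecting total acyclicity (Lemma~\ref{c:pushforward-acyclicity}).
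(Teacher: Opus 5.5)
Your proposal is correct, and its key step is exactly the paper's: total acyclicity comes from the adjunction $\mathrm{Hom}_{\mathcal T_X}(E,(P\star I)[n])\cong\mathrm{Hom}_{\mathcal T_X}(P^\vee\star E,I[n])$ together with $P^\vee\star E$ being a bounded complex of injectives, and your brutal-truncation induction makes the paper's final vanishing explicit. The only divergence is acyclicity: the paper gets it globally from $q_X(P\star I)\cong P\otimes^{\mathbf L}_{\mathcal O_X}q_XI=0$, which in fact follows directly from K-flatness of the chosen resolution for any $A\in\mathcal D_X$, so your affine-chart localisation and the thick-subcategory preliminaries are correct but unnecessary.
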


\begin{proof}
Since $q_X(P\star I)\cong P\otimes^{\mathbf L}_{\mathcal O_X}q_XI=0$, $P\star I$ is acyclic.

Put $P^\vee=\mathbf R\mathcal Hom_{\mathcal O_X}(P,\mathcal O_X)$, which is again perfect \cite[\S2.8]{BILMP2026}.
Hence
\[
\mathrm{Hom}_{\mathcal T_X}(E,(P\star I)[n])
\cong\mathrm{Hom}_{\mathcal T_X}(P^\vee\star E,I[n])
=0
\qquad(n\in\mathbb Z),
\]
The final equality holds since $P^\vee \star E$ is bounded injective complex.
Thus $P\star I\in\mathcal S_X$.
\end{proof}

\begin{corollary}\label{cor:ais}
If $\mathcal S_X$ is localising, then
$A\star I\in\mathcal S_X$ for all $A\in\mathcal D_X$ and $I\in\mathcal S_X$.
\end{corollary}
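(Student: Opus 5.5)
Fix $I\in\mathcal S_X$ and consider the full subcategory
\[
\mathcal L_I=\{A\in\mathcal D_X \mid A\star I\in\mathcal S_X\}\subseteq\mathcal D_X .
\]
The plan is to show that $\mathcal L_I$ is a localising subcategory of $\mathcal D_X$ that contains all perfect complexes. Since $X$ is Noetherian, hence quasi-compact and separated-free enough for \cite{BondalVanDenBergh}, $\mathcal D_X$ is compactly generated by perfect complexes. The smallest localising subcategory containing the perfect complexes is therefore all of $\mathcal D_X$, which gives $\mathcal L_I=\mathcal D_X$ as claimed.

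\emph{Perfect complexes.} Lemma~\ref{lem:PIS} says exactly that every perfect $P$ lies in $\mathcal L_I$.

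\emph{Triangulated subcategory.} The assignment $-\star I:\mathcal D_X\to\mathcal T_X$ is a triangle functor. To see this, choose K-flat resolutions functorially, for instance via a K-flat model replacement. A distinguished triangle in $\mathcal D_X$ can then be represented by a degreewise split short exact sequence of K-flat complexes, $0\to F'\to F\to F''\to0$. Tensoring with $I$ and totalising with coproducts keeps it degreewise split. Its terms are injective, as noted in the definition, so it gives a distinguished triangle in $\mathcal T_X$. Shifts are also compatible with $\star$. Since $\mathcal S_X$ is a triangulated subcategory closed under direct summands, $\mathcal L_I$ is thick.

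\emph{Coproducts.} If $F_\alpha$ are K-flat resolutions of $A_\alpha$, then $\coprod F_\alpha$ is a K-flat resolution of $\coprod A_\alpha$, since filtered colimits and coproducts are exact and commute with $\otimes$. Because $\mathrm{Tot}^{\coprod}$ commutes with coproducts, this gives
\[
\Bigl(\coprod A_\alpha\Bigr)\star I\cong\coprod (A_\alpha\star I),
\]
with coproducts in $\mathcal T_X$ computed degreewise by the preliminaries. The hypothesis that $\mathcal S_X$ is localising then puts this coproduct in $\mathcal S_X$, so $\mathcal L_I$ is closed under coproducts.

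The main obstacle is the triangle-functor and well-definedness bookkeeping for $\star$, namely that triangles in $\mathcal D_X$ go to triangles in $\mathcal T_X$ independently of resolutions. I would cite \cite[\S2.8]{BILMP2026} for this, since the definition already invokes it for independence of resolution. A fallback is available if one prefers to avoid it. Every K-flat complex is built from perfect complexes by homotopy colimits of coproducts, and these are filtered colimits of flat complexes. Corollary~\ref{cor:localizing-colimit} applied to the localising subcategory $\mathcal S_X$ of $\mathcal T_X$ would then give closure under the degreewise filtered colimits $\mathrm{colim}(F_d\star I)$ directly.
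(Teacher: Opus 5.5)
Your proposal is correct and is essentially the paper's own argument. For fixed $I\in\mathcal S_X$, the subcategory $\{A\in\mathcal D_X\mid A\star I\in\mathcal S_X\}$ is localising because $-\star I$ is exact and coproduct-preserving. It contains $\mathrm{Perf}(X)$ by Lemma~\ref{lem:PIS}, and $\mathrm{Loc}_{\mathcal D_X}(\mathrm{Perf}(X))=\mathcal D_X$ via $\mathcal D_X\simeq D_{\mathrm{qc}}(\mathcal O_X)$ and Bondal--Van den Bergh; you simply spell out the exactness and coproduct bookkeeping that the paper takes for granted.

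The closing ``fallback'' through filtered colimits is unnecessary. As written it is also unsupported: it presupposes K-flat resolutions that are filtered colimits of complexes already known to lie in your subcategory. You can just drop it.
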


\begin{proof}
For fixed $I\in\mathcal S_X$, the subcategory $\{A\in\mathcal D_X\mid A\star I\in\mathcal S_X\}$ is localising, since $-\star I$ is exact and preserves coproducts.
By Lemma~\ref{lem:PIS} this subcategory contains every perfect complex, hence equals $\mathrm{Loc}_{\mathcal D_X}\bigl(\mathrm{Perf}(X)\bigr)=\mathcal D_X$.
\end{proof}

\begin{lemma}\label{c:tensor-open-localisation}
For every $I\in\mathcal T_X$, there is a natural isomorphism
\[
 (\mathbf Ru_\ast\mathcal O_U)\star I\cong u_\ast u^\ast I
 \quad\text{in }\mathcal T_X.
\]
\end{lemma}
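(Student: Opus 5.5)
We need $(\mathbf Ru_\ast\mathcal O_U)\star I\cong u_\ast u^\ast I$ in $\mathcal T_X$. The plan is to compare both sides via a projection formula at the level of complexes, using a flat model for $\mathbf Ru_\ast\mathcal O_U$.

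\textbf{Step 1: a flat model.} Choose a finite affine open cover $U=\bigcup_{i=1}^m U_i$. Since $X$ is Noetherian, hence quasi-separated, all intersections $U_{i_0\cdots i_p}$ are quasi-compact, and when $X$ is separated they are affine. For the general case I would first reduce to $X$ affine, or to separated $X$, as follows. Both sides are triangulated in $I$ and compatible with restriction to affine opens $V\subseteq X$ and with Mayer--Vietoris triangles, in the style of the proof of Theorem~\ref{thm:locality}. By an argument like the one proving $h$ is an isomorphism there, it then suffices to produce a natural comparison map and to check that it is an isomorphism after applying $v^\times$ (equivalently $v^\ast$) for each affine $v:V\hookrightarrow X$. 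Let $C$ be the \v{C}ech complex with terms $C^p=\prod_{i_0<\cdots<i_p}(u_{i_0\cdots i_p})_\ast\mathcal O_{U_{i_0\cdots i_p}}$. On an affine chart these terms are localisations of the form $\widetilde{R_f}$, so they are flat quasi-coherent sheaves, and $C$ is a bounded complex of flats. Because affine open immersions have no higher direct images, $C\simeq \mathbf Ru_\ast\mathcal O_U$. Being bounded and flat, $C$ is $K$-flat, so we may compute $(\mathbf Ru_\ast\mathcal O_U)\star I=\mathrm{Tot}^{\coprod}(C\otimes I)$.

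\textbf{Step 2: projection formula termwise.} For an affine open immersion $w:W\hookrightarrow X$ and $I^q$ injective, I claim $w_\ast\mathcal O_W\otimes I^q\cong w_\ast w^\ast I^q$. Locally this is the identity $R_f\otimes_R M\cong M_f$, so the ordinary projection formula for the quasi-compact separated morphism $w$ applies. It follows that $C\otimes I$ is the \v{C}ech double complex $\prod_{i_0<\cdots<i_p}(u_{i_0\cdots i_p})_\ast u_{i_0\cdots i_p}^\ast I$, whose totalisation is a finite sum, so $\coprod$ and $\prod$ agree.

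\textbf{Step 3: the comparison map.} The augmentation $u_\ast u^\ast I\to C\otimes I$ comes from restriction maps. It is a degreewise map of complexes of injectives, and its cone is the totalisation of the augmented \v{C}ech complexes $u_\ast u^\ast I^q\to C^\bullet\otimes I^q$. Each of these is the \v{C}ech resolution of the injective sheaf $u^\ast I^q$, and each is exact. The point needing care, which I expect to be the main obstacle, is that exactness alone does not make the cone zero in $\mathcal T_X$; we need it to be contractible. I would argue this as follows. The augmented \v{C}ech complex of the injective quasi-coherent sheaf $u^\ast I^q$ is a bounded exact complex of injectives, by Lemma~\ref{c:open-adjoints} together with the fact that direct images along open immersions preserve injectives. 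A bounded exact complex of injectives splits, hence is contractible. The totalisation of a bounded-width double complex whose rows are contractible is itself contractible, via the standard filtration by columns, which has finite length. Therefore the augmentation is a homotopy equivalence.

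\textbf{Step 4: naturality and independence.} Naturality in $I$ is clear from the construction. Independence of the choice of cover and of the $K$-flat model follows from \cite[\S2.8]{BILMP2026}, which shows that $\star$ is well defined up to canonical isomorphism.
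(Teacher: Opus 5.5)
Your Steps~1--3 prove the lemma when $X$ is separated (more generally semi-separated). In that case the global \v{C}ech complex is a bounded flat model of $\mathbf Ru_\ast\mathcal O_U$, the projection formula holds termwise because every inclusion is affine, and the augmentation is already the required isomorphism. This is simpler than the paper's route, which defines a separate map $\alpha_I$ and needs a retract argument.

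The gap is the general case, which the lemma covers: $X$ is only assumed Noetherian, and the lemma is used in that generality in Proposition~\ref{c:restriction-recollements}. Take $X$ to be the affine plane with doubled origin and $U$ one of the two copies of $\mathbb A^2$. Every affine open $W\subseteq U$ containing its origin meets the other copy in $W$ minus a point, which is not affine, so $W\hookrightarrow X$ is not an affine morphism. Consequently the \v{C}ech terms can have nonzero higher direct images, and $C\not\simeq\mathbf Ru_\ast\mathcal O_U$ in general. The termwise projection formula of Step~2 fails too: for $\mathcal F$ the injective hull of the residue field at the other origin, $u_\ast\mathcal O_U\otimes\mathcal F\ne0=u_\ast u^\ast\mathcal F$. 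So the comparison map of Step~3 is not defined on $X$. Your reduction (``it suffices to produce a natural comparison map and check it on affine opens'') presupposes exactly this missing global map, and \v{C}ech augmentations built chart by chart do not glue to a morphism of $\mathcal T_X$ without further argument.

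The fix is to define the map intrinsically, as the paper does. Take $\alpha_I\colon(\mathbf Ru_\ast\mathcal O_U)\star I\to u_\ast u^\ast I$ adjoint, under $u^\ast\dashv u_\ast$, to $u^\ast((\mathbf Ru_\ast\mathcal O_U)\star I)\cong(u^\ast\mathbf Ru_\ast\mathcal O_U)\star u^\ast I\cong u^\ast I$.

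\begin{itemize}
\item For affine $X$, your \v{C}ech augmentation $\beta_I$ is a homotopy equivalence. You must still show it is inverse to $\alpha_I$; the paper gets $\alpha_I\beta_I=1$ from the retract $\mathcal O_U\to u^\ast\mathbf Ru_\ast\mathcal O_U\to\mathcal O_U$.
\item For general $X$, base change identifies $v^\ast\alpha_I$ with $\alpha_{v^\ast I}$ for each affine $v\colon V\hookrightarrow X$, so $\mathrm{Cone}(\alpha_I)$ is contractible on an affine cover.
\item The paper then concludes with \cite[Proposition~B.1]{CoupekStovicek2020}. Your Mayer--Vietoris criterion works equally well here: in the degreewise split sequence $0\to K\to u_\ast u^\ast K\oplus v_\ast v^\ast K\to z_\ast z^\ast K\to0$, contractibility of $u^\ast K$ and $v^\ast K$ forces $K\simeq0$, and you induct over a finite affine cover.
\end{itemize}

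One smaller point: in Step~3, argue contractibility of the totalised cone by induction on the finitely many \v{C}ech columns, or by perturbing the row contractions. The induction uses that for a degreewise split $0\to A\to B\to Q\to0$ the induced map $\mathrm{Cone}(A\to B)\to Q$ is a homotopy equivalence. A filtration by columns only exhibits the totalisation as an iterated extension of the columns, and those are not contractible.
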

\begin{proof}
Since $u$ is an open immersion, the derived counit gives
\[
u^\ast\mathbf Ru_\ast\mathcal O_U
\simeq u^\ast u_\ast(\iota\mathcal O_U)
\cong\iota\mathcal O_U
\simeq\mathcal O_U.
\]
Hence there are natural isomorphisms in $\mathcal T_U$
\[
u^\ast((\mathbf Ru_\ast\mathcal O_U)\star I)
\cong(u^\ast \mathbf Ru_\ast\mathcal O_U)\star(u^\ast I)
\cong \mathcal O_U\star(u^\ast I)
\cong u^\ast I,
\]
whose composition corresponds to $\alpha_I:(\mathbf Ru_\ast\mathcal O_U)\star I\to u_\ast u^\ast I$ by adjunction $u^\ast \dashv u_\ast$.

The case $U=\varnothing$ is immediate.
For affine $X=\mathrm{Spec}(R)$, use the finite principal \v Cech complex $C^\bullet$ from the proof of Lemma~\ref{c:right-adjoint-acyclicity}.
It is bounded and flat, represents $\mathbf Ru_\ast\mathcal O_U$, and hence computes $(\mathbf Ru_\ast\mathcal O_U)\star I=\mathrm{Tot}(C^\bullet\otimes_R I)$.
The augmentation \v Cech double complex
\[
 \beta_I:u_\ast u^\ast I\longrightarrow\mathrm{Tot}(C^\bullet\otimes_R I)
\]
has finite exact horizontal rows, i.e., $(u_\ast u^\ast I)^n \to \mathrm{Tot}(C^\bullet \otimes _R I^n)$ is finite exact for each $n$.
For every $n\in\mathbb Z$ and $0\leq p<m$, Lemma~\ref{c:open-adjoints} gives
\[
u_\ast u^\ast I^n\in\mathrm{Inj}\,\mathcal A_X,
\qquad
C^p\otimes_R I^n
=\coprod_{i_0<\cdots<i_p}(I^n)_{f_{i_0}\cdots f_{i_p}}
\in\mathrm{Inj}\,\mathcal A_X,
\]
where we identify $R$-modules with quasi-coherent sheaves on $X=\mathrm{Spec}(R)$.
Thus, for every $n\in\mathbb Z$, the following sequence is split exact in $\mathrm{Mod}_R$:
\[
0\longrightarrow\Gamma(U,I^n|_U)
\longrightarrow C^0\otimes_R I^n
\longrightarrow C^1\otimes_R I^n
\longrightarrow\cdots
\longrightarrow C^{m-1}\otimes_R I^n
\longrightarrow0.
\]
Thus $\mathrm{Cone}(\beta_I)$ is contractible, and equivalently $\beta_I$ is a homotopy equivalence.

Note that in $\mathcal D_U$, one has the retract
\[
1_{\mathcal O_U} = \big[\mathcal O_U
\xrightarrow{\text{Čech augmentation}}
u^\ast\mathbf Ru_\ast\mathcal O_U
\xrightarrow{\text{counit}}
\mathcal O_U\big].
\]
Applying $- \star (u^\ast I)$ yields that 

\[
\xymatrix@C=1.5cm@R=.7cm{
\mathcal O_U\star(u^\ast I)
\ar[r]
\ar@/^1.5pc/@{=}[rr]
\ar[d]_-{\cong}
& (u^\ast\mathbf R u_\ast\mathcal O_U)\star(u^\ast I)
\ar[r]
\ar[d]^-{\cong}
& \mathcal O_U\star(u^\ast I)
\ar[d]^-{\cong}
\\
u^\ast I
\ar[r]^-{u^\ast\beta_I}
& u^\ast\bigl((\mathbf R u_\ast\mathcal O_U)\star I\bigr)
\ar[r]^-{u^\ast\alpha_I}
& u^\ast I
}
\]
Then applying $u_\ast$ and $u_\ast u^\ast \cong 1$ give that $\alpha_I\beta_I=1$, thus $\alpha_I=\beta_I^{-1}$.

For general $X$, let $v:V\hookrightarrow X$ be an affine open immersion and put $w:U\cap V\hookrightarrow V$.
Then open base change tensor action with restriction identify $v^\ast\alpha_I$ with $\alpha_{v^\ast I}$
\[
\xymatrix@C=1.2cm@R=.7cm{
v^\ast\bigl((\mathbf Ru_\ast\mathcal O_U)\star I\bigr)
\ar[r]^-{\cong}
\ar[d]_-{v^\ast\alpha_I}
& (v^\ast\mathbf Ru_\ast\mathcal O_U)\star(v^\ast I)
\ar[r]^-{\cong}
& (\mathbf Rw_\ast\mathcal O_{U\cap V})\star(v^\ast I)
\ar[d]^-{\alpha_{v^\ast I}}
\\
v^\ast u_\ast u^\ast I
\ar[r]^-{\cong}
& w_\ast(I|_{U\cap V})
\ar[r]^-{\cong}
& w_\ast w^\ast(v^\ast I)
}
\]
The affine case shows that $v^\ast\alpha_I$ is a homotopy equivalence, and so is $\alpha_{v^\ast I}$.
Restriction commutes with mapping cones, so $v^\ast\mathrm{Cone}(\alpha_I)
\cong\mathrm{Cone}(v^\ast\alpha_I)
\simeq0$.
Thus $\mathrm{Cone}(\alpha_I)$ is acyclic on any affine open cover, and hence $\mathrm{Cone}(\alpha_I)$ is acyclic.
Since $v^\ast$ is exact, $v^\ast Z^n\mathrm{Cone}(\alpha_I) \cong Z^n\bigl(v^\ast\mathrm{Cone}(\alpha_I)\bigr)$ is a direct summand of a term of  $v^\ast\mathrm{Cone}(\alpha_I)$, and thus is injective.
By \cite[Proposition~B.1]{CoupekStovicek2020}, each $Z^n\mathrm{Cone}(\alpha_I)$ is injective on $X$.
Consequently, the short exact sequences
\[
0\longrightarrow Z^n\mathrm{Cone}(\alpha_I)
\longrightarrow\mathrm{Cone}(\alpha_I)^n
\longrightarrow Z^{n+1}\mathrm{Cone}(\alpha_I)
\longrightarrow0
\]
split for every $n\in\mathbb Z$.
Hence $\mathrm{Cone}(\alpha_I)$ is contractible, and $\alpha_I$ is an isomorphism in $\mathcal T_X$.
\end{proof}

\begin{proof}[Proof of Proposition~\ref{c:restriction-recollements}]
When $\mathcal{GI}(\mathcal A_X)$ is covering, Theorem~\ref{thm:equivalence} shows that $\mathcal S_X$ is localising.
Then by Corollary~\ref{cor:ais} and Lemma~\ref{c:tensor-open-localisation}, one has
\[
 u_\ast u^\ast I\cong(\mathbf Ru_\ast\mathcal O_U)\star I\in\mathcal S_X
 \qquad(I\in\mathcal S_X).
\]
Since $u_\ast$ reflects total acyclicity by Lemma~\ref{c:pushforward-acyclicity}, $u^\ast I\in\mathcal S_U$.
Together with Proposition~\ref{c:tac-adjunction}, $u^\ast \dashv u_\ast \dashv u^\times$ restrict to the full subcategories $\mathcal S_X$ and $\mathcal S_U$, yielding $\widetilde{u^\ast}\dashv\widetilde{u_\ast}\dashv\widetilde{u^\times}$.

By Theorem~\ref{thm:locality}, $\mathcal{GI}(\mathcal A_U)$ is covering.
Thus, for $Y=X,U$, Theorem~\ref{thm:equivalence} supplies the right adjoint $i_Y{}^!$.
Denote by $D_{\mathrm{qc}}(\mathcal O_X)$ the full subcategory of the derived category of all $\mathcal O_X$-modules consisting of complexes with quasi-coherent cohomology. By \cite[Theorem~3.1.1]{BondalVanDenBergh},
$D_{\mathrm{qc}}(\mathcal O_Y)$ is compactly generated;
the equivalence
$\mathcal D_Y\simeq D_{\mathrm{qc}}(\mathcal O_Y)$
of \cite[Theorem~1.2]{HallNeemanRydh} shows that $\mathcal D_Y$ is compactly generated.
Hence \cite[Lemma~7.3]{Krause2005} gives the left adjoint $i_Y{}^\ast$, and Proposition~\ref{prop:tac-recollement} gives the recollements.

Since all three ambient functors preserve the totally acyclic subcategories, they descend to the Verdier quotients.
The adjunction units and counits descend with their triangle identities, giving $\widehat{u^\ast}\dashv\widehat{u_\ast}\dashv\widehat{u^\times}$.

The first three displayed squares commute by the definitions of the restricted functors, and the last three by the definitions of the quotient functors.
\end{proof}

\section{Failure of stalkwise detection}\label{sec:stalkwise-descent}
Theorem~\ref{thm:locality} shows that the covering property for Gorenstein injective quasi-coherent sheaves is Zariski local. In this section, we show that it cannot be checked pointwise.

We construct a countable commutative one-dimensional Cohen--Macaulay Noetherian nonlocal ring $S$ such that every $S_{\mathfrak p}$ admits a dualising complex and $\mathcal{GI}(S_{\mathfrak p})$ is covering, whereas $\mathcal{GI}(S)$ is not covering.

The main argument is given in Subsection~\ref{c:sec:ring}. The subsequent subsections justify its inputs: Subsection~\ref{c:sec:ring-properties} establishes the ring properties, Subsections~\ref{c:sec:artin} and~\ref{c:sec:lift} construct the Gorenstein injective modules $H_i \in \mathcal{GI}(S)$, and Subsection~\ref{c:sec:telescope} proves the required nonvanishing of $\varprojlim{}^1$, which yields that $\coprod_{u \in \mathbb N_+} H_i \notin \mathcal{GI}(S)$.

\subsection{Outline of the example}\label{c:sec:ring}

Let $t_1,t_2,\ldots$ be algebraically independent over $\mathbb Q$. 
Set $D_i=\mathbb Q[t_i^3,t_i^4,t_i^5]$.
Let $B_0$ denote the colimit of the countable inclusion of the system
\[
B_0 = \varinjlim \big[D_1 \xrightarrow{w \mapsto w \otimes 1} D_1 \otimes_{\mathbb Q} D_2\xrightarrow{w \mapsto w \otimes 1} D_1 \otimes_{\mathbb Q} D_2 \otimes D_3 \xrightarrow{w \mapsto w \otimes 1} \cdots \big].
\]
Note that $B_0$ is a domain, and $\mathfrak p_i = (t_i ^3, t_i ^4, t_i^5)$ is a prime ideal of $B_0$ for each $i \in \mathbb N_+$.
Hence $U = B_0 \setminus \bigcup_{i \in \mathbb N_+} \mathfrak p_i$ is a multiplicative set and denote the the localisation $B = U^{-1} B_0$.

Let $S = B[u,v]/(u,v)^2$ be the final ring, and put $\mathfrak n=(u,v)$.
By Corollary~\ref{c:lem:ringproperties} one has
\[
\mathrm{Spec}(S) = \{\mathfrak n\} \cup \{\mathfrak m_i\}_{i \in \mathbb N_+},
\qquad \mathfrak m_i = (t_i^3,t_i^4,t_i^5,u,v).
\]
Moreover, $S$ is Noetherian and Cohen--Macaulay of dimension one (Lemma~\ref{c:lem:S-properties}).

For each $i \in \mathbb N_+$, let $B_i$ be a subring of $B_0$ by taking each $t_i$ to be $1$, $K_i = \mathrm{Frac}(B_i)$ the fractional field.
Write $K = \mathrm{Frac}(B)$.
By Lemma~\ref{c:lem:S-localisations} the localisations are
\[
S_{\mathfrak m_i}
\cong K_i[t_i^3,t_i^4,t_i^5]_{(t_i^3,t_i^4,t_i^5)}[u,v]/(u,v)^2,\qquad
S_{\mathfrak n}\cong K[u,v]/(u,v)^2.
\]
Each maximal localisation is finite free of rank nine over the discrete valuation ring $K_i[t_i^3]_{(t_i^3)}$.
By Proposition~\ref{c:prop:local-dualising} its dualising complex can be $\mathrm{Hom}_{K_i[t_i^3]_{(t_i^3)}} \bigl(S_{\mathfrak m_i},K_i[t_i^3]_{(t_i^3)}\bigr)$, concentrated in degree zero.
At $\mathfrak n$, a dualising complex concentrated in degree zero is $\mathrm{Hom}_K(K[u,v]/(u,v)^2,K)$ (Proposition~\ref{c:prop:local-dualising}).
By Corollary~\ref{c:cor:local-covering}, $\mathcal{GI}(S_{\mathfrak q})$ is a covering class in $\mathrm{Mod}_{S_{\mathfrak q}}$ for each $\mathfrak q \in \mathrm{Spec}(S)$.

The construction and verification of Gorenstein injectivity is use the similar argument as in Example~\ref{thm:main}.
For each $i\in\mathbb N_+$, Subsection~\ref{c:sec:lift} constructs $H_i\in\mathcal{GI}(S)$ supported at $\mathfrak m_i$.
Put $\Omega=\mathrm{Hom}_B(S,B)$ and $a_n=\prod_{i=1}^n t_i^3$.
By Lemma~\ref{c:lem:colimit-injective}, the direct system
\[
\Omega\xrightarrow{a_1}\Omega\xrightarrow{a_2}\Omega\xrightarrow{a_3}\cdots
\]
has colimit $E_S(S/\mathfrak n)$ and yields the telescope sequence
\[
0\longrightarrow \Omega^{(\mathbb N_+)}
\xlongrightarrow{1-\nabla}\Omega^{(\mathbb N_+)}
\longrightarrow E_S(S/\mathfrak n)\longrightarrow0,
\]
where $\nabla$ sends the $n$th summand to the $(n+1)$st by multiplication by $a_n$.
Applying $\mathrm{Hom}_S(-,\coprod_{i\in\mathbb N_+}H_i)$ gives a commutative diagram
\[
\xymatrix@C=1cm@R=.5cm{
\left(\coprod_{i\in\mathbb N_+}\mathrm{Ext}_S^1(\Omega,H_i)\right)^{\mathbb N_+}
\ar[r]^-{\simeq}\ar[d]_{1-\Delta}
&
\mathrm{Ext}_S^1\left(\Omega^{(\mathbb N_+)},\coprod_{i\in\mathbb N_+}H_i\right)
\ar[d]^{\mathrm{Ext}_S^1(1-\nabla,\coprod_{i\in\mathbb N_+}H_i)}
\\
\left(\coprod_{i\in\mathbb N_+}\mathrm{Ext}_S^1(\Omega,H_i)\right)^{\mathbb N_+}
\ar[r]^-{\simeq}\ar@{->>}[d]
&
\mathrm{Ext}_S^1\left(\Omega^{(\mathbb N_+)},\coprod_{i\in\mathbb N_+}H_i\right)
\ar[d]
\\
\varprojlim{}^1_n\left(\coprod_{i\in\mathbb N_+}\mathrm{Ext}_S^1(\Omega,H_i)\right)
\ar@{ >.>}[r]
&
\mathrm{Ext}_S^2\left(E_S(S/\mathfrak n),\coprod_{i\in\mathbb N_+}H_i\right)
}
\]
with columns exact at middle.
Here $\Delta((y_n))=(a_ny_{n+1})$ is the shift of the inverse tower
\[
\cdots\xrightarrow{a_3}
\coprod_{i\in\mathbb N_+}\mathrm{Ext}_S^1(\Omega,H_i)
\xrightarrow{a_2}
\coprod_{i\in\mathbb N_+}\mathrm{Ext}_S^1(\Omega,H_i)
\xrightarrow{a_1}
\coprod_{i\in\mathbb N_+}\mathrm{Ext}_S^1(\Omega,H_i).
\]
Lemma~\ref{c:lem:global-obstruction} shows that this $\varprojlim{}^1$ is nonzero.
Consequently
\[
\mathrm{Ext}_S^2\left(E_S(S/\mathfrak n),\coprod_{i\in\mathbb N_+}H_i\right)\ne0.
\]
Since $E_S(S/\mathfrak n)$ is injective, $\coprod_{i\in\mathbb N_+}H_i$ is not Gorenstein injective.
Thus $\mathcal{GI}(S)$ is not closed under coproducts and is not covering by Theorem~\ref{thm:equivalence}.
The precover-splitting argument of Subsection~\ref{subsec:exampleB-outline} also shows that $\coprod_{i\in\mathbb N_+}H_i$ has no Gorenstein injective precover.

\subsection{The ring \texorpdfstring{$S$}{S}, its spectrum and localisations}\label{c:sec:ring-properties}
We computes $\mathrm{Spec}(S)$ and characterises its properties, then compute the dualising complex of $S_{\mathfrak q}$ for each $\mathfrak q \in \mathrm{Spec}(S)$.

Analogous to Subsection~\ref{subsec:exampleB-outline}, one has a surjective ring map 
\[
\pi : S \longrightarrow B, \qquad 1_S \longmapsto 1_B, \quad u \longmapsto 0, \quad v \longmapsto 0
\]
with nilpotent kernel $\mathfrak n$.
Thus $\mathrm{Spec}(S) = \{\pi^{-1}(\mathfrak q) \mid \mathfrak q \in \mathrm{Spec}(B)\}$.
The localisation gives
\[
\mathrm{Spec}(B)
\longleftrightarrow
\{\mathfrak p\in\mathrm{Spec}(B_0)\mid\mathfrak p\cap U=\varnothing\}.
\]
Hence one has a bijection
\[
\{ \mathfrak p \in\mathrm{Spec}(B_0)\mid\mathfrak p\cap U=\varnothing\}\xlongrightarrow{\sim} \mathrm{Spec}(S) , \qquad \mathfrak p \longmapsto \pi^{-1} (U^{-1}\mathfrak p).
\]

\begin{lemma}
For each $i \in \mathbb N_+$, $\mathfrak p_i=(t_i^3,t_i^4,t_i^5)$ is a prime ideal of $B_0$.
Then 
\[
\{P\in\mathrm{Spec}(B_0):P\cap U=\varnothing\}
=\{(0)\}\cup\{\mathfrak p_i:i\ge1\},
\]
\end{lemma}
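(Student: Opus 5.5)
The plan is to prove the two assertions separately, working throughout with the finite stages $B_0^{(m)}=D_1\otimes_{\mathbb Q}\cdots\otimes_{\mathbb Q}D_m$. Each stage is a finitely generated $\mathbb Q$-domain (a subring of $\mathbb Q[t_1,\ldots,t_m]$) of Krull dimension $m$, and $B_0=\bigcup_m B_0^{(m)}$.

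\emph{Primality of $\mathfrak p_i$.} Since $D_i=\mathbb Q\oplus(t_i^3,t_i^4,t_i^5)D_i$ as $\mathbb Q$-vector spaces, we have $D_i/(t_i^3,t_i^4,t_i^5)\cong\mathbb Q$. Because colimits and tensor products commute, this gives
\[
B_0/\mathfrak p_i\cong\varinjlim_m\bigotimes_{j\le m,\,j\neq i}D_j .
\]
This is a directed union of subrings of polynomial rings in the algebraically independent variables $t_j$ with $j\ne i$, hence a domain. So $\mathfrak p_i$ is prime. The same computation shows two facts we will reuse. First, $\mathfrak p_i\cap B_0^{(m)}=0$ for $i>m$, because $B_0^{(m)}$ embeds into $B_0/\mathfrak p_i$. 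Second, for $i\le m$ the ideal $\mathfrak p_i^{(m)}:=\mathfrak p_i\cap B_0^{(m)}$ is a prime with $\dim B_0^{(m)}/\mathfrak p_i^{(m)}=m-1$. Since $B_0^{(m)}$ is an affine domain, it is catenary and equidimensional, so $\operatorname{ht}\mathfrak p_i^{(m)}=1$.

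\emph{Primes disjoint from $U$.} The inclusion $\supseteq$ is immediate from the definition of $U$. For $\subseteq$, let $P\neq0$ be a prime with $P\cap U=\varnothing$, that is, $P\subseteq\bigcup_i\mathfrak p_i$. Fix $0\ne f\in P$ and choose $n$ with $f\in B_0^{(n)}$. For each $m\ge n$, put $P_m=P\cap B_0^{(m)}$. By the first reused fact, $P_m\subseteq\bigcup_{i\le m}\mathfrak p_i^{(m)}$, which is a finite union of primes. Finite prime avoidance therefore gives an index $i(m)\le m$ with $P_m\subseteq\mathfrak p_{i(m)}$. Since $f\in P_m$ and $f\notin\mathfrak p_i$ for $i>n$, we must have $i(m)\le n$. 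Some index $i\le n$ therefore occurs for infinitely many $m$. Because the $P_m$ increase with $m$, this forces $P_m\subseteq\mathfrak p_i$ for all $m\ge n$, and hence $P\subseteq\mathfrak p_i$.

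Finally, for $m\ge n$ the ideal $P_m$ is a nonzero prime contained in the height-one prime $\mathfrak p_i^{(m)}$, so $P_m=\mathfrak p_i^{(m)}$. Taking the union over $m$ gives $P=\mathfrak p_i$. I expect the main obstacle to be the passage from the infinite union $\bigcup_i\mathfrak p_i$ to a single $\mathfrak p_i$, since prime avoidance is unavailable for infinite unions. The stage-by-stage reduction above, which pins the index $i(m)$ below $n$ using the fixed element $f$, is meant to resolve exactly this point.
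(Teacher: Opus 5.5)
Your proof is correct. It has the same two-step skeleton as the paper: first show that a nonzero prime $P$ with $P\cap U=\varnothing$ lies in some $\mathfrak p_i$, then force equality by a dimension count. The tools you use in each step are different, though.

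\emph{Containment.} The paper stays inside $B_0$. It picks $0\ne a\in P$ and notes that $a$ lies in only finitely many $\mathfrak p_i$. Assuming $P\not\subseteq\mathfrak p_i$ for every $i$, it uses finite prime avoidance to find $b\in P$ outside those primes. It then takes a generic combination $a+\lambda b$ with $\lambda\in\mathbb Q$, which avoids every $\mathfrak p_i$ and so lies in $P\cap U$. This step relies on the ground field being infinite. You instead pass to the Noetherian stages $B_0^{(m)}$, where only $\mathfrak p_1,\dots,\mathfrak p_m$ meet nontrivially. There you apply ordinary finite prime avoidance, and the fixed element $f$ plus a pigeonhole argument pins down a single index. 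Your version does not need $\mathbb Q$ to be infinite. It isolates the real structural input: $B_0$ is a directed union of subrings, each meeting only finitely many $\mathfrak p_i$ nontrivially.

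\emph{Equality.} The paper identifies $(B_0)_{\mathfrak p_i}\cong K_i[t_i^3,t_i^4,t_i^5]_{(t_i^3,t_i^4,t_i^5)}$, a one-dimensional local domain. It reuses this identification later for $S_{\mathfrak m_i}$ and the dualising modules. You instead show $\operatorname{ht}\mathfrak p_i^{(m)}=1$ in each affine domain $B_0^{(m)}$ and take the union over $m$. The fact you actually need there is the dimension formula $\operatorname{ht}\mathfrak q+\dim A/\mathfrak q=\dim A$ for affine domains, which is what your appeal to $B_0^{(m)}$ being an affine domain supplies.

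Finally, your computation $B_0/\mathfrak p_i\cong\varinjlim_m\bigotimes_{j\le m,\,j\ne i}D_j$ proves that $\mathfrak p_i$ is prime. The paper asserts this without proof.
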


\begin{proof}
We first claim that any nonzero prime ideal of $B_0$ with empty intersection with $U$ is contained in some $\mathfrak p_i$.
Otherwise, suppose that there exists a counterexample $P$, that is, $P \not\subseteq \mathfrak p_i$ for all $i \in \mathbb N_+$.
Then for any nonzero $a \in P$, put $F(a) = \{i \in \mathbb N_+\mid a \in \mathfrak p_i\}$.
By colimit construction, $a$ lies in the image of $D_1 \otimes \cdots \otimes D_m$ for some $m$ and thus $F(a)$ is finite.
Then by prime avoidance \cite[Proposition~1.11\textup{(i)}]{AtiyahMacdonald1969},
choose $b\in P\setminus\bigcup_{i\in F}\mathfrak p_i$.
Hence for each $i \in \mathbb N_+$, one has $a \notin \mathfrak p_i$ or $b \notin \mathfrak p_i$. 
Again $a$ and $b$ lies in the image of $D_1 \otimes \cdots \otimes D_n$ for some $n \in \mathbb N_+$, thus neither belongs to $\mathfrak p_i$ for $i > n$.
In this case, there is $\lambda \in \mathbb Q$ such that $(a + \lambda b) \notin \mathfrak p_i$ for each $i \in \mathbb N_+$.
Thus $(a + \lambda b) \in U$, a contradiction.

Then fix a nonzero prime ideal $P$ of $B_0$.
The above discussion gives $P \subseteq \mathfrak p_i$ for some $i \in \mathbb N_+$.
Since $K_i [t_i^3, t_i^4, t_i^5] \cong K_i[t_i^3] \oplus t_i^4 K_i[t_i^3] \oplus t_i^5 K_i[t_i^3]$ as a $K_i[t_i^3]$-module, $K_i [t_i^3, t_i^4, t_i^5]$ is an integral domain with $\dim K_i [t_i^3, t_i^4, t_i^5] = \dim K_i [t_i^3] = 1$.
Thus the local ring 
\[
(B_0)_{\mathfrak p_i} \cong K_i[t_i^3,t_i^4,t_i^5]_{(t_i^3,t_i^4,t_i^5)}
\]
is a one-dimensional domain.
Clearly $P$ is sent to a nonzero prime ideal and thus is the maximal ideal.
By assumption $P \subseteq \mathfrak p_i$, one has $P=\mathfrak p_i$.

Each $\mathfrak p_i$ and $0$ are disjoint from $U$, proving this lemma.
\end{proof}

\begin{corollary}\label{c:lem:ringproperties}
The prime spectrum of $S$ is
\[
\mathrm{Spec}(S) = \{\mathfrak n\} \cup \{\mathfrak m_i\}_{i \in \mathbb N_+},
\qquad \mathfrak m_i = (t_i^3,t_i^4,t_i^5,u,v).
\]
\end{corollary}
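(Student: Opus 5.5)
The corollary follows by transporting the preceding lemma along the bijection established just before it,
\[
\{\mathfrak p\in\mathrm{Spec}(B_0)\mid\mathfrak p\cap U=\varnothing\}\xlongrightarrow{\sim}\mathrm{Spec}(S),\qquad \mathfrak p\longmapsto\pi^{-1}(U^{-1}\mathfrak p).
\]
This bijection combines two standard facts. First, primes of $B=U^{-1}B_0$ correspond to primes of $B_0$ disjoint from $U$. Second, $\pi:S\to B$ is surjective with nilpotent kernel $\mathfrak n=(u,v)$, so every prime of $S$ contains $\mathfrak n$; hence $\pi^{-1}$ is a bijection $\mathrm{Spec}(B)\to\mathrm{Spec}(S)$. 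By the lemma, the domain of the bijection is exactly $\{(0)\}\cup\{\mathfrak p_i\mid i\ge1\}$. It therefore remains only to identify the images of these primes.

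For $\mathfrak p=(0)$, we have $U^{-1}(0)=0$ in $B$, so $\pi^{-1}(0)=\ker\pi=\mathfrak n$. For $\mathfrak p=\mathfrak p_i=(t_i^3,t_i^4,t_i^5)$, the extended ideal is $U^{-1}\mathfrak p_i=(t_i^3,t_i^4,t_i^5)B$. Its preimage under $\pi$ is generated by lifts of these generators together with $\ker\pi=(u,v)$. Since $S=B[u,v]/(u,v)^2$ contains $B$ as a subring and $\pi$ restricts to the identity on $B$, the lifts may be taken to be $t_i^3,t_i^4,t_i^5$ themselves. This gives $\pi^{-1}(U^{-1}\mathfrak p_i)=(t_i^3,t_i^4,t_i^5,u,v)=\mathfrak m_i$.

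There is no real obstacle in this argument. The one point to state explicitly is that the $\mathfrak m_i$ are pairwise distinct and distinct from $\mathfrak n$. This follows from injectivity of the correspondences, since the $\mathfrak p_i$ are pairwise distinct and nonzero: $t_i^3\in\mathfrak p_i$, but $t_i^3\notin\mathfrak p_j$ for $j\ne i$, as one sees by reducing in the tensor-product description of $B_0$.
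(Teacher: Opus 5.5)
Your proposal is correct and follows the paper's route: the corollary comes from combining the bijection $\mathfrak p\mapsto\pi^{-1}(U^{-1}\mathfrak p)$ with the preceding lemma, and then reading off the images $(0)\mapsto\mathfrak n$ and $\mathfrak p_i\mapsto\mathfrak m_i$. Your extra check that the $\mathfrak m_i$ are pairwise distinct and distinct from $\mathfrak n$ is valid; the paper leaves it implicit.
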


\begin{lemma}\label{c:lem:S-localisations}
The localisations of $S$ are
\[
S_{\mathfrak m_i}\cong B_{\mathfrak m_{B,i}}[u,v]/(u,v)^2,\qquad
S_{\mathfrak n}\cong K[u,v]/(u,v)^2.
\]
\end{lemma}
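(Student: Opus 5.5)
The plan is to reduce both isomorphisms to statements about localising $B$, using the isomorphism $S\cong B\otimes_{\mathbb Q}Y$ with $Y=\mathbb Q[u,v]/(u,v)^2$. Since localisation commutes with this finite free base change, for any prime $\mathfrak q=\pi^{-1}(\mathfrak q_B)$ of $S$ we have
\[
S_{\mathfrak q}\cong B_{\mathfrak q_B}\otimes_{\mathbb Q}Y\cong B_{\mathfrak q_B}[u,v]/(u,v)^2 .
\]
The reason is that an element of $S$ lies outside $\mathfrak q$ exactly when its image under $\pi$ lies outside $\mathfrak q_B$. Moreover, any $s\in S$ with $\pi(s)=b$ can be written $s=b(1+n)$ with $n$ nilpotent, so $s$ is invertible once $b$ is. Hence inverting $S\setminus\mathfrak q$ is the same as inverting $B\setminus\mathfrak q_B$, viewed inside $S$ via the section $B\to S$.

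By Corollary~\ref{c:lem:ringproperties} and the preceding lemma, the primes of $B$ are $(0)$ and $\mathfrak m_{B,i}=U^{-1}\mathfrak p_i$. They correspond respectively to $\mathfrak n$ and $\mathfrak m_i$. For $\mathfrak m_i$ the formula above gives the first isomorphism directly. For $\mathfrak n$ it gives $S_{\mathfrak n}\cong B_{(0)}[u,v]/(u,v)^2$. Since $B=U^{-1}B_0$ is a localisation of the domain $B_0$, we have $B_{(0)}=\mathrm{Frac}(B)=K$, which gives the second isomorphism.

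If one also wants the identification $B_{\mathfrak m_{B,i}}\cong(B_0)_{\mathfrak p_i}\cong K_i[t_i^3,t_i^4,t_i^5]_{(t_i^3,t_i^4,t_i^5)}$ used in the outline, the plan is as follows. First note $B_{\mathfrak m_{B,i}}=(B_0)_{\mathfrak p_i}$ because $U\subseteq B_0\setminus\mathfrak p_i$. Then write $B_0=B_i\otimes_{\mathbb Q}D_i$. Elements of $B_i\setminus\{0\}$ avoid $\mathfrak p_i$, since $\mathfrak p_i=B_i\otimes\mathfrak p_{D_i}$ meets $B_i\otimes1$ only in $0$. So the localisation factors through $K_i\otimes_{\mathbb Q}D_i=K_i[t_i^3,t_i^4,t_i^5]$ localised at the extended ideal $(t_i^3,t_i^4,t_i^5)$.

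The main obstacle is this last verification, namely that $\mathfrak p_i$ contracts to $0$ on $B_i$ and that $\mathfrak p_iK_i[t_i^3,t_i^4,t_i^5]$ is exactly the maximal ideal $(t_i^3,t_i^4,t_i^5)$. I would handle it through the graded decomposition of $D_i$ as $\mathbb Q\oplus\mathfrak p_{D_i}$, from which $B_0/\mathfrak p_i\cong B_i$ follows.
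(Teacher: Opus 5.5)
Your proposal is correct and takes essentially the same route as the paper. You localise the coefficients $B$ at $\mathfrak q_B$ and note that, since $(u,v)$ is nilpotent, this already inverts every element of $S\setminus\mathfrak q$; the paper phrases this as $B_{\mathfrak q}[u,v]/(u,v)^2$ being local with maximal ideal $\mathfrak qB_{\mathfrak q}+(u,v)$, and then uses $B_{(0)}=K$. Your extra identification $B_{\mathfrak m_{B,i}}\cong(B_0)_{\mathfrak p_i}\cong K_i[t_i^3,t_i^4,t_i^5]_{(t_i^3,t_i^4,t_i^5)}$ via $B_0=B_i\oplus\mathfrak p_i$ goes beyond the statement but is sound. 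One small correction to the unit step: write $s=b+n'$ with $n'$ nilpotent, so that $s=b(1+b^{-1}n')$ is a unit once $b$ is.
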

\begin{proof}
For $\mathfrak q\in\mathrm{Spec}(B)$, localising the coefficients gives $B_{\mathfrak q}[u,v]/(u,v)^2$, a local ring with maximal ideal $\mathfrak qB_{\mathfrak q}+(u,v)$.
Thus this ring is $S_{\mathfrak qS+\mathfrak n}$.
Taking $\mathfrak q=\mathfrak m_{B,i}$ and $\mathfrak q=(0)$, and using $B_{(0)}=K$, gives the assertions.
\end{proof}

\begin{lemma}\label{c:lem:S-properties}
The ring $S$ is Noetherian and Cohen--Macaulay of dimension one.
\end{lemma}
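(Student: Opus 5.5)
The plan is to reduce everything to the ring $B$ and then pass to $S=B[u,v]/(u,v)^2$, which is a finite free $B$-module of rank $3$. The three claims are handled in order: Noetherianity, then dimension, then the Cohen--Macaulay property.

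\emph{Noetherianity of $B$.} I would first show that $B$ is a one-dimensional Noetherian domain with maximal ideals $\mathfrak m_{B,i}=U^{-1}\mathfrak p_i$. By the preceding lemma, $\mathrm{Spec}(B)=\{(0)\}\cup\{\mathfrak m_{B,i}\}_{i\ge1}$, and this already gives $\dim B=1$. For Noetherianity I would use the criterion that a ring is Noetherian if every localisation at a maximal ideal is Noetherian and every nonzero element lies in only finitely many maximal ideals. This is the standard ``finite character'' criterion: a Noetherian-local, finite-character domain is Noetherian.

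\emph{Local factors and finite character.} The localisations $B_{\mathfrak m_{B,i}}\cong (B_0)_{\mathfrak p_i}\cong K_i[t_i^3,t_i^4,t_i^5]_{(t_i^3,t_i^4,t_i^5)}$ are Noetherian, being localisations of a finitely generated $K_i$-algebra. For finite character, take a nonzero $b=a/s$ with $a\in B_0$ and $s\in U$. Then $a$ lies in the image of $D_1\otimes\cdots\otimes D_m$ for some $m$, so $a\notin\mathfrak p_i$ for $i>m$, exactly as in the proof of the preceding lemma. Hence $b$ lies in at most $m$ maximal ideals. To verify this criterion directly, take an ideal $I\neq0$ and pick $0\ne b\in I$. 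Then $B/bB$ is supported at finitely many maximal ideals, so $B/bB\cong\prod_{i\in F}(B/bB)_{\mathfrak m_{B,i}}$ with each factor Noetherian (of finite length). Therefore $I/bB$ is finitely generated, mirroring Corollary~\ref{cor:A-noetherian}.

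\emph{Passing to $S$.} Since $S$ is finite over $B$, it is Noetherian. The surjection $\pi:S\to B$ has nilpotent kernel, so $\dim S=\dim B=1$, consistent with Corollary~\ref{c:lem:ringproperties}.

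\emph{Cohen--Macaulay property.} I would check this locally using Lemma~\ref{c:lem:S-localisations}. The ring $S_{\mathfrak n}\cong K[u,v]/(u,v)^2$ is Artinian, hence Cohen--Macaulay. The ring $S_{\mathfrak m_i}$ is one-dimensional, and $t_i^3$ is a non-zerodivisor on it, because $S_{\mathfrak m_i}$ is free over the domain $B_{\mathfrak m_{B,i}}$. Thus its depth is at least $1$, which equals its dimension, as in Lemma~\ref{lem:R}.

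The main obstacle is the finite-character/Noetherian step. One must justify that $B/bB$ is Noetherian, using that it is a finite product of local Artinian rings; alternatively one can invoke the criterion from the literature (for example, Heinzer's result) as cited.
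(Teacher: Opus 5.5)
Your proposal is correct. The dimension and Cohen--Macaulay parts match the paper's argument: the kernel of $\pi$ is nilpotent, $S_{\mathfrak n}$ is Artinian, and $t_i^3$ is regular on $S_{\mathfrak m_i}$ because $S_{\mathfrak m_i}$ is free over the domain $B_{\mathfrak m_{B,i}}$.

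The genuine difference is how you prove that $B$ is Noetherian.

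\emph{The paper's route.} By the preceding lemma, every prime of $B$ is either $(0)$ or $U^{-1}\mathfrak p_i=(t_i^3,t_i^4,t_i^5)B$. Each of these is visibly finitely generated, so Cohen's theorem finishes the argument in one line.

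\emph{Your route.} You use the finite-character criterion.
\begin{enumerate}
\item Each $B_{\mathfrak m_{B,i}}\cong K_i[t_i^3,t_i^4,t_i^5]_{(t_i^3,t_i^4,t_i^5)}$ is Noetherian.
\item Each nonzero $b=a/s$ lies only in those $\mathfrak m_{B,i}$ with $i\le m$, where $a$ comes from $D_1\otimes\cdots\otimes D_m$.
\item Hence $B/bB$ has finitely many primes, all maximal, since $(0)$ does not contain $b$.
\item So $B/bB$ is a finite product of Artinian local rings, and therefore $I/bB$ is finitely generated.
\end{enumerate}
This is sound. The product decomposition needs $B/bB$ to be zero-dimensional, and you do get that from the spectrum description. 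The argument also parallels Corollary~\ref{cor:A-noetherian}.

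\emph{Comparison.} Your argument relies only on the local rings and the finite-character property, not on explicit generators of the primes. It would therefore survive in settings where such generators are not at hand. Here, however, those generators are available, so Cohen's theorem removes what you identified as the main obstacle immediately. There is no need to appeal to a Heinzer-type criterion or to verify the product decomposition of $B/bB$.
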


\begin{proof}
By computation of $\mathrm{Spec}(S)$, $S$ has Krull dimension $1$.
Since every prime ideal of $B$ is finitely generated by the preceding description, $B$ is Noetherian by Cohen's theorem \cite[Theorem~3.4]{Matsumura1989}.
Since $S$ is finite free of rank $3$ over $B$, $S$ is Noetherian.
By Lemma~\ref{c:lem:S-localisations}, each $S_{\mathfrak m_i}$ is free over the domain $B_{\mathfrak m_{B,i}}$, so $t_i^3$ is a nonzerodivisor in its maximal ideal.
Hence it has nonzero depth and thus coincides with the Krull dimension $1$.
The remaining localisation $S_{\mathfrak n}$ is again Artinian and therefore $S$ is Cohen--Macaulay.
\end{proof}

\begin{proposition}\label{c:prop:local-dualising}
Dualising complexes for $S_{\mathfrak m_i}$ and $S_{\mathfrak n}$ are respectively the modules
\[
\mathrm{Hom}_{K_i[t_i^3]_{(t_i^3)}}
\bigl(S_{\mathfrak m_i},K_i[t_i^3]_{(t_i^3)}\bigr),\qquad
\mathrm{Hom}_K(K[u,v]/(u,v)^2,K),
\]
both are viewed as complexes concentrated in degree zero.
\end{proposition}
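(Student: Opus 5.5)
The plan is to reduce both cases to one general principle. Let $(V,\mathfrak v)\to(T,\mathfrak m)$ be a local homomorphism of Noetherian local rings with $T$ module-finite over $V$. Suppose $V$ is a regular local ring of dimension $d$, so that $V$ is Gorenstein and $V$ itself is a dualising complex for $V$. Then $\mathrm{Hom}_V(T,V)$ is a dualising complex for $T$; more precisely, $\mathbf R\mathrm{Hom}_V(T,V)$ is dualising for $T$, since coinduction along a finite map preserves dualising complexes up to shift. When $T$ is free as a $V$-module, $\mathrm{Ext}^{>0}_V(T,V)=0$, so $\mathbf R\mathrm{Hom}_V(T,V)\simeq\mathrm{Hom}_V(T,V)$ is concentrated in degree zero.

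Concretely, I will verify the defining properties directly for $D=\mathrm{Hom}_V(T,V)$ with $T$ finite free over $V$.
\begin{enumerate}
\item $D$ is finitely generated over $T$, because it is a finite free $V$-module.
\item $D$ has finite injective dimension over $T$. Take a finite injective resolution $V\to I^\bullet$ over $V$; since $V$ is regular of dimension $d$, it has length $d$. Then $\mathrm{Hom}_V(T,I^\bullet)$ is a complex of injective $T$-modules, by \cite[Lemma~3.1.6]{BrunsHerzog}. It is exact except in degree zero, because $\mathrm{Ext}^{>0}_V(T,V)=0$, so it resolves $D$.
\item The homothety map $T\to\mathrm{Hom}_T(D,D)$ is an isomorphism and $\mathrm{Ext}^{>0}_T(D,D)=0$. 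By adjunction, $\mathbf R\mathrm{Hom}_T(D,D)\simeq\mathbf R\mathrm{Hom}_V(D,V)$. Because $D$ is $V$-free, this is $\mathrm{Hom}_V(\mathrm{Hom}_V(T,V),V)\cong T$, using double duality for finite free modules. I will check that the composite is the homothety map.
\end{enumerate}

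For $S_{\mathfrak n}\cong K[u,v]/(u,v)^2$ (Lemma~\ref{c:lem:S-localisations}), take $V=K$, a field and hence regular of dimension $0$. Here $T$ is a $3$-dimensional $K$-algebra and $D=\mathrm{Hom}_K(T,K)$ is the Matlis dual. In fact $D$ is injective over $T$, which also follows from Proposition~\ref{prop:finite-local-injective} since it is $E_T(K)$.

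For $S_{\mathfrak m_i}$, take $V=K_i[t_i^3]_{(t_i^3)}$, a discrete valuation ring. I must check the structural claim in the outline: $S_{\mathfrak m_i}\cong K_i[t_i^3,t_i^4,t_i^5]_{(t_i^3,t_i^4,t_i^5)}[u,v]/(u,v)^2$ is finite free of rank nine over $V$. This step is the main obstacle, because a localisation of a finite free extension need not remain finite over the localised base. I will argue as follows. First, $K_i[t_i^3,t_i^4,t_i^5]=K_i[t_i^3]\oplus t_i^4K_i[t_i^3]\oplus t_i^5K_i[t_i^3]$, as noted in the preceding lemma. Next, the extension $K_i[t_i^3]\subset K_i[t_i^3,t_i^4,t_i^5]$ is integral and the maximal ideal $(t_i^3,t_i^4,t_i^5)$ is the unique prime lying over $(t_i^3)$, since $(t_i^4)^3$ and $(t_i^5)^3$ lie in $(t_i^3)$. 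Hence localising at $(t_i^3)$ already gives a local ring, namely $K_i[t_i^3,t_i^4,t_i^5]_{(t_i^3,t_i^4,t_i^5)}$, which is free of rank $3$ over $V$. Tensoring with the $3$-dimensional algebra $\mathbb Q[u,v]/(u,v)^2$ then gives rank $9$. The residue fields agree ($K_i$ in both cases) and the homomorphism $V\to S_{\mathfrak m_i}$ is local.

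Finally, I will apply the general principle with $d=1$. This gives that $\mathrm{Hom}_V(S_{\mathfrak m_i},V)$ is a dualising complex concentrated in degree zero, of injective dimension one.
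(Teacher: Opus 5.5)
Your proposal is correct and follows essentially the same route as the paper: both put $\omega=\mathrm{Hom}_\Lambda(T,\Lambda)$ with $T$ finite free over a DVR or field $\Lambda$. Both then obtain an injective resolution of length at most one by applying $\mathrm{Hom}_\Lambda(T,-)$ to one of $\Lambda$, and use the adjunction $\mathrm{Ext}^q_T(\omega,\omega)\cong\mathrm{Ext}^q_\Lambda(\omega,\Lambda)$ together with double duality to get vanishing and the homothety isomorphism. Your extra argument that $S_{\mathfrak m_i}$ is free of rank nine over $K_i[t_i^3]_{(t_i^3)}$ is a valid justification of a point the paper only asserts by listing a basis: the maximal ideal is the unique prime over $(t_i^3)$, so localising at it agrees with base change to the DVR.
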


\begin{proof}
Consider either of the pairs \(
(\Lambda,T)=
\bigl(K_i[t_i^3]_{(t_i^3)},S_{\mathfrak m_i}\bigr)
\quad\text{or}\quad
\bigl(K,K[u,v]/(u,v)^2\bigr),\) and put $\omega=\mathrm{Hom}_\Lambda(T,\Lambda)$.
In the first case, $T$ is finite free over $\Lambda$ with basis
\[
\{1,t_i^4,t_i^5,u,t_i^4u,t_i^5u,v,t_i^4v,t_i^5v\}.
\]
in the second case, it has basis $\{1,u,v\}$.
Thus $\omega$ is finite free over $\Lambda$.

Since $\Lambda$ is a discrete valuation ring or a field, it has an injective resolution of length at most one.
Applying $\mathrm{Hom}_\Lambda(T,-)$ gives an injective resolution of $\omega$ over $T$: exactness follows from the freeness of $T$, and injectivity follows from the adjunction
\[
\mathrm{Hom}_T\bigl(-,\mathrm{Hom}_\Lambda(T,J)\bigr)
\cong\mathrm{Hom}_\Lambda(-,J).
\]
In particular, $\mathrm{id}_T\omega\leq1$.

Finally, the same adjunction gives
\[
\mathrm{Ext}_T^q(\omega,\omega)
\cong\mathrm{Ext}_\Lambda^q(\omega,\Lambda)=0
\qquad(q>0),
\]
and the homothety map is identified as
\[
T\longrightarrow\mathrm{Hom}_T(\omega,\omega)
\cong\mathrm{Hom}_\Lambda(\omega,\Lambda), \qquad t \longmapsto \big(f \longmapsto f(t)\big)
\]
Thus $\omega$ is a dualising complex for $T$.
\end{proof}
\begin{corollary}\label{c:cor:local-covering}
For every prime $\mathfrak q$ of $S$, $\mathcal{GI}(S_{\mathfrak q})$ is covering.
\end{corollary}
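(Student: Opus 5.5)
The plan is to pass from dualising complexes to closure of $\mathcal{GI}$ under filtered colimits, and then invoke Theorem~\ref{thm:equivalence}. By Corollary~\ref{c:lem:ringproperties} and Lemma~\ref{c:lem:S-localisations}, every localisation $S_{\mathfrak q}$ is a commutative Noetherian local ring. It is either $K_i[t_i^3,t_i^4,t_i^5]_{(t_i^3,t_i^4,t_i^5)}[u,v]/(u,v)^2$, which is finite over the discrete valuation ring $K_i[t_i^3]_{(t_i^3)}$, or the Artinian ring $K[u,v]/(u,v)^2$. Noetherianity of $S_{\mathfrak q}$ follows from Lemma~\ref{c:lem:S-properties}, since localisations of Noetherian rings are Noetherian. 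Proposition~\ref{c:prop:local-dualising} then supplies a dualising complex for each $S_{\mathfrak q}$, concentrated in degree zero.

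Next I would quote \cite[Theorem~6.9]{CFH}, as recalled in the introduction: over a commutative Noetherian ring with a dualising complex, the class of Gorenstein injective modules is closed under filtered colimits. Applied to each $S_{\mathfrak q}$, this gives condition~(5) of Theorem~\ref{thm:equivalence} for $\mathcal A=\mathrm{Mod}_{S_{\mathfrak q}}$. That category is a locally finitely generated Grothendieck category with generator $S_{\mathfrak q}$, and it is locally Noetherian. The implication $(5)\Rightarrow(1)$ of Theorem~\ref{thm:equivalence} then shows that $\mathcal{GI}(S_{\mathfrak q})$ is covering. Alternatively, one could apply Corollary~\ref{cor:gi-covering-from-colimits} directly.

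I expect the only real check to be that the hypotheses of \cite[Theorem~6.9]{CFH} are exactly met: commutative, Noetherian, and admitting a dualising complex. The first two are immediate from the above, and the third is Proposition~\ref{c:prop:local-dualising}. In the Artinian case one could argue even more directly. There the dualising module $\mathrm{Hom}_K(K[u,v]/(u,v)^2,K)$ is injective, and Lemma~\ref{lem:finite-endomorphism-generator}-style reasoning, or \cite[Corollary~4.13]{Yu2016} for Artin algebras, already gives the covering property. This backup route covers $S_{\mathfrak n}$ regardless of how \cite{CFH} is stated.
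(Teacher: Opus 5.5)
Your proposal is correct and follows the paper's proof. Proposition~\ref{c:prop:local-dualising} supplies a dualising complex at every prime, \cite[Theorem~6.9]{CFH} gives closure of $\mathcal{GI}(S_{\mathfrak q})$ under filtered colimits, and the implication $(5)\Rightarrow(1)$ of Theorem~\ref{thm:equivalence} then gives the covering property; your backup argument for the Artinian stalk $S_{\mathfrak n}$ via \cite[Corollary~4.13]{Yu2016} is valid but not needed.
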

\begin{proof}
Proposition~\ref{c:prop:local-dualising} gives dualising complexes at every prime of $S$.
The assertion follows from \cite[Theorem~6.9]{CFH} and Theorem~\ref{thm:equivalence}.
\end{proof}

\subsection{Gorenstein injective modules over \texorpdfstring{$S_{\mathfrak m_i}/t_i^3S_{\mathfrak m_i}$}{the Artinian quotients}}\label{c:sec:generic}\label{c:sec:artin}

The construction of $H_i \in \mathcal{GI}(S)$ is based on the construction of $L_i \in \mathcal{GI}(S_{\mathfrak m_i}/t_i^3S_{\mathfrak m_i})$, where
\[
S_{\mathfrak m_i}/t_i^3S_{\mathfrak m_i}
\cong K_i[a,b,u,v]/\bigl((a,b)^2,(u,v)^2\bigr),
\qquad a=\overline{t_i^4},\quad b=\overline{t_i^5}.
\]
Put $\Omega=\mathrm{Hom}_B(S,B)$.

\begin{lemma}\label{c:lem:genericinjective}
There are isomorphisms of $S$-modules
\[
\Omega\otimes_BK\cong\mathrm{Hom}_B(S,K)\cong E_S(S/\mathfrak n).
\]
\end{lemma}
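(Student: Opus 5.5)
The plan has two parts. First establish $\Omega\otimes_BK\cong\mathrm{Hom}_B(S,K)$ by finiteness of $S$ over $B$. Then identify $\mathrm{Hom}_B(S,K)$ with $E_S(S/\mathfrak n)$ by the ring-change result Proposition~\ref{prop:finite-local-injective}, applied after localising at $\mathfrak n$, much as in Lemma~\ref{lem:J-injective} and Corollary~\ref{cor:J-Hom}.

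\emph{First isomorphism.} $S=B[u,v]/(u,v)^2$ is finite free over $B$ with basis $\{1,u,v\}$. Hence $\Omega=\mathrm{Hom}_B(S,B)\cong B^{3}$, and the canonical map
\[
\mathrm{Hom}_B(S,B)\otimes_BK\longrightarrow\mathrm{Hom}_B(S,K),\qquad f\otimes\lambda\longmapsto\lambda f,
\]
is an isomorphism, since both sides commute with finite direct sums in $S$ and the map is the identity when $S=B$. This map is $S$-linear for the action $(sf)(x)=f(sx)$, so it is an isomorphism of $S$-modules.

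\emph{Localising to $S_{\mathfrak n}$.} Write $M=\mathrm{Hom}_B(S,K)$. Since $K=B_{(0)}$, every nonzero element of $B$ acts invertibly on $K$, and hence on $M$. The multiplicative set $S\setminus\mathfrak n$ consists of elements $s=b+\alpha u+\beta v$ with $b\in B\setminus\{0\}$. Such an $s$ is a unit modulo the nilpotent ideal $(u,v)$ after inverting $b$, so it acts invertibly on $M$. Therefore $M\cong M_{\mathfrak n}$, and, using that $S$ is finite over $B$,
\[
M_{\mathfrak n}\cong\mathrm{Hom}_K(S\otimes_BK,K)=\mathrm{Hom}_K\bigl(K[u,v]/(u,v)^2,K\bigr),
\]
where $K[u,v]/(u,v)^2\cong S_{\mathfrak n}$ by Lemma~\ref{c:lem:S-localisations}.

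\emph{Identifying the injective envelope.} Apply Proposition~\ref{prop:finite-local-injective} to $K\to S_{\mathfrak n}$. This is a finite local homomorphism, and the residue fields agree: $S_{\mathfrak n}/\mathfrak nS_{\mathfrak n}\cong K$. It gives
\[
E_{S_{\mathfrak n}}(K)\cong\mathrm{Hom}_K(S_{\mathfrak n},E_K(K))=\mathrm{Hom}_K(S_{\mathfrak n},K).
\]
Finally, use \cite[Lemma~3.2.5]{BrunsHerzog}: $E_S(S/\mathfrak n)$ is $\mathfrak n$-local, so it equals its localisation, and $E_S(S/\mathfrak n)\cong E_{S_{\mathfrak n}}(\kappa(\mathfrak n))$ as $S$-modules. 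Combining these gives $M\cong E_S(S/\mathfrak n)$.

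The step most likely to need care is checking that the action of $S\setminus\mathfrak n$ on $\mathrm{Hom}_B(S,K)$ is bijective, so that no localisation is lost, and tracking that all the identifications are $S$-linear rather than merely $B$-linear. Everything else is formal from results already in the paper.
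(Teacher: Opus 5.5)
Your proposal is correct and follows essentially the same route as the paper. It uses finite freeness of $S$ over $B$ for $\Omega\otimes_BK\cong\mathrm{Hom}_B(S,K)$, the Bruns--Herzog localisation lemma to get $E_S(S/\mathfrak n)\cong E_{S_{\mathfrak n}}(K)$, and Proposition~\ref{prop:finite-local-injective} applied to $K\to S_{\mathfrak n}\cong K[u,v]/(u,v)^2$. The only differences are that you run the chain of isomorphisms in the opposite order, and you explicitly check that $S\setminus\mathfrak n$ acts invertibly on $\mathrm{Hom}_B(S,K)$, which the paper leaves implicit in writing $\mathrm{Hom}_K(S_{\mathfrak n},K)\cong\mathrm{Hom}_B(S,K)$.
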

\begin{proof}
As in the proof of Lemma~\ref{lem:J-injective}, every element outside $\mathfrak n$ acts injectively (thus invertibly) on $E_S(S/\mathfrak n)$, so \cite[Lemma~3.2.5]{BrunsHerzog} gives $E_S(S/\mathfrak n)\cong E_{S_{\mathfrak n}}(K)$.
By Lemma~\ref{c:lem:S-localisations}, $S_{\mathfrak n}=S\otimes_BK\cong K[u,v]/(u,v)^2$; applying Proposition~\ref{prop:finite-local-injective} to $K\to S_{\mathfrak n}$ yields
\[
E_S(S/\mathfrak n)\cong E_{S_{\mathfrak n}}(K)
\cong\mathrm{Hom}_K(S_{\mathfrak n},K)
\cong\mathrm{Hom}_B(S,K)\cong\Omega\otimes_BK.
\]
The last isomorphism uses finite freeness of $S$ over $B$.
\end{proof}

For each $i \in \mathbb N_+$, set $X_i=K_i[a,b]/(a,b)^2$ and $Y_i=K_i[u,v]/(u,v)^2$.
Then $S_{\mathfrak m_i}/t_i^3S_{\mathfrak m_i}\cong X_i\otimes_{K_i}Y_i$.
For a finite-dimensional $K_i$-space $W$, write $W^*=\mathrm{Hom}_{K_i}(W,K_i)$.

Use the matrix construction from Subsection~\ref{The Gorenstein injective modules}; here $X_i,Y_i,P_i$ play the roles of $X,Y,P$ there, over $K_i$ with variables $a,b$.
Let $N_i=\coprod_{r,s\geq0}\bigl(X_i\otimes_{K_i}Y_i\bigr) e_{r,s}$ be the free $\bigl(X_i\otimes_{K_i}Y_i\bigr)$-module on the indicated basis.
For all $r,s\geq0$, define
\begin{align*}
    \alpha:N_i\longrightarrow N_i,&\qquad e_{2r,s}\longmapsto ue_{r,s},\quad e_{2r+1,s}\longmapsto ve_{r,s};\\
    \beta:N_i\longrightarrow N_i,&\qquad e_{r,s}\longmapsto ae_{r,2s}+be_{r,2s+1}.
\end{align*}
Then set $d=\begin{psmallmatrix}\alpha&\beta\\\beta&-\alpha\end{psmallmatrix}$.
Note that $d^2=0$.
Then one has a $1$-periodic cochain complex
\[
P_i=(\cdots\xlongrightarrow dN_i\oplus N_i\xlongrightarrow dN_i\oplus N_i\xlongrightarrow d\cdots).
\]
By the similar analysis of Lemma~\ref{lem:E-tensor}, $P_i$ and $\bigl(X_i\otimes_{K_i}Y_i\bigr)^*\otimes_{X_i\otimes_{K_i}Y_i}P_i$ are acyclic.
Let $\varepsilon_0,\varepsilon_u,\varepsilon_v$ be the basis of $Y_i^*$ dual to $1,u,v$.
Under the natural identification
\[
\bigl(X_i\otimes_{K_i}Y_i^*\bigr)\otimes_{X_i\otimes_{K_i}Y_i}(P_i)^0
\cong\left(\coprod_{r,s\geq0}(X_i\otimes_{K_i}Y_i^*)e_{r,s}\right)^2,
\]
let $\lambda_i$ be the $K_i$-linear functional extracting the coefficient of $(b\otimes\varepsilon_v)e_{0,0}$ in the first component, and put $z_i=((b\otimes\varepsilon_v)e_{0,0},0)$.
The calculation in the proof of Lemma~\ref{lem:Q}, with coefficient field $K_i$ and $x_1,x_2$ replaced by $a,b$, gives
\[
dz_i=0,\qquad \lambda_i(z_i)=1,\qquad \lambda_i d=0.
\]
Thus $z_i$ represents a nonzero cohomology class; periodicity gives
\[
H^j\left(\bigl(X_i\otimes_{K_i}Y_i^*\bigr)\otimes_{X_i\otimes_{K_i}Y_i}P_i\right)\ne0
\qquad(j\in\mathbb Z).
\]

\begin{lemma}\label{c:lem:artin-periodic}
There is a Gorenstein injective $\bigl(X_i\otimes_{K_i}Y_i\bigr)$-module $L_i$ with an exact sequence
\begin{equation}\label{c:eq:Lperiodic}
0\longrightarrow L_i\longrightarrow I_i^0\longrightarrow L_i\longrightarrow0,
\qquad I_i^0\text{ injective over }\bigl(X_i\otimes_{K_i}Y_i\bigr),
\end{equation}
and $\mathrm{Ext}_{X_i\otimes_{K_i}Y_i}^j(\bigl(X_i\otimes_{K_i}Y_i^*\bigr),L_i)\ne0$ for each $j \geq 1$.
\end{lemma}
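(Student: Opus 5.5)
Write $\Gamma_i=X_i\otimes_{K_i}Y_i$; it is a finite-dimensional commutative local $K_i$-algebra. Its injective envelope of the residue field is $\Gamma_i^*=\mathrm{Hom}_{K_i}(\Gamma_i,K_i)$, and every injective $\Gamma_i$-module is a direct sum, equivalently a product, of copies of $\Gamma_i^*$. The plan copies Subsection~\ref{The Gorenstein injective modules}: dualise the $1$-periodic complex $P_i$. No cone of $t^n$ is needed, since there is no $t$ here. Concretely, set
\[
\mathbf I_i=\mathrm{Hom}_{\Gamma_i}(P_i,\Gamma_i^*)\cong\mathrm{Hom}_{K_i}(P_i,K_i).
\]
This is a $1$-periodic complex whose terms are products of copies of $\Gamma_i^*$, hence injective. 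Put $L_i=Z^0(\mathbf I_i)$ and $I_i^0=\mathbf I_i^0$.

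\textbf{Step 1: $\mathbf I_i$ is totally acyclic.} The complex $P_i$ is acyclic; this is the analogue of Lemma~\ref{lem:E-tensor}, using the contracting $K_i$-linear homotopy $\begin{psmallmatrix}g&0\\0&-g\end{psmallmatrix}$. Since $\mathrm{Hom}_{K_i}(-,K_i)$ is exact, $\mathbf I_i$ is acyclic. For total acyclicity it suffices to test against $\Gamma_i^*$, because products of acyclic complexes are acyclic. By adjunction,
\[
\mathrm{Hom}_{\Gamma_i}(\Gamma_i^*,\mathbf I_i)\cong\mathrm{Hom}_{K_i}(\Gamma_i^*\otimes_{\Gamma_i}P_i,K_i).
\]
The complex $\Gamma_i^*\otimes_{\Gamma_i}P_i$ is acyclic by the second half of the analogue of Lemma~\ref{lem:E-tensor}, using the homotopy $\begin{psmallmatrix}0&k\\k&0\end{psmallmatrix}$ with $\eta_0,\eta_1,\eta_2$ dual to $1,a,b$. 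Hence the Hom complex above is acyclic, and $L_i$ is Gorenstein injective.

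\textbf{Step 2: the periodic sequence.} Because $\mathbf I_i$ is $1$-periodic, $Z^1\mathbf I_i\cong Z^0\mathbf I_i=L_i$. Acyclicity then gives \eqref{c:eq:Lperiodic} in the form
\[
0\to Z^0\to\mathbf I_i^0\to Z^1\to0.
\]

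\textbf{Step 3: Ext nonvanishing.} Put $M=X_i\otimes_{K_i}Y_i^*$, which is finite-dimensional. Since $\mathbf I_i$ is a complete injective resolution of $L_i$ with injective terms, for each $j\ge1$
\[
\mathrm{Ext}^j_{\Gamma_i}(M,L_i)\cong H^j\mathrm{Hom}_{\Gamma_i}(M,\mathbf I_i).
\]
To justify this, truncate $\mathbf I_i$ in nonnegative degrees to get an injective resolution of $Z^0$; this is where the indexing must be checked. By adjunction,
\[
\mathrm{Hom}_{\Gamma_i}(M,\mathbf I_i)\cong\mathrm{Hom}_{K_i}(M\otimes_{\Gamma_i}P_i,K_i).
\]
Exactness of $K_i$-duality gives
\[
H^j\cong\mathrm{Hom}_{K_i}\bigl(H^{-j}(M\otimes_{\Gamma_i}P_i),K_i\bigr).
\]
The cohomology $H^{-j}(M\otimes_{\Gamma_i}P_i)$ is nonzero for all $j$ by the displayed computation with $z_i$ and $\lambda_i$ together with periodicity. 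Its $K_i$-dual is therefore nonzero.

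The main obstacle is the bookkeeping in Steps 1 and 3. It must be verified that the homotopies from Lemma~\ref{lem:E-tensor} transfer verbatim with $x_1,x_2$ replaced by $a,b$ over the field $K_i$, with $\Gamma_i^*$ in place of $E_R(\mathbb Q)$. No completion or $t$-torsion enters here, so this should be straightforward. One must also check the identification of $\mathrm{Ext}^j$ with the cohomology of the Hom complex in positive degrees, which relies on $\mathrm{Ext}^{\ge1}(\text{injective},L_i)=0$ and dimension shifting along \eqref{c:eq:Lperiodic}.
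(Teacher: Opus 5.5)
Your proposal is correct and is essentially the paper's own proof. Both dualise the $1$-periodic complex $P_i$ into $\Gamma_i^*$ and prove total acyclicity through the adjunction $\mathrm{Hom}_{\Gamma_i}(E,\mathbf I_i)\cong\mathrm{Hom}_{K_i}(E\otimes_{\Gamma_i}P_i,K_i)$. Both then take $L_i=Z^0$ and identify $\mathrm{Ext}^j$ with the $K_i$-dual of $H^{-j}\bigl((X_i\otimes_{K_i}Y_i^*)\otimes_{\Gamma_i}P_i\bigr)$. One correction: injective $\Gamma_i$-modules are direct sums of copies of $\Gamma_i^*$, but in general they are not products of copies (compare socle dimensions), so the phrase ``equivalently a product'' should go. Your reduction only uses the direct-sum form anyway, since $\mathrm{Hom}$ out of a direct sum is a product of acyclic complexes.
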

\begin{proof}
The duality $\mathrm{Hom}_{X_i\otimes_{K_i}Y_i}(-,\bigl(X_i\otimes_{K_i}Y_i\bigr)^*)\cong\mathrm{Hom}_{K_i}(-,K_i)$ makes $\bigl(X_i\otimes_{K_i}Y_i\bigr)^*$ an injective cogenerator.
For Noetherian rings, every injective $\bigl(X_i\otimes_{K_i}Y_i\bigr)$-module is a summand of coproduct of copies of $\bigl(X_i\otimes_{K_i}Y_i\bigr)^*$.

Set $I_i=\mathrm{Hom}_{X_i\otimes_{K_i}Y_i}(P_i,\bigl(X_i\otimes_{K_i}Y_i\bigr)^*)$.
Since the terms of $P_i$ are free, the terms of $I_i$ are products of the injective module $\bigl(X_i\otimes_{K_i}Y_i\bigr)^*$ and hence are injective.
Since $\mathrm{Hom}_{X_i\otimes_{K_i}Y_i}(-,\bigl(X_i\otimes_{K_i}Y_i\bigr)^*)$ is exact and  $P_i$ acyclic, $I_i$ is acyclic.
For any injective $\bigl(X_i\otimes_{K_i}Y_i\bigr)$-module $E$, the complex $E\otimes_{X_i\otimes_{K_i}Y_i}P_i$ is a direct summand of a coproduct of copies of the acyclic complex $\bigl(X_i\otimes_{K_i}Y_i\bigr)^*\otimes_{X_i\otimes_{K_i}Y_i}P_i$, so it is acyclic.
Thus
\[
\mathrm{Hom}_{X_i\otimes_{K_i}Y_i}(E,I_i)
\cong\mathrm{Hom}_{X_i\otimes_{K_i}Y_i}
\bigl(E\otimes_{X_i\otimes_{K_i}Y_i}P_i,\bigl(X_i\otimes_{K_i}Y_i\bigr)^*\bigr).
\]
It follows that $\mathrm{Hom}_{X_i\otimes_{K_i}Y_i}(E,I_i)$ is acyclic.
Thus $I_i$ is totally acyclic.
Put $L_i=Z^0I_i$, one has \eqref{c:eq:Lperiodic}.
For $j\geq1$, the exactness of $\mathrm{Hom}_{X_i\otimes_{K_i}Y_i}(-,\bigl(X_i\otimes_{K_i}Y_i\bigr)^*)$ gives
\[
\begin{aligned}
\mathrm{Ext}_{X_i\otimes_{K_i}Y_i}^j(\bigl(X_i\otimes_{K_i}Y_i^*\bigr),L_i)
&\cong H^j\mathrm{Hom}_{X_i\otimes_{K_i}Y_i}(\bigl(X_i\otimes_{K_i}Y_i^*\bigr),I_i)\\
&\cong\mathrm{Hom}_{K_i}\bigl(H^{-j}(\bigl(X_i\otimes_{K_i}Y_i^*\bigr)\otimes_{X_i\otimes_{K_i}Y_i}P_i),K_i\bigr)\ne0.
\end{aligned}
\]
By the identification $S_{\mathfrak m_i}/t_i^3S_{\mathfrak m_i}\cong X_i\otimes_{K_i}Y_i$, one has $L_i \in \mathcal{GI}(S_{\mathfrak m_i}/t_i^3S_{\mathfrak m_i} )$.
\end{proof}

\subsection{Construction of the modules \texorpdfstring{$H_i$}{Hi}}\label{c:sec:lift}

Put $H_i= E_{S_{\mathfrak m_i}}(L_i) / L_i$.

\begin{lemma}\label{c:restriction}
Let $S$ be a commutative Noetherian ring and $\Sigma\subseteq S$ a multiplicative set.
Every Gorenstein injective $\Sigma^{-1}S$-module is Gorenstein injective over $S$.
\end{lemma}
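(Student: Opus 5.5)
The plan is to take a complete injective resolution over $\Sigma^{-1}S$ and show that, after restriction of scalars to $S$, it is still a complete injective resolution. Let $G$ be a Gorenstein injective $\Sigma^{-1}S$-module with a totally acyclic complex $I$ of injective $\Sigma^{-1}S$-modules and $Z^0 I\cong G$. Regard $I$ as a complex of $S$-modules via $\phi\colon S\to\Sigma^{-1}S$. Acyclicity is unaffected by restriction, and $Z^0$ is computed identically, so two points remain: the terms $I^n$ are injective over $S$, and $\mathrm{Hom}_S(E,I)$ is acyclic for every injective $S$-module $E$.

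For the first point, use that restriction of scalars $\phi_\ast$ is right adjoint to the exact functor $\Sigma^{-1}S\otimes_S-$, since localisation is flat. A right adjoint of an exact functor preserves injectives, so each $I^n$ is an injective $S$-module. This is the same argument as in Lemma~\ref{c:open-adjoints}, where the exact left adjoint $u^\ast$ of $u_\ast$ was used.

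For the second point, fix an injective $S$-module $E$. The same adjunction gives an isomorphism of complexes
\[
\mathrm{Hom}_S(E,I)\cong\mathrm{Hom}_{\Sigma^{-1}S}(\Sigma^{-1}E,I),
\]
natural in each degree. It therefore suffices to know that $\Sigma^{-1}E$ is an injective $\Sigma^{-1}S$-module. Since $S$ is Noetherian, localisation preserves injectivity. Concretely, $E$ is a direct sum of modules $E_S(S/\mathfrak p)$, and $\Sigma^{-1}E_S(S/\mathfrak p)$ is either $0$ (when $\mathfrak p\cap\Sigma\ne\varnothing$) or $E_{\Sigma^{-1}S}(\Sigma^{-1}S/\Sigma^{-1}\mathfrak p)$ by \cite[Lemma~3.2.5]{BrunsHerzog}. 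Direct sums of injectives over the Noetherian ring $\Sigma^{-1}S$ are injective. Total acyclicity of $I$ over $\Sigma^{-1}S$ then makes the right-hand complex acyclic, so $I$ is totally acyclic over $S$ and $G$ is Gorenstein injective over $S$.

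The only step needing care is the injectivity of $\Sigma^{-1}E$, which is where the Noetherian hypothesis enters. Everything else is the tensor--restriction adjunction applied degreewise.
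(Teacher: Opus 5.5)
Your proposal is correct and follows essentially the same route as the paper: restriction along $S\to\Sigma^{-1}S$ preserves injectives because its left adjoint $\Sigma^{-1}(-)$ is exact, and the adjunction isomorphism $\mathrm{Hom}_S(E,I)\cong\mathrm{Hom}_{\Sigma^{-1}S}(\Sigma^{-1}E,I)$ transfers total acyclicity, using that $\Sigma^{-1}E$ is injective over the Noetherian ring $\Sigma^{-1}S$. Your added justification of that last point via the Matlis decomposition and \cite[Lemma~3.2.5]{BrunsHerzog} fills in a step the paper only asserts.
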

\begin{proof}
Let $I$ be a totally acyclic complex of injective $\Sigma^{-1}S$-modules.
The restriction $\mathrm{Mod}_{\Sigma^{-1}S} \to \mathrm{Mod}_S$ is exact and preserves injectives, since its left adjoint $\Sigma^{-1}(-)$ is exact.
For an injective $S$-module $E$, the module $\Sigma^{-1}E$ is injective over $\Sigma^{-1}S$, since $S$ is commutative Noetherian.
The adjunction $\mathrm{Hom}_S(E,I)\cong \mathrm{Hom}_{\Sigma^{-1}S}(\Sigma^{-1}E,I)$ proves that $S$ is totally acyclic over $S$.
\end{proof}

\begin{lemma}\label{c:lem:lift}
The module $H_i$ is Gorenstein injective over $S_{\mathfrak m_i}$ and over $S$.
\end{lemma}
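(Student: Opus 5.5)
The plan is to treat $H_i$ as the first cosyzygy of $L_i$ over $R_i:=S_{\mathfrak m_i}$. It then suffices to show that $L_i$, regarded as an $R_i$-module via $R_i\twoheadrightarrow T_i:=R_i/t_i^3R_i\cong X_i\otimes_{K_i}Y_i$, has Gorenstein injective dimension at most one. Once $\operatorname{Gid}_{R_i}L_i\le 1$ is known, the standard cosyzygy characterisation of Gorenstein injective dimension gives the first part: for any injective embedding $0\to L_i\to E\to C\to0$, the cokernel $C$ is Gorenstein injective. Applied to the envelope $E=E_{R_i}(L_i)$, this shows $H_i$ is Gorenstein injective over $R_i$. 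The statement over $S$ then follows from Lemma~\ref{c:restriction} with $\Sigma=S\setminus\mathfrak m_i$, since $S_{\mathfrak m_i}=\Sigma^{-1}S$ and $S$ is Noetherian by Lemma~\ref{c:lem:S-properties}.

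For the bound $\operatorname{Gid}_{R_i}L_i\le1$, the key input is that $x=t_i^3$ is a nonzerodivisor on $R_i$. This holds because $R_i$ is free over the domain $B_{\mathfrak m_{B,i}}$ (Lemma~\ref{c:lem:S-localisations}). Consequently every injective $R_i$-module $E$ is $x$-divisible, and there is an exact sequence
\[
0\longrightarrow\mathrm{Hom}_{R_i}(T_i,E)\longrightarrow E\xrightarrow{\ x\ }E\longrightarrow0 .
\]
Hence $E\otimes^{\mathbf L}_{R_i}T_i$ has homology concentrated in a single degree, equal to the injective $T_i$-module $\mathrm{Hom}_{R_i}(T_i,E)$. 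Change of rings then gives
\[
\mathrm{Ext}^j_{R_i}(E,L_i)\cong\mathrm{Ext}^{j-1}_{T_i}\bigl(\mathrm{Hom}_{R_i}(T_i,E),L_i\bigr),
\]
which vanishes for $j\ge2$ because $L_i\in\mathcal{GI}(T_i)$ by Lemma~\ref{c:lem:artin-periodic}. Dimension shifting along $0\to L_i\to E_{R_i}(L_i)\to H_i\to0$ then yields $\mathrm{Ext}^{\ge1}_{R_i}(E,H_i)=0$ for every injective $E$.

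Ext-orthogonality to injectives alone does not give Gorenstein injectivity, and this is the step I expect to be the main obstacle. To finish I would use that $R_i$ has a dualising module by Proposition~\ref{c:prop:local-dualising}. Over such rings, the results of \cite{CFH} (Bass class description of Gorenstein injective dimension) give two facts. First, a module has finite Gorenstein injective dimension iff it lies in the Bass class. Second, a module of finite Gorenstein injective dimension that is Ext-orthogonal to injectives is Gorenstein injective. So it remains to place $L_i$, and hence $H_i$, in the Bass class of $R_i$. For this I would use the change-of-rings formula $\operatorname{Gid}_{R_i}N=\operatorname{Gid}_{T_i}N+1$ for $T_i$-modules $N$ and an $R_i$-regular element, valid when $R_i$ has a dualising complex; alternatively, verify directly that $L_i$ lies in the Bass class using $\omega_{T_i}\simeq\mathbf R\mathrm{Hom}_{R_i}(T_i,\omega_{R_i})[1]$. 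If that citation is unavailable, the fallback is a hands-on construction. The complete resolution $I_i=\mathrm{Hom}_{T_i}(P_i,T_i^*)$ has terms that are products of $T_i^*=\mathrm{Hom}_{R_i}(T_i,E_{R_i}(K_i))$. The degreewise sequences $0\to J\to E_J\xrightarrow{x}E_J\to0$ displayed above would be glued, horseshoe-fashion, into a totally acyclic complex of injective $R_i$-modules whose cycle in degree $0$ is $H_i$. Total acyclicity of that complex would follow from the Ext vanishing already established.
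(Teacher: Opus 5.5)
Your proposal is correct and follows the paper's route: bound $\mathrm{Gid}_{S_{\mathfrak m_i}}L_i$ by $1$ using the change-of-rings formula along the regular element $t_i^3$, pass to the cosyzygy $H_i$ in $0\to L_i\to E_{S_{\mathfrak m_i}}(L_i)\to H_i\to 0$, and descend to $S$ by Lemma~\ref{c:restriction}. The paper simply cites the Bennis--Mahdou change-of-rings theorem for the first step. Your explicit computation of $\mathrm{Ext}^{\ge 2}_{S_{\mathfrak m_i}}(E,L_i)=0$ and the Bass-class finiteness argument via Proposition~\ref{c:prop:local-dualising} are therefore a self-contained justification of the same bound, not a different strategy.
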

\begin{proof}
Since $t_i^3$ is regular on $S_{\mathfrak m_i}$ and $L_i$ is Gorenstein injective over $S_{\mathfrak m_i}/t_i^3S_{\mathfrak m_i}$, \cite[Theorem~4.2]{BennisMahdou2010} gives $\mathrm{Gid}_{S_{\mathfrak m_i}}L_i
=\mathrm{Gid}_{S_{\mathfrak m_i}/t_i^3S_{\mathfrak m_i}}L_i+1=1$.
Applying \cite[Lemma~2.5\textup{(3)}]{BennisMahdou2010} to
\[
0\longrightarrow L_i\longrightarrow E_{S_{\mathfrak m_i}}(L_i)
\longrightarrow H_i\longrightarrow0
\]
gives $\mathrm{Gid}_{S_{\mathfrak m_i}}H_i\leq\max\{0,1-1\}=0$.
Thus $H_i$ is Gorenstein injective over $S_{\mathfrak m_i}$, and hence is also Gorenstein injective over $S$ by Lemma~\ref{c:restriction}.
\end{proof}

\begin{lemma}\label{c:lem:lift-ext}
There is an isomorphism of $S_{\mathfrak m_i}$ modules
\[
0\ne\mathrm{Ext}_{S_{\mathfrak m_i}}^1(\Omega_{\mathfrak m_i},H_i)\cong\mathrm{Ext}_{X_i\otimes_{K_i}Y_i}^2(\bigl(X_i\otimes_{K_i}Y_i^*\bigr),L_i).
\]
\end{lemma}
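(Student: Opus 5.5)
Write $T=S_{\mathfrak m_i}$, $\overline T=T/t_i^3T\cong X_i\otimes_{K_i}Y_i$ and $E=E_T(L_i)$. The plan is to do two dimension shifts: first along $0\to L_i\to E\to H_i\to0$ over $T$, then across the change of rings $T\to\overline T$ given by the regular element $t_i^3$.

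\emph{Step 1.} Apply $\mathrm{Hom}_T(\Omega_{\mathfrak m_i},-)$ to $0\to L_i\to E\to H_i\to 0$. Since $E$ is injective, we get $\mathrm{Ext}^1_T(\Omega_{\mathfrak m_i},H_i)\cong\mathrm{Ext}^2_T(\Omega_{\mathfrak m_i},L_i)$.

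\emph{Step 2.} Compute $\Omega_{\mathfrak m_i}$. Since $S$ is finite free over $B$, we have $\Omega_{\mathfrak m_i}=\mathrm{Hom}_B(S,B)_{\mathfrak m_i}\cong\mathrm{Hom}_{B_{\mathfrak m_{B,i}}}(T,B_{\mathfrak m_{B,i}})$. This module is finite free over $B_{\mathfrak m_{B,i}}$, hence $t_i^3$ is a nonzerodivisor on it. Moreover
\[
\Omega_{\mathfrak m_i}/t_i^3\Omega_{\mathfrak m_i}\cong \mathrm{Hom}_{\overline B}(\overline T,\overline B),\qquad \overline B=B_{\mathfrak m_{B,i}}/t_i^3\cong X_i .
\]
Using $\overline T=X_i\otimes_{K_i}Y_i$, this becomes $\mathrm{Hom}_{X_i}(X_i\otimes Y_i,X_i)\cong X_i\otimes_{K_i}Y_i^*$ as $\overline T$-modules. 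The action check is the same as in Lemma~\ref{lem:C-base-change}.

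\emph{Step 3.} Change rings. For a $T$-module $M$ on which $x=t_i^3$ is regular and a $\overline T$-module $N$, Rees' lemma gives $\mathrm{Ext}^{j}_T(M,N)\cong\mathrm{Ext}^j_{\overline T}(M/xM,N)$ for all $j$. To see this, take a free resolution $F\to M$ over $T$. Because $x$ is regular on $M$ and on the free modules, $F/xF$ is a free $\overline T$-resolution of $M/xM$, and $\mathrm{Hom}_T(F,N)=\mathrm{Hom}_{\overline T}(F/xF,N)$. Applying this with $M=\Omega_{\mathfrak m_i}$, $N=L_i$ and $j=2$, and combining with Steps 1 and 2, gives
\[
\mathrm{Ext}^1_{T}(\Omega_{\mathfrak m_i},H_i)\cong\mathrm{Ext}^2_{\overline T}(X_i\otimes_{K_i}Y_i^*,L_i).
\]
This group is nonzero by Lemma~\ref{c:lem:artin-periodic}.

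All maps used are $T$-linear, so the isomorphism is one of $S_{\mathfrak m_i}$-modules. The main point needing care is Step 2: we need the identification of $\Omega_{\mathfrak m_i}/t_i^3$ with $X_i\otimes Y_i^*$ as modules over $\overline T$, and not merely over $K_i$. This should follow from the tensor–Hom identification $\mathrm{Hom}_{X_i}(X_i\otimes_{K_i}Y_i,X_i)\cong X_i\otimes_{K_i}\mathrm{Hom}_{K_i}(Y_i,K_i)$, with $Y_i$ acting by precomposition.
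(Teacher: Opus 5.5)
Your proposal is correct and follows essentially the same route as the paper's proof. Both arguments dimension-shift along $0\to L_i\to E_{S_{\mathfrak m_i}}(L_i)\to H_i\to0$, pass to $S_{\mathfrak m_i}/t_i^3S_{\mathfrak m_i}$ using regularity of $t_i^3$, identify $\Omega_{\mathfrak m_i}/t_i^3\Omega_{\mathfrak m_i}\cong X_i\otimes_{K_i}Y_i^*$, and conclude nonvanishing from Lemma~\ref{c:lem:artin-periodic}; you additionally justify the change-of-rings (Rees) step and the $\overline T$-module structure, which the paper leaves implicit.
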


\begin{proof}
A straightforward calculation gives
\begin{align*}
\frac{\Omega_{\mathfrak m_i}}{t_i^3\Omega_{\mathfrak m_i}}
&\cong \Omega_{\mathfrak m_i}
\otimes_{B_{\mathfrak m_{B,i}}}
\frac{B_{\mathfrak m_{B,i}}}
{t_i^3B_{\mathfrak m_{B,i}}} \cong \Omega_{\mathfrak m_i}
\otimes_{B_{\mathfrak m_{B,i}}}
\frac{K_i[a,b]}{(a,b)^2} =\Omega_{\mathfrak m_i}
\otimes_{B_{\mathfrak m_{B,i}}}X_i \cong \Omega_{\mathfrak m_i}
\otimes_{B_{\mathfrak m_{B,i}}}X_i\\
&\cong \bigl(\Omega\otimes_B B_{\mathfrak m_{B,i}}\bigr)
\otimes_{B_{\mathfrak m_{B,i}}}X_i = \bigl(\mathrm{Hom}_B(S,B)
\otimes_B B_{\mathfrak m_{B,i}}\bigr)
\otimes_{B_{\mathfrak m_{B,i}}}X_i\\
&\cong \mathrm{Hom}_{B_{\mathfrak m_{B,i}}}
\bigl(S\otimes_B B_{\mathfrak m_{B,i}},
B_{\mathfrak m_{B,i}}\bigr)
\otimes_{B_{\mathfrak m_{B,i}}}X_i \cong \mathrm{Hom}_{B_{\mathfrak m_{B,i}}}
\bigl(S_{\mathfrak m_i},B_{\mathfrak m_{B,i}}\bigr)
\otimes_{B_{\mathfrak m_{B,i}}}X_i\\
&\cong \mathrm{Hom}_{X_i}
\bigl(S_{\mathfrak m_i}\otimes_{B_{\mathfrak m_{B,i}}}X_i,X_i\bigr)\cong \mathrm{Hom}_{X_i}
\bigl(S_{\mathfrak m_i}/t_i^3S_{\mathfrak m_i},X_i\bigr)\\
&\cong \mathrm{Hom}_{X_i}(X_i\otimes_{K_i}Y_i,X_i)
\cong X_i\otimes_{K_i}\mathrm{Hom}_{K_i}(Y_i,K_i) \quad = X_i\otimes_{K_i}Y_i^*.
\end{align*}
It follows that 
\[
\begin{aligned}
\mathrm{Ext}_{S_{\mathfrak m_i}}^1(\Omega_{\mathfrak m_i},H_i)
&\cong\mathrm{Ext}_{S_{\mathfrak m_i}}^2(\Omega_{\mathfrak m_i},L_i) \cong\mathrm{Ext}_{S_{\mathfrak m_i}/t_i^3S_{\mathfrak m_i}}^2
\left(\frac{\Omega_{\mathfrak m_i}}{t_i^3\Omega_{\mathfrak m_i}},L_i\right)\\
&\cong\mathrm{Ext}_{X_i\otimes_{K_i}Y_i}^2
(X_i\otimes_{K_i}Y_i^*,L_i)\ne0.
\end{aligned}
\]
The final inequality follows from Lemma~\ref{c:lem:artin-periodic}.
\end{proof}

\begin{lemma}\label{c:lem:lift-support}
As an $S$-module, $H_i$ satisfies $\mathrm{Supp}_S(H_i)=\{\mathfrak m_i\}$.
\end{lemma}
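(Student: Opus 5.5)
Since $H_i=E_{S_{\mathfrak m_i}}(L_i)/L_i$, the plan is to show that every element of $H_i$ is killed by a power of $\mathfrak m_i$ and that $H_i\neq0$. Then $(H_i)_{\mathfrak q}=0$ for every prime $\mathfrak q\neq\mathfrak m_i$, while $(H_i)_{\mathfrak m_i}=H_i\neq0$.

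\emph{Torsion.} First, $L_i$ is a module over $S_{\mathfrak m_i}/t_i^3S_{\mathfrak m_i}\cong X_i\otimes_{K_i}Y_i$. This is an Artinian local ring, so some power $\mathfrak m_i^h S_{\mathfrak m_i}$ annihilates $L_i$, and $L_i$ is $\mathfrak m_i$-torsion. Over the Noetherian local ring $S_{\mathfrak m_i}$, the injective envelope of an $\mathfrak m_i$-torsion module is again $\mathfrak m_i$-torsion. Indeed, $\Gamma_{\mathfrak m_i}(E)$ is injective for injective $E$ over a Noetherian ring \cite[Theorem~18.4]{Matsumura1989}, and $\Gamma_{\mathfrak m_i}(E_{S_{\mathfrak m_i}}(L_i))$ is an essential injective extension of $L_i$ inside the envelope, so it equals the envelope. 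Alternatively, $E_{S_{\mathfrak m_i}}(L_i)$ is a direct sum of copies of $E_{S_{\mathfrak m_i}}(S_{\mathfrak m_i}/\mathfrak m_i)$, since the only associated prime of $L_i$ is the maximal one. Hence $E_{S_{\mathfrak m_i}}(L_i)$, and therefore its quotient $H_i$, is $\mathfrak m_i$-torsion as an $S_{\mathfrak m_i}$-module, hence also as an $S$-module via restriction.

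\emph{Localisations.} Let $\mathfrak q\in\mathrm{Spec}(S)$ with $\mathfrak q\neq\mathfrak m_i$. By Corollary~\ref{c:lem:ringproperties}, either $\mathfrak q=\mathfrak n$ or $\mathfrak q=\mathfrak m_j$ with $j\neq i$; in both cases $\mathfrak m_i\not\subseteq\mathfrak q$. Choose $s\in\mathfrak m_i\setminus\mathfrak q$. Each element of $H_i$ is killed by a power of $s$, and $s$ is a unit in $S_{\mathfrak q}$, so $(H_i)_{\mathfrak q}=0$. At $\mathfrak m_i$ itself, $H_i$ is already an $S_{\mathfrak m_i}$-module, so $(H_i)_{\mathfrak m_i}\cong H_i$.

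\emph{Nonvanishing.} It remains to show $H_i\neq0$; I expect this to be the only point needing care. If $H_i=0$, then $L_i=E_{S_{\mathfrak m_i}}(L_i)$ would be injective over $S_{\mathfrak m_i}$, so $\mathrm{Ext}^1_{S_{\mathfrak m_i}}(\Omega_{\mathfrak m_i},H_i)=0$. This contradicts Lemma~\ref{c:lem:lift-ext}. Equivalently, it contradicts $\mathrm{Gid}_{S_{\mathfrak m_i}}L_i=1$ from the proof of Lemma~\ref{c:lem:lift}. Therefore $\mathrm{Supp}_S(H_i)=\{\mathfrak m_i\}$.
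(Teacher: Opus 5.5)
Your proposal is correct and follows essentially the same route as the paper. In both, $L_i$ is killed by a power of $\mathfrak m_iS_{\mathfrak m_i}$, so its envelope (a coproduct of copies of $E_{S_{\mathfrak m_i}}(K_i)$) and hence $H_i$ are $\mathfrak m_i$-power torsion, which kills every localisation away from $\mathfrak m_i$, and $H_i\neq0$ comes from Lemma~\ref{c:lem:lift-ext}. Your extra $\Gamma_{\mathfrak m_i}$-argument and the detour through injectivity of $L_i$ are valid but unnecessary, since $H_i=0$ would already give $\mathrm{Ext}^1_{S_{\mathfrak m_i}}(\Omega_{\mathfrak m_i},H_i)=0$ directly.
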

\begin{proof}
Since $L_i$ is a module over $S_{\mathfrak m_i}/t_i^3S_{\mathfrak m_i}\cong X_i\otimes_{K_i}Y_i$, whose maximal ideal $(a,b,u,v)$ has cube zero, one has $(\mathfrak m_iS_{\mathfrak m_i})^3L_i=0$.
The socle of $L_i$ is an essential submodule of $L_i$.
Thus $E_{S_{\mathfrak m_i}}(L_i)$ is a coproduct of copies of $E_{S_{\mathfrak m_i}}(K_i)$.
Each copy is $\mathfrak m_i$-power torsion by \cite[Theorem~18.4(v)]{Matsumura1989}, so the quotient $H_i$ is also $\mathfrak m_i$-power torsion.
Hence $(H_i)_{\mathfrak q}=0$ for every prime $\mathfrak q\ne\mathfrak m_i$.
Finally, $H_i\ne0$ by Lemma~\ref{c:lem:lift-ext}, and $(H_i)_{\mathfrak m_i}\cong H_i$.
\end{proof}

\subsection{The global obstruction}\label{c:sec:telescope}\label{c:sec:stalk}

Put $a_n=\prod_{i=1}^n t_i^3$. 

\begin{lemma}\label{c:lem:colimit-injective}
    One has $\varinjlim\bigl(\Omega\xlongrightarrow{a_1}\Omega\xlongrightarrow{a_2}\Omega\longrightarrow\cdots\bigr)
\cong \Omega\otimes_BK\cong E_S(S/\mathfrak n)$.
\end{lemma}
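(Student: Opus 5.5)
The plan is to split the statement into two isomorphisms. The first is $\varinjlim(\Omega\xrightarrow{a_1}\Omega\xrightarrow{a_2}\cdots)\cong\Omega\otimes_BK$. The second, $\Omega\otimes_BK\cong E_S(S/\mathfrak n)$, is exactly Lemma~\ref{c:lem:genericinjective}, so all the work is in the first. Since filtered colimits commute with $\Omega\otimes_B-$, it suffices to prove the corresponding statement for $B$ itself:
\[
\varinjlim\bigl(B\xrightarrow{a_1}B\xrightarrow{a_2}B\to\cdots\bigr)\cong K .
\]
Tensoring with $\Omega$ (finite free over $B$, since $S$ is finite free of rank $3$) then gives the claim. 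The colimit of multiplication maps is the localisation $B[\{a_n\}^{-1}]$, via $b$ in the $n$th copy $\mapsto b/(a_1\cdots a_{n-1})$; these maps are injective because $B$ is a domain. So the key step is to identify this localisation with $K$.

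To show $B[\{a_n\}^{-1}]=K$, I would argue on primes. The nonzero primes of $B$ are exactly the $\mathfrak m_{B,i}=U^{-1}\mathfrak p_i$, by the lemma computing $\{P\in\mathrm{Spec}(B_0):P\cap U=\varnothing\}$. Each $\mathfrak m_{B,i}$ contains $t_i^3$, which divides $a_n$ for $n\ge i$. Hence every nonzero prime meets the multiplicative set $\{a_n\}$, and the localisation is a domain whose only prime is $(0)$. It is therefore a field containing $B$, and so it equals $K$.

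For a more hands-on verification, one can also check directly that every nonzero $b\in B$ becomes a unit. Write $b=c/w$ with $c\in B_0$ and $w\in U$. The element $c$ lies in $D_1\otimes\cdots\otimes D_m$, so it belongs to only finitely many $\mathfrak p_i$, namely those with $i\le m$. Localising at $t_1^3,\dots,t_m^3$ makes $c$ avoid every prime, so $c$ is a unit there.

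The main obstacle I expect is compatibility of the $S$-module structures. The colimit is computed through $\Omega\otimes_B-$, with $B$ central in $S$, and it must match the identification in Lemma~\ref{c:lem:genericinjective}. This is formal: multiplication by $a_n\in B\subseteq S$ is $S$-linear, so the colimit isomorphism is $S$-linear, and composing with Lemma~\ref{c:lem:genericinjective} finishes the proof.
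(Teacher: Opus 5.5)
Your proof is correct and follows essentially the paper's route: identify the telescope with the localisation of $\Omega$ at the elements $t_i^3$, via $x\mapsto x/(a_1\cdots a_{n-1})$ on the $n$th copy, and then invoke Lemma~\ref{c:lem:genericinjective}. You also supply a step the paper's proof passes over silently, namely that $B[(t_i^3)^{-1}\mid i\in\mathbb N_+]$ is already the fraction field $K$; your argument for this is sound (every nonzero prime $\mathfrak m_{B,i}$ contains $t_i^3$, so the localisation has $(0)$ as its only prime and is therefore a field).
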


\begin{proof}
Consider the set of compatible morphisms
\[
\Omega\longrightarrow
\Omega\otimes_B B[(t_i^3)^{-1}\mid i\in\mathbb N_+],
\qquad
x \longmapsto
\bigg({\prod_{j=1}^{n-1}a_j}\bigg)^{-1} \cdot x.
\]
The universal property of colimits yields a morphism
\[
\varinjlim\bigl(\Omega\xlongrightarrow{a_1}\Omega\xlongrightarrow{a_2}\Omega\longrightarrow\cdots\bigr) \longrightarrow \Omega\otimes_B B[(t_i^3)^{-1}\mid i\in\mathbb N_+].
\]
It it injective since each $a_i$ is injective and $B$ is an integral domain.
It is surjective since each element on the right hand side concerns only finite many $t_i$'s, thus admits a preimage.

Finally, $\Omega\otimes_BK\cong E_S(S/\mathfrak n)$ follows from Lemma~\ref{c:lem:genericinjective}.
\end{proof}

\begin{lemma}\label{c:lem:global-obstruction}
Consider the tower
\[
\cdots\xlongrightarrow{a_3}\coprod_{i\geq1}\mathrm{Ext}_S^1(\Omega,H_i)
\xlongrightarrow{a_2}\coprod_{i\geq1}\mathrm{Ext}_S^1(\Omega,H_i)
\xlongrightarrow{a_1}\coprod_{i\geq1}\mathrm{Ext}_S^1(\Omega,H_i).
\]
Then one has $\varprojlim{}^1\coprod_{i\geq1}\mathrm{Ext}_S^1(\Omega,H_i)\ne0$.
\end{lemma}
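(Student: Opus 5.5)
The plan is to identify each $M_i:=\mathrm{Ext}_S^1(\Omega,H_i)$ as a nonzero module on which $t_i^3$ acts nilpotently and every $t_j^3$ with $j\ne i$ acts invertibly. The tower is then, summand by summand, eventually a tower of isomorphisms twisted by powers of $t_i^3$. From this we produce an explicit element of the product not in the image of $1-\Delta$, exactly as in the outline of Example~\ref{thm:main}.

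\emph{Step 1: local structure of $M_i$.} Since $\Omega$ is finitely generated over the Noetherian ring $S$, $\mathrm{Ext}$ commutes with localisation. By Lemma~\ref{c:lem:lift-support}, $H_i$ is supported only at $\mathfrak m_i$ and equals its own localisation, so
\[
M_i\cong \mathrm{Ext}^1_{S_{\mathfrak m_i}}(\Omega_{\mathfrak m_i},H_i)\ne0
\]
by Lemma~\ref{c:lem:lift-ext}. For $j\ne i$, $t_j^3\notin\mathfrak m_i$ is a unit in $S_{\mathfrak m_i}$, so it acts bijectively on $M_i$. By the proof of Lemma~\ref{c:lem:lift-ext}, $M_i$ is a quotient of a module over $S_{\mathfrak m_i}/t_i^3$, or at least is killed by $t_i^3$: $H_i$ is annihilated by a power of $\mathfrak m_i$ only locally, but $\mathrm{Ext}^2(\Omega_{\mathfrak m_i},L_i)$ is annihilated by $t_i^3$ because $L_i$ is. Hence $t_i^3M_i=0$. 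It follows that $a_n$ acts on $M_i$ as $0$ for $n\ge i$, and invertibly for $n<i$.

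\emph{Step 2: computing $\varprojlim^1$.} Write $M=\coprod_i M_i$ with tower maps $a_n\colon M\to M$. The limit-one term is the cokernel of $1-\Delta$ on $M^{\mathbb N_+}$, where $\Delta(y)_n=a_ny_{n+1}$. Choose nonzero $\eta_i\in M_i$ and consider $x=(x_n)$ with $x_n=\eta_n\in M_n\subseteq M$.

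Suppose $y=(y_n)$ satisfies $y_n-a_ny_{n+1}=x_n$ for all $n$. Project onto the $M_i$-component, writing $y_n^{(i)}$. For $n\ge i$ we have $a_n=0$ on $M_i$, so $y_n^{(i)}=x_n^{(i)}=\delta_{ni}\eta_i$. In particular $y_i^{(i)}=\eta_i$. For $n<i$ the relation gives
\[
y_n^{(i)}=a_ny_{n+1}^{(i)}\quad\text{for } n<i,
\]
since $x_n^{(i)}=0$ for $n\ne i$. Iterating yields $y_1^{(i)}=a_1\cdots a_{i-1}\eta_i$, which is nonzero because each $a_n$ with $n<i$ acts invertibly on $M_i$. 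Thus $y_1$ has nonzero components in infinitely many $M_i$, contradicting $y_1\in\coprod_iM_i$. So $x\notin\mathrm{im}(1-\Delta)$, and $\varprojlim^1\ne0$.

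\emph{Main obstacle.} The delicate point is Step 1's claim that $t_i^3$, rather than merely some power of it, kills $M_i$. If only $t_i^{3k}M_i=0$ is available, the argument is unchanged, since $a_n$ still acts by $0$ for $n\ge ik$ once the exponents accumulate. Indeed $a_n$ contains the factor $t_i^3$ only once, so what is needed is $t_i^3$-nilpotence compatible with the single factor. If $t_i^3M_i\ne0$, I would instead use that $a_n$ acts on $M_i$ through $t_i^3\cdot(\text{unit})$ for $n\ge i$, and run the same diagonal argument with the Mittag-Leffler-style observation that the image of $M_i$ under arbitrarily many such maps eventually vanishes. This is the step I would verify first, via the isomorphism with $\mathrm{Ext}^2_{X_i\otimes Y_i}$, which is manifestly killed by $t_i^3$.
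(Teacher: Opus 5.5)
Your proposal is correct and is essentially the paper's proof. You show that $t_i^3$ kills $\mathrm{Ext}^1_S(\Omega,H_i)$ (via the connecting isomorphism with $\mathrm{Ext}^2_{S_{\mathfrak m_i}}(\Omega_{\mathfrak m_i},L_i)$ and $t_i^3L_i=0$), that each $t_j^3$ with $j\ne i$ acts invertibly, and that a solution of $y-\Delta y=(\eta_n)$ would therefore give $y_1$ the nonzero component $a_1\cdots a_{i-1}\eta_i$ in every summand. The only difference from the paper is cosmetic: you project to the $i$-th summand before iterating, whereas the paper iterates first. Your final contingency paragraph is unnecessary, since Step~1 already gives $t_i^3M_i=0$; its first claim, that $a_n$ acts by $0$ for $n\ge ik$ when only $t_i^{3k}M_i=0$, is also false, as you yourself note.
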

\begin{proof}
By Lemma~\ref{c:lem:lift-ext}, $\mathrm{Ext}_S^1(\Omega,H_i)
\cong\mathrm{Ext}_{S_{\mathfrak m_i}}^1(\Omega_{\mathfrak m_i},H_i)\ne0$.
Since $X_i\otimes_{K_i}Y_i\cong S_{\mathfrak m_i}/t_i^3S_{\mathfrak m_i}$ is annihilated by $t^3$, one has $t_i^3\mathrm{Ext}_S^1(\Omega,H_i)
\cong t_i^3\mathrm{Ext}_{S_{\mathfrak m_i}}^1(\Omega_{\mathfrak m_i},H_i)= 0$.

For $j\ne i$, multiplication by $t_j^3$ on this $S_{\mathfrak m_i}$-module is invertible. Consequently, $a_1\cdots a_{i-1}$ acts invertibly on $\mathrm{Ext}_S^1(\Omega,H_i)$, whereas $(a_1\cdots a_i)\mathrm{Ext}_S^1(\Omega,H_i)=0$.
For each $i \in \mathbb N_+$, fix $0\ne\xi_i\in\mathrm{Ext}_S^1(\Omega,H_i) \subseteq \coprod_i\mathrm{Ext}_S^1(\Omega,H_i)$.
One gets a diagonal element
\[
(\xi_i)_{i\geq1}
\in\prod_{n\geq1}\coprod_{i\geq1}
\mathrm{Ext}_S^1(\Omega,H_i).
\]

We claim that $(\xi_i)_{i\geq1} \notin \mathrm{im}(1-\Delta)$, where $\Delta((y_i)_{i\geq 1})=(a_iy_{i+1})_{i \geq 1}$.
Otherwise, assume there were $(y_i)_{i\geq 1}$ with $y_i-a_iy_{i+1}=\xi_i$.
Iterating $y_i-a_i y_{i+1}=\xi_i$ gives
\[
y_1=\sum_{k=1}^m\left(\prod_{j=1}^{k-1}a_j\right)\xi_k
+\left(\prod_{j=1}^m a_j\right)y_{m+1}.
\]
In the $\mathrm{Ext}_S^1(\Omega,H_m)$-component, all $\xi_k$ with $k\ne m$ vanish, and the last term vanishes since $t_m^3 \in \prod_{j=1}^m a_j$ is the annihilator.
Thus this component of $y_1$ equals $\left(\prod_{j=1}^{m-1}a_j\right)\xi_m\ne0$,
since every factor acts as an automorphism on $\mathrm{Ext}_S^1(\Omega,H_m)$.
This holds for every $m$, contradicting the finite support of $y_1$.

Therefore, $(\xi_i)_{i\geq1}\notin\mathrm{im}(1-\Delta)$.
Thus $\mathrm{coker}(1-\Delta)
\cong\varprojlim{}^1\coprod_{i\geq1}\mathrm{Ext}_S^1(\Omega,H_i)\ne0$.
\end{proof}

\end{document}